\DeclareMathAlphabet{\pazocal}{OMS}{zplm}{m}{n}
\def\smallsection#1{\smallskip\noindent\textbf{#1}.}
\numberwithin{equation}{section}
\newcommand{\comp}{{\operatorname{comp}}}
\newcommand{\R}{\mathbb{R}}
\newcommand{\p}{{\partial}}
\newcommand{\olambda}{{\overline{\lambda}}}
\newcommand{\ov}{{\overline{v}}}
\newcommand{\w}{\omega}
\newcommand{\dd}[2]{\dfrac{\partial #1}{\partial #2}}
\newcommand{\matrice}[1]{\left( \begin{matrix}
#1
\end{matrix} \right)}
\newcommand{\Det}{{\operatorname{Det}}}
\newcommand{\Res}{{\operatorname{Res}}}
\newcommand{\sing}{{\operatorname{sing}}}
\newcommand{\DD}{{\mathcal{D}}}
\newcommand{\loc}{{\operatorname{loc}}}
\newcommand{\epsi}{\varepsilon}
\newcommand{\sgn}{{\operatorname{sgn}}}
\newcommand{\trace}{{\operatorname{Tr}}} 
\newcommand{\te}{\theta}
\newcommand{\lr}[1]{\left\langle #1 \right\rangle}
\newcommand{\FF}{\mathcal{F}}
\newcommand{\tK}{{\tilde{K}}}
\newcommand{\WW}{\mathcal{W}}
\newcommand{\AAA}{\mathcal{A}}
\newcommand{\KK}{\mathcal{K}}
\newcommand{\eff}{{\operatorname{eff}}}
\newcommand{\supp}{\mathrm{supp}}
\newcommand{\Bb}{\mathbb{B}}
\newcommand{\Z}{\mathbb{Z}}
\newcommand{\Dd}{\mathbb{D}}
\newcommand{\TT}{\mathcal{T}}
\newcommand{\Ss}{\mathbb{S}}
\newcommand{\Id}{{\operatorname{Id}}}
\newcommand{\BB}{\mathcal{B}}
\newcommand{\VV}{{\mathcal{V}}}
\newcommand{\EE}{\mathcal{E}}
\newcommand{\UU}{{\mathcal{U}}}
\newcommand{\LL}{{\mathcal{L}}}
\newcommand{\HH}{\mathcal{H}}
\newcommand{\SSSS}{{\mathcal{S}}}
\newcommand{\systeme}[1]{\left\{ \begin{matrix} #1 \end{matrix} \right.}
\newcommand{\C}{\mathbb{C}}
\newcommand{\az}{\alpha}
\newcommand{\ek}{{e^{ik\bullet/\epsi}}}
\newcommand{\CC}{\mathcal C}
\renewcommand{\Re}{\operatorname{Re}}
\renewcommand{\Im}{\operatorname{Im}}
\newcommand{\Tt}{{\mathbb{T}}}
\newcommand{\1}{\mathds{1}}
\newcommand{\RR}{\mathcal{R}}
\newcommand{\cross}[2]{\draw[black] (#1-.05,#2-.05) -- (#1+.05,#2+.05); \draw[black] (#1-.05,#2+.05) -- (#1+.05,#2-.05);}
\newcommand{\crossr}[2]{\draw[red] (#1-.05,#2-.05) -- (#1+.05,#2+.05); \draw[red] (#1-.05,#2+.05) -- (#1+.05,#2-.05);}
\newcommand{\Reso}[2]{\draw[red] (#1-.05,#2-.05) -- (#1+.05,#2+.05); 
\draw[red] (#1-.05,#2+.05) -- (#1+.05,#2-.05);
\draw[red] (#1,#2) circle (16pt);}
\newcommand{\Reson}[2]{\draw[blue] (#1-.05,#2-.05) -- (#1+.05,#2+.05); 
\draw[blue] (#1-.05,#2+.05) -- (#1+.05,#2-.05);
\draw[blue] (#1,#2) circle (6pt);}
\newcommand{\Resonn}[2]{\draw[red] (#1-.05,#2-.05) -- (#1+.05,#2+.05); 
\draw[red] (#1-.05,#2+.05) -- (#1+.05,#2-.05);
\draw[red] (#1,#2) circle (23pt);}
\title[Scattering resonances for highly oscillatory potentials.]{Scattering resonances for highly oscillatory potentials.}
\author{Alexis Drouot}
\email{alexis.drouot@gmail.com}
\newtheorem{thm}{Theorem}
\newtheorem{lem}{Lemma}[section]
\newtheorem{theorem}[thm]{Theorem}
\theoremstyle{definition}
\newenvironment{abstracte}[1]
  {\bigskip\selectlanguage{#1}%
   \begin{center}\bfseries\abstractname\end{center}}
  {\par\bigskip}
\begin{document}

\maketitle

\begin{abstracte}{english}
We study resonances of compactly supported potentials $ V_\epsi(x)  = W ( x, x/\epsi ) $ where $ 
W : \R^d \times  \R^d / (  2\pi \Z) ^d \to \C $, $ d $ odd. That means that 
$ V_\epsi$ is a sum of a slowly varying potential, $ W_0 $,  and one oscillating at frequency $1/\epsi$. 
When $ W_0 \equiv 0 $ we prove that there are no resonances above the line $\Im \lambda = -A \ln(\epsi^{-1})$, except a simple resonance near $0$ when $ d=1$.  We show that this result is optimal by constructing a one-dimensional example. This settles a conjecture of Duch\^ene-Vuki\'cevi\'c-Weinstein \cite{DucVulWei1}. In the case when $ W_0 \neq 0$ and $W$ smooth we prove that resonances in fixed strips admit an expansion in powers of $\epsi$. The argument provides a method for computing the coefficients of the expansion. We produce an effective potential converging uniformly to $W_0$ as $\epsi \rightarrow 0$ and whose resonances approach resonances of $V_\epsi$ modulo $O(\epsi^4)$. This improves the one-dimensional result of Duch\^ene, Vuki\'cevi\'c and Weinstein and extends it to all odd dimensions.
\end{abstracte}

\section{Introduction}

In this paper we are interested in the poles of the meromorphic continuation of $(-\Delta + \VV - \lambda^2)^{-1}$ where $d$ is odd and $\VV : \R^d \rightarrow \C$ is a bounded compactly supported potential. These poles called scattering resonances appear in many physical situations, for instance their imaginary parts 
are the rates of decay of waves scattered by $\VV$.

Let $-\Delta \geq 0 $ be the free Laplacian on $\R^d$. The operator $R_0(\lambda) = (-\Delta - \lambda^2)^{-1}$, well defined as an operator $L^2(\R^d) \rightarrow H^2(\R^d)$ for $\Im \lambda > 0$, extends to a meromorphic family of bounded operators $L^2_\comp(\R^d) \rightarrow H^2_\loc(\R^d)$ for $\lambda \in \C$ (see \S\ref{subsec:1} for review of notation). This family admits one simple pole at $0$ if $d=1$ and is entire if $d \geq 3$. If $\VV$ is a bounded compactly supported function on $\R^d$ then $R_\VV(\lambda) = (-\Delta+\VV-\lambda^2)^{-1}$ is well defined for $\Im \lambda \gg 1$ as an operator $L^2(\R^d) \rightarrow H^2(\R^d)$. It extends  to a meromorphic family of operators $L^2_\comp(\R^d) \rightarrow H^2_\loc(\R^d)$ -- see \cite{DyaZwo}.

Let $W$ be a {\em bounded} complex valued function with support in $\Bb^d(0,L)\times\Tt^d$.
 We define $V_\epsi$ as
\begin{equation*}
V_\epsi(x) = W\left( x, \dfrac{x}{\epsi} \right).
\end{equation*}
If  $W$ is formally given by
\begin{equation*}
W(x,y) = \sum_{k \in \Z^d} W_k(x) e^{iky}
\end{equation*}
we can write $V_\epsi$ as a highly oscillatory perturbation of $W_0$:
\begin{equation}\label{eq:1a}
V_\epsi(x) =  W_0(x)+V_\sharp(x), \  \ \ \ V_\sharp(x) =  \sum_{k \neq 0} W_k(x) e^{ikx/\epsi}.
\end{equation}
In this paper we study resonances of potentials $V_\epsi$ given by \eqref{eq:1a}. 


\subsection{Main results}


The first theorem concerns the case of a vanishing slowly varying part. In the notations of \eqref{eq:1a} we will assume for this result that $W \in L^\infty_0(\Bb^d(0,L) \times \Tt^d)$ (i.e., $\supp(W)$ is a compact subset of $\Bb^d(0,L) \times \Tt^d$ and $W$ is uniformly bounded) and that moreover,
\begin{equation}\label{eq:9f}\begin{gathered}
\exists s \in (0,1), \ \ \ \sum_{k \neq 0} \dfrac{|W_k|_{H^s}}{|k|^s} < \infty  \text{ if } d=1, \\
\sum_{k \neq 0} \dfrac{\|W_k\|_1}{|k|} < \infty \text{ if } d \geq 3.\end{gathered}
\end{equation}

\begin{theorem}\label{thm:1} Let $W$ be in $L^\infty_0(\Bb^d(0,L) \times \Tt^d,\C)$ such that $W_0 \equiv 0$ and \eqref{eq:9f} holds. Then there exists $C, c, A$ three positive constants such that
\begin{equation*}\begin{gathered}
\text{if } d=1, \ \Res(V_\epsi) \setminus \Dd\left(0,c\epsi^{s/2}\right)\subset \left\{ \lambda \in \C : \ \Im \lambda \leq C-A\ln(\epsi^{-1}) \right\}; \\
\text{if } d \geq 3, \ \Res(V_\epsi) \subset \left\{ \lambda \in \C : \ \Im \lambda \leq C-A\ln(\epsi^{-1}) \right\}.
\end{gathered}
\end{equation*}
\end{theorem}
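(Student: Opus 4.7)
The plan is to reduce the existence of resonances to a Birman--Schwinger type invertibility condition, and to exploit the oscillation of $V_\sharp$ to keep the relevant operator norm below $1$ throughout the target region. Let $\chi \in C_c^\infty(\R^d)$ equal $1$ on a neighborhood of $\Bb^d(0,L)$. By meromorphic Fredholm theory (cf.\ \cite{DyaZwo}), $\lambda \in \Res(V_\epsi)$ if and only if $I + V_\epsi R_0(\lambda)\chi$ is not invertible on $L^2(\R^d)$. Since $W_0 \equiv 0$ we have $V_\epsi = V_\sharp$, so it suffices to prove $\|V_\sharp R_0(\lambda)\chi\|_{L^2 \to L^2} < 1$ in the desired region. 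We will combine this with the classical cutoff resolvent bound, valid in odd dimensions,
$$\|\chi R_0(\lambda)\chi\|_{L^2\to L^2} \leq \frac{C_L}{1+|\lambda|}\, e^{2L(\Im\lambda)^-}, \qquad (\Im\lambda)^- := \max(-\Im\lambda,0),$$
with the caveat that in $d=1$ the factor $(1+|\lambda|)^{-1}$ must be replaced by $|\lambda|^{-1}$ near $0$ due to the simple pole of $R_0$ at the origin.

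The crux is the quantitative oscillation estimate
$$\bigl\|V_\sharp R_0(\lambda)\chi\bigr\|_{L^2\to L^2} \leq C\, \epsi^\sigma\, e^{2L(\Im\lambda)^-}, \qquad \sigma = \begin{cases} s & \text{if } d=1,\\ 1 & \text{if } d\geq 3, \end{cases}$$
(with an extra factor $|\lambda|^{-1}$ in the $d=1$ case). To prove it, expand $V_\sharp R_0(\lambda)\chi f = \sum_{k\neq 0} W_k(x)\, e^{ikx/\epsi}\, R_0(\lambda)\chi f$ and analyze each summand by passing to Fourier variables. There, $e^{ikx/\epsi} R_0(\lambda)$ becomes multiplication by the shifted symbol $(|\xi - k/\epsi|^2 - \lambda^2)^{-1}$, concentrated on the shell $|\xi-k/\epsi| \approx |\Re \lambda|$; for $|k|/\epsi \gg |\lambda|$ this shell sits at frequency $|\xi| \sim |k|/\epsi$. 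Pairing the resulting high-frequency output against a generic $L^2$ test function and using the Sobolev regularity of $W_k$ (or its $L^1$ size in $d\geq 3$) to distribute derivatives, one extracts a factor $(\epsi/|k|)^\sigma$ per summand. The sum in $k$ converges precisely thanks to assumption \eqref{eq:9f}.

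Combining the two inequalities, $\|V_\sharp R_0(\lambda)\chi\| < 1$ is guaranteed whenever $C\epsi^\sigma e^{2L(\Im\lambda)^-} < 1$ (respectively $< c|\lambda|$ in the $d=1$ case). This gives $(\Im\lambda)^- \leq A\ln(\epsi^{-1}) - C'$ for some $A>0$, which is Theorem~\ref{thm:1}. In $d=1$, the presence of the $|\lambda|^{-1}$ factor means the argument breaks down inside a disk of radius $\sim \epsi^{s/2}$ around $0$, obtained by balancing $\epsi^s/|\lambda|$ against the exponential growth; this is exactly the disk $\Dd(0,c\epsi^{s/2})$ excluded in the statement. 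The main obstacle is the oscillation estimate itself: one must produce the correct power $\epsi^\sigma$ with only polynomial $\lambda$-loss, so that it survives when paired against the exponential growth of $\|R_0(\lambda)\chi\|$. A brute multiplier bound $\|V_\sharp\|_\infty$ gives no oscillation gain at all, so the analysis must genuinely use the $k$-dependent regularity controls in \eqref{eq:9f}, and the distinction between the 1D and higher-dimensional cases reflects precisely the sharpness of those controls.
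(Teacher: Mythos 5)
Your reduction to a Lippman--Schwinger (Birman--Schwinger) invertibility condition is in the same spirit as the paper, but the central estimate you propose,
\[
\bigl\|V_\sharp R_0(\lambda)\chi\bigr\|_{L^2\to L^2} \;\leq\; C\,\epsi^\sigma\, e^{2L(\Im\lambda)^-}, \qquad \sigma>0,
\]
is \emph{false}, and this is a genuine gap. Oscillation gives no gain whatsoever for an $L^2$-norm of the output: for any $g$ one has $|W_k\, e^{ikx/\epsi}\,g|_{L^2} = |W_k\, g|_{L^2}$, which has no $\epsi$-dependence at all. Your sketch ("pairing against a generic $L^2$ test function and distributing derivatives") cannot be executed: if you pair $W_k e^{ikx/\epsi}g$ against $\phi\in L^2$ and integrate by parts, one derivative inevitably lands on $\bar\phi$, which has no regularity to absorb it. So the brute-force multiplier bound $|W_k|_\infty$ is in fact sharp for that operator norm, and no power of $\epsi$ can appear.

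The paper's argument avoids this by working directly with the resonant state and using the fixed-point equation $u=-K_Vu$ as a \emph{regularity bootstrap}: since $K_V:L^2\to H^1$ is bounded (Lemma~\ref{lem:1h}), a nontrivial solution of $u=-K_Vu$ automatically lies in $H^1$ with $|u|_{H^1}\lesssim \lr{\lambda}e^{2L(\Im\lambda)_-}|\lambda|^{-1}|W|_\infty|u|_2$. It is precisely this a priori $H^1$ control on $u$ that breaks the circularity and allows the oscillatory gain. In $d\geq 3$ one then uses the commutator identity $e^{ik\bullet/\epsi}=\frac{\epsi}{|k|}[P_k,e^{ik\bullet/\epsi}]$ on $u\in H^1$, so that the derivative which falls on $u$ is controlled by $|u|_{H^1}$ and the derivative which falls on $W_k$ is controlled by $\|W_k\|_1$ (note this is the $W^1$ norm in the paper's notation, i.e.\ one derivative in $L^\infty$, not the $L^1$-norm you appear to assume). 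In $d=1$ the paper instead uses the algebra property of $H^1(\R)$ and its dual, together with the estimate $|V_\sharp|_{H^{-1}}\leq \epsi^s|W|_{X^s}$. In both cases the resonance equation supplies the missing regularity; an operator-norm-only argument on $L^2$, as in your proposal, cannot. To fix the proposal you would either have to reproduce the bootstrap (estimate the norm of $K_V$ from $H^1$ to $H^1$, or from $L^2$ to $L^2$ restricted to functions that are already $H^1$-bounded in terms of their $L^2$ norm through the equation), or, in the formulation $w=-V_\sharp R_0(\lambda)\chi w$ with $w=Vu$, exploit the fact that $w$ has a specific high-frequency decomposition $w=\sum_k W_k e^{ikx/\epsi}u$ with $u\in H^1$ rather than being a generic $L^2$ function.

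Your account of the excluded disk $\Dd(0,c\epsi^{s/2})$ in $d=1$ (the balance of $\epsi^s/|\lambda|^2$ against the exponential) is consistent with the paper's outcome, once the estimate is obtained correctly.
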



This settles a conjecture of \cite{DucVulWei1}: for odd dimensions $d \geq 3$ and $\epsi$ small enough the potential $V_\epsi$ does not have a bound state. In \S\ref{subsec:8} we construct a step-like function $W$ such that $V_{\pi/(2n)}$ has a resonance $\lambda_n  \sim -i \ln(n)$ as $n \rightarrow +\infty$. This shows that one cannot improve the rate of escape of resonances given by Theorem \ref{thm:1} in dimension $1$.

In the next statements we always assume that $W$ is smooth. We consider the 
case $ W_0 \neq 0 $. If $\lambda_0$ is a simple resonance of $W_0$ we can write 
\begin{equation}\label{eq:12ac}
R_{W_0}(\lambda) = \dfrac{i u \otimes v}{\lambda - \lambda_0} + H(\lambda), \ H(\lambda) \text{ holomorphic near } \lambda_0, 
\end{equation}
for some functions $u,v \in H^2_\loc(\R^d,\C)$ called resonant states. As the potential $V_\epsi$ given by \eqref{eq:1a} converges weakly to $W_0$ it is natural to expect that resonances of $V_\epsi$ converge to resonances of $W_0$. In fact a much stronger statement holds:

\begin{thm}\label{cor:3q} Let $W$ belong to $C_0^\infty(\Bb^d(0,L) \times \Tt^d,\C)$ and $V_\epsi$ be given by \eqref{eq:1a}. Let $\lambda_0$ be a simple resonance of $W_0$. In a neighborhood of $\lambda_0$ and for $\epsi$ small enough the potential $V_\epsi$ admits a unique resonance $\lambda_\epsi$. Moreover, 
for any $ N $, 
\begin{equation*}
\lambda_\epsi = \lambda_0 + c_2 \epsi^2 + c_3 \epsi^3   + ... + c_{N-1} \epsi^{N-1} +  O ( \epsi^{N} ) , \ \ c_j \in 
\C . 
\end{equation*}
If $u, v$ are the resonant states of \eqref{eq:12ac} then
\begin{gather}\label{eq:La0La1} 
\begin{gathered}
c_2 = i\int_{\R^d} \Lambda_0(x) u(x) v(x) dx, \ \ \ \ c_3=i\int_{\R^d} \Lambda_1(x) u(x) v(x) dx, \\
\Lambda_0 = \sum_{k \neq 0} \dfrac{W_k W_{-k}}{|k|^2}, \ \ \ \ \Lambda_1 = -2\sum_{k \neq 0} \dfrac{W_{-k}( (k \cdot D)  W_k)}{|k|^4}.
\end{gathered}
\end{gather}
\end{thm}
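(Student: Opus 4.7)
The plan is to combine a Grushin/Birman--Schwinger reduction near $\lambda_0$ with an asymptotic expansion of the resolvent on highly oscillatory inputs, and then invoke the implicit function theorem.

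\textbf{Step 1 (Reduction to a scalar equation).} Write $V_\epsi = W_0 + V_\sharp$ with $V_\sharp = \sum_{k\neq 0} W_k e^{ikx/\epsi}$. A resonance $\lambda$ of $V_\epsi$ is a value for which $-\Delta + V_\epsi - \lambda^2$ has a nontrivial outgoing null-solution $\phi$, equivalently (by the second resolvent identity) $(I + R_{W_0}(\lambda) V_\sharp)\phi = 0$. Substituting the pole expansion \eqref{eq:12ac} and separating the $u$-direction reduces the resonance condition in a fixed neighborhood of $\lambda_0$ to the scalar equation
\[
\lambda - \lambda_0 = F(\lambda,\epsi), \qquad F(\lambda,\epsi) := -i\,\big\langle v,\, V_\sharp \bigl(I + H(\lambda) V_\sharp\bigr)^{-1} u\big\rangle ,
\]
where $H(\lambda)$ is the holomorphic part of $R_{W_0}$ at $\lambda_0$. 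All pairings are well-defined compactly supported integrals because $V_\sharp$ has compact support, and invertibility of $I + H(\lambda) V_\sharp$ near $\lambda_0$ for $\epsi$ small follows from analytic Fredholm theory after inserting cutoffs $\chi \equiv 1$ on $\Bb^d(0,L)$.

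\textbf{Step 2 (Resolvent on oscillatory functions).} For $g \in C_c^\infty(\R^d)$, conjugating $R_0(\lambda)$ by $e^{ikx/\epsi}$ corresponds on the Fourier side to the momentum shift $\xi \mapsto \xi + k/\epsi$. A geometric expansion of $(|\xi + k/\epsi|^2 - \lambda^2)^{-1}$ in $\epsi$ gives
\[
R_0(\lambda)\bigl(g\, e^{ikx/\epsi}\bigr) = e^{ikx/\epsi}\left( \tfrac{\epsi^2}{|k|^2}\, g - \tfrac{2\epsi^3}{|k|^4}(k\cdot D) g + O(\epsi^4)\right),
\]
uniformly in $\lambda$ on compact neighborhoods of $\lambda_0$, with higher-order terms involving $(k\cdot D)^j g$ and $(|D|^2 - \lambda^2)^j g$. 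The identity $R_{W_0} = R_0 - R_0 W_0 R_{W_0}$ and bootstrap transport the same expansion to $R_{W_0}$, and a further $O(\epsi^\infty)$ correction (since $\int v(x) g(x) e^{ikx/\epsi} dx = O(\epsi^\infty)$ by non-stationary phase in the smooth setting) replaces $R_{W_0}$ by $H(\lambda)$.

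\textbf{Step 3 (Expansion of $F$ and conclusion).} Inserting $V_\sharp = \sum_{k\neq 0} W_k e^{ikx/\epsi}$ into the Neumann series for $(I + H V_\sharp)^{-1}$ and applying Step 2: the linear term $\langle v, V_\sharp u\rangle$ is $O(\epsi^\infty)$ by non-stationary phase; in the quadratic term only the diagonal $\ell = -k$ contributes to any finite order, giving $\epsi^2 \int \Lambda_0 uv$ at leading order. At order $\epsi^3$ the factor $(-k\cdot D)(W_{-k} u) = -(k \cdot D W_{-k})u - W_{-k}(k \cdot D u)$ splits into two pieces; the antisymmetry $k \leftrightarrow -k$ makes $\sum_k k\, W_k W_{-k}/|k|^4 = 0$, killing the $(k\cdot D u)$ piece, while the $(k\cdot D W_{-k})u$ piece produces $\epsi^3 \int \Lambda_1 uv$ after the relabeling $k \to -k$. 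Terms with $n \ge 3$ factors of $V_\sharp$ contribute $O(\epsi^4)$ since each internal $H$ produces an $\epsi^2$. Thus
\[
F(\lambda,\epsi) = i\epsi^2\!\int \Lambda_0 uv\, dx + i\epsi^3\!\int \Lambda_1 uv\, dx + O(\epsi^4),
\]
uniformly in $\lambda$ near $\lambda_0$. The implicit function theorem applied to $\lambda - \lambda_0 = F(\lambda,\epsi)$ yields a unique $\lambda_\epsi$ near $\lambda_0$ with the claimed values of $c_2, c_3$. Iterating Step 2 and the Neumann expansion to arbitrary order in $\epsi$ produces all further $c_j$, hence the full asymptotic $\lambda_\epsi = \lambda_0 + \sum_{j=2}^{N-1} c_j \epsi^j + O(\epsi^N)$.

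\textbf{Main obstacle.} The technical crux is Step 2: the uniform-in-$\lambda$ resolvent expansion on oscillatory inputs to arbitrary order, together with the careful handling of the non-compact support that develops upon iterating $R_{W_0} = R_0 - R_0 W_0 R_{W_0}$ (hence the cutoffs and the essential use of the compact support of $W_0$), and the bookkeeping in Step 3 which identifies, at each order of the Neumann series, the non-oscillatory multi-indices $k_1 + \cdots + k_n = 0$ and exploits the $k \leftrightarrow -k$ symmetry to obtain the clean closed forms \eqref{eq:La0La1} of $\Lambda_0$ and $\Lambda_1$.
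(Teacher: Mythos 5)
Your approach is genuinely different from the paper's. The paper derives Theorem \ref{cor:3q} as a corollary of Theorem \ref{thm:5} (the Fredholm determinant expansion $D_{V_\epsi}(\lambda)=a_0(\lambda)+\epsi^2 a_2(\lambda)+\cdots+O(\epsi^N)$), which is proved by a heavy combinatorial trace analysis; it then uses Hurwitz's theorem and the implicit function theorem for $D_V$ and extracts $c_2,c_3$ from a Laurent expansion of $(\Id+K_{W_0})^{-1}(-K_{W_0})^{p-2}K_\Lambda$. You instead perform a direct Lyapunov--Schmidt/Grushin reduction to a scalar fixed-point equation $\lambda-\lambda_0=F(\lambda,\epsi)$ and expand $F$ via a Neumann series, using the same resolvent expansion on oscillatory inputs (your Step~2 is essentially the paper's Lemma~\ref{lem:9} / Appendix~\ref{app:2}). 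If one only wants Theorem \ref{cor:3q}, your route is considerably lighter; what the paper's determinant route buys in exchange is the global/uniform information (Theorems \ref{cor:3} and \ref{cor:3s}) and the higher-multiplicity version (Theorem \ref{cor:3r}), none of which fall out of a single-pole Grushin reduction.

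There is, however, one genuine gap. You assert that ``invertibility of $I+H(\lambda)V_\sharp$ near $\lambda_0$ for $\epsi$ small follows from analytic Fredholm theory after inserting cutoffs.'' Analytic Fredholm theory cannot give this: it only says that, for each fixed $\epsi$, the inverse is meromorphic in $\lambda$ with a discrete pole set, and gives no control of how that pole set moves as $\epsi\to 0$; a priori a pole could creep up to $\lambda_0$. Since $\|V_\sharp\|_\infty$ does \emph{not} tend to $0$, one cannot argue by norm smallness of $HV_\sharp$ either. To close the gap you must exploit the oscillations, e.g.\ by a compactness argument: if $(I+H(\lambda_n)V_\sharp)u_n=0$, $\|u_n\|_{L^2}=1$, $\lambda_n\to\lambda_0$, $\epsi_n\to 0$, then $u_n=-H(\lambda_n)V_\sharp u_n$ forces $u_n$ to be bounded in $H^2$ near $\supp V_\sharp$, hence $V_\sharp u_n\rightharpoonup 0$ weakly in $L^2$ by non-stationary phase/integration by parts in each mode; as $H(\lambda_0)$ (with cutoffs) is compact, $H(\lambda_n)V_\sharp u_n\to 0$ strongly, contradicting $\|u_n\|=1$. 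This is the same mechanism driving Theorem~\ref{thm:1} and is where the compact support of $W$, the smoothing of $H$, and the summability $\sum_k\|W_k\|_1/|k|<\infty$ all enter. Once this is supplied, the reduction, the Neumann expansion, and your Step~3 computations of $c_2, c_3$ (including the $k\leftrightarrow -k$ cancellation killing the $(k\cdot D u)$ piece) are correct. You should also note that the case $d=1$, $\lambda_0=0$ needs separate treatment (the paper's Lemma~\ref{lem:2s}), since $R_0(\lambda)$ itself has a pole at $0$ in dimension one, so the pole structure \eqref{eq:12ac} and the decomposition $R_{W_0}=\frac{iu\otimes v}{\lambda-\lambda_0}+H$ require more care there.
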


If $W$ is real-valued 
then so are $\Lambda_0$ and $\Lambda_1$.
In \S \ref{subsec:2} we will prove a version of Theorem \ref{cor:3q} for resonances of higher multiplicity. Theorem \ref{cor:3q} implies that perturbations of $W_0$ by a high frequency potential $V_\sharp$ enjoy some similarities with suitable analytic perturbations of $W_0$. In fact we have the following 

\begin{thm}\label{cor:3} Assume that $W$ belongs to $C_0^\infty(\Bb^d(0,L) \times \Tt^d,\C)$ and that $V_\epsi$ is given by \eqref{eq:1a}. Let $V_{\eff,\epsi} = W_0 - \epsi^2 \Lambda_0 - \epsi^3 \Lambda_1$ where $\Lambda_0, \Lambda_1$ are given in \eqref{eq:La0La1}. For every bounded family $ \epsi \mapsto \mu_\epsi$ of simple resonances of $V_{\eff,\epsi}$ there exists a family of resonances $\epsi  \mapsto \lambda_\epsi$ of $V_\epsi$ such that
\begin{equation*}
|\lambda_\epsi - \mu_\epsi| = O(\epsi^4).
\end{equation*}
Conversely for every bounded family $\epsi  \mapsto \lambda_\epsi$ of simple resonances of $V_\epsi$ there exists a family of resonances $\epsi  \mapsto \mu_\epsi$ of $V_{\eff, \epsi}$ such that
\begin{equation*}
|\lambda_\epsi - \mu_\epsi| = O(\epsi^4).
\end{equation*}
\end{thm}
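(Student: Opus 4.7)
\noindent\textbf{Proof plan for Theorem \ref{cor:3}.} The approach is to produce two matching asymptotic expansions of resonances and to compare them. Theorem \ref{cor:3q} already supplies the expansion for $V_\epsi$; a parallel and simpler expansion will be derived for $V_{\eff,\epsi}$, and by construction of $V_{\eff,\epsi}$ the two expansions will agree up to order $\epsi^4$, from which the theorem follows directly.

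I start with a bounded family $(\mu_\epsi)$ of simple resonances of $V_{\eff,\epsi}$. Since $V_{\eff,\epsi} \to W_0$ uniformly on a common compact support, continuity of resonances under bounded compactly supported perturbations (see \cite{DyaZwo}) forces any accumulation point $\mu_\star$ of $(\mu_\epsi)$ as $\epsi \to 0$ to be a resonance of $W_0$. Working along a subsequence $\epsi_n \to 0$ on which $\mu_{\epsi_n} \to \mu_\star$, and assuming first that $\mu_\star$ is a simple resonance of $W_0$, Theorem \ref{cor:3q} provides a unique resonance $\lambda_\epsi$ of $V_\epsi$ near $\mu_\star$ satisfying
\[ \lambda_\epsi = \mu_\star + c_2 \epsi^2 + c_3 \epsi^3 + O(\epsi^4), \qquad c_2 = i \int_{\R^d} \Lambda_0 u v \, dx, \quad c_3 = i \int_{\R^d} \Lambda_1 u v \, dx. \]
I will then derive a parallel expansion for $V_{\eff,\epsi} = W_0 - \epsi^2 \Lambda_0 - \epsi^3 \Lambda_1$, a smooth compactly supported perturbation of $W_0$ that depends polynomially on $\epsi$. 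Either the Grushin reduction underlying Theorem \ref{cor:3q} (much simpler here, in the absence of oscillation) or two orders of classical Rayleigh--Schr\"odinger perturbation theory for a simple resonance will produce a unique simple resonance of $V_{\eff,\epsi}$ near $\mu_\star$, necessarily equal to $\mu_\epsi$ for small $\epsi_n$, with
\[ \mu_\epsi = \mu_\star + c_2 \epsi^2 + c_3 \epsi^3 + O(\epsi^4), \]
the coefficients matching $c_2$ and $c_3$ by direct computation of the first two perturbation corrections. Subtracting the two expansions gives $|\lambda_\epsi - \mu_\epsi| = O(\epsi^4)$ along the subsequence, and a diagonal argument over the finitely many accumulation points of $(\mu_\epsi)$ in any bounded region lifts the bound to the full family.

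The case where $\mu_\star$ is a resonance of $W_0$ of multiplicity $m > 1$ requires the multi-resonance refinement of Theorem \ref{cor:3q} announced in \S\ref{subsec:2}: it provides compatible bifurcation patterns for the clusters of resonances of $V_\epsi$ and of $V_{\eff,\epsi}$ emerging from $\mu_\star$, both controlled up to $O(\epsi^4)$ by the same $m \times m$ effective matrix computed from $\Lambda_0$, $\Lambda_1$ and the generalized resonant states attached to $\mu_\star$, so the resonance-by-resonance matching still goes through. The converse direction of the theorem is symmetric: one starts from a bounded family of simple resonances $\lambda_\epsi$ of $V_\epsi$ and traces the same expansions back through $V_{\eff,\epsi}$. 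The main obstacle in a rigorous write-up will be the bookkeeping in this multi-resonance case --- ensuring that the one-to-one correspondence between simple resonances of $V_\epsi$ and of $V_{\eff,\epsi}$ inside a single cluster is canonical --- but once both bifurcations are governed by the same effective matrix modulo $O(\epsi^4)$, the $O(\epsi^4)$ matching follows from an elementary stability estimate for the roots of characteristic polynomials.
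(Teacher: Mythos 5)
Your approach is genuinely different from the paper's. The paper does not compare resonance expansions term by term; it proves directly that the two Fredholm determinants agree to order $\epsi^4$ — combining Theorem \ref{thm:5} (expansion of $D_{V_\epsi}$) with Lemma \ref{lem:7} (expansion of $D_{W_0+\epsi\Lambda}$ for an analytic perturbation), relating $D_{V_\epsi}(\lambda)$ to $\DD_{V_{\eff,\epsi}}(\lambda)$ up to an $O(\epsi^4)$ error — and then runs Rouch\'e's theorem on a disk of radius $\sim\epsi^{4/m}$ around $\mu_\epsi$. This localizes the argument near $\mu_\epsi$, so the relevant multiplicity is that of $\mu_\epsi$ as a zero of $\DD_{V_{\eff,\epsi}}$ (namely $m=1$ in the simple case), not the multiplicity of any limiting resonance of $W_0$, and no Puiseux expansion ever enters. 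Your route instead expands $\lambda_\epsi$ (Theorem \ref{cor:3q}) and $\mu_\epsi$ (Rayleigh--Schr\"odinger) separately and subtracts; in the scenario where $\mu_\star$ is a simple resonance of $W_0$ this is fine, though no shorter — you still need the Lemma \ref{lem:7}-type trace computation to check that the coefficients $c_2,c_3$ of $V_{\eff,\epsi}$ really coincide with those of Theorem \ref{cor:3q}.

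The real gap is in your handling of the case where $\mu_\star$ is a resonance of $W_0$ of multiplicity $m'>1$. You invoke ``an elementary stability estimate for the roots of characteristic polynomials,'' claiming that if the two $m'\times m'$ effective matrices (or Weierstrass polynomials) agree to $O(\epsi^4)$ then the individual roots do too. This is false in general: two monic degree-$m'$ polynomials with coefficients differing by $O(\delta)$ can have roots differing by $O(\delta^{1/m'})$, and even if $\mu_\epsi$ is simple within its cluster, the generic separation of the resonances bifurcating from $\mu_\star$ is only $\sim\epsi^{2/m'}$, so first-order eigenvalue perturbation theory for a nearly-degenerate simple eigenvalue gives a bound of order $\epsi^4/\epsi^{2/m'}=\epsi^{4-2/m'}$, strictly worse than $\epsi^4$ for $m'\geq 2$. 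To recover $O(\epsi^4)$ by expansion-matching one would instead have to prove that the Puiseux expansions of the paired resonances of $V_\epsi$ and $V_{\eff,\epsi}$ agree term by term through order $\epsi^4$ (roughly the first $4m'$ fractional-power coefficients), which goes far beyond what Theorem \ref{cor:3q}/\ref{cor:3r} supplies and is not sketched. The paper's determinant-plus-Rouch\'e strategy is precisely what sidesteps this bookkeeping. Finally, your write-up does not address the $d=1$, $\lambda_0=0$ exceptional point, for which the paper needs the separate function $h_V$ of Lemma \ref{lem:2s} and a Cauchy-integral argument to remove the pole of $R_0$.
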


The potential $V_{\eff,\epsi}$ plays the role of an effective potential. In dimension one $ \Lambda_0 $ was already derived in \cite{DucVulWei1}. 


We next give a uniform description of the behavior of resonances of $V_\epsi$ as $\epsi \rightarrow 0$. For $W_0 \in C^\infty_0(\Bb^d(0,L), \C)$ we define $m_{W_0}(\lambda_0)$ the multiplicity of a resonance $\lambda_0$ of $W_0$. If $\epsi,B,c,A$ are given positive constants let $\CC_\epsi, \TT_\epsi$ and $\DD_\epsi$ be the sets
\begin{equation}\label{eq:9g}\begin{gathered}
\CC_\epsi = \bigcup_{\substack{\lambda \in \Res(W_0), \\ \Im \lambda \geq -B }} \Dd\left(\lambda, c \epsi^{2/m_{W_0}(\lambda)}\right), \  \ \ 
\TT_\epsi = \bigcup_{\substack{\lambda \in \Res(W_0), \\ \Im \lambda \leq -B}} \Dd\left(\lambda, \lr{\lambda}^{-d-1}\right) \\
\DD_\epsi = \left\{ \lambda \in \C : \ \Im \lambda \leq -B, \ |\lambda|^{2d+1} \geq A \ln(\epsi^{-1}) \right\}.\end{gathered}
\end{equation}

\begin{thm}\label{cor:3s} Assume that $W$ belongs to $C_0^\infty(\Bb^d(0,L) \times \Tt^d,\C)$ and that $V_\epsi$ is given by \eqref{eq:1a}. There exists $A>0$ with the following. For any $B > 0$, there exists $c>0$ such that for all $\epsi$ small enough if $\CC_\epsi, \TT_\epsi$ and $\DD_\epsi$ are given by \eqref{eq:9g} then
\begin{equation*}
\Res(V_\epsi) \subset \CC_\epsi \cup \TT_\epsi \cup \DD_\epsi.
\end{equation*}
\end{thm}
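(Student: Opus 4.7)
The plan is to cover the complement of $\CC_\epsi \cup \TT_\epsi \cup \DD_\epsi$ by two regimes and to exclude resonances of $V_\epsi$ in each. In the strip $\Im\lambda \geq -B$ the result follows from the extension of Theorem \ref{cor:3q} to resonances of higher multiplicity announced in \S\ref{subsec:2}: near each resonance $\lambda_0$ of $W_0$ of multiplicity $m := m_{W_0}(\lambda_0)$ one constructs an effective $m\times m$ symbol $M(\lambda,\epsi) = M_0(\lambda) + \epsi^2 M_2(\lambda) + O(\epsi^3)$ whose zeros are the resonances of $V_\epsi$ near $\lambda_0$. Since $\det M_0$ vanishes to order $m$ at $\lambda_0$, a Rouché argument on $\det M(\cdot,\epsi)$ confines these resonances to $\Dd(\lambda_0, c\epsi^{2/m})$, exactly the component of $\CC_\epsi$ at $\lambda_0$. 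On the rest of the strip $-B\leq\Im\lambda\leq C$, away from the finitely many resonances of $W_0$ in this region, $(I+W_0 R_0(\lambda))^{-1}$ is uniformly bounded, so a Neumann series argument applied to
\begin{equation*}
I+V_\epsi R_0(\lambda) = (I+W_0 R_0(\lambda))\bigl(I + (I+W_0 R_0(\lambda))^{-1} V_\sharp R_0(\lambda)\bigr)
\end{equation*}
excludes resonances for $\epsi$ small, using the bound $\|V_\sharp R_0(\lambda)\|_{L^2_\comp\to L^2_\loc} = O(\epsi)$ obtained via two integrations by parts against the eigenfunctions of $-\epsi^2\Delta$ on $\Tt^d$, exactly as in the proof of Theorem \ref{thm:1} (each Fourier mode $W_k(x)e^{ikx/\epsi}$ contributes a factor $\epsi^2/|k|^2$).

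For the deep region $\Im\lambda \leq -B$ I would use the same factorization and show that the second factor is invertible outside $\TT_\epsi \cup \DD_\epsi$. The key quantitative ingredient is a polynomial-and-exponential resolvent estimate for $(I+W_0 R_0)^{-1}$ on the complement of the small discs of radius $\lr{\lambda}^{-d-1}$ around $\Res(W_0)$ defining $\TT_\epsi$. This estimate is obtained by applying a Cartan-type minimum modulus lemma to a Fredholm-type determinant $f_{W_0}(\lambda)$ of $I+W_0 R_0(\lambda)$, which is entire of order $d$, and combining it with the free resolvent bound $\|\chi R_0(\lambda)\chi\|_{L^2\to L^2}\leq C\lr{\lambda}^{-1} e^{L(\Im\lambda)_-}$ and the identity $(I+W_0 R_0(\lambda))^{-1} = I - W_0 R_{W_0}(\lambda)$. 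The resulting bound has the form
\begin{equation*}
\|(I+W_0 R_0(\lambda))^{-1}\|_{L^2_\comp\to L^2_\loc} \leq \exp\bigl(C\lr{\lambda}^{N} + CL|\Im\lambda|\bigr)
\end{equation*}
for some $N = N(d)$ compatible with the disc radius $\lr{\lambda}^{-d-1}$. Together with the oscillation bound $\|V_\sharp R_0(\lambda)\|\leq C\epsi e^{L|\Im\lambda|}$, the operator $(I+W_0 R_0(\lambda))^{-1} V_\sharp R_0(\lambda)$ has norm at most $C\epsi\exp(C\lr{\lambda}^{N}+C'|\Im\lambda|)$. This is smaller than $1/2$ whenever the exponent is dominated by $\tfrac{1}{2}\ln(\epsi^{-1})$; using $|\Im\lambda|\leq|\lambda|$ this holds on the complement of $\DD_\epsi$ provided $2d+1$ dominates $N+1$, which fixes the exponent in the definition of $\DD_\epsi$ and selects $A$ small enough.

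The main obstacle will be the derivation of the resolvent bound above, in particular pinning down the exponent $N$. The minimum modulus lemma bounds $|f_{W_0}|$ from below only outside a sparse union of discs, whose log-radii must be traded against the $O(\lr{\lambda}^d)$ resonances of $W_0$ inside $\Dd(0,2|\lambda|)$, and the cofactor involved in inverting $I+W_0 R_0$ contributes its own polynomial growth; a careful bookkeeping in the spirit of the Dyatlov--Zworski framework is needed. A secondary technical point is matching the two regimes continuously along $\Im\lambda = -B$, but this is automatic since the Neumann series argument above extends slightly past $-B$ from both sides.
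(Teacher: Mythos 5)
Your strategy matches the paper's in both regimes; the differences are in execution. In the strip $\Im\lambda\geq -B$, the paper first confines $\Res(V_\epsi)\cap\{\Im\lambda\geq -B\}$ to a fixed disc $\Dd(0,\rho)$ via the a priori bound $|K_V|_\BB\geq 1$, then compares the zero counts of $D_V$ and $D_{W_0}$ on $\p\Dd(0,\rho)$ using $D_V=D_{W_0}+O(\epsi^2)$, and concludes via Theorem~\ref{cor:3r}; once the counts match, there is automatically nothing outside the $\CC_\epsi$ discs. Your Grushin-plus-Neumann-series variant reaches the same conclusion: the Grushin matrix near each $\lambda_0$ reproduces the $\epsi^{2/m}$ confinement (the essential input being the absence of an $\epsi^1$ term, which is the same fact $D_V=D_{W_0}+O(\epsi^2)$ in disguise), while the Neumann series on the factorization $\Id+K_V=(\Id+K_{W_0})(\Id+(\Id+K_{W_0})^{-1}K_{V_\sharp})$ excludes resonances at fixed distance from $\Res(W_0)$. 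This is a legitimate alternative and slightly more explicit, though it duplicates work the counting argument gets for free.

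In the deep region the two proofs are structurally identical: both start from $u=-(\Id+K_{W_0})^{-1}K_{V_\sharp}u$, use the commutator identity $\ek=\tfrac{\epsi}{|k|}[P_k,\ek]$ to extract a factor $\epsi$ from $K_{V_\sharp}$, and then need a quantitative bound on $(\Id+K_{W_0})^{-1}$ off the discs of radius $\lr{\lambda}^{-d-1}$. The paper simply cites the estimate $|(\Id+K_{W_0})^{-1}|_\BB\leq e^{C\lr{\lambda}^{2d+1}}$ from the proof of \cite[Theorem~3.49]{DyaZwo}, which pins down the exponent $2d+1$ appearing in $\DD_\epsi$. What you propose to do instead is re-derive this bound from a Cartan minimum-modulus lemma applied to a determinant of $\Id+W_0R_0$, combined with the free-resolvent bound and the polynomial resonance-counting estimate. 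That is indeed how the cited result is proved, so your route would succeed, but you have correctly identified it as the main technical work; the paper avoids it by citation. One small bookkeeping inconsistency: you describe the oscillation gain as ``two integrations by parts'' giving $\epsi^2/|k|^2$ per mode, yet claim $\|K_{V_\sharp}\|=O(\epsi)$; a single commutation giving $\epsi/|k|$ (as in the paper) already suffices, and two would give $O(\epsi^2)$ — either way the final exponent is unaffected. Finally, note that $\DD_\epsi$ is fixed in the theorem's statement with exponent $2d+1$, so you do not get to ``fix the exponent''; you need your minimum-modulus bound to produce an exponent no larger than $2d+1$, which the cited reference guarantees.
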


Therefore assume that $\epsi \mapsto \lambda_\epsi$ is a family of resonance of $V_\epsi$. Then after passing to a subsequence $ \epsi_j \to 0 $, one of the three following scenarios occurs:
\begin{enumerate}
\item[$(i)$] $\lambda_\epsi$ converges to a resonance $\lambda_0$ of $W_0$ and $\lambda_\epsi = \lambda_0 + O(\epsi^{2/m_{W_0}(\lambda_0)})$.
\item[$(ii)$] $\Im \lambda_\epsi \rightarrow -\infty$ and $|\lambda_\epsi|$ grows at least like $\ln(\epsi^{-1})^{1/(2d+1)}$.
\item[$(iii)$] $\Im \lambda_\epsi \rightarrow -\infty$ and $d(\lambda_\epsi, \Res(W_0)) = O(|\lambda_\epsi|^{-d-1})$.
\end{enumerate}
(Here we suppressed the subsequence notation.)

For the case of a potential $W_0$ with simple resonances these results are illustrated on Figure \ref{fig:1} below. \begin{center}
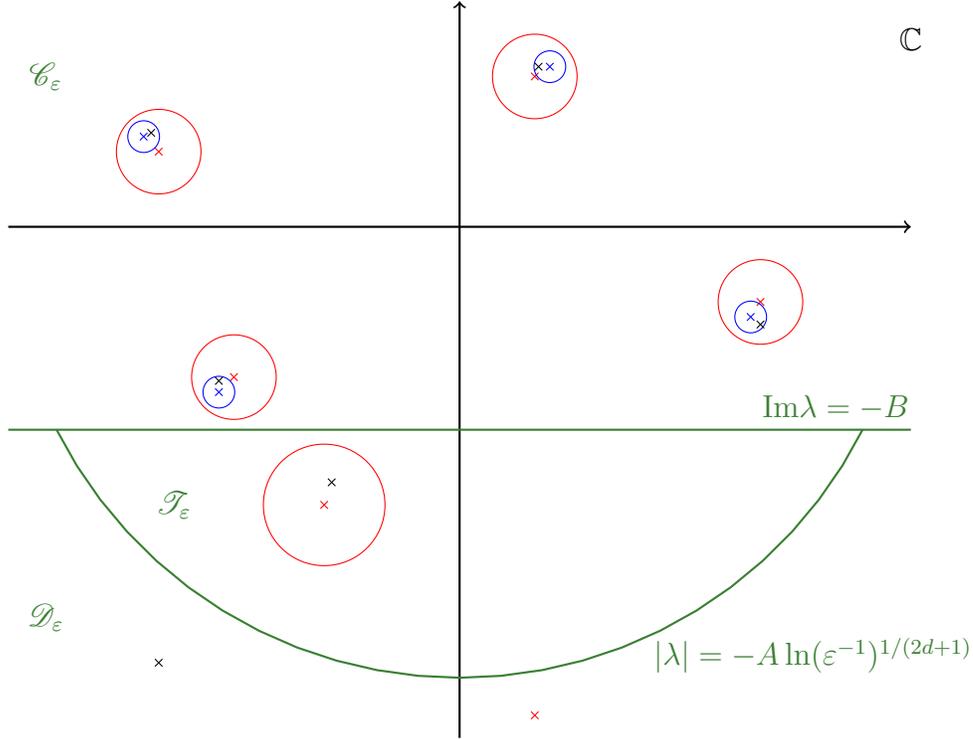
\begin{figure}
\begin{tikzpicture}
\draw[thick,->] (-6,0) -- (6,0) node[anchor=north east] {}; 
\draw[thick,->] (0,-6.8)  -- (0,3) node[anchor=north east] {};
\node at (6,2.5) {$\mathbb{C}$};

\Reso{-4}{1}
\Reson{-4.2}{1.2}
\cross{-4.1}{1.25}
\node[OliveGreen] at (-5.5,2) {$\mathbb{\CC_\epsi}$};

\Reso{1}{2}
\Reson{1.2}{2.13}
\cross{1.05}{2.13}

\Reso{-3}{-2}
\Reson{-3.2}{-2.2}
\cross{-3.2}{-2.05}

\Reso{4}{-1}
\Reson{3.87}{-1.2}
\cross{4}{-1.3}

\node[OliveGreen] at (5,-2.4) {$\text{Im} \lambda =  - B$};

\draw [OliveGreen,thick,domain=206.7:333.3] plot ({6*cos(\x)}, {6*sin(\x)});

\node[OliveGreen] at (-3.8,-3.7) {$\mathbb{\TT_\epsi}$};
\draw[OliveGreen,thick] (-6,-2.7) -- (6,-2.7); 
\node[OliveGreen] at (4.7,-5.7) {$|\lambda| =  - A \ln(\epsi^{-1})^{1/(2d+1)}$};
\node[OliveGreen] at (-5.5,-5.2) {$\mathbb{\DD_\epsi}$};

\Resonn{-1.8}{-3.7}
\cross{-1.7}{-3.4}

\cross{-4}{-5.8}
\crossr{1}{-6.5}
\end{tikzpicture}\label{fig:1}
\caption{The red (resp. black, blue) crosses denote resonances of $W_0$ (resp. $V_\epsi$, $V_{\eff,\epsi}$). Above the line $\Im \lambda = B$ resonances of $V_{\eff,\epsi}$ and $V_\epsi$ lie within red disks of radius $\sim \epsi^2$ centered at resonances of $W_0$. Resonances of $V_{\eff,\epsi}$ and $V_\epsi$ in these disks lie within a distance $\sim \epsi^4$ from each other. In the middle zone resonances of $V_\epsi$ lie within disks of radius $\sim 1$ centered at resonances of $W_0$. Below both curves $\Im \lambda =  - B$ and $|\lambda| =  - A \ln(\epsi^{-1})^{1/(2d+1)}$ resonances of $V_\epsi, V_{\eff,\epsi}$ and $W_0$ are no longer correlated.}
\end{figure}
\end{center}

Theorems \ref{cor:3q}, \ref{cor:3} and \ref{cor:3s} are actually consequences of a stronger result. For $\VV \in L^\infty_0(\Bb^d(0,L),\C)$ and $\rho \in C_0^\infty(\R^d)$ that is $1$ on $\supp(\VV)$ we define $K_\VV(\lambda) = \rho R_0(\lambda)\VV$. If $p \geq d+1$ and $\Psi$ is the entire function defined by
\begin{equation}\label{eq:2d}
\Psi(z) = (1+z) \exp\left(-z+ \dfrac{z^2}{2}- ...  +\dfrac{(-z)^{p-1}}{p-1} \right) - 1
\end{equation}
the operator $\Psi(K_\VV(\lambda))$ is trace class. This allows us to define the Fredholm determinant
\begin{equation}\label{eq:1d}
D_\VV(\lambda) = \Det\left(\Id + \Psi(K_\VV(\lambda))\right).
\end{equation}
Apart from the special case of $0$ in dimension one resonances of $\VV$ are exactly zeros of $D_\VV$ -- see \cite[Theorem 5.4]{GLMZ05}. 
To deal with the particular case of the zero resonance in dimension one we define $X_d=\C$ if $d \geq 3$ and $X_1=\C \setminus \{0\}$. The following result shows that $D_V$ admits an expansion in powers of $\epsi$. 

\begin{theorem}\label{thm:5} Let $W$ in $C_0^\infty(\Bb^d(0,L) \times \Tt^d, \C)$ and $V_\epsi$ be the potential given by \eqref{eq:1a}. Fix $N > 0$ and $p = 4(d+N)N$. If $D_{V_\epsi}(\lambda)$ is the Fredholm determinant defined in \eqref{eq:1d} then there exists $a_0, ..., a_{N-1}$ holomorphic functions of $\lambda \in X_d$ such that uniformly on compact subsets of $X_d$,
\begin{equation*}
D_{V_\epsi}(\lambda) = a_0(\lambda) + \epsi^2 a_2(\lambda) +  \epsi^3 a_3(\lambda) + ... + \epsi^{N-1} a_{N-1}(\lambda) + O(\epsi^N).
\end{equation*}
Moreover if $\Lambda_0$ and $\Lambda_1$ are the potentials defined in Theorem \ref{cor:3q} then $a_0(\lambda) = D_{W_0}(\lambda),$
\begin{equation*}\begin{gathered}
a_2(\lambda) = - D_{W_0}(\lambda) \cdot \trace\left((\Id + K_{W_0})^{-1} (-K_{W_0})^{p-2} K_{\Lambda_0} \right), \\
a_3(\lambda) = - D_{W_0}(\lambda)  \cdot \trace\left((\Id + K_{W_0})^{-1} (-K_{W_0})^{p-2} K_{\Lambda_1} \right).\end{gathered}
\end{equation*}
\end{theorem}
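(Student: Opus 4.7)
The plan is to expand $\log D_{V_\epsi}(\lambda)$ as a power series in traces of $K_{V_\epsi}(\lambda)^j$, substitute $K_{V_\epsi} = K_{W_0} + K_{V_\sharp}$, and analyse each resulting word trace by oscillatory Fourier techniques. Since $\Psi(z) = -\sum_{j\ge p}(-z)^j/j + O(z^{2p})$, one has $\log D_{V_\epsi}(\lambda) = -\sum_{j\ge p}(-1)^j/j \cdot \trace K_{V_\epsi}(\lambda)^j$ on $\{\|K_{V_\epsi}\| < 1\}$, extended meromorphically to $X_d$. Expanding each $\trace(K_{W_0}+K_{V_\sharp})^j$ over binary words and inserting the Fourier decomposition $V_\sharp = \sum_{k\ne 0} W_k P_k^\epsi$ (with $P_k^\epsi$ denoting multiplication by $e^{ikx/\epsi}$) in every $K_{V_\sharp}$ factor, each word trace becomes a multiple series indexed by $(k_1,\ldots,k_m)\in (\Z^d\setminus\{0\})^m$, where $m$ counts the $K_{V_\sharp}$'s.

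The key analytic tool is the commutation $R_0(\lambda) P_k^\epsi = P_k^\epsi \tilde T_\epsi^{(k)}(\lambda)$, with $\tilde T_\epsi^{(k)}(\lambda) = (|{-i\nabla} + k/\epsi|^2 - \lambda^2)^{-1}$, combined with the symbol expansion
\[ \frac{1}{|\eta+K/\epsi|^2 - \lambda^2} = \frac{\epsi^2}{|K|^2} - \frac{2\epsi^3\,\eta\cdot K}{|K|^4} + O(\epsi^4), \qquad K\ne 0, \]
uniform on compact subsets of $(\lambda, \eta)$. Moving all $P_{k_i}^\epsi$ to one side via the iterated relation $\tilde T_\epsi^{(j)} P_\ell^\epsi = P_\ell^\epsi \tilde T_\epsi^{(j+\ell)}$, the trace features an overall phase $e^{i(k_1+\cdots+k_m)x/\epsi}$ multiplied by a smooth composite of $K_{W_0}$, the $\tilde T_\epsi^{(K_i)}$'s for non-zero partial sums $K_i = k_1+\cdots+k_i$, and the $W_{k_i}$. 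In the non-resonant case $k_1+\cdots+k_m \ne 0$, repeated integration by parts against the global oscillation yields $O(\epsi^N)$; the quantitative threshold $p = 4(d+N)N$ is chosen so that, after summing over word placements, over Fourier modes (using the rapid decay of $\widehat W$ since $W \in C^\infty_0$), and over $j\ge p$, the total non-resonant contribution is $O(\epsi^N)$ uniformly on compact subsets of $X_d$. In the resonant case $k_1+\cdots+k_m = 0$, the $m-1$ non-trivial partial sums each produce a factor at least $\epsi^2$, so the contribution is of order $\epsi^{2(m-1)}$: $m=1$ gives nothing (no resonance possible with $k_1\ne 0$), $m=2$ drives the $\epsi^2$ and $\epsi^3$ coefficients, and $m\ge 3$ only contributes from $\epsi^4$ onward.

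For $m = 2$, the pairing $(k,-k)$ with leading symbol $1/|k|^2$ summed over $k\ne 0$ yields the effective multiplication by $\Lambda_0 = \sum_{k\ne 0}W_kW_{-k}/|k|^2$. Combining with the combinatorial factor $j$ from cyclic placements of the pair in the word, the leading $\epsi^2$ contribution to $\trace K_{V_\epsi}^j - \trace K_{W_0}^j$ is $\epsi^2\cdot j \cdot \trace(K_{W_0}^{j-2} K_{\Lambda_0}) + O(\epsi^3)$. Substituting into the log-series and resumming
\[ \sum_{j\ge p}(-1)^j K_{W_0}^{j-2} = (-K_{W_0})^{p-2}(\Id + K_{W_0})^{-1} \]
(formally, then by analytic continuation of $(\Id + K_{W_0})^{-1}$) gives
\[ \log \frac{D_{V_\epsi}(\lambda)}{D_{W_0}(\lambda)} = -\epsi^2\,\trace\bigl((\Id+K_{W_0})^{-1}(-K_{W_0})^{p-2} K_{\Lambda_0}\bigr) + O(\epsi^3), \]
and exponentiating by $D_{W_0}$ yields the stated formula for $a_2$. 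The analogous analysis at the next symbolic order $-2(\eta\cdot k)/|k|^4$, after the $\eta$-derivative is transferred onto $W_k$ by an integration by parts and the pair $(k, -k)$ is symmetrized, reassembles into $\Lambda_1 = -2\sum_{k\ne 0} W_{-k}(k\cdot D)W_k/|k|^4$ and gives $a_3$.

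The main obstacle is verifying the $O(\epsi^N)$ estimates in the non-resonant case uniformly in $\lambda$ on compact subsets of $X_d$. This requires precise Schatten-class bookkeeping: one must track the regularity of the kernels of $K_{W_0}^r$ (which are only of class $C^{2r-d-1}$), ensure that sufficiently many integrations by parts can be distributed against the oscillating phase to extract $\epsi^N$, and control the summation over all Fourier modes via the rapid decay of $\widehat W$. The meromorphic continuation of $(\Id + K_{W_0}(\lambda))^{-1}$ past resonances of $W_0$ must also be handled, with the vanishing of $D_{W_0}(\lambda)$ there ensuring that $a_j(\lambda)$ remain holomorphic on $X_d$; the special exclusion of $\lambda = 0$ in $X_1$ handles the free zero-resonance in dimension one. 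The explicit prescription $p = 4(d+N)N$ is the quantitative outcome of balancing the regularity of $K_{W_0}^r$'s kernels, the number of integrations by parts needed, and the Schatten exponents at which everything is trace class.
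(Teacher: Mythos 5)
Your sketch follows the same broad strategy as the paper's \S 5.3--5.4: Fourier-decompose $V_\sharp$, split into constructive ($k_1+\cdots+k_m=0$) and destructive sequences, and use the symbol expansion of $R(\xi+k/\epsi,\lambda)$ to extract powers of $\epsi$. The coefficient identification for $a_2, a_3$ correctly parallels \S 5.4. But there are two substantive gaps.

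\textbf{The log-series does not extend meromorphically.} You write $\log D_{V_\epsi} = -\sum_{j\ge p}(-1)^j\trace(K_{V_\epsi}^j)/j$ ``on $\{\|K_{V_\epsi}\|<1\}$, extended meromorphically to $X_d$.'' But $\log D_{V_\epsi}$ is not meromorphic: $D_{V_\epsi}$ has zeros (at resonances of $V_\epsi$) and the logarithm has branch points there, not poles. You can compute the coefficients $a_j$ for $\Im\lambda \gg 1$ this way and extend each $a_j$ by unique continuation -- that much is fine -- but this does \emph{not} by itself establish the uniform $O(\epsi^N)$ error on compact subsets of $X_d$ containing resonances of $W_0$, which is what the theorem claims. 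The paper avoids the logarithm entirely in the rigorous part of the proof: it uses the Plemelj--Smithies expansion $D_V = \sum_n \omega_n(X,Y)/n!$ with $X = \Psi(K_{W_0})$, $Y = \Psi(K_V)-\Psi(K_{W_0})$, together with the entire continuation of $\TT_X = \Det(\Id+X)(\Id+X)^{-1}$ (Appendix A), and then a dedicated complex-analysis lemma (Lemma 4.4) which propagates expansions of meromorphic factors to an expansion of a holomorphic product. This machinery is exactly what lets one cross the resonances of $W_0$ with control on the remainder; your sketch has nothing in its place.

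\textbf{The destructive $O(\epsi^N)$ bound in the lower half-plane is not addressed.} You correctly identify this as ``the main obstacle'' but offer only ``repeated integration by parts against the global oscillation yields $O(\epsi^N)$.'' That works for $\Im\lambda \ge 1$ where the free resolvent is uniformly $L^2$-bounded, but fails for $\Im\lambda < 0$: the kernel of $R_0(\lambda)$ grows like $e^{2L|\Im\lambda|}$, the operators $T_j^{-1}W_kT_j$ used in the commutator argument are no longer bounded, and integration by parts in the phase does not automatically produce $\epsi^N$. The paper's Lemma 4.9 handles this by decomposing $K_\VV(\lambda) = K_\VV(-\lambda) + I_{\VV,1}(\lambda)$, exploiting that $I_{\VV,1}(\lambda) = K_\VV(\lambda)-K_\VV(-\lambda)$ is a \emph{smoothing} operator whose kernel can absorb the powers of $(D^2-\lambda^2)$, then using a three-lines argument across the strip $|\Im\lambda|\le 1$. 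This is a genuine piece of analysis specific to the lower half-plane and cannot be reduced to ``Schatten-class bookkeeping.'' A related point: the combinatorial control over how to place the $N$ vanishing-mode blocks so that the phase estimate applies (the ``admissible'' and ``good'' subsequences in Lemmas 4.10--4.11) is also missing, and without it the bound $C^{\nu^2}\prod\|W_{k_\ell}\|_{2\nu}$ cannot be summed over $\nu$.

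Minor: ``the $m-1$ non-trivial partial sums each produce a factor at least $\epsi^2$, so the contribution is of order $\epsi^{2(m-1)}$'' is not quite right, since intermediate partial sums may vanish even when all $k_i\ne 0$; the correct statement is that the order is $\epsi^2$ times the number of nonvanishing partial sums. Your conclusion that $m\ge 3$ only enters at $\epsi^4$ is still correct, but the stated reason is off.
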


Here again we note that a perturbation of a potential $W_0$ by a highly oscillatory potential enjoys similarities with a suitable analytic perturbation of $W_0$. We will make this observation more precise in \S \ref{sub:3} below.

\subsection{Relation with existing work} Our original motivation for investigating highly oscillatory potentials came from Christiansen \cite{Ch06} where it was shown that certain complex-valued oscillatory potentials have no resonances at all. The proof there is based on a priori estimates on solutions of $(\Id + K_\VV(\lambda))u=0$. Although real valued potentials have infinitely many resonances -- see \cite{SZ}, \cite{HS} and references given there -- similar ideas lead to absence of resonance in strips depending logarithmically on the frequency of oscillations (Theorem \ref{thm:1}).

In dimension one scattering resonances of potentials of the form \eqref{eq:1a} have recently been  extensively studied.  For $W$ with $W_0 \equiv 0$ and $V_\epsi$ given by \eqref{eq:1a} Borisov and Gadyl'shin investigate in \cite{BorGad} the behavior of eigenvalues of the Schr\"odinger operator $D_x^2 + V_\epsi$. They give a sufficient condition for an eigenvalue to exist for small $\epsi$. Under this condition they derive an expansion of the eigenvalue as $\epsi \rightarrow 0$. In \cite{Borisov2} Borisov refines this result by including potentials that are less regular. These two papers focus on the spectrum and on the eigenvalues rather than on scattering resonances. Scattering theory for operators of the form $D_x^2 + V_\epsi$ was systematically presented by Duch\^ene--Weinstein \cite{DucWei}. In that paper the authors study the behavior of the transmission coefficient of such potentials. They prove that away from possible poles, the transmission coefficient of $V_\epsi$ converges to that of $W_0$. They give estimates on the remainder that depend on the regularity of $W$. The study is later continued in \cite{DucVulWei1}. In that paper Duch\^ene, Vuki\'cevi\'c and Weinstein generalize the result of \cite{BorGad} to geneal potentials $V_\epsi$ given by \eqref{eq:1a}. They give conditions for the existence of a bound state of $V_\epsi$ for small $\epsi$ whose energy is expressed in terms of an effective potential which is an analytic perturbation of $W_0$.

Also in dimension one, \cite{Borisov1} studies in detail  the spectrum of Schr\"odinger operators with a potential that is the sum of a compactly supported potential and a periodic potential oscillating at frequency $\sim \epsi^{-1}$. The paper \cite{DucVulWei2} deals with potentials that are a sum of a periodic potential $Q_{\text{per}}$ perturbed by a term $Q_\epsi$ oscillating at frequency $\epsi^{-1}$. As $\epsi \rightarrow 0$ they observe the bifurcation of eigenvalues of $D_x^2  + Q_{\text{av}} + Q_\epsi$ at distance $\epsi^4$ from the edges of the continuous spectrum of $D_x^2 + Q_{\text{av}}$.

In higher dimension the work \cite{GolWei} deals with general perturbations of operators $-\Delta + W_0$. The perturbation $V_\sharp$ needs to be small when measured in a suitable space. They show that simple resonances of perturbed operators depend analytically on $V_\sharp$. Although such a result applies to potentials given by \eqref{eq:1a} it does not yield an expansion of resonances in powers of $\epsi$ because $V_\sharp$ does not depend smoothly on $\epsi$. 

Let us discuss in more detail the relation between our work specialized to dimension one and \cite{DucVulWei1}. By  fine analysis of the scattering coefficients they show that the transmission coefficient of $V_\epsi$ is equal to the transmission coefficient of the effective potential
\begin{equation*}
V_\eff(x) = W_0(x) - \epsi^2  \Lambda_0(x),  \ \ \ \Lambda_0(x) = \sum_{k \neq 0} \dfrac{|W_k(x)|^2}{|k|^2}
\end{equation*}
modulo an error of order $\epsi^3$. This remarkable result provided
further motivation for our investigation. One of the main consequences is \cite[Corollary $3.7$]{DucVulWei1}: in the case $d=1, W_0 \equiv 0$ and for $\epsi$ small enough a ground state emerges from the edge of the continuous spectrum of $D_x^2$, with energy $\lambda_\epsi$ given by
\begin{equation}\label{eq:1k}
\lambda_\epsi = -\dfrac{\epsi^4}{4} \left(\int_\R \Lambda_0(x) dx\right)^2 + O(\epsi^5).
\end{equation}
Theorem \ref{cor:3q} refines \eqref{eq:1k}. Since the functions $u,v$ of \eqref{eq:12ac} are given by $u=v=1/\sqrt{2}$ the energy of the bound state admits the expansion
\begin{equation*}
\lambda_\epsi = -\dfrac{\epsi^4}{4} \left(\int_\R \Lambda_0(x) dx\right)^2 - \dfrac{\epsi^5}{4} \int_\R \Lambda_0(x) dx \int_\R \Lambda_1(x) dx + O(\epsi^6),
\end{equation*}
and in fact $\lambda_\epsi$ is even a smooth function of $\epsi$. In \S \ref{sub:4} we compare numerically the efficiency of the effective potential $V_{\eff,\epsi}$ derived here compared to the efficiency of the effective potential derived in \cite{DucVulWei1}.

\subsection{Numerical results}\label{sub:4}
Let $W$ be the smooth function on $\R \times \Tt^1$ defined by
\begin{equation*}
W(x,y) = \exp\left(-\dfrac{x^2}{1-x^2}\right)\1_{[-1,1]}(x)\left(1 + 2 \cos(x/2+y)\right). 
\end{equation*}
Let $V_\epsi$ be given by \eqref{eq:1a} and $\Lambda_0, \Lambda_1$ the potentials defined in Theorem \ref{cor:3q}. Thanks to a Matlab simulation whose code was transferred to us by Duch\^ene, Vuki\'cevi\'c and Weinstein we computed numerically the transmission coefficients $t_\epsi$ of $V_\epsi$, $t_\epsi^1$ of $V_{\eff,\epsi}^1 = W_0-\epsi^2 \Lambda_0$ (the effective potential as derived in \cite{DucVulWei1}) and $t_\epsi^2$ of $V_{\eff,\epsi}^2 = W_0-\epsi^2\Lambda_0-\epsi^3\Lambda_1$ (the improved effective potential derived here). In Figure \ref{fig:3} we plotted the graphs of $|t_\epsi - t^j_\epsi|$ for different values of $\epsi$ and $j=1,2$. For $\epsi > 0.1$ neither the approximation of $t_\epsi$ by $t_\epsi^1$ nor $t_\epsi^2$ give satisfying results. For $\epsi \in [0.01,0.1]$ it is much better but we still cannot see the improvements induced by chosing $V_{\eff,\epsi}^2$ instead of $V_{\eff,\epsi}^1$. For $\epsi < 0.01$ the approximation of $t_\epsi$ by $t_\epsi^2$ instead of $t_\epsi^1$ gives better results.

\begin{figure}
    \scalebox{0.93}{\includegraphics{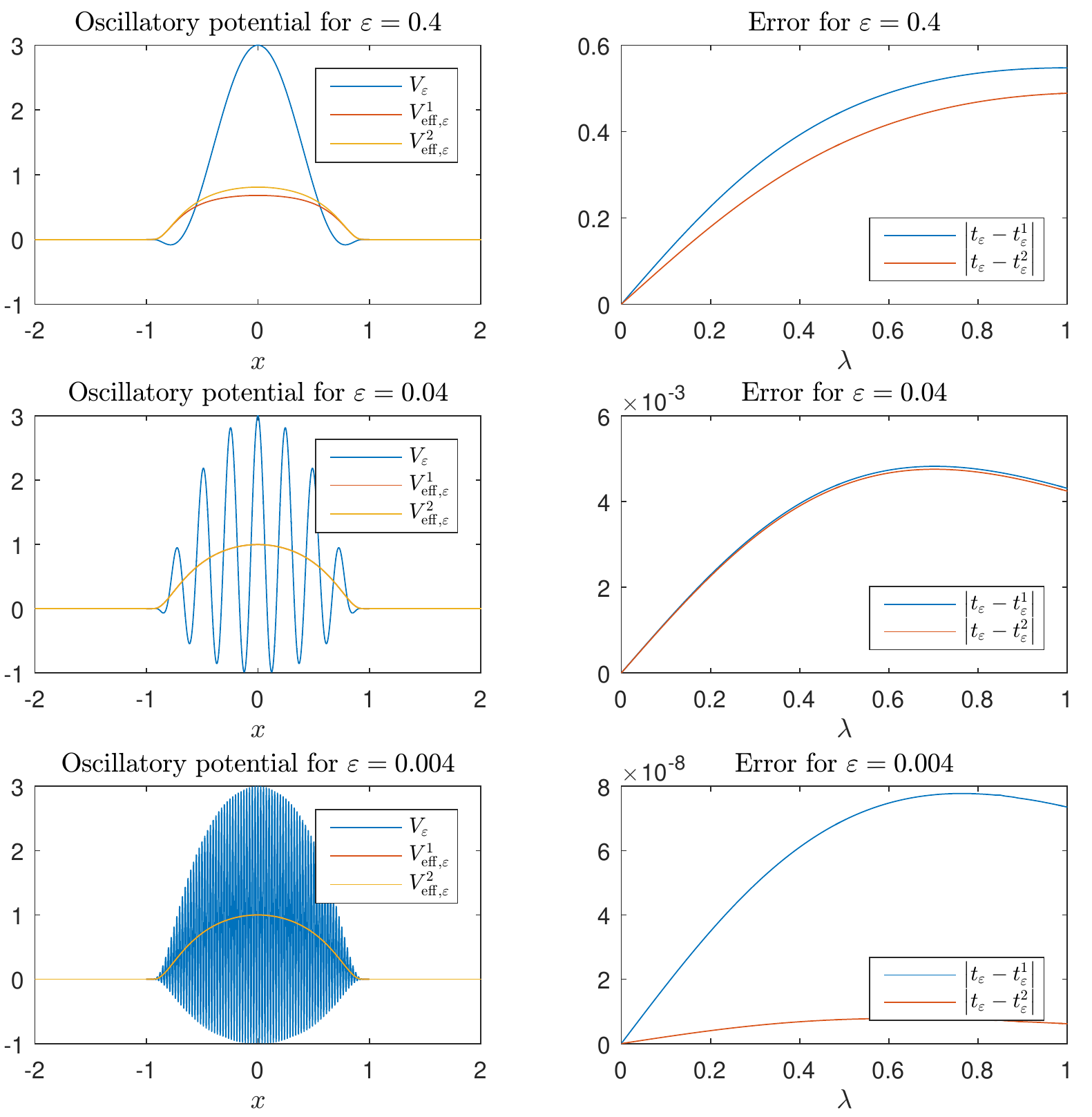}}
    \caption{Oscillatory potential and errors in approximating the transmission coefficient of $V_\epsi$ by the transmission coefficient of $V_{\eff,\epsi}^j$ for different values of $\epsi$ and $j=1,2$.}\label{fig:3}
\end{figure}

\subsection{Plan of the paper} We organize the paper as follows. In \S \ref{sec:2} we focus on the case $W_0 \equiv 0$ and we prove Theorem \ref{thm:1}. The proof relies mainly on an application of the Lippman-Schwinger principle combined with integration by parts. In \S \ref{subsec:8} we construct a step-like potential $V_\epsi$ whose resonances are zeros of a $2 \times 2$ explicit determinant. Uniform estimates on this determinant and arguments from complex analysis show that $V_\epsi$ admits a resonance $\lambda_\epsi \sim i\ln(\epsi)$.

In \S \ref{sec:3} we apply Theorem \ref{thm:5} to prove that resonances of potentials of the form \eqref{eq:1a} admit an expansion in powers of $\epsi$. We compute the first terms in the expansion using a trace estimate. Then we show that resonances of $V_\epsi$ are comparable to the one of the effective potential $V_{\eff,\epsi}$ by comparing two Fredholm determinants. We then prove Theorem \ref{cor:3s} using complex analysis arguments.

The section \ref{sec:5} consists in the proof of Theorem \ref{thm:5}. It is by far the hardest part of the paper. We first describe how an expansion of the determinant $D_{V_\epsi}(\lambda)$ in powers of $\epsi$ can be reduced to an expansion on the trace of an operator that takes a complicated form. We split this operator into two parts in a natural way.  By arguments of combinatorial nature we will prove that the first part is negligible as $\epsi \rightarrow 0$ and therefore produces no term in the expansion of $D_{V_\epsi}$. We will deal with the second part essentially by deriving an operator-valued expansion of $e^{ik\bullet/\epsi} R_0(\lambda) e^{-ik\bullet/\epsi}$ in powers of $\epsi$. The operators in this expansion will produce all the terms in the expansion of $D_V$. The expression of the coefficients in the expansion is theoretically traceable directly from the proof. We compute the first few terms. In dimension one the pole of $R_0(\lambda)$ at $\lambda=0$ will cause some trouble. We will overcome these difficulties by arguments specific to the one-dimensional case but that still rely on trace and determinant computations rather than on ODE techniques.

\subsection{Notation}\label{subsec:1} From now on we drop the subscript $\epsi$ and we fix $L > 0$. Given a function $W \in L^\infty_0(\Bb^d(0,L) \times \Tt^d, \C)$, $V$ is the function associated to $W$ by \eqref{eq:1a}. We will use the following notation:
\begin{itemize}
\item $X^d$ is the set equal to $\C \setminus \{0\}$ when $d=1$ and equal to $\C$ when $d \geq 3$.
\item Any time $\pm$ or $\mp$ appears in an equation, this equation has two meanings: one for the upper subscripts, one for the lower one. For instance, $f(x) = \mp 1$ for $\pm x \geq 1$ means $f(x) = -1$ for $x \geq 1$ and $f(x) = 1$ for $-x \geq 1$.
\item If $x \in \R$, $x_- = \max(0,-x)$.
\item For $x \in \R^n$, $\lr{x} = (1+|x|^2)^{1/2}$.
\item If $z \in \C$ and $r > 0$, $\Dd(z,r)$ denotes the set of $w \in \C$ with $|z-w| < r$.
\item If $x \in \R^d$ and $L > 0$, $\Bb^d(x,L)$ denotes the set of $y \in \R^d$ with $|x-y| < L$. $\Tt^d$ is the $d$-dimensional torus $\R^d/(2\pi \Z)^d$.
\item Let $\HH$ be a space of functions on an open set $\UU \subset \R^d$. We write $f \in \HH_0$ if $f$ belongs to $\HH$ and has compact support in $\UU$ and $f \in \HH_\loc$ if for every $\rho \in C_0^\infty(\R^d)$, $\rho f \in \HH$.
\item For a potential $\VV$, $\Res(\VV)$ is the set of resonances of $\VV$. If $\lambda \in \Res(\VV)$, $m_\VV(\lambda)$ is the geometric multiplicity of $\lambda$ defined by
\begin{equation*}
m_\VV(\lambda) = \text{rank} \oint_\lambda R_\VV(\mu) d\mu.
\end{equation*}
\item If $\HH_1, \HH_2$ are two Hilbert space, we denote by $\BB(\HH_1,\HH_2)$ (resp. $\LL(\HH_1,\HH_2)$) the space of bounded (resp. trace class) operators from $\HH_1$ to $\HH_2$ and by $\BB(\HH_1)$ (resp. $\LL(\HH_1)$) the space of bounded (resp. trace class) operators from $\HH_1$ to itself. If $\HH_1 = L^2(\R^d,\C)$ we simply write $\BB = \BB(\HH_1)$ and $\LL = \LL(\HH_1)$.
\item If $f$ is a function on $\R^d$, $\hat{f}$ and $\FF f$ both denote the Fourier transform of $f$:
\begin{equation*}
\FF f(\xi) = \hat{f}(\xi) = \dfrac{1}{(2\pi)^{d/2}} \int_{\R^d} f(x) e^{-ix\xi}dx.
\end{equation*}
\item We define $H^s(\R^d)$ the space of complex-valued functions $f$ with $\lr{\xi}^s \hat{f}(\xi) \in L^2(\R^d)$. If $s$ is an integer we define $W^s(\R^d)$ the space of functions with $s$ derivatives in $L^\infty(\R^d)$ and we write $|\cdot|_{W^s} = \| \cdot \|_s$. Similarly $W^s_0(\Bb^d(0,L))$ is the space of functions in $W^s(\R^d)$ with support contained in $\Bb^d(0,L)$.
\item For $k \in \Z^d$, $\ek$ denotes the multiplication operator by the function $e^{ikx/\epsi}$.
\item $\rho$ denotes a smooth function that is $1$ on $\Bb^d(0,L)$ and $0$ outside $\Bb^d(0,L+1)$.
\item The operator $D$ is $-i\p_x$. It is a vector-valued operator in dimension $d > 1$. For $k=(k_1, ..., k_d) \in \Z^d$, $k \cdot D$ is the operator $k_1 D_{x_1} + ... + k_d D_{x_d}$.
\item In general if $A(\lambda)$ is a family of operators depending on $\lambda$ we will write $A$ for $A(\lambda)$ unless there is a possible confusion.
\end{itemize}

\smallsection{Acknowledgment} We would like to thank Maciej Zworski for his help and guidance. We also thank Michael Weinstein, Vincent Duch\^ene and Iva Vuki\'cevi\'c for stimulating discussions and for sharing the Matlab codes leading to Figure \ref{fig:3}. This research was partially supported by NSF grant DMS-1500852 and the Fondation CFM pour la recherche.

\section{Resonance escaping in the case $W_0 \equiv 0$}\label{sec:2}
In this part we start with preliminary estimates that will be used all along the paper. Then we prove Theorem \ref{thm:1} and construct in \S \ref{subsec:8} an example of potential that proves that this theorem is optimal.

\subsection{Preliminaries}\label{subsec:6}

For $\VV \in L^\infty_0(\Bb^d(0,L),\C)$ we define $K_\VV$ the operator $\rho R_0(\lambda) \VV$. We start by the following preliminary:

\begin{lem}\label{lem:1h} For all $\az, \beta \in \{0,1,2\}^d$ with $|\az|+|\beta| \leq 2$ and for all $\VV \in W^{|\beta|}_0(\Bb^d(0,L),\C)$, 
\begin{equation*}
\left|D^\az K_\VV D^\beta\right|_{\BB} \leq \systeme{C \lr{\lambda}^{\az+\beta} |\lambda|^{-1} e^{2L(\Im \lambda)_-} \|\VV\|_{|\beta|} \ \ \text{ if } d =1, \\
C \lr{\lambda}^{|\az|+|\beta|-1} e^{2L(\Im \lambda)_-} \|\VV\|_{|\beta|} \ \ \  \text{ if } d \geq 3.}
\end{equation*}
The constant $C$ depends on $d(\supp(\VV), \p \Bb^d(0,L))$ only.
\end{lem}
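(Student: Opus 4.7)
The strategy is to reduce everything to the classical cut-off free resolvent estimate: for any $\chi_1,\chi_2 \in L^\infty$ with support in $\Bb^d(0,L+1)$,
\begin{equation*}
\|\chi_1 R_0(\lambda) \chi_2\|_{\BB} \leq C e^{2L(\Im\lambda)_-} f_d(\lambda), \qquad f_d(\lambda) = \begin{cases} |\lambda|^{-1} & d=1, \\ \lr{\lambda}^{-1} & d \geq 3 \text{ odd}.\end{cases}
\end{equation*}
For $d=1$ this is immediate from $R_0(\lambda)(x,y) = \frac{i}{2\lambda}e^{i\lambda|x-y|}$ via Schur's test; for odd $d \geq 3$ it is standard (see e.g.\ \cite{DyaZwo}), coming from the Hankel-function form of the kernel combined with a low/high-frequency decomposition. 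The base case $\az=\beta=0$ follows by choosing $\chi\in C_0^\infty(\Bb^d(0,L))$ with $\chi\equiv 1$ on $\supp(\VV)$---this is possible because $d(\supp(\VV),\p\Bb^d(0,L))>0$---and writing $K_\VV = (\rho R_0(\lambda)\chi)\cdot\VV$, so that $\|K_\VV\|_\BB \leq \|\rho R_0(\lambda)\chi\|_\BB\|\VV\|_0$.

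To handle derivatives with $|\az|+|\beta|\leq 2$, I would first commute $D^\beta$ past $\VV$ by iterated Leibniz,
\begin{equation*}
\VV D^\beta = \sum_{\gamma\leq\beta} (-1)^{|\gamma|}\binom{\beta}{\gamma}\, D^{\beta-\gamma}\bigl((D^\gamma\VV)\,\cdot\bigr),
\end{equation*}
which introduces at most $|\beta|$ derivatives of $\VV$ (and this is the origin of the $\|\VV\|_{|\beta|}$ factor on the right side of the bound). Symmetrically, commuting $D^\az$ past $\rho$ produces only derivatives of $\rho$, which contribute only to the constant $C$. Since $R_0(\lambda)$ commutes with $D$, this reduces $D^\az K_\VV D^\beta$ to a finite linear combination of operators of the form $\rho_\kappa D^a R_0(\lambda) D^b \tilde\VV$, where $\rho_\kappa \in C_0^\infty(\Bb^d(0,L+1))$, $\tilde\VV\in L^\infty$ satisfies $\|\tilde\VV\|_\infty \leq C\|\VV\|_{|\beta|}$, and $|a|+|b|\leq|\az|+|\beta|\leq 2$.

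It then remains to estimate $\|\chi_1 D^a R_0(\lambda) D^b\chi_2\|_\BB$ for $|a|+|b|\leq 2$. The case $|a|+|b|=0$ is exactly the base estimate. For $|a|+|b|=2$, I would invoke local elliptic regularity: if $u = R_0(\lambda)\chi_2 f$ then $(-\Delta-\lambda^2)u = \chi_2 f$, so picking a cut-off $\phi\in C_0^\infty(\Bb^d(0,L+1))$ equal to $1$ on $\supp(\chi_1)$ and applying the interior estimate $\|\phi u\|_{H^2} \leq C(\|\Delta(\phi u)\|_{L^2}+\|\phi u\|_{L^2})$ together with $\Delta(\phi u) = \phi(-\lambda^2 u - \chi_2 f) + [\Delta,\phi]u$ and the base $L^2$ bound produces $\|\chi_1 D^a D^b R_0(\lambda)\chi_2\|_\BB \leq C\lr{\lambda}^2 f_d(\lambda) e^{2L(\Im\lambda)_-}$. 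For $|a|+|b|=1$, interpolation between the $0$th and $2$nd-order estimates (or direct Fourier multiplier analysis of $\xi/(|\xi|^2-\lambda^2)$) supplies the intermediate bound $\lr{\lambda} f_d(\lambda) e^{2L(\Im\lambda)_-}$. The main bookkeeping obstacle is the commutator $[\Delta,\phi]u$ appearing in the order-$2$ step, which involves $\nabla u$ on a slightly larger set than $\supp \phi$; one absorbs it by iterating the argument on a finite nested sequence of cut-offs, and since the total order is capped at $2$ the recursion terminates after a single extra step.
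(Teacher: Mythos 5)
Your proposal takes a genuinely different route from the paper, which discharges the lemma in a single line by citing the Schur--test bounds on the explicit kernel of $R_0(\lambda)$ and its derivatives in \cite{DyaZwo}. Your reduction---commute $D^\az$, $D^\beta$ past $\rho$ and $\VV$ via Leibniz to leave only the model operators $\chi_1 D^a R_0(\lambda) D^b\chi_2$, and then exploit the equation $(-\Delta-\lambda^2)u = \chi_2 f$ rather than the closed-form kernel---is sound and arguably more robust, since it avoids the odd-dimension expression for $R_0$. The commutation bookkeeping (in particular the Leibniz identity producing $\tilde\VV = D^\gamma\VV$, hence the $\|\VV\|_{|\beta|}$ dependence, and the harmless derivatives of $\rho$) is correct.

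The organisation of the middle step, however, is circular as written. The interior estimate $\|\phi u\|_{H^2} \leq C\bigl(\|\Delta(\phi u)\|_{L^2} + \|\phi u\|_{L^2}\bigr)$ which you use for $|a|+|b|=2$ already needs the first-order bound to control the commutator $[\Delta,\phi]u = 2\nabla\phi\cdot\nabla u + (\Delta\phi)u$, so you cannot then obtain the first-order case by interpolating the zeroth-order and second-order estimates: the second-order estimate is not available yet. Nor does a bare Fourier-multiplier analysis of $\xi/(|\xi|^2-\lambda^2)$ produce the stated bound: for $\Im\lambda<0$ the operator $DR_0(\lambda)$ is not bounded on $L^2$ at all, and the factor $e^{2L(\Im\lambda)_-}$ only materializes after the two spatial cut-offs are inserted. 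The fix is simply to perform the steps in increasing order: (0) the base $L^2$ bound; (1) pair the equation with $\chi^2\bar u$ and integrate by parts, absorbing the cross term $2\chi\nabla\chi\cdot\nabla u\,\bar u$ by Cauchy--Schwarz, which yields $\|\chi\nabla u\|_{L^2} \leq C\lr{\lambda}\|\chi'u\|_{L^2} + C\|\chi_2 f\|_{L^2}^{1/2}\|\chi'u\|_{L^2}^{1/2}$ using (0) alone; (2) feed (0)--(1) into the interior estimate for $H^2$. With that ordering your nested-cut-off recursion does terminate in one extra step as claimed, and the powers $\lr{\lambda}^{|\az|+|\beta|}$ ($d=1$) and $\lr{\lambda}^{|\az|+|\beta|-1}$ ($d\geq 3$) come out as stated.
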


Such estimates are proved in \cite[Theorem $2.1$]{DyaZwo} and follow from Schur's test. We recall that $X_d = \C$ if $d \geq 3$ and $X_1=\C \setminus \{0\}$. The following lemma characterizes resonances of a potential $\VV$ via a Lippman-Schwinger equation.

\begin{lem}\label{lem:1e} Let $\VV \in L^\infty_0(\Bb^d(0,L),\C)$. $\lambda \in X_d$ is a resonance of $\VV$ if and only if there exists $0 \neq u \in L^2$ such that $u = - K_\VV u$.
\end{lem}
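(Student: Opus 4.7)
The plan is to reformulate the resonance condition as a nontrivial outgoing solution to $(-\Delta+\VV-\lambda^2)w=0$, and then pass back and forth between such solutions $w\in H^2_\loc$ and their cutoffs $u=\rho w\in L^2$ via the free resolvent $R_0(\lambda)$, using crucially the assumption $\supp(\VV)\subset\{\rho=1\}$ and the fact that on $X_d$ the resolvent $R_0(\lambda)$ has no pole so outgoing solutions are unique.

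For the direction $(\Leftarrow)$, suppose $u\in L^2\setminus\{0\}$ satisfies $u=-K_\VV(\lambda) u=-\rho R_0(\lambda)\VV u$. Since $\VV u\in L^2_\comp$ has support in $\Bb^d(0,L)$, the function $w:=-R_0(\lambda)\VV u$ lies in $H^2_\loc$ and satisfies $(-\Delta-\lambda^2)w=-\VV u$, and $w$ is outgoing by construction. The identity $u=\rho w$ combined with $\rho\equiv 1$ on $\supp(\VV)$ gives $\VV w=\VV u$, so $(-\Delta+\VV-\lambda^2)w=0$. If $w\equiv 0$ then $\VV u=0$ and hence $u=0$, contradicting our hypothesis; so $w$ is a nontrivial outgoing solution, certifying that $\lambda$ is a resonance of $\VV$.

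For $(\Rightarrow)$, the pole condition on $R_\VV(\lambda)$ at $\lambda\in X_d$ produces a nontrivial outgoing $w\in H^2_\loc$ with $(-\Delta+\VV-\lambda^2)w=0$ (by taking a Laurent coefficient of $R_\VV(\mu)f$ at $\mu=\lambda$ for a suitable $f\in L^2_\comp$). Writing $(-\Delta-\lambda^2)w=-\VV w$ with $\VV w$ compactly supported and invoking uniqueness of outgoing solutions on $X_d$ (which is where the restriction to $X_d$ enters, in order to avoid the pole of $R_0$ at $0$ in dimension one) yields $w=-R_0(\lambda)\VV w$. Setting $u:=\rho w\in L^2$ and using $\VV\rho=\VV$, multiplying both sides by $\rho$ gives
\begin{equation*}
u=\rho w=-\rho R_0(\lambda)\VV w=-\rho R_0(\lambda)\VV\rho w=-K_\VV(\lambda) u.
\end{equation*}
It remains only to check $u\neq0$: if $\rho w\equiv0$, then $w$ vanishes on a neighborhood of $\supp(\VV)$, hence $\VV w=0$, hence $w=-R_0(\lambda)\cdot 0=0$, contradicting nontriviality.

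The only delicate point in this plan is the appeal to uniqueness of the outgoing continuation, which is precisely what forces us to remove the origin in $d=1$; everything else is bookkeeping with the cutoff $\rho$ and the identity $\VV\rho=\VV$. This is routine and parallels the standard treatment in \cite{DyaZwo}, to which we will refer for the meromorphic continuation facts.
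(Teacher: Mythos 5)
Your argument is correct but goes through a genuinely different mechanism than the paper. The paper stays entirely at the operator level: by Neumann series for $\Im\lambda\gg1$ followed by meromorphic continuation it derives the factorization
\begin{equation*}
R_\VV(\lambda)=\bigl(\Id-(1-\rho)R_0(\lambda)\VV\bigr)\bigl(\Id-(\Id+K_\VV)^{-1}K_\VV\bigr)R_0(\lambda),
\end{equation*}
then reads off that poles of $R_\VV$ coincide with the $\lambda$ where $\Id+K_\VV$ is not invertible, and uses compactness of $K_\VV$ on $X_d$ to replace non-invertibility by non-injectivity. You instead invoke the scattering-theoretic characterization of resonances as nontrivial outgoing solutions of $(-\Delta+\VV-\lambda^2)w=0$, and pass between the outgoing state $w$ and the $L^2$ vector $u=\rho w$ via $w=-R_0(\lambda)\VV w$. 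Your route is shorter and makes transparent where the restriction to $X_d$ enters, but it outsources to \cite{DyaZwo} two nontrivial points: that a pole of $R_\VV$ at $\lambda$ produces a nontrivial outgoing state, and the Rellich-type uniqueness (an outgoing solution of $(-\Delta-\lambda^2)v=0$ on $X_d$ vanishes) that is needed to conclude $w=-R_0(\lambda)\VV w$. The paper's method, besides being self-contained, also produces the explicit identity \eqref{eq:1b} as a by-product, which it reuses later in the proof of Theorem~\ref{cor:3s}. If you keep your version, state the Rellich uniqueness step explicitly; it is the one genuinely analytic input in your argument.
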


\begin{proof} For $\lambda \in \C$ if $d \geq 3$ and $\lambda \in \C \setminus \{0\}$ the operator $K_\VV$ is compact. Thus $\Id + K_\VV$ is injective if and only if $\Id + K_\VV$ is invertible. For $\Im \lambda \gg 1$ we can invert $\Id + R_0(\lambda) \VV$ via Neumann series. Moreover, 
\begin{align*}
R_\VV(\lambda) & = \left( \Id + R_0(\lambda) \VV \right)^{-1} R_0(\lambda)  = \left(\sum_{n = 0}^\infty (-R_0(\lambda) \VV)^n\right) R_0(\lambda) \\
    & = \left(\sum_{n = 0}^\infty (-K_\VV)^n + (1-\rho) \sum_{n = 1}^\infty (-R_0(\lambda) \VV)^n \right)R_0(\lambda) \\
    & = \left( \Id + (1-\rho)\sum_{n = 1}^\infty (-R_0(\lambda) \VV)^n (\Id + K_\rho)  \right) \left(\Id + K_\VV \right)^{-1} R_0(\lambda) \\
    & = \left( \Id + (1-\rho)\sum_{n = 1}^\infty (-R_0(\lambda) \VV)^n - (-R_0(\lambda) \VV)^{n+1}  \right) \left(\Id + K_\VV \right)^{-1} R_0(\lambda) \\
    & = \left( \Id - (1-\rho)R_0(\lambda) \VV \right) \left( \Id - \left(\Id + K_\VV \right)^{-1} K_\VV \right) R_0(\lambda). 
\end{align*}
The operator $R_0(\lambda)$ meromorphically continues to $\C$ as an operator $L^2_\comp$ to $H^2_\loc$ while the operator $\left(\Id + K_\VV \right)^{-1}$ meromorphically continues to $\C$ as an operator $L^2$ to $L^2$. Thus the identity
\begin{equation}\label{eq:1b}
R_\VV(\lambda) = \left( \Id - (1-\rho)R_0(\lambda) \VV \right) \left( \Id - \left(\Id + K_\VV \right)^{-1} K_\VV \right) R_0(\lambda)
\end{equation}
initially valid for $\Im \lambda \gg 1$ meromorphically continues to all of $\C$. The poles of the RHS are precisely the set of $\lambda$ such that $\Id + K_\VV$ is not invertible (apart from $\lambda = 0$ in dimension one) while the poles of the LHS are the resonances of $\VV$. This proves the lemma.\end{proof}

\subsection{Escaping of resonances.}\label{subsec:7} We prove here Theorem \ref{thm:1} in the case $d=1$. Assume that \eqref{eq:9f} holds. If $\lambda \neq 0$ is a resonance of $V$ then by Lemma \ref{lem:1e} there exists $u$ such that $u = -K_V u$ and $|u|_2=1$. It satisfies the \textit{a priori} estimate
\begin{equation}\label{eq:1oo}
|u|_{H^1} = |K_V u|_{H^1} \leq  |K_V|_{\BB(H^1,L^2)} |u|_2 \leq C\dfrac{\lr{\lambda}e^{2L(\Im\lambda)_-}}{|\lambda|}|W|_\infty |u|_2,
\end{equation}
in particular it belongs to $H^1$. The well-known estimate $|fg|_{H^1} \leq |f|_{H^1} |g|_{H^1}$ (valid in dimension one) implies by duality that $|fg|_{H^{-1}} \leq |f|_{H^1}|g|_{H^{-1}}$. The bound \eqref{eq:1oo} yields
\begin{align}\label{eq:1l}\begin{split}
|u|_2 = |K_V u|_2 & \leq |K_\rho|_{\BB(H^{-1}, L^2)} |Vu|_{H^{-1}} \\ & \leq C \dfrac{\lr{\lambda}e^{2L(\Im \lambda)_-}}{|\lambda|} |V|_{H^{-1}} |u|_{H^1}  \leq C \dfrac{\lr{\lambda}^2e^{4L(\Im \lambda)_-}}{|\lambda|^2} |V|_{H^{-1}} |W|_\infty |u|_2.\end{split}
\end{align}
To estimate $|K_\rho|_{\BB(H^{-1},L^2)}$ we used the adjoint bound i.e. we estimated $|K_\rho(-\olambda)|_{\BB(L^2,H^1)}$ thanks to Lemma \ref{lem:1h}. We claim that $|V|_{H^{-1}} \leq \epsi^s |W|_{X^s}$, where $|W|_{X^s} = \sum_{k \neq 0} |k|^{-s} |W_k|_{H^s}$. Indeed using that $|\lr{\xi}^s \widehat{W_k}|_2 = |W_k|_{H^s}$ and $|V|_{H^{-1}} = |\lr{\xi}^{-1}\hat{V}|_2$ we have 
\begin{align*}
|V|_{H^{-1}}   & = \left| \lr{\xi}^{-1} \sum_{k \neq 0} \widehat{W_k}(\xi -k/\epsi) \right|_2 \\
   & \leq \sum_{k \neq 0} \left| \lr{\xi}^{-1}\lr{\xi -k/\epsi}^{-s}\lr{\xi -k/\epsi}^s\widehat{W_k}(\xi -k/\epsi) \right|_2 \\
   & \leq \sum_{k \neq 0} |\lr{\xi}^{-1}\lr{\xi -k/\epsi}^{-s}|_\infty |W_k|_{H^s} \\
   & \leq \sum_{k \neq 0} |\lr{\xi}^{-s}\lr{\xi -k/\epsi}^{-s}|_\infty |W_k|_{H^s}   \leq C\sum_{k \neq 0} \lr{k/\epsi}^{-s} |W_k|_{H^s} \leq \epsi^{s} |W|_{X^s}.
\end{align*}
In the last line we used Peetre's inequality: for every $x,y \in \R^d$ and $t \geq 0$ there exists a constant $C$ such that 
\begin{equation}\label{eq:7f}
\lr{x}^{-t} \lr{y}^{-t} \leq C\lr{x-y}^{-t}.
\end{equation}
Now combining $u=-K_Vu$ and $|u|_2=1$ with the estimate \eqref{eq:1l} we get
\begin{equation*}
1 \leq C \epsi^s \dfrac{ \lr{\lambda}^2 e^{4L(\Im \lambda)_-}}{|\lambda|^2} |W|_{X^s}^2.
\end{equation*}
Hence either $|\lambda| \leq 1$ and then $|\lambda| \leq c\epsi^{s/2}$ for some constant $c$; or $|\lambda| \geq 1$ and
\begin{equation*}
\Im \lambda \leq \dfrac{1}{4L}\ln\left( C |W|_{X^s}^2 \right) - \dfrac{s}{4L} \ln(\epsi^{-1}).
\end{equation*}
This proves Theorem \ref{thm:1} for $d=1$.

We next prove the theorem in dimension $d \geq 3$. In this case the inequality $|fg|_{H^1} \leq |f|_{H^1} |g|_{H^1}$ no longer holds and we must find another way around. Let $W$ such that $W_0 \equiv 0$ and \eqref{eq:9f} holds 
and $u \neq 0$ with $|u|_2=1$ and
\begin{equation}\label{eq:1o}
u =-K_Vu =  -\sum_{k \neq 0} K_{W_k} e^{ik\bullet/\epsi} u. 
\end{equation}
As in the case $d=1$ $u$ satisfies the \textit{a priori} estimate $|u|_{H^1}  \leq C e^{C(\Im \lambda)_-}  |W|_\infty |u|_2$. Noting that
\begin{equation*}
\ek = \dfrac{\epsi}{|k|}[P_k,\ek] \ \text{ where } \ P_k = \dfrac{k_1 D_{x_1} + ... + k_d D_{x_d}}{|k|}
\end{equation*}
we obtain the commutator identity
\begin{align*}
\epsi^{-1} |k| K_{W_k} \ek  = K_{W_k} P_k \ek -  K_{W_k}  \ek P_k.
\end{align*}
Consequently
\begin{align}\label{eq:1c}\begin{split}
\epsi^{-1} |k| \left|K_{W_k} \ek u \right|_2 & \leq |K_{W_k} P_k \ek u|_2 + |K_{W_k}  \ek P_k u|_2 \\
 & \leq |K_{W_k} P_k|_{\BB} |u|_2 + |K_{W_k}|_{\BB} |P_k u|_2 \\
 & \leq C e^{2L (\Im \lambda)_-} \|W_k\|_1 |u|_2 + C e^{C (\Im \lambda)_-} |W_k|_\infty |u|_{H^1} \\
 & \leq  C e^{4L (\Im \lambda)_-} \|W_k\|_1(1+|W|_\infty) |u|_2.\end{split}
\end{align}
From the second to the third line we used the estimates of Lemma \ref{lem:1h}. From the third to the fourth line we used \eqref{eq:1oo}. Sum \eqref{eq:1c} over $k \in \Z^d \setminus \{0\}$ to obtain
\begin{equation*}
|u|_2 = |K_V u|_2 \leq C  \epsi e^{4L(\Im \lambda)_-} (1+|W|_\infty) \left( \sum_{k \neq 0} \dfrac{\|W_k\|_1}{|k|}\right) |u|_2.
\end{equation*}
It follows that
\begin{equation*}
1 \leq C  \epsi e^{4L(\Im \lambda)_-} (1+|W|_\infty) \left( \sum_{k \neq 0} \dfrac{\|W_k\|_1}{|k|}\right)
\end{equation*}
which implies an upper bound on $\Im \lambda$ of the required form. This ends the proof of Theorem \ref{thm:1}.

\subsection{Construction of an optimal potential}\label{subsec:8}
Here we show that the rate of decay of imaginary parts of resonances of $V_\epsi$ provided by Theorem \ref{thm:1} is optimal in dimension $1$. We construct a function $W$ with $W_0 \equiv 0$ satisfying \eqref{eq:9f} such that the potential $V$ defined by \eqref{eq:1a} has a resonance $\lambda_\epsi \sim -i \ln(\epsi^{-1})$ with $\epsi = \pi/(2n)$. Define $W$ by
\begin{equation*}
W(x,y) = \1_{[-1/2,1/2]}(x) \left( \1_{[0,\pi]}(y) - \1_{[-\pi,0]}(y) \right).
\end{equation*}
The $k$-th Fourier coefficient of $W$ is given by
\begin{equation*}
W_k(x) = \systeme{ 0 & \text{ if } k \text{ is even,} \\ \dfrac{2}{i \pi k} \1_{[-1/2,1/2]}(x) & \text{ if } k \text{ is odd.} }
\end{equation*}
The function $\1_{[-1/2,1/2]}$ belongs to $H^{1/2-\delta}$ for all $1/2 > \delta > 0$ and
\begin{equation*}
\sum_{k \neq 0 } |k|^{-1/2+\delta}|W_k|_{H^{1/2-\delta}} \leq c_\delta \sum_{k \neq 0} |k|^{-3/2+\delta} < \infty.
\end{equation*}
Therefore $W$ satisfies \eqref{eq:9f} for every $s \in (0,1/2)$. The potential $V$ associated to $W$ by \eqref{eq:1a} is plotted on Figure \ref{fig:2}.

\begin{center}
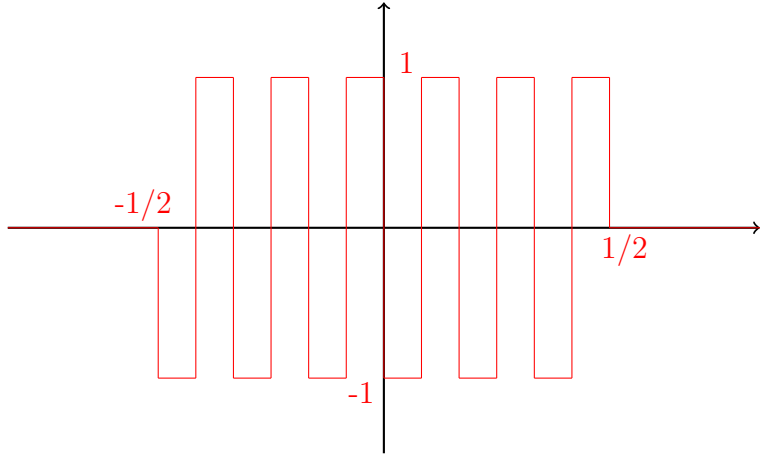
\begin{figure}
\begin{tikzpicture}
\draw[thick,->] (-5,0) -- (5,0) node[anchor=north east] {}; 
\draw[thick,->] (0,-3)  -- (0,3) node[anchor=north east] {};

\draw[red] (-5,0) -- (-3,0);
\draw[red] (5,0) -- (3,0);

\draw[red] (-3,-2) -- (-2.5,-2);
\draw[red] (-2,-2) -- (-1.5,-2);
\draw[red] (-1,-2) -- (-.5,-2);
\draw[red] (0,-2) -- (.5,-2);
\draw[red] (1,-2) -- (1.5,-2);
\draw[red] (2,-2) -- (2.5,-2);

\draw[red] (3,2) -- (2.5,2);
\draw[red] (2,2) -- (1.5,2);
\draw[red] (1,2) -- (.5,2);
\draw[red] (0,2) -- (-.5,2);
\draw[red] (-1,2) -- (-1.5,2);
\draw[red] (-2,2) -- (-2.5,2);

\draw[red] (-3,0) -- (-3,-2);
\draw[red] (-2.5,-2) -- (-2.5,2);
\draw[red] (-2,-2) -- (-2,2);
\draw[red] (-1.5,-2) -- (-1.5,2);
\draw[red] (-1,-2) -- (-1,2);
\draw[red] (-.5,-2) -- (-.5,2);
\draw[red] (0,-2) -- (0,2);
\draw[red] (.5,-2) -- (.5,2);
\draw[red] (1,-2) -- (1,2);
\draw[red] (1.5,-2) -- (1.5,2);
\draw[red] (2,-2) -- (2,2);
\draw[red] (2.5,-2) -- (2.5,2);
\draw[red] (3,0) -- (3,2);

\node[red] at (-3.2,0.3) {-1/2};
\node[red] at (3.2,-0.3) {1/2};
\node[red] at (0.3,2.2) {1};
\node[red] at (-.3,-2.2) {-1};
\end{tikzpicture}
\caption{The potential $V$ for $\epsi = \pi/12$.}\label{fig:2}
\end{figure}
\end{center}

We next characterize resonances of $V$ as zeros of a certain $2 \times 2$ determinant.

\begin{lem} Let $A_\pm$ be the matrix
\begin{equation}\label{eq:1m}
A_\pm = \matrice{0 & 1 \\ \pm 1 -\lambda^2 & 0}.
\end{equation}
Then $\lambda \neq 0$ is a resonance of $V$ for $\epsi = \pi/(2n)$ if and only if $D(\lambda) = 0$ where
\begin{equation*}
D(\lambda) = \Det \left( \left(e^{A_+/2n} e^{A_-/2n} \right)^n \matrice{1 \\ -i\lambda}, \matrice{ 1 \\ i\lambda } \right).
\end{equation*}
Here $\Det(a,b)$ denotes the determinant of two vectors $a, b$ of $\C^2$.
\end{lem}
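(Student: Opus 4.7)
The natural route is to reduce the spectral problem on $\R$ to a transfer-matrix problem on $[-1/2, 1/2]$. First, by Lemma \ref{lem:1e} together with \eqref{eq:1b} and the explicit form of $R_0(\lambda)$ in dimension one, $\lambda \in X_1$ is a resonance of $V$ iff there exists a nontrivial $u \in H^2_\loc(\R)$ solving $-u'' + V u = \lambda^2 u$ which is purely outgoing outside $\supp V$, namely $u(x) = c_+ e^{i\lambda x}$ for $x > 1/2$ and $u(x) = c_- e^{-i\lambda x}$ for $x < -1/2$, with $(c_-, c_+) \ne (0,0)$. Since $V \in L^\infty$, $u$ is automatically $C^1$ on $\R$, and matching at $x = \pm 1/2$ gives the boundary data
\begin{equation*}
\matrice{u \\ u'}(-1/2) = c_- e^{i\lambda/2} \matrice{1 \\ -i\lambda}, \qquad \matrice{u \\ u'}(1/2) = c_+ e^{i\lambda/2} \matrice{1 \\ i\lambda}.
\end{equation*}

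Next I would exploit the piecewise constant structure of $V$. A direct computation from $V(x) = W(x, x/\epsi)$ shows that for $\epsi = \pi/(2n)$ the potential on $[-1/2, 1/2]$ is a step function alternating between $-1$ and $+1$ on $2n$ consecutive subintervals of equal width $1/(2n)$, starting with $V \equiv -1$ on the leftmost one. On any such subinterval the vector $U = (u, u')^\top$ satisfies the linear first-order system $U' = A_\pm U$ with $A_\pm$ as in \eqref{eq:1m}, so the transfer matrix across a single step is $e^{A_\pm/(2n)}$. Stacking the $n$ pairs of steps from left to right yields
\begin{equation*}
U(1/2) = \left(e^{A_+/(2n)}\, e^{A_-/(2n)}\right)^n U(-1/2).
\end{equation*}

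Combining the two preceding displays and cancelling the nonzero factor $e^{i\lambda/2}$, the existence of a resonant state at $\lambda$ is equivalent to the existence of $(c_-, c_+) \ne (0,0)$ with
\begin{equation*}
c_- \left(e^{A_+/(2n)}\, e^{A_-/(2n)}\right)^n \matrice{1 \\ -i\lambda} = c_+ \matrice{1 \\ i\lambda},
\end{equation*}
i.e., to the linear dependence of the two column vectors appearing in $D(\lambda)$. This is precisely the condition $D(\lambda) = 0$, and the lemma follows.

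The only real subtlety is bookkeeping: one must carefully verify from the definition of $W$ and of $V$ that for the specific choice $\epsi = \pi/(2n)$ the sign pattern along $[-1/2, 1/2]$ is indeed $-, +, -, +, \ldots$, so that the ordering of the factors $e^{A_+/(2n)}$ and $e^{A_-/(2n)}$ in the product matches the one written in $D(\lambda)$. The restriction $\lambda \ne 0$ is needed because $R_0(\lambda)$ has a pole at $0$ in dimension one and because the two outgoing exponentials $e^{\pm i\lambda x}$ degenerate there; away from $0$, the argument above reduces the resonance condition to the vanishing of an explicit $2 \times 2$ determinant, with no further analytic input required.
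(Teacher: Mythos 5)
Your proposal is correct and follows essentially the same route as the paper: both characterize a nonzero resonance $\lambda$ of a compactly supported $1$D potential by the existence of a nontrivial $H^2_\loc$ solution of $-u''+Vu=\lambda^2 u$ with the outgoing conditions $u(x)=c_\pm e^{\pm i\lambda x}$ for $\pm x>1/2$, reduce this to a boundary matching problem for $(u,u')$ at $x=\pm 1/2$, and compute the transfer matrix across $[-1/2,1/2]$ as $(e^{A_+/2n}e^{A_-/2n})^n$ using the piecewise constant structure of $V$. The only cosmetic difference is that you keep the pair $(c_-,c_+)$ and observe $c_-\neq 0$, whereas the paper normalizes the left boundary data to $(1,-i\lambda)^\top$ at the outset; this is immaterial.
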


\begin{proof} We recall that since $d = 1$, $\lambda \neq 0$ is a resonance of $V$ if and only if there exists a non zero function $u \in H^2_\loc$ with
\begin{equation*}
\left\{ \begin{matrix}
-u'' + Vu -\lambda^2 u = 0  & \\
u(x) = a_\pm e^{\pm i \lambda x}, \ \ & \pm x \gg 1
\end{matrix} \right.
\end{equation*}
see \cite[Theorem 2.4]{DyaZwo}. Using standard uniqueness results for ODEs $\lambda \neq 0$ is a resonance of $V$ if and only if there exists $a \in \C$ such that the boundary problem
\begin{equation}\label{eq:3a}
\systeme{-u'' + Vu - \lambda^2 u = 0, \\ u(-1/2)=1, \ u'(-1/2) = -i\lambda, \\ u(1/2)=a, \ u'(1/2)=ia\lambda}
\end{equation}
admits a non-zero solution $u$ in $H^2_\loc$. The ODE
\begin{equation*}
\systeme{-u'' + V u - \lambda^2 u = 0, \\ u(-1/2)=1, \ u'(-1/2) = -i\lambda}
\end{equation*}
admits a unique solution $u \in H^2_\loc$. The coefficients of the ODE are constant equal to $\pm 1$ on intervals of length $\pi/(2n)$. Hence $u$ can be explicitly computed using a matrix exponential. A direct calculation shows that
\begin{equation}\label{eq:3b}
\matrice{ u(1/2) \\ u'(1/2) } = \left(e^{A_+/2n} e^{A_-/2n} \right)^n \matrice{1 \\ -i\lambda}
\end{equation}
where $A_\pm$ are the matrices given by \eqref{eq:1m}. Putting together \eqref{eq:3a} and \eqref{eq:3b} $\lambda \neq 0$ is a resonance if and only if there exists $a$ such that
\begin{equation*}
a \matrice{ 1 \\ i\lambda } = \left(e^{A_+/2n} e^{A_-/2n} \right)^n \matrice{1 \\ -i\lambda}, 
\end{equation*}
that is, if and only if $D(\lambda) = 0$. This ends the proof.\end{proof}

In order to prove that $V_{\pi/(2n)}$ has a resonance $\lambda_n \sim -i\ln(n)$ we study asymptotics of $D(\lambda)$ uniform in the region $\{ (\lambda,n) \ : |\lambda| = O(\ln(n)) \}$. By the Baker-Hausdorff-Campbell formula, there exists a matrix $Z_n \in M_2(\C)$ such that $e^{Z_n} = e^{A_+/n} e^{A_-/n}$. Its asymptotic development is
\begin{equation*}
Z_n = \dfrac{A_++A_-}{2n} + \dfrac{1}{8n^2} [A_+,A_-] + \sum_{m \geq 3} \dfrac{1}{(2n)^m} P_m(A_+,A_-).
\end{equation*}
The terms $P_m(X,Y)$ are homogeneous polynomial of degree $m$ in the non-commuting variables $X, Y$. The expansion converges as long as $|A_+|<2n, |A_-| < 2n$ -- see \cite{BCH}. This is realized as long as $|\lambda| = o(\sqrt{n})$, hence when $\lambda = O(\ln(n))$. It yields
\begin{equation*}
Z_n = \dfrac{A_++A_-}{2n} + \dfrac{1}{8n^2} [A_+,A_-] + O\left(n^{-3} \lambda^6\right) \text{ when }
 \lambda = O(\ln(n)).
\end{equation*}
Therefore
\begin{align*}
e^{nZ_n} & = \exp\left(\dfrac{A_++A_-}{2} + \dfrac{1}{8n}[A_+,A_-] + O\left(n^{-2} \lambda^6\right)\right) \\ & = \exp\left(\dfrac{A_++A_-}{2} + \dfrac{1}{8n}[A_+,A_-]\right)\left( 1+O\left(n^{-2} \lambda^6\right) \right).
\end{align*}
A direct computation leads to
\begin{equation*}
\dfrac{A_++A_-}{2} + \dfrac{1}{8n}[A_+,A_-] = \matrice{ -1/4n & 1 \\ -\lambda^2 & 1/4n}.
\end{equation*}
The eigenvalues are $\pm \nu, \ \  \nu=i\sqrt{\lambda^2-(4n)^{-2}}$ and therefore
\begin{equation*}
\dfrac{A_++A_-}{2} + \dfrac{1}{8n}[A_+,A_-] = \Omega \Delta \Omega^{-1} \text{ with } \Delta = \matrice{-\nu & 0 \\ 0 & \nu} \text{ and } \Omega = \matrice{ 1 & 1 \\ -\nu + (4n)^{-1} & \nu + (4n)^{-1}}.
\end{equation*}
Another direct computation gives
\begin{align*}
 D(\lambda) & = \Det(\Omega) \Det\left( e^{\Delta} \Omega^{-1} \matrice{1 \\ -i\lambda}, \Omega^{-1} \matrice{1 \\ i\lambda} \right)\left(1 + O\left(n^{-2}\lambda^6\right)\right) \\
   & = -\dfrac{\lambda^2e^{-\nu}}{2\nu } \left(  \left(\dfrac{\nu}{i\lambda} + 1\right)^2 + (4n \lambda)^{-2}  - e^{2\nu}\left(\left(\dfrac{\nu}{i\lambda} - 1\right)^2 + (4n \lambda)^{-2}\right)\right)\left(1 + O\left(n^{-2}\lambda^6\right)\right) \\ 
   & =  -\dfrac{\lambda^2e^{-\nu}}{2\nu } \left( 4 + O\left((n\lambda)^{-2}\right) - \dfrac{e^{2i\lambda}}{(4n\lambda)^2} \left(1+O\left(n^{-2}\lambda^{-1}\right)\right) \right)\left(1 + O\left(n^{-2}\lambda^6\right)\right)
\end{align*}
as long as $\lambda = O(\ln(n))$. In order to investigate the behavior of zeros of $D(\lambda)$ we investigate first the behavior of zeros of the function $f$ given by
\begin{equation*}
f(\lambda) = 4 - \dfrac{e^{2i\lambda}}{(4n\lambda)^2}.
\end{equation*}

\begin{lem}\label{lem:1f} The zeros of $f$ are given by $\lambda_\nu^\pm = i\WW_\nu(\pm i/8n), \ \ \nu \in \Z
$ where $\WW_\nu$ is the $\nu$-th branch of the Lambert function -- see \cite{Lambert}. In particular as $n$ goes to infinity $\lambda^+_1 \sim -i \ln(n)$. Moreover, there exists $r_0$ (independent on $n$) such that for all $n$ large enough and $\te \in \Ss^1$,
\begin{equation}\label{eq:2c}
|f(\lambda_1^+ + r_0e^{i\te})| \geq 3r_0.
\end{equation}
\end{lem}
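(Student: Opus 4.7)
The overall plan is to solve $f(\lambda) = 0$ explicitly via the Lambert function, extract the leading asymptotics of the relevant branch, and then establish \eqref{eq:2c} by exploiting the fact that the equation $f(\lambda_1^+) = 0$ allows one to factor the highly oscillatory exponential $e^{2i\lambda_1^+}$ out of $f(\lambda_1^+ + z)$. For the first step, $f(\lambda) = 0$ is equivalent to $e^{2i\lambda} = (8n\lambda)^2$, i.e.\ $e^{i\lambda} = \pm 8n\lambda$, or $\lambda e^{-i\lambda} = \pm 1/(8n)$. Setting $w = -i\lambda$ and recalling that $we^w = a$ has solutions $w = \WW_\nu(a)$ for $\nu \in \Z$, one obtains (after relabeling the $\pm$ sign) $\lambda = i\WW_\nu(\pm i/(8n))$, proving the first claim.

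To identify $\lambda_1^+$ I will use the standard expansion of the non-principal branches of the Lambert function near the origin,
\begin{equation*}
\WW_\nu(z) = \log z + 2\pi i \nu - \log(\log z + 2\pi i \nu) + o(1), \quad z \to 0,
\end{equation*}
see \cite{Lambert}. Applied at $z = i/(8n)$ and $\nu = 1$, the leading term $\log(i/(8n)) = -\log(8n) + i\pi/2$ dominates, so $\WW_1(i/(8n)) = -\log n + O(\log \log n)$, and consequently $\lambda_1^+ = i\WW_1(i/(8n)) \sim -i\log n$.

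For the quantitative bound \eqref{eq:2c} the crucial identity is that $f(\lambda_1^+) = 0$ rewrites as $e^{2i\lambda_1^+} = 64 n^2 (\lambda_1^+)^2$, which lets me re-express $f$ on a neighborhood of $\lambda_1^+$ in the multiplicative form
\begin{equation*}
f(\lambda_1^+ + z) \; = \; 4\left( 1 - \dfrac{e^{2iz}}{(1 + z/\lambda_1^+)^2} \right).
\end{equation*}
Since $|\lambda_1^+| \to \infty$, the factor $(1 + z/\lambda_1^+)^{-2}$ converges to $1$ uniformly on any fixed disk $\{|z| \leq r_0\}$, so
\begin{equation*}
f(\lambda_1^+ + z) \; = \; 4(1 - e^{2iz}) + O(1/|\lambda_1^+|)
\end{equation*}
uniformly for $|z| \leq r_0 \leq 1/2$. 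The elementary bound $|1 - e^w| \geq |w|/2$ for $|w| \leq 1$ then gives $|4(1 - e^{2iz})| \geq 4 r_0$ on $|z| = r_0$, and fixing $r_0 \leq 1/2$ and taking $n$ large enough that the remainder is $\leq r_0$ yields $|f(\lambda_1^+ + r_0 e^{i\theta})| \geq 3 r_0$.

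The main obstacle is precisely this uniformity in $n$: a direct Taylor expansion around $\lambda_1^+$ would involve $f''$, whose size could a priori blow up since $|e^{2i\lambda}|$ grows like $n^2$ in the neighborhood of $\lambda_1^+$. The multiplicative factorization above circumvents this by using the on-shell relation $e^{2i\lambda_1^+} = 64 n^2 (\lambda_1^+)^2$ to cancel the exponential growth, leaving only the mild polynomial correction $z/\lambda_1^+$ coming from the denominator in $f$, which vanishes as $n \to \infty$.
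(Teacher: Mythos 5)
Your proposal is correct and follows essentially the same route as the paper: both solve $f=0$ via the Lambert function, cite the branch asymptotics, and — crucially for \eqref{eq:2c} — use the on-shell identity $e^{2i\lambda_1^+}=(8n\lambda_1^+)^2$ to rewrite $f(\lambda_1^++z)=4\bigl(1-e^{2iz}/(1+z/\lambda_1^+)^2\bigr)$, which cancels the $n$-dependent exponential growth. The only difference is cosmetic in the final estimate: the paper factors $4(1-A)(1+A)$ with $A=e^{iz}/(1+z/\lambda_1^+)$ and bounds each factor as $r\to0$, whereas you approximate by $4(1-e^{2iz})$ up to $O(1/|\lambda_1^+|)$ and invoke $|1-e^w|\ge|w|/2$ on $|w|\le 1$ (true, though stated without proof).
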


\begin{proof} The equation $f(\lambda) = 0$ is equivalent to
\begin{equation*}
-i\lambda e^{-i\lambda} = \pm \dfrac{i}{8n}.
\end{equation*}
Therefore zeros of $f$ are given by $-i\WW_\nu(\pm i/8n)$. From \cite[equation $(4.20)$]{Lambert} we obtain the asymptotic $\lambda^+_1 \sim -i \ln(n)$. In order to show the lower bound \eqref{eq:2c} we consider $r \in (0,1)$. We prove some estimates that are uniform in $n$ and $\te \in \Ss^1$ as $r \rightarrow 0$. The identity $f(\lambda_1^+)=0$ yields
\begin{equation*}
f(\lambda_1^++re^{i\te})  = 4- 4\left(\dfrac{e^{re^{i\te}}}{1+re^{i\te}/\lambda_1^+}\right)^2.
\end{equation*}
As $r \rightarrow 0$, $e^{re^{i\te}} = 1+re^{i\te} + o(r)$, therefore
\begin{equation*}
1 - \dfrac{e^{re^{i\te}}}{1+re^{i\te}/\lambda_1^+} = \dfrac{re^{i\te} (1-\lambda_1^+) + o(r)}{1+re^{i\te}/\lambda_1^+}.
\end{equation*}
For $n$ large enough we have $\lambda_1^+ \sim -i\ln(n)$ and thus a fortiori $|\lambda_1^+| \geq 2$. This implies
\begin{equation*}
\left| 1 - \dfrac{e^{re^{i\te}}}{1+re^{i\te}/\lambda_1^+} \right| \geq \dfrac{r/2+o(r)}{1+r/2} = r/2 + o(r).
\end{equation*}
Similarly,
\begin{equation*}
\left| 1 + \dfrac{e^{re^{i\te}}}{1+re^{i\te}/\lambda_1^+} \right| \geq 2 + O(r).
\end{equation*}
Therefore for $r$ small enough
\begin{equation*}
|f(\lambda_1^+ + re^{i\te})| \geq 4r + o(r) \geq 3r.
\end{equation*}
This completes the proof of the lemma. \end{proof}

For $\lambda \in \p\Dd(\lambda_1^+,r_0)$, $f(\lambda)$ is bounded from below uniformly as $n \rightarrow \infty$. Hence for $\lambda \in \p \Dd(\lambda_1^+,r_0)$,
\begin{equations*}
4 + O\left((n\lambda)^{-2}\right) - \dfrac{e^{2i\lambda}}{(4n\lambda)^2}\left(1+O\left(n^{-2}\lambda^{-1}\right)\right) = f(\lambda) \left(1+O\left(n^{-2}\lambda^{-1}\right)\right) \\
 = f(\lambda) \left(1+O\left(n^{-2}\ln(n)^{-1}\right)\right). 
\end{equations*}
This implies that for $\lambda \in \p \Dd(\lambda_1^+,r_0)$,
\begin{align*}
D(\lambda) & = -\dfrac{\lambda^2e^{-\nu}}{2\nu}f(\lambda)\left(1+O\left(n^{-2}\ln(n)^{-1}\right)\right)\left(1+O\left(n^{-2}\ln(n)^6\right)\right) \\
   & =-\dfrac{\lambda^2 e^{-\nu}}{2\nu}f(\lambda)\left(1+O\left(n^{-2}\ln(n)^6\right)\right).
\end{align*}
By Rouch\'e's theorem this is enough to ensure that for $n$ large enough, $D(\lambda)$ has exactly one zero on $\CC(\lambda_1^+,r_0)$. This proves that there exists a resonance behaving like $-i\ln(n)$.

\section{Applications of Theorem \ref{thm:5}}\label{sec:3}
Here we consider $W \in C_0^\infty(\Bb^d(0,L) \times \Tt^d,\C)$ and $V_\epsi$ given by \eqref{eq:1a}. We assume that Theorem \ref{thm:5} holds and we get directly to the applications. We prove that resonances of $V_\epsi$ in compact sets admit a full expansion as $\epsi \rightarrow 0$ (Theorem \ref{cor:3q}); that they can be well approximated by a small perturbation $V_{\eff,\epsi}$ of $W_0$ (Theorem \ref{cor:3}); and we give a description of the localization of resonances of $V_\epsi$ (Theorem \ref{cor:3s}).

\subsection{Expansion of resonances in powers of $\epsi$}\label{subsec:2} In this paragraph we prove Theorem \ref{cor:3q}. We start with the case $d \geq 3$ or $\lambda_0 \neq 0$.

\begin{proof}[Proof of Theorem \ref{cor:3q} assuming $d \geq 3$ or $\lambda_0 \neq 0$.] Let $\lambda_0$ be a simple resonance of $W_0$ with $\lambda_0 \neq 0$ if $d=1$. For $N > 0$ and $p=4N(d+N)$ consider $D_V(\lambda)$ given in \eqref{eq:1d}. This is a holomorphic function of $\lambda$ near $\lambda_0$. By Theorem \ref{thm:5} it converges to $D_{W_0}$ as $\epsi \rightarrow 0$ uniformly on a neighborhood of $\lambda_0$. Thus by Hurwitz's theorem $D_V$ has exactly one zero $\lambda_\epsi$ that converges to $\lambda_0$. It follows that for $\epsi$ small enough and $r_0$ small enough $\lambda_\epsi$ is the only resonance of $V$ on $\Dd(\lambda_0,r_0)$.

Define $f(\lambda,\epsi) = D_V(\lambda)$ if $\epsi \neq 0$ and $f(\lambda,0) = D_{W_0}(\lambda)$ otherwise. By Theorem \ref{thm:5} the function $f$ is of class $C^{N-1}$ in a neighborhood of $(\lambda_0,0)$. In addition since 
\begin{equation*}
\dd{f}{\lambda}(\lambda_0,0) = D_{W_0}'(\lambda_0) \neq 0
\end{equation*}
the implicit function theorem implies that the equation $f(\lambda,\epsi)=0$ has exactly one solution in a neighborhood of $(\lambda_0,0)$. Using uniqueness it must be $(\lambda_\epsi, \epsi)$. It follows that the function $\epsi \rightarrow \lambda_\epsi$ is $C^{N-1}$. As $N$ was arbitrary we conclude that $\epsi \rightarrow \lambda_\epsi$ is $C^\infty$ for $\epsi$ near $0$. Thus for all $N$, 
\begin{equation*}
\lambda_\epsi = \lambda_0 + \epsi c_1 + ... + \epsi^{N-1} c_{N-1} + O(\epsi^N), \ \  c_j \in \C.
\end{equation*} 

We now derive the values of $c_1, c_2, c_3$. Let $R_{W_0}(\lambda)$ be the meromorphic continuation of the operator $(-\Delta - \lambda^2 + W_0)^{-1}$. Since $\lambda_0$ is a simple resonance of $W_0$ there exists $u \in H^2_\loc(\R^d,\C)$, $v \in \DD'(\R^d,\C)$ such that 
\begin{equation*}
R_{W_0}(\lambda) = \dfrac{i u \otimes v}{\lambda - \lambda_0} + H(\lambda)
\end{equation*}
where $H(\lambda) : L^2_\comp \rightarrow H^2_\loc$ is a family of operators holomorphic near $\lambda_0$. Let $f$ be a smooth compactly supported function on $\R^d$. Since $R_{W_0}(\lambda)(P_V-\lambda^2)f=f$ we have
\begin{equation*}
0 = (iu \otimes v)(P_V-\lambda_0^2)f  = iu \lr{\ov, (P_V-\lambda_0^2)f}_{\DD'}  = iu \lr{(P_V-\lambda^2_0)^* \ov, f}_{\DD'}.
\end{equation*}
Since this is valid for arbitrary $f$ it yields $(P_V-\lambda^2_0)^* \ov=0$. Thus $v \in H^2_\loc$ and $(P_V-\lambda_0^2)v=0$ which implies $v+R_0(\lambda_0) W_0 v = 0$.

Let $\Pi_0$ be the operator $-i\rho (u \otimes v) W_0$. We claim that the family of operators
\begin{equation}\label{eq:12ab}
(-K_{W_0})^{p-2} \left( \Id + K_{W_0} \right)^{-1} - \dfrac{\Pi_0}{\lambda-\lambda_0}
\end{equation}
is holomorphic in a neighborhood of $\lambda_0$. Indeed since $(\Id + K_{W_0})^{-1} = \Id -\rho R_{W_0}(\lambda)W_0$ there exists a family of operators $B(\lambda)$ holomorphic near $\lambda_0$ such that
\begin{equation*}
(\Id + K_{W_0})^{-1} =   \dfrac{\Pi_0}{\lambda - \lambda_0}  + B(\lambda).
\end{equation*}
It leads to
\begin{equations*}
(-K_{W_0})^{p-2} \left( \Id + K_{W_0} \right)^{-1} - \dfrac{\Pi_0}{\lambda-\lambda_0} = (-K_{W_0})^{p-2} \left( \Id + K_{W_0} \right)^{-1} - (\Id + K_{W_0})^{-1} + B(\lambda) \\
   = -\left(\Id - (-K_{W_0})^{p-2}\right) \left( \Id + K_{W_0} \right)^{-1} + B(\lambda) 
   = - (\Id + ... + (-K_{W_0})^{p-3}) + B(\lambda).
\end{equations*}
This is as claimed holomorphic near $\lambda_0$.

Let $\Lambda \in L^\infty(\Bb^d(0,L),\C)$. We now compute the trace $\trace\left((-K_{W_0})^{p-2}(\Id + K_{W_0})^{-1} K_\Lambda\right)$ modulo a holomorphic function. Since the operator given by \eqref{eq:12ab} is holomorphic near $\lambda_0$ and trace class there exists a function $\varphi$ holomorphic near $\lambda_0$ such that
\begin{equation*}
\trace\left((-K_{W_0})^{p-2}(\Id + K_{W_0})^{-1} K_\Lambda\right) = \dfrac{\trace(\Pi_0K_\Lambda)}{\lambda-\lambda_0} + \varphi(\lambda).
\end{equation*}
Using $\Pi_0=-i \rho u \otimes v W_0$ and $v+R_0(\lambda_0) W_0 v = 0$ we get 
\begin{equations*}
\trace(\Pi_0 K_\Lambda)(\lambda_0)  = -i\int_{\R^d} \rho(x) u(x) v(y) W_0(y) R_0(\lambda_0,y,x) \Lambda(x) dx dy\\
  = -i\int_{\R^d}  u(x) \Lambda(x) \left( \int_{\R^d} R_0(\lambda_0,x,y)W_0(y) v(y) dy \right)dx \\
  = -i\int_{\R^d}  u(x) \Lambda(x) (R_0(\lambda_0) W_0v)(x)dx  = i\int_{\R^d}   \Lambda(x) u(x) v(x) dx.
\end{equations*}
It follows that
\begin{equation}\label{eq:11z}\begin{split}
\trace\left((-K_{W_0})^{p-2}(\Id + K_{W_0})^{-1} K_\Lambda\right) = \dfrac{i}{\lambda-\lambda_0}\left(\int_{\R^d}   \Lambda u v\right) + \varphi(\lambda).\end{split}
\end{equation}
Apply the formula \eqref{eq:11z} to $\Lambda = \epsi^2 \Lambda_0$ to obtain
\begin{align*}
D_{V}(\lambda) & = D_{W_0}(\lambda)\left( 1-  \trace\left((-K_{W_0})^{p-2}(\Id + K_{W_0})^{-1} K_{\epsi^2 \Lambda_0}\right) \right) + O(\epsi^3)\\ 
& = D_{W_0}(\lambda) \left(1-\dfrac{i\epsi^2}{\lambda-\lambda_0} \left(\int_{\R^d}   \Lambda_0 u v\right) - \epsi^2 \varphi_0(\lambda) \right) + O(\epsi^3).
\end{align*}
Here the function $\varphi_0$ is holomorphic near $\lambda_0$ and does not depend on $\epsi$. If $g$ is the holomorphic function such that $g(\lambda) (\lambda-\lambda_0) = D_{W_0}(\lambda)$ then
\begin{equation}\label{eq:1u}
D_V(\lambda) = g(\lambda)\left( \lambda-\lambda_0 - i\epsi^2\left(\int_{\R^d}   \Lambda_0 u v\right) - \epsi^2 (\lambda-\lambda_0) \varphi_0(\lambda)  \right) + O(\epsi^3).
\end{equation}
Note that as $\epsi \rightarrow 0$ we have $g(\lambda_\epsi) \rightarrow D_{W_0}'(\lambda_0) \neq 0$. Thus specializing the identity \eqref{eq:1u} at $\lambda=\lambda_\epsi$ leads to
\begin{equation*}
0 = \lambda_\epsi - \lambda_0 - i\epsi^2 \left(\int_{\R^d}   \Lambda_0 u v\right) - \epsi^2 (\lambda_\epsi-\lambda_0) \varphi_0(\lambda_\epsi) + O(\epsi^3).
\end{equation*}
Since $\lambda_\epsi - \lambda_0 = O(\epsi)$ and $\varphi_0(\lambda_\epsi) \rightarrow \varphi_0(\lambda_0)$ as $\epsi \rightarrow 0$ we obtain 
\begin{equation}\label{eq:1v}
\lambda_\epsi = \lambda_0 + i\epsi^2 \left(\int_{\R^d}   \Lambda_0 u v\right) + O(\epsi^3).
\end{equation}
This recovers the result of \cite{DucVulWei1}.

Now to get the second order correction we apply \eqref{eq:11z} successively to $\Lambda = \epsi^2 \Lambda_0$ and $\Lambda=\epsi^3 \Lambda_1$. The same operations as in the previous paragraph lead to
\begin{align*}
D_V(\lambda) & = D_{W_0}(\lambda)\left( 1-  \trace\left((-K_{W_0})^{p-2}(\Id + K_{W_0})^{-1} K_{\epsi^2 \Lambda_0+\epsi^3\Lambda} \right) \right) + O(\epsi^4) \\ 
& = g(\lambda)\left( \lambda-\lambda_0 -i \left(\int_{\R^d} \left( \epsi^2 \Lambda_0 + \epsi^3\Lambda_1\right) u v\right) - (\lambda-\lambda_0)(\epsi^2 \varphi_0(\lambda)+ \epsi^3\varphi_1(\lambda))\right) + O(\epsi^4)
\end{align*}
for a function $\varphi_1$ holomorphic near $\lambda_0$. Here again specialize this identity at $\lambda = \lambda_\epsi$ and use $g(\lambda_\epsi) \rightarrow g(\lambda_0) \neq 0$ to obtain
\begin{equation*}
0 = \lambda_\epsi - \lambda_0 - i \left(\int_{\R^d} \left( \epsi^2 \Lambda_0 + \epsi^3\Lambda_1\right) u v\right) - (\lambda_\epsi-\lambda_0)(\epsi^2 \varphi_0(\lambda_\epsi)+ \epsi^3\varphi_1(\lambda_\epsi)) + O(\epsi^4).
\end{equation*}
This time by \eqref{eq:1v} we know that $\lambda_\epsi - \lambda_0 = O(\epsi^2)$. It follows that
\begin{equation*}
\lambda_\epsi = \lambda_0 + i \epsi^2\left(\int_{\R^d}\Lambda_0 uv \right) + i\epsi^3 \left(\int_{\R^d} \Lambda_1 u v\right) + O(\epsi^4).
\end{equation*}
This proves the theorem.
\end{proof}

In the case $\lambda_0=0$ and $d=1$ we use the following refinement of Theorem \ref{thm:5}:

\begin{lem}\label{lem:2s} Let $W$ belong to $C_0^\infty([-L,L] \times \Tt^1, \C)$ and $V$ be given by \eqref{eq:1a}. There exists an entire function $h_V$ satisfying the following:
\begin{itemize}
\item[$(i)$] $\lambda_0$ is a resonance of $V$ of multiplicity $m$ if and only if it is a zero of $h_V$ of multiplicity $m$.
\item[$(ii)$] There exists $h_4, ..., h_{N-1}$ such that locally uniformly on  $\C$
\begin{equation*}
h_V(\lambda) = \lambda d_{W_0}(\lambda) \left( 1 - \trace\left( (\Id + K_{W_0})^{-1} K_\Lambda \right) \right) + \epsi^4 h_4(\lambda) + ... + \epsi^{N-1} h_{N-1}(\lambda) + O(\epsi^N)
\end{equation*}
where $d_{W_0}(\lambda) = \Det(\Id + K_{W_0})$ and $\Lambda$ is the potential given by
\begin{equation*}
\Lambda = \epsi^2 \Lambda_0 + \epsi^3 \Lambda_1 =  \epsi^2 \sum_{k \neq 0} \dfrac{W_k W_{-k}}{k^2} - 2 \epsi^3 \sum_{k \neq 0} \dfrac{W_k (DW_{-k})}{k^3}.
\end{equation*}
\end{itemize}
\end{lem}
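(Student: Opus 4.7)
The strategy is to build $h_V$ from the trace-class Fredholm determinant $d_V(\lambda) := \det(\Id + K_V(\lambda))$, multiplied by a factor of $\lambda$ to cancel the simple pole of $R_0(\lambda)$ at $0$, and then to redo the proof of Theorem \ref{thm:5} in this un-regularized setting, extended uniformly across $\lambda = 0$.

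First I would analyze the structure of $K_V$ near $\lambda = 0$. In dimension one $R_0(\lambda)$ has kernel $\tfrac{i}{2\lambda} e^{i\lambda|x-y|}$, so $K_V(\lambda) = \lambda^{-1} T_V(\lambda)$ with $T_V$ entire and trace class, and $T_V(0) = \tfrac{i}{2}\, \rho \otimes V$ of rank one. Writing $T_V(\lambda) = T_V(0) + \lambda S_V(\lambda)$ and factoring
\begin{equation*}
\Id + K_V(\lambda) = (\Id + S_V(\lambda))\bigl(\Id + \lambda^{-1}(\Id + S_V(\lambda))^{-1} T_V(0)\bigr),
\end{equation*}
one sees that $d_V$ is meromorphic on $\C$ with at most a simple pole at $0$, so $h_V(\lambda) := \lambda\, d_V(\lambda)$ extends to an entire function. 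Zeros of $h_V$ away from $0$ correspond to resonances of $V$ by Lemma \ref{lem:1e} and the standard identification of the resonance multiplicity with the order of vanishing of the Fredholm determinant (cf.\ \cite{GLMZ05}). At $\lambda = 0$ the matching of multiplicities follows from comparing the Laurent expansion of $R_V(\lambda)$ with the explicit Laurent expansion of $\Id + K_V(\lambda)$ produced by the factorization above, using that the rank-one nature of $T_V(0)$ makes the second factor a rank-one modification of the identity.

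To derive the expansion I would repeat the argument of Theorem \ref{thm:5} with the un-regularized determinant. In dimension one $K_V(\lambda)$ is trace class for $\lambda \neq 0$, so one does not need the $\Psi$-regularization of \eqref{eq:2d}, and each coefficient in the $\epsi$-expansion of $D_{V_\epsi}$ simplifies by the formal substitution $(-K_{W_0})^{p-2} \leadsto \Id$. The resulting expansion, valid uniformly on compact subsets of $\C \setminus \{0\}$, reads
\begin{equation*}
d_V(\lambda) = d_{W_0}(\lambda)\bigl(1 - \trace((\Id+K_{W_0})^{-1}K_\Lambda)\bigr) + \sum_{j=4}^{N-1} \epsi^j \tilde{h}_j(\lambda) + O(\epsi^N),
\end{equation*}
where the $\tilde{h}_j$ are meromorphic on $\C$ with at most a simple pole at $0$ inherited from $(\Id + K_{W_0})^{-1}$. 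Multiplying through by $\lambda$ produces entire functions $h_j(\lambda) := \lambda\, \tilde{h}_j(\lambda)$, and the leading term becomes the $\lambda d_{W_0}(\lambda)(1 - \trace((\Id+K_{W_0})^{-1}K_\Lambda))$ required by the statement.

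The main obstacle is to extend the $O(\epsi^N)$ error bound uniformly across $\lambda = 0$. Away from $0$ the argument of Theorem \ref{thm:5} delivers the bound directly, but near $0$ one must show that the $\epsi$-dependent parts of the expansion have a pole of order at most one at $\lambda = 0$ with residue depending continuously on $\epsi$. For this I would observe that $h_V$ and its leading approximation are both entire, so their difference is an entire function controlled by $C |\lambda| \epsi^N$ on every annulus $\{0 < |\lambda| < r\}$, and the maximum principle then propagates the estimate to the full disk $\{|\lambda| \leq r\}$. The required continuity of the residues in $\epsi$ reduces to the continuity of $T_V(0) = \tfrac{i}{2}\rho \otimes V$ as $\epsi \to 0$, which is controlled by the Peetre-inequality estimates of \S\ref{subsec:7} combined with the structural factorization produced in the first step.
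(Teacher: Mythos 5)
Your overall plan --- define $h_V := \lambda\,d_V(\lambda)$ using the unregularized trace-class determinant (entire by \cite[Theorem~2.6]{DyaZwo}), expand it in $\epsi$ away from $0$, and use the maximum principle to carry the expansion across the origin --- is a genuinely different route from the paper's. The paper instead builds $h_V$ from the \emph{regularized} determinant $D_V$ and a carefully constructed family of holomorphic modifications $t_m$ whose singular parts match those of $\trace(K_V^m)$; the whole content of the lemma is then pushed into Lemma~\ref{lem:1g}. Your maximum-principle endgame is the same in spirit as the paper's Lemma~\ref{lem:1i}, so that part is fine if the $\C\setminus\{0\}$ expansion is actually established.

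The genuine gap is the claim that the expansion of $d_V$ follows from Theorem~\ref{thm:5} ``by the formal substitution $(-K_{W_0})^{p-2}\leadsto\Id$.'' The two determinants are related by $d_V = D_V\exp\bigl(-\sum_{m=1}^{p-1}(-1)^m\trace(K_V^m)/m\bigr)$, so producing an expansion of $d_V$ requires an expansion of $\trace(K_V^m)$ for $1 \le m \le p-1$. These low-order traces are precisely the hard part in dimension one: the resolvent kernel $\frac{i}{2\lambda}e^{i\lambda|x-y|}$ contributes non-smooth $|x-y|$ factors, and after passing to $y$-coordinates one is left with oscillatory integrals of the form $\int \varphi(y)e^{iky/\epsi}|y|\,dy$, which do \emph{not} vanish to all orders in $\epsi$ but instead pick up boundary contributions from the singularity of $|y|$ at $y=0$. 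This is exactly what the paper's Lemma~\ref{lem:1c} quantifies and what feeds the proof of Lemma~\ref{lem:1g} via the decomposition $K_V = E_{V,0} + E_{V,1}$ into a pole-carrying smoothing part and a pole-free non-smoothing part. Nothing in Theorem~\ref{thm:5} or its proof deals with these terms, because there the exponent $p$ was chosen large precisely to avoid them. So ``redo the proof of Theorem~\ref{thm:5}'' is not an available shortcut; one would need to rebuild the $E_{V,0}/E_{V,1}$ analysis from scratch.

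A second, smaller worry: you reduce the continuity of the residues at $0$ to ``the continuity of $T_V(0) = \tfrac{i}{2}\rho\otimes V$ as $\epsi\to 0$.'' But $V_\epsi \to W_0$ only weakly, not in $L^\infty$, so the rank-one operator $\rho\otimes V_\epsi$ does not converge to $\rho\otimes W_0$ in operator norm. The relevant quantity is $\int V_\epsi$, which does converge to $\int W_0$ with an $O(\epsi^\infty)$ error by non-stationary phase, but this is a statement about traces of $T_V(0)$ against fixed test functions, not about operator-norm continuity. Invoking ``Peetre-inequality estimates from \S\ref{subsec:7}'' does not substitute for this --- those estimates control $|V|_{H^{-1}}$, which would only give $O(\epsi^s)$ for $s < 1$, far short of the $O(\epsi^4)$ you need. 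The expansion with the correct $O(\epsi^4)$ remainder, including its value at $\lambda = 0$, is exactly where the integration-by-parts lemma is indispensable.
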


We defer the proof of Lemma \ref{lem:2s} to \S \ref{subsec:4}. The proof of Theorem \ref{cor:3q} in the case $\lambda_0=0$ and $d=1$ is the same as in the case $d \neq 1$ or $\lambda_0 \neq 0$ using $h_V$ instead of $D_V$ and we skip the details. We end this part with a version of Theorem \ref{cor:3q} for resonances $\lambda_0$ of $W_0$ with higher multiplicity.

\begin{thm}\label{cor:3r} Assume that $W$ belongs to $C_0^\infty(\Bb^d(0,L) \times \Tt^d,\C)$ and that $\lambda_0$ is a resonance of $W_0$ with multiplicity $m$. Then in a neighborhood of $\lambda_0$ the potential $V_\epsi$ has exactly $m$ resonances $\lambda_{1,\epsi}, ..., \lambda_{m,\epsi}$ for $\epsi$ small enough. In addition for every $j \in [1,m]$ and $N >0$,
\begin{equation*}
\lambda_{j,\epsi} = \lambda_0 + c_{j,2} \epsi^{2/m} + c_{j,3} \epsi^{3/m} + ... + c_{j,N-1} \epsi^{(N-1)/m} + O(\epsi^{N/m}), \ \  c_{j,n} \in \C.
\end{equation*}
\end{thm}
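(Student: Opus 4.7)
The plan is to mimic the proof of Theorem \ref{cor:3q}, replacing the implicit function theorem by a Weierstrass preparation and a Newton-Puiseux analysis to accommodate the fact that $\lambda_0$ is a zero of $D_{W_0}$ of order $m > 1$. First I would invoke Theorem \ref{thm:5} with $N$ as large as desired and $p = 4(d+N)N$ (using $h_V$ from Lemma \ref{lem:2s} instead of $D_V$ in the special case $d=1$, $\lambda_0 = 0$) to get
\begin{equation*}
D_V(\lambda) = D_{W_0}(\lambda) + \epsi^2 a_2(\lambda) + \epsi^3 a_3(\lambda) + \cdots + \epsi^{N-1} a_{N-1}(\lambda) + O(\epsi^N)
\end{equation*}
uniformly near $\lambda_0$, with each $a_k$ holomorphic. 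Writing $D_{W_0}(\lambda) = (\lambda - \lambda_0)^m g_0(\lambda)$ with $g_0(\lambda_0) \neq 0$, Rouch\'e's theorem on $\partial \Dd(\lambda_0, r)$ for small $r$ then shows that for $\epsi$ small enough $D_V$ has exactly $m$ zeros in $\Dd(\lambda_0, r)$, counted with multiplicity; these define the resonances $\lambda_{1, \epsi}, \ldots, \lambda_{m, \epsi}$.

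Next I would view $f(\lambda, \epsi) := D_V(\lambda)$, extended by $f(\lambda, 0) := D_{W_0}(\lambda)$, as a $C^{N-2}$ function near $(\lambda_0, 0) \in \C \times [0, \infty)$ that is holomorphic in $\lambda$. Since $f(\cdot, 0)$ vanishes to exact order $m$ at $\lambda_0$, Malgrange's smooth preparation theorem produces a factorization
\begin{equation*}
f(\lambda, \epsi) = U(\lambda, \epsi)\, P(\lambda, \epsi), \qquad P(\lambda, \epsi) = (\lambda - \lambda_0)^m + \sum_{j=0}^{m-1} b_j(\epsi)(\lambda - \lambda_0)^j,
\end{equation*}
with $U(\lambda_0, 0) \neq 0$ and $b_j \in C^{N-2}$, $b_j(0) = 0$. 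Comparing $\epsi$-expansions on both sides and noting that $f$ has no $\epsi^1$-term yields $b_j(\epsi) = b_{j,2}\epsi^2 + b_{j,3}\epsi^3 + \cdots + b_{j,N-1}\epsi^{N-1} + O(\epsi^N)$. The resonances $\lambda_{j, \epsi}$ are thus the roots of the monic polynomial $P(\,\cdot\,, \epsi)$.

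To extract the fractional expansion I would substitute $\lambda - \lambda_0 = \epsi^{2/m}\mu$ and divide $P(\lambda, \epsi) = 0$ by $\epsi^2$; since $b_j(\epsi) = O(\epsi^2)$, the equation rewrites as $\mu^m + \sum_{j=0}^{m-1} \gamma_j(\eta)\mu^j = 0$ with $\eta = \epsi^{1/m}$ and $\gamma_j \in C^{N-2}$, satisfying $\gamma_j(0) = 0$ for $j \geq 1$ and $\gamma_0(0) = b_{0,2}$. Generically $b_{0,2} \neq 0$ and the limiting polynomial $\mu^m + b_{0,2}$ has $m$ simple roots, so the implicit function theorem in $\eta$ gives $m$ smooth branches $\mu_j(\eta)$, which translate into the Puiseux expansion claimed. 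In degenerate cases where $b_{0,2}$ or further $b_{j,k}$ vanish, the Newton polygon of $P$ still forces fractional-power expansions in $\eta$ of the same form (possibly with vanishing leading coefficients), and a finite iteration of Newton-polygon blow-ups handles this because only finitely many orders of $\epsi$ must be resolved.

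The main obstacle is the Puiseux step in the smooth (non-analytic) category, where the classical Puiseux theorem does not apply directly. My workaround is to use Theorem \ref{thm:5} to expand the preparation coefficients $b_j(\epsi)$ as polynomials in $\epsi$ modulo $O(\epsi^N)$ for arbitrary $N$. This reduces matters to a polynomial Newton-polygon problem that terminates in finitely many steps, with the implicit function theorem producing smooth branches at each simple root encountered along the way.
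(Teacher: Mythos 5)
Your proposal follows the same route as the paper's: expand $D_V$ via Theorem \ref{thm:5} (or $h_V$ via Lemma \ref{lem:2s} when $\lambda_0=0$, $d=1$), apply Hurwitz to locate exactly $m$ zeros converging to $\lambda_0$, and then extract a fractional-power expansion of these zeros. The paper's proof simply asserts \emph{"these zeros admit a Puiseux expansion"} and kills the $\epsi^{1/m}$ coefficient using $D_V = D_{W_0}+O(\epsi^2)$; your preparation-theorem / Newton-polygon argument is a legitimate unpacking of exactly that step, though you could sidestep the finite-regularity Malgrange machinery by applying holomorphic Weierstrass preparation to the jointly analytic truncation $D_{W_0}+\epsi^2 a_2+\cdots+\epsi^{N-1}a_{N-1}$ and then transporting zeros to $D_V$ with Rouch\'e on $\epsi^{N/m}$-disks.
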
 

\begin{proof} Let $\lambda_0 \in X_d$ be a resonance of $W_0$ of multiplicity $m > 1$. Fix $N > 0$ and $p$, $D_V$ given by Theorem \ref{thm:5}. Since locally uniformly on $\C$ we have $D_V(\lambda) \rightarrow D_{W_0}(\lambda)$, by Hurwitz's theorem the function $D_V$ has exactly $m$ zeros (counted with multiplicity) converging to $\lambda_0$. These zeros admit a Puiseux expansion: there exists $c_{1,1}, ..., c_{m,N-1}$ such that the zeros $\lambda_{1,\epsi}, ..., \lambda_{m,\epsi}$ of $D_V$ near $\lambda_0$ are given by
\begin{equation*}
\lambda_{j,\epsi} = \lambda_0 + \epsi^{1/m} c_{j,1} + ... + \epsi^{(N-1)/m} c_{j,N-1} + O(\epsi^{N/m}).
\end{equation*}
Now since $D_V(\lambda) = D_{W_0}(\lambda) + O(\epsi^2)$, $c_{j,1} = 0$. In the case $\lambda_0=0$ in dimension one the proof can be modified by considering $h_V$ instead of $D_V$. This proves Theorem \ref{cor:3r}.\end{proof}

\subsection{Derivation of an effective potential}\label{sub:3} In this part we prove Theorem \ref{cor:3}. We start by giving a few preliminaries concerning trace class operators and Fredholm determinant. The reader can consult \cite[Chapter B]{DyaZwo} for a complete introduction. The singular values of a compact operator $X : \HH \rightarrow \HH$ are defined as the nonincreasing sequence $s_j(X) = \lambda_j((X^*X)^{1/2})$. In particular $s_0(X) = |X|_{\BB(\HH)}$. The singular values satisfy two remarkable inequalities. If $Y$ is another compact operator then for every $j,\ell$, 
\begin{equation*}
s_{j+\ell}(X+Y) \leq s_j(X) + s_\ell(Y),
\end{equation*}
\begin{equation*}
s_{j\ell}(XY) \leq s_j(X) s_\ell(Y).
\end{equation*}
We say that a compact operator $X$ is trace class if the sequence $s_j(X)$ is summable. The trace class norm of $X$ denoted by $|X|_\LL$ is the sum of the series. If $X$ trace class we can define the trace of $X$ and the Fredholm determinant $\Det(\Id + X)$. This determinant vanishes if and only if $\Id + X$ is not invertible. Recall that $X8d=\C$ for $d \geq 3$, $X_1=\C \setminus \{0\}$ and that $K_\VV = \rho R_0(\lambda) \VV$.

\begin{lem}\label{lem:1b} Let $\VV$ in $L^\infty(\Bb^d(0,L),\C)$. Uniformly on $\{\Im \lambda \geq 1\}$ and locally uniformly on $X_d$, $s_j(K_\VV) \leq C  |\VV|_\infty j^{-2/d}$. Consequently if $p \geq d$ is an integer the operator $K_\VV^p$ is trace class and locally uniformly in $X_d$, uniformly in $\{\Im \lambda \geq 1\}$, $|K_\VV^p|_{\LL} \leq C |\VV|_\infty^p$.
\end{lem}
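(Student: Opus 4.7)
The plan is to factor $K_\VV$ through a Sobolev space on a bounded set and then invoke Weyl's law for the singular values of the resulting Rellich embedding. Since $\rho$ has support in $\Bb^d(0, L+1)$, I would write $K_\VV = \iota \circ \tK_\VV$, where $\tK_\VV$ denotes $K_\VV$ regarded as taking values in $H^2(\Bb^d(0, L+1))$ and $\iota : H^2(\Bb^d(0, L+1)) \hookrightarrow L^2(\R^d)$ is the extension-by-zero embedding.

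The first step is to bound $|\tK_\VV|_\BB$. Applying Lemma \ref{lem:1h} with $|\az| \leq 2$ and $|\beta|=0$ controls $\|K_\VV u\|_{H^2(\Bb^d(0,L+1))}$ by $C |\VV|_\infty |u|_2$, uniformly under the stated hypotheses on $\lambda$ (the factors $\lr{\lambda}^{|\az|-1}$ or $|\lambda|^{-1}\lr{\lambda}^{|\az|}$ appearing in Lemma \ref{lem:1h} are absorbed into the constant by local uniformity on $X_d$ combined with the bound on $\{\Im\lambda \geq 1\}$). The second step is the classical Weyl asymptotic for the Dirichlet Laplacian on the ball $\Bb^d(0,L+1)$: its $j$-th eigenvalue grows like $c_d\, j^{2/d}$, which translates into $s_j(\iota) \leq C j^{-2/d}$ for the Rellich embedding $H^2 \hookrightarrow L^2$ on the ball. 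The multiplicativity property $s_j(AB) \leq |A|_\BB\, s_j(B)$ then gives
\begin{equation*}
s_j(K_\VV) \leq |\tK_\VV|_\BB \cdot s_j(\iota) \leq C |\VV|_\infty j^{-2/d},
\end{equation*}
which is the first assertion.

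For the trace-class bound, I would apply the standard product inequality $s_{pj+1}(X^p) \leq s_{j+1}(X)^p$ to $X = K_\VV$ and sum, obtaining
\begin{equation*}
|K_\VV^p|_\LL \leq p \sum_{j \geq 0} s_{j+1}(K_\VV)^p \leq C |\VV|_\infty^p \sum_{j \geq 1} j^{-2p/d}.
\end{equation*}
The series converges as soon as $2p/d > 1$; under the hypothesis $p \geq d$ (and $d$ odd) this is automatic since $2p/d \geq 2$. The main technical input is thus the Weyl asymptotic for Sobolev embeddings on bounded domains, which is classical and requires no adaptation. The only subtlety I foresee is the careful book-keeping of the $\lambda$-dependence coming out of Lemma \ref{lem:1h}, needed to reconcile the two uniformity statements (uniformity on $\{\Im \lambda \geq 1\}$ and local uniformity on $X_d$) with the polynomial factors of $\lr{\lambda}$ that the pointwise estimates produce.
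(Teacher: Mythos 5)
Your proposal is correct and follows essentially the same route as the paper: the paper cites Dyatlov--Zworski's singular value estimate (their (B.3.9)) to get $s_j(K_\VV)\le Cj^{-2/d}|\lr{D}^2K_\VV|_\BB$, then invokes Lemma~\ref{lem:1h} and the same iterated product inequality for singular values, whereas you unpack (B.3.9) into its Weyl-law/Rellich-embedding content; the two are the same mechanism presented at different levels of explicitness.
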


\begin{proof} We combine \cite[Equation (B.3.9]{DyaZwo} with Lemma \ref{lem:1h}. This gives:
\begin{equation*}
s_j(K_\VV) \leq C j^{-2/d} |\lr{D}^2K_\VV|_\BB  \leq C  |\VV|_\infty j^{-2/d}.
\end{equation*}
This estimate works both locally uniformly on $X_d$ and uniformly on $\{\Im \lambda \geq 1\}$. In order to prove that the operator $K_\VV^p$ belongs to $\LL$ for $p \geq d$ it suffices to prove that the sequence of singular values $s_j(K_\VV^p)$ is summable. Using the properties of the singular values,
\begin{equation*} 
\sum_{j = 0}^\infty s_j(K_\VV^p)  \leq p\sum_{j=0}^\infty s_{pj}(K_\VV^p) \leq p \sum_{j=0}^\infty s_{j}(K_\VV)^p 
\leq C  |\VV|_\infty^p \sum_{j=0}^\infty j^{-2p/d}.
\end{equation*}
Since $p \geq d$ the series converges and the lemma follows. \end{proof}

This lemma implies that for $\VV \in L^\infty(\Bb^d(0,L),\C)$ the Fredholm determinant 
\begin{equation*}
D_\VV(\lambda) = \Det(\Id + \Psi(K_\VV)), \ \ \  \Psi(z) = (1+z) \exp\left(-z+ \dfrac{z^2}{2}- ...  +\dfrac{(-z)^{p-1}}{p-1} \right) - 1
\end{equation*}
is well defined when $\lambda \in X_d$ -- see \cite[Lemma 6.1]{Si77}. It is an entire function of $\lambda$ for $d \geq 3$ and is a meromorphic function of $\lambda$ with a pole at $\lambda=0$ for $d=1$. We now show the seemingly unknown:

\begin{lem}\label{lem:7} Let $W_0, \Lambda \in L^\infty(\Bb^d(0,L),\C)$. If $p \geq d$ and $D_{W_0+\epsi \Lambda}$ is the Fredholm determinant given by \eqref{eq:1d} then there exists $b_0, b_1, ...$ holomorphic functions of $\lambda \in X_d$ such that locally uniformly on  $X_d$,
\begin{equation*}
D_{W_0+\epsi \Lambda}(\lambda) = \sum_{j = 0}^\infty b_j(\lambda) \epsi^j.
\end{equation*} 
In addition $b_0(\lambda) = D_{W_0}(\lambda)$ and
\begin{equation*}
b_1(\lambda) = D_{W_0}(\lambda) \cdot \trace \left((\Id + K_{W_0})^{-1} (-K_{W_0})^{p-1} K_\Lambda\right).
\end{equation*}
\end{lem}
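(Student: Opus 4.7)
The key algebraic fact is that by construction
\begin{equation*}
\Id + \Psi(A) = (\Id + A) e^{Q(A)}, \qquad Q(z) = \sum_{k=1}^{p-1} \frac{(-z)^k}{k},
\end{equation*}
and a direct computation $(1+z) Q'(z) = -1 + (-z)^{p-1}$ gives the compact formula $\Psi'(z) = (-z)^{p-1} e^{Q(z)}$. In particular $\Psi$ vanishes to order $p$ at the origin, which together with Lemma \ref{lem:1b} will allow us to work in the trace class throughout.

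Since the assignment $\VV \mapsto K_\VV = \rho R_0(\lambda) \VV$ is linear, $K_{W_0 + \epsi \Lambda} = K_{W_0} + \epsi K_\Lambda$. Because $\Psi$ is entire with a zero of order $p$ at $0$, the arguments of \cite[Lemma~6.1]{Si77} combined with Lemma \ref{lem:1b} show that $\epsi \mapsto \Psi(K_{W_0} + \epsi K_\Lambda)$ is entire with values in $\LL$, locally uniformly in $\lambda \in X_d$. Holomorphy of the Fredholm determinant on $\LL$ then furnishes the expansion
\begin{equation*}
D_{W_0 + \epsi \Lambda}(\lambda) = \sum_{j=0}^\infty b_j(\lambda) \epsi^j,
\end{equation*}
converging locally uniformly in $(\lambda, \epsi) \in X_d \times \C$ with each $b_j$ holomorphic on $X_d$. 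Setting $\epsi = 0$ gives $b_0 = D_{W_0}$.

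To identify $b_1$, first restrict to the open set $U \subset X_d$ where $D_{W_0}$ does not vanish, so that $\Id + \Psi(K_{W_0})$ is invertible. The standard logarithmic-derivative formula for Fredholm determinants gives
\begin{equation*}
b_1(\lambda) = D_{W_0}(\lambda) \cdot \trace \bigl((\Id + \Psi(K_{W_0}))^{-1} \cdot \partial_\epsi|_{\epsi = 0} \Psi(K_{W_0} + \epsi K_\Lambda)\bigr).
\end{equation*}
Set $A = K_{W_0}$, $B = K_\Lambda$, and $M = (\Id + \Psi(A))^{-1} = e^{-Q(A)}(\Id + A)^{-1}$; since $M$ is a function of $A$, we have $[M, A] = 0$. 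Expanding $\Psi(z) = \sum_{n \geq p} \psi_n z^n$, the Gateaux derivative in direction $B$ is $\sum_{n \geq p} \psi_n \sum_{k=0}^{n-1} A^k B A^{n-1-k}$. Cyclicity of the trace together with $[M,A] = 0$ collapses the inner sum to $n \trace(M A^{n-1} B)$, so
\begin{equation*}
\trace\bigl(M \cdot \partial_\epsi|_{\epsi=0} \Psi(A + \epsi B)\bigr) = \trace(M \Psi'(A) B) = \trace\bigl((\Id + A)^{-1} (-A)^{p-1} B\bigr),
\end{equation*}
the last equality following from $\Psi'(A) = (-A)^{p-1} e^{Q(A)}$ and cancellation of the exponential factors.

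This proves the claimed formula on $U$, and it extends to all of $X_d$ by analytic continuation: $b_1$ is already known to be holomorphic on $X_d$, and the apparent pole of $(\Id + K_{W_0})^{-1}$ at a resonance of $W_0$ is cancelled by the corresponding zero of $D_{W_0}$. The main subtlety is the noncommutative Gateaux derivative of $\Psi$; it is handled by noting that we only need it under the trace against an operator commuting with $K_{W_0}$, which reduces everything to the scalar identity $\Psi'(A) B$.
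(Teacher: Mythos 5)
Your proof is correct, and it takes a genuinely different route from the paper's. The paper establishes the expansion in $\epsi$ via Hartogs's theorem, then identifies $b_0, b_1$ by first restricting to $\Im\lambda\gg 1$, where the identity
\begin{equation*}
D_{W_0+\epsi\Lambda}(\lambda) = \exp\left(-\sum_{m=p}^\infty (-1)^m \frac{\trace(K_{W_0+\epsi\Lambda}^m)}{m}\right)
\end{equation*}
holds with absolute convergence in $\LL$; it then Taylor-expands each $\trace(K_{W_0+\epsi\Lambda}^m)$, controls the remainder $r_m(\epsi)$ by a Cauchy estimate that exploits the extra decay $|K_{W_0+\epsi\Lambda}^m|_\BB \leq (C/|\lambda|)^{m-d}$ available only for large $\Im\lambda$, resums, and finally extends to $X_d$ by unique continuation. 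You instead stay on the set $\{D_{W_0}\neq 0\}$, invoke the logarithmic-derivative formula for the Fredholm determinant, and reduce the noncommutative Gateaux derivative of $\Psi$ to the scalar identity $\Psi'(z) = (-z)^{p-1}e^{Q(z)}$ by observing that under the trace against the commuting operator $M = (\Id+\Psi(K_{W_0}))^{-1}$ all terms collapse. The algebraic facts you use --- $\Id+\Psi(A) = (\Id+A)e^{Q(A)}$ and $(1+z)Q'(z) = -1+(-z)^{p-1}$ --- are exactly right, the cancellation of $e^{\pm Q(A)}$ gives the clean formula $(\Id+K_{W_0})^{-1}(-K_{W_0})^{p-1}K_\Lambda$, and the final appeal to analytic continuation to remove the restriction $D_{W_0}\neq 0$ is sound since $b_1$ is a priori holomorphic on all of $X_d$. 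Your approach is more conceptual and avoids the explicit remainder estimates, at the cost of leaning on the logarithmic-derivative formula for modified Fredholm determinants; the paper's approach is more self-contained but requires the detour through the half-plane $\Im\lambda\gg 1$. One small point you leave implicit: the trace-class convergence of the series $\sum_{n\geq p}\psi_n\sum_{k=0}^{n-1}A^kBA^{n-1-k}$ (needed to justify the termwise application of cyclicity), which follows from the singular-value bounds of Lemma \ref{lem:1b} and the decay $|\psi_n|\leq C(n^{1/2}/n!)^{1/(p-1)}$ recorded in the paper, but this is a minor omission.
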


\begin{proof} Let $W_0, \Lambda \in L^\infty(\Bb^d(0,L),\C)$. By \cite[Theorem 3.3]{Si77} if $p \geq d$ and $\Psi$ is given by \eqref{eq:2d} the determinant 
$(\epsi, \lambda) \mapsto D_{W_0+\epsi \Lambda}(\lambda) = \Det(\Id + \Psi(K_{W_0+\epsi \Lambda}))$
is an entire function of $\epsi$ (with $\lambda \in X_d$ fixed) and a holomorphic function of $\lambda$ on $X_d$ (with $\epsi$ fixed). Thus by Hartogs's theorem it is analytic on $\C \times X_d$. Write a power expansion of $D_{W_0+\epsi \Lambda}$ as follows:
$D_{W_0+\epsi \Lambda}(\lambda) = \sum_{n=0}^\infty b_n(\lambda) \epsi^n$.
Since 
\begin{equation*}
b_n(\lambda) = \dfrac{1}{n!} \left.\dd{^n D_{W_0+\epsi\Lambda}}{\epsi^n}\right|_{\epsi=0}(\lambda)
\end{equation*}
the function $b_n$ is holomorphic on $X_d$. We next identify the coefficients $b_0(\lambda)$ and $b_1(\lambda)$.

Fix $m \geq d$ and assume that $\lambda \in \Dd(\lambda_0,1)$, $\Im \lambda_0 \gg 1$. By Lemma \ref{lem:1h} and Lemma \ref{lem:1b},
\begin{equation*}
\left|K_{W_0+\epsi \Lambda}^m \right|_\LL \leq \left|K_{W_0+\epsi \Lambda}^{m-d} \right|_\BB |K_{W_0+\epsi \Lambda}^d|_\LL  \leq \dfrac{C^m}{|\lambda|^{m-d}}.
\end{equation*}
It follows that the series 
\begin{equation*}
\sum_{m=p}^\infty (-1)^m\dfrac{K_{W_0+\epsi \Lambda}^m}{m}
\end{equation*}
converges absolutely in $\LL$ for $\Im \lambda \gg 1$ and in addition
\begin{equation}\label{eq:0b}
D_{W_0+\epsi \Lambda}(\lambda) = \exp \left(-\sum_{m=p}^\infty (-1)^m\dfrac{\trace\left(K_{W_0+\epsi \Lambda}^m\right)}{m}\right),
\end{equation}
see \cite[Theorem 6.2]{Si77}. If $d=1$ then $\trace(K_{W_0+\epsi \Lambda}) = \trace(K_{W_0}) + \epsi \trace(K_\Lambda)$. We now obtain a first order Taylor expansion of $\trace\left(K_{W_0+\epsi \Lambda}^m\right)$ for $m \geq d$. Using the binomial expansion, the cyclicity of the trace and the Taylor-Lagrange inequality,
\begin{equations}\label{eq:0a}
\trace\left(K_{W_0+\epsi \Lambda}^m\right) = \trace\left(K_{W_0}^m\right) + m \epsi \trace\left(K_{W_0}^{m-1}K_\Lambda\right) + r_m(\epsi), \\
|r_m(\epsi)| \leq \dfrac{1}{2} \sup_{\epsi' \in [0,\epsi]} \dd{^2\trace\left(K_{W_0+\epsi \Lambda}^m\right)}{\epsi^2}(\epsi').
\end{equations}
We claim that $|r_m(\epsi)| \leq \epsi^2$ for $\Im \lambda$ large enough. The function $\epsi \mapsto \trace\left(K_{W_0+\epsi \Lambda}^m\right)$ is holomorphic and satisfies
\begin{equation*}
\left| \trace\left(K_{W_0+\epsi \Lambda}^m\right) \right| \leq |K_{W_0+\epsi \Lambda}^m|_\LL^d |K_{W_0+\epsi \Lambda}^m|_\LL^{m-d} \leq \dfrac{C^m }{\lr{\lambda}^{m-d}} |W_0+\epsi \Lambda|^m_\infty. 
\end{equation*}
when $\Im \lambda \geq 1$. Therefore the Cauchy estimate for derivatives of holomorphic functions shows that $|r_m(\epsi)| \leq C^m \epsi^2 \lr{\lambda}^{m-d} (|W_0|_\infty+|\Lambda|_\infty)^m$ when $\Im \lambda \geq 1$. This proves the claim. \eqref{eq:0a} implies then
\begin{equations*}
\sum_{m=p}^\infty (-1)^m\dfrac{\trace\left(K_{W_0+\epsi \Lambda}^m\right)}{m} = \sum_{m=p}^\infty (-1)^m\dfrac{\trace\left(K_{W_0}^m\right)}{m} + \epsi \sum_{m=p}^\infty (-1)^m\trace\left(K_{W_0}^{m-1}K_\Lambda\right) + O(\epsi^2) \\ 
    = \sum_{m=p}^\infty (-1)^m\dfrac{\trace\left(K_{W_0}^m\right)}{m} - \epsi \trace\left((-K_{W_0})^{p-1}(\Id + K_{W_0})^{-1}K_\Lambda\right) + O(\epsi^2).
\end{equations*}
when $\Im \lambda \geq 1$. The following determinant asymptotic follows: for $\Im \lambda$ large enough,
\begin{equations*}
D_{W_0 +  \epsi \Lambda}(\lambda)  = \exp\left(-\sum_{m=p}^\infty (-1)^m\dfrac{\trace\left(K_{W_0}^m\right)}{m} + \epsi \trace\left((-K_{W_0})^{p-1}(\Id + K_{W_0})^{-1}K_\Lambda\right) + O(\epsi^2)\right) \\
     = D_{W_0}(\lambda)\left(1 + \epsi \trace\left((-K_{W_0})^{p-1}(\Id + K_{W_0})^{-1}K_\Lambda\right)\right) + O(\epsi^2).
\end{equations*}
Thus $b_0(\lambda) = D_{W_0}(\lambda)$ and $b_1(\lambda) = D_{W_0}(\lambda) \trace\left((-K_{W_0})^{p-1}(\Id + K_{W_0})^{-1}K_\Lambda\right)$ for $\Im \lambda \gg 1$. Since the functions $b_0, b_1$ are holomorphic by the unique continuation principle these identities must also hold on $X_d$. This ends the proof of the theorem.\end{proof}

We are now ready to prove Theorem \ref{cor:3}. It is the special case $m=1$ of

\begin{thm}\label{cor:3a} Let $V_{\eff} = W_0 + \epsi^2 \Lambda_0 + \epsi^3 \Lambda_1$ were $\Lambda_0, \Lambda_1$ where defined in Theorem \ref{thm:5}. Let $\mu_\epsi$ be a family of resonances of $V_{\eff,\epsi}$ with multiplicity $m$. For every $\epsi > 0$ there exist $m$ resonances counted with multiplicity $\lambda_{1,\epsi}, ..., \lambda_{m,\epsi}$ of $V_\epsi$ such that
\begin{equation*}
|\lambda_{j,\epsi} - \mu_\epsi| = O(\epsi^{4/m}).
\end{equation*}
Conversely let $\lambda_\epsi$ be a family of resonances of $V_{\epsi}$ with multiplicity $m$. For every $\epsi > 0$ there exist $m$ resonances counted with multiplicity $\mu_{1,\epsi}, ..., \mu_{m,\epsi}$ of $V_{\eff,\epsi}$ such that
\begin{equation*}
|\lambda_{j,\epsi} - \mu_{j,\epsi}| = O(\epsi^{4/m}).
\end{equation*}
\end{thm}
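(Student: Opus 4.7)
The plan is to compare the Fredholm determinants $D_{V_\epsi}$ and $D_{V_{\eff,\epsi}}$ to within $O(\epsi^4)$ locally uniformly on $X_d$, and then transfer zeros via Rouch\'e's theorem. On the $V_\epsi$ side, Theorem \ref{thm:5} applied with $N \geq 4$ provides an expansion
\begin{equation*}
D_{V_\epsi}(\lambda) = D_{W_0}(\lambda) + \epsi^2 a_2(\lambda) + \epsi^3 a_3(\lambda) + O(\epsi^4),
\end{equation*}
with $a_2, a_3$ given explicitly as traces involving $K_{\Lambda_0}, K_{\Lambda_1}$. On the $V_{\eff,\epsi}$ side, iterating Lemma \ref{lem:7} (first with small parameter $\epsi^2$ and perturbation $-\Lambda_0$, then picking up the $-\epsi^3 \Lambda_1$ contribution at the next order) yields
\begin{equation*}
D_{V_{\eff,\epsi}}(\lambda) = D_{W_0}(\lambda) + \epsi^2 \tilde a_2(\lambda) + \epsi^3 \tilde a_3(\lambda) + O(\epsi^4).
\end{equation*}

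The main obstacle is the algebraic identification $\tilde a_j = a_j$ for $j = 2, 3$: this is precisely what the definition of $\Lambda_0, \Lambda_1$ in Theorem \ref{thm:5} is engineered for. I expect it to follow from a direct manipulation using cyclicity of the trace and the relation $(-K_{W_0})^\ell(\Id + K_{W_0}) = (-K_{W_0})^\ell - (-K_{W_0})^{\ell+1}$, which collapses the difference between the two regularized trace formulas. Granting this, one obtains
\begin{equation*}
D_{V_\epsi}(\lambda) - D_{V_{\eff,\epsi}}(\lambda) = O(\epsi^4) \quad \text{locally uniformly on } X_d.
\end{equation*}

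Given the $O(\epsi^4)$ comparison, Rouch\'e's theorem finishes the argument. Let $\mu_\epsi$ be a bounded family of resonances of $V_{\eff,\epsi}$ of multiplicity exactly $m$. A Weierstrass preparation (or a Taylor expansion combined with a normal-families argument on the locally bounded holomorphic family $\{D_{V_{\eff,\epsi}}\}$) provides $c_0 > 0$ such that $|D_{V_{\eff,\epsi}}(\lambda)| \geq c_0 |\lambda - \mu_\epsi|^m$ on a fixed neighborhood of $\mu_\epsi$, uniformly for small $\epsi$. Choose $C$ large so that $c_0 C^m$ exceeds the implicit constant in the $O(\epsi^4)$ comparison. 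On the circle $\p \Dd(\mu_\epsi, C \epsi^{4/m})$ one then has $|D_{V_{\eff,\epsi}}| \geq c_0 C^m \epsi^4 > |D_{V_\epsi} - D_{V_{\eff,\epsi}}|$, so Rouch\'e's theorem yields exactly $m$ zeros of $D_{V_\epsi}$, counted with multiplicity, inside $\Dd(\mu_\epsi, C\epsi^{4/m})$. These are the required $\lambda_{1,\epsi}, \dots, \lambda_{m,\epsi}$. The converse direction is symmetric, since the $O(\epsi^4)$ comparison is symmetric in the two determinants.

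Finally, in dimension one near $\lambda = 0$ both determinants have poles, so the argument is carried out instead with the entire functions $h_{V_\epsi}$ and $h_{V_{\eff,\epsi}}$ from Lemma \ref{lem:2s} (the analogue for $V_{\eff,\epsi}$ follows from the same proof, since it is again a compactly supported smooth potential). The same Rouch\'e step then applies uniformly throughout $\C$.
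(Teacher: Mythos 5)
There is a genuine gap at the step where you identify $\tilde a_j = a_j$: the two Fredholm determinants $D_{V_\epsi}$ and $D_{V_{\eff,\epsi}}$ do \emph{not} agree modulo $O(\epsi^4)$, and the relation you invoke shows exactly why. Theorem \ref{thm:5} yields $a_2(\lambda) = -D_{W_0}\trace\bigl((\Id+K_{W_0})^{-1}(-K_{W_0})^{p-2}K_{\Lambda_0}\bigr)$, with exponent $p-2$, because in the trace expansion of $\trace(K_V^m)$ the pair $K_{W_{-k}}e^{-ik\bullet/\epsi}K_{W_k}e^{ik\bullet/\epsi}$ collapses to a \emph{single} factor $\epsi^2 K_{\Lambda_0}$, giving $\trace(K_{W_0}^{m-2}K_{\Lambda_0})$. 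Lemma \ref{lem:7}, applied to $D_{V_\eff}$, gives instead $\tilde a_2(\lambda) = -D_{W_0}\trace\bigl((\Id+K_{W_0})^{-1}(-K_{W_0})^{p-1}K_{\Lambda_0}\bigr)$, with exponent $p-1$, because there a single $K_{W_0}$ is traded for $\epsi^2K_{\Lambda_0}$, producing $\trace(K_{W_0}^{m-1}K_{\Lambda_0})$. Your identity $(-K_{W_0})^{p-2}(\Id+K_{W_0}) = (-K_{W_0})^{p-2}-(-K_{W_0})^{p-1}$ does not make the difference vanish; after cancelling the resolvent it gives
\begin{equation*}
a_2(\lambda) - \tilde a_2(\lambda) = -D_{W_0}(\lambda)\,\trace\bigl((-K_{W_0})^{p-2}K_{\Lambda_0}\bigr),
\end{equation*}
a fixed holomorphic function that is not identically zero for general $W_0$. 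So $D_{V_\epsi} - D_{V_{\eff,\epsi}} = O(\epsi^2)$, not $O(\epsi^4)$, and the Rouch\'e step as written breaks down.

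The paper's way around this is to change the regularization order of the effective-potential determinant rather than to force an identity that is false. One introduces $\DD_{\VV}(\lambda) = \Det(\Id + \psi(K_\VV))$, the regularized determinant of order $p-1$ (one fewer exponential counterterm than $\Psi$), related to $D_\VV$ by $D_\VV(\lambda) = \exp\bigl(\trace((-K_\VV)^{p-1})/(p-1)\bigr)\DD_\VV(\lambda)$. Applying the analogue of Lemma \ref{lem:7} to $\DD$ shifts the exponent in its first-order coefficient from $p-1$ down to $p-2$, which now matches the $a_2, a_3$ of Theorem \ref{thm:5}. One then shows
\begin{equation*}
D_{V_\epsi}(\lambda) = \exp\!\left(\dfrac{\trace((-K_{W_0})^{p-1})}{p-1}\right)\DD_{V_{\eff,\epsi}}(\lambda) + O(\epsi^4)
\end{equation*}
locally uniformly on $X_d$. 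The exponential prefactor depends only on $W_0$, so it never vanishes and does not affect zero counting, and $\DD_{V_{\eff,\epsi}}$ is still a legitimate Fredholm determinant ($p-1 \geq d$) whose zeros are the resonances of $V_{\eff,\epsi}$ with the correct multiplicity. With this replacement your Rouch\'e argument (and the $d=1$ modification via $h_V$) goes through as stated. So the overall architecture of your proposal matches the paper's, but the $O(\epsi^4)$ comparison must be made against $\exp(\cdot)\DD_{V_{\eff,\epsi}}$, not $D_{V_{\eff,\epsi}}$.
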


\begin{proof} Assume $d \geq 3$. Fix $N = 4$, $p = 16(d+4)$ and $D_V$ given in Theorem \ref{thm:5}. Let $V_\eff = W_0 - \epsi^2 \Lambda_0 - \epsi^3 \Lambda_1$. By Theorem \ref{thm:5},
\begin{equation}\label{eq:6f}
D_V = D_{W_0}(\lambda) \left(1 + \trace\left( (\Id + K_{W_0})^{-1} (-K_{W_0})^{p-2} K_{-\epsi^2 \Lambda_0 - \epsi^3 \Lambda_1} \right) \right) + O(\epsi^4).
\end{equation}
Define $\DD_\VV$ the Fredholm determinant
\begin{equation*}
\DD_\VV(\lambda) = \Det(\Id + \psi(K_\VV)), \ \ \ \psi(z) = \exp\left( \dfrac{(-z)^{p-1}}{p-1} \right) \Psi(z).
\end{equation*}
The Fredholm determinants $D_\VV$ defined in \eqref{eq:1d} and $\DD_\VV$ are related through
\begin{equation*}
D_\VV(\lambda) = \exp \left(\dfrac{\trace((-K_\VV)^{p-1})}{p-1}\right) \DD_\VV(\lambda). 
\end{equation*}
Therefore \eqref{eq:6f} implies $ D_V(\lambda) = $
\begin{equation*}
\exp \left(\dfrac{\trace((-K_{W_0})^{p-1})}{p-1}\right) \DD_{W_0}(\lambda) \left(1 + \trace\left( (\Id + K_{W_0})^{-1} (-K_{W_0})^{p-2} K_{-\epsi^2 \Lambda_0 - \epsi^3 \Lambda_1} \right) \right) + O(\epsi^4).
\end{equation*}
Lemma \ref{lem:7} leads to
\begin{equation*}
D_V(\lambda) = \exp \left(\dfrac{\trace((-K_{W_0})^{p-1})}{p-1}\right) \DD_{V_\eff}(\lambda) + O(\epsi^4)
\end{equation*}
where $V_\eff = W_0 - \epsi^2 \Lambda_0 - \epsi^3 \Lambda_1$. Consider now $\mu_\epsi$ a bounded family of resonances of $V_\eff$ of multiplicity $m$. As $\mu_\epsi$ is bounded there exist $C,r$ such that for every $\lambda \in \Dd(\mu_\epsi,r)$, 
\begin{equation}\label{eq:7g}
\left|\exp \left(\dfrac{\trace((-K_{W_0})^{p-1})}{p-1}\right) \DD_{V_\eff}(\lambda)\right| \geq C |\lambda-\mu_\epsi|^m.
\end{equation}
Let $\gamma_\epsi = \p \Dd(\mu_\epsi,c\epsi^{4/m})$. If $c$ is small enough then by \eqref{eq:7g} for every $\lambda \in \gamma_\epsi$,
\begin{equation*}
\left|D_V(\lambda) - \exp \left(\dfrac{\trace((-K_{W_0})^{p-1})}{p-1}\right) \DD_{V_\eff}(\lambda) \right| < \left|\exp \left(\dfrac{\trace((-K_{W_0})^{p-1})}{p-1}\right) \DD_{V_\eff}(\lambda)\right|.
\end{equation*}
By Rouch\'e's theorem this implies that $V$ and $V_\eff$ have the same number of resonances inside the disk $\Dd(\mu_\epsi, c\epsi^{4/m})$. The proof of the convert part is similar and we omit it. This proves Theorem \ref{cor:3a} away from the resonance $0$ in dimension one.

We now concentrate on $d=1$. In this case by Theorem \ref{thm:5} and Lemma \ref{lem:7} the function $h_V$ of Lemma \ref{lem:2s} satisfies $h_V(\lambda)=\lambda d_{V_\eff}(\lambda) + O(\epsi^4)$ locally uniformly on  $X_d$. The functions $h_V$ and $d_{V_\eff}$ are both entire. By a Cauchy formula, if $\lambda \in \Dd(0,1)$ then 
\begin{align*}
h_V(\lambda) = \dfrac{1}{2\pi i} \oint_{\p \Dd(0,2)} \dfrac{\lambda d_{V_\eff}(\mu) d\mu}{\mu-\lambda} + O(\epsi^4)
\end{align*}  
and this holds uniformly on   $\Dd(0,1)$. Thus the estimate $h_V(\lambda)=\lambda d_{V_\eff}(\lambda) + O(\epsi^4)$ holds locally uniformly on   $\C$. The end of the proof is the same as in the case $d \geq 3$.\end{proof}

\subsection{Uniform description of the resonant set} Here we prove Theorem \ref{cor:3s}. Let $W \in C_0^\infty(\Bb^d(0,L) \times \Tt^d, \C)$ and $V$ associated to $W$ by \eqref{eq:1a}. Fix $B > 0$. We first localize resonances of $V$ that are above the line $\Im \lambda = -B$. According to \eqref{eq:1b} the set of resonances of $V$ in $X_d$ is the set of $\lambda$ such that the operator $\Id + K_V(\lambda)$ is not invertible on $L^2$. Thus if $\lambda \in X_d$ is a resonance then $|K_V|_\BB \geq 1$. Since for $\Im \lambda \geq -B$, $|K_V|_\BB \leq {C |V|_\infty e^{2L B}}/{|\lambda|}$, for $\epsi$ small enough resonances of $V$ and $W_0$ in the half plane $\Im \lambda \geq -B$ all belong to a same disk $\Dd(0,\rho)$. By Theorem \ref{thm:5},
\begin{equation*}
D_V(\lambda) = D_{W_0}(\lambda) + O(\epsi^2) \text{ uniformly on } \Dd(0,\rho). 
\end{equation*}
As $D_{W_0}$ has no zero on $\p \Dd(0,\rho)$ we have
\begin{equation*}
\dfrac{1}{2\pi i} \oint_{\p \Dd(0,\rho)} \dfrac{D'_V(\lambda)}{D_V(\lambda)} d\lambda \rightarrow \dfrac{1}{2\pi i} \oint_{\p \Dd(0,\rho)} \dfrac{D_{W_0}'(\lambda)}{D_{W_0}(\lambda)} d\lambda.
\end{equation*}
Therefore $W_0$ and $V$ have the same (finite) number of resonances on $\Dd(0,\rho)$ for $\epsi$ small enough. By Theorem \ref{cor:3r} there exists $c > 0$ such that these resonances belong to 
\begin{equation*}
\CC_\epsi = \bigcup_{\substack{\lambda_0 \in \Res(W_0), \\ \Im \lambda_0 \geq -B}} \Dd\left(\lambda_0, c\epsi^{2/m_{W_0}(\lambda_0)}\right).
\end{equation*}

Now assume that $\lambda \in \Res(V)$ satisfies $\Im \lambda \leq - B$ and that $\lambda$ does not belong to the set $\TT_\epsi$ defined in \eqref{eq:9g}. This means
\begin{equation*}
\lambda \notin \bigcup_{\substack{\lambda_0 \in \Res(W_0), \\ \Im \lambda_0 \leq -B}} \Dd\left(\lambda_0,\lr{\lambda_0}^{-d- 1}\right).
\end{equation*}
Then (see the proof of \cite[Theorem $3.49$]{DyaZwo}):
\begin{equation*} 
\left|(\Id+K_{W_0})^{-1}\right|_{\BB} \leq e^{C \lr{\lambda}^{2d+1}}.
\end{equation*}
We now reproduce the proof of Theorem \ref{thm:1} for $d \geq 3$. Since $\lambda \in \Res(V)$ there must exist $u \in L^2$ with 
$u = -K_Vu$. In particular $u$ belongs to $H^1$ with $|u|_{H^1} \leq C e^{C (\Im \lambda)_-} |W|_\infty |u|_2$. The equation $u=-K_Vu$ is equivalent to 
\begin{equation*}
u = -\left( \Id + K_{W_0} \right)^{-1} K_{V_\sharp} u = -\left( \Id + K_{W_0} \right)^{-1} \sum_{k \neq 0} K_{W_k} \ek u
\end{equation*}
where $V_\sharp(x)=\sum_{k \neq 0} W_k(x) e^{ikx/\epsi}$. As in the proof of Theorem \ref{thm:1}, we perform an integration by parts on the term $K_{W_k} \ek u$:
\begin{equation*}
\dfrac{|k|}{\epsi}K_{W_k}\ek u = K_{W_k}P_k \ek u - K_{W_k} \ek P_k u.
\end{equation*}
This yields
\begin{equation*}
\dfrac{|k|}{\epsi}|K_{W_k} \ek u|_2 \leq C e^{2L(\Im \lambda)_-} \|W_k\|_1 |u|_2 + C e^{2L(\Im \lambda)_-} |W_k|_\infty |u|_{H^1}.
\end{equation*}
Using the \textit{a priori} bound on $|u|_{H^1}$ and summing over $k \neq 0$ we obtain
\begin{equation*}
|K_{V_\sharp} u|_2 \leq C \epsi e^{2L(\Im \lambda)_-} |W|_\infty \left( \sum_{k \neq 0} \dfrac{\|W_k\|_1}{|k|} \right) |u|_2.
\end{equation*}
It follows that
\begin{equation*}
|u|_2 = \left| (\Id + K_{W_0})^{-1} K_{V_\sharp}u \right|_2 \leq C \epsi e^{2L(\Im \lambda)_-} e^{C \lr{\lambda}^{2d+1}}  |W|_\infty \left( \sum_{k \neq 0} \dfrac{\|W_k\|_1}{|k|} \right)  |u|_2.
\end{equation*}
Since $u \neq 0$, this implies a lower bound on $|\lambda|$ of the form $|\lambda| \geq A- C\ln(\epsi^{-1})^{1/(2d+1)}$. Thus $\lambda$ belongs to the set $\DD_\epsi$ defined in \eqref{eq:9g}. This ends the proof of Theorem \ref{cor:3s}.

\section{Proof of Theorem \ref{thm:5}}\label{sec:5} 
We now get to the core of the paper: the proof of Theorem \ref{thm:5}. We first explain the ideas. If $D_V$ is the determinant given by \eqref{eq:1d} we can write formally 
\begin{equation*}
D_V(\lambda) = \exp\left( \sum_{m=p}^\infty \dfrac{(-1)^m}{m} \trace\left(K_V^m\right) \right).
\end{equation*}
It order to prove Theorem \ref{thm:5} it seems necessary to obtain an expansion in powers of $\epsi$ of $\trace(K_V^m)$. For a potential $V$ given by $V(x) = \sum_{k \in \Z^d} W_k(x) e^{ikx/\epsi}$ then $\trace(K_V^m)$ can be decomposed as a sum of terms of the form
\begin{equation*}
T[k_1, ..., k_m] = \trace\left( \prod_{j=1}^m K_{W_{k_j}} e^{ik_j\bullet/\epsi}\right)
\end{equation*}
where $k_1, ..., k_m \in \Z^d$. We now explain how to obtain an expansion for $T[k_1, ..., k_m]$. We say that the sequence $k_1, ..., k_m$ is constructive if $k_1+...+k_m = 0$ and destructive otherwise. We use this terminology for the following reason. In the case of a destructive sequence the behavior of the oscillatory terms $e^{ik_j x/\epsi}$ imply $\prod_{j=1}^m e^{ik_j x/\epsi} \rightarrow 0$ weakly as $\epsi \rightarrow 0$. We will prove that in this case $T[k_1, ..., k_m]$ is of order $O(\epsi^N)$ and thus produces no term in the expansion provided by Theorem \ref{thm:5}. Now if $k_1, ..., k_m$ is a constructive sequence let $R(\xi) = (\xi^2-\lambda^2)^{-1}$ so that formally $R_0(\lambda) = R(D)$. Using the commutation relation $e^{-ik\bullet/\epsi} D e^{ik\bullet/\epsi} = D+k/\epsi$, we have
\begin{equation}\label{eq:65a}\begin{gathered}
T[k_1, ..., k_m] = \trace\left( \prod_{j=1}^m \rho R(D) W_{k_j} e^{ik_j\bullet/\epsi}\right) 
       = \trace\left( \prod_{j=1}^m \rho R(D+\sigma_j/\epsi) W_{k_j} \right)
      \end{gathered}
\end{equation}
where $\sigma_j = k_j+...+k_m$. We note that there are no more oscillatory terms in the second line of \eqref{eq:65a}. An expansion of $T[k_1, ..., k_m]$ follows then from an operator-valued expansion of the operator $R(D+\sigma_j/\epsi)$, which in turn follows from an expansion of the function $R(\xi+\sigma_j/\epsi)$.

\subsection{Preliminaries on Fredholm determinants}
We start by giving a formula for general Fredholm determinants as infinite series. Consider $X,Y$ two trace class operators on $L^2$ and assume that $\Id + X$ is invertible. Define the Fredholm determinant
\begin{equation*}
D(\mu) = \Det(\Id + X + \mu Y).
\end{equation*}
This is a holomorphic function of the variable $\mu$, satisfying the bound $|D(\mu)| \leq e^{|X|_\LL + \mu |Y|_\LL}$. Expand it in power series: there exists a sequence $\omega_n(X,Y)$ such that
\begin{equation}\label{eq:2a}
D(\mu) = \sum_{n=0}^\infty \dfrac{\mu^n}{n!} \omega_n(X,Y).
\end{equation}
The terms $\omega_n(X,Y)$ are given by the $n \times n$ determinant
\begin{equation}\label{eq:2z}
\omega_n(X,Y) = \Det(\Id +X) \left| \begin{matrix}
\tau_1 & n-1 & 0 & \hdots & 0 \\
\tau_2 & \tau_1 & n-2 & \hdots & 0 \\
\vdots & \ddots & \ddots & \ddots & \vdots \\
\tau_{n-1} & \ddots & \ddots & \ddots & 1 \\
\tau_n & \tau_{n-1} & \hdots & \tau_2 & \tau_1
\end{matrix}\right|,\end{equation}
where $\tau_j = \trace\left(((\Id + X)^{-1} Y)^j\right)$ -- see \cite[Theorem 6.8]{Si77}. 

\begin{lem} Let $s \geq 0$ and assume that $\lr{D}^s X$ and $\lr{D}^s Y$, initially defined as operators from $L^2$ to $H^{-s}$, are trace class operator on $L^2$. Then
\begin{equation}\label{eq:11b}
|\omega_n(X,Y)| \leq  |\lr{D}^s Y \lr{D}^{-s}|_\LL^n e^{|\lr{D}^s X \lr{D}^{-s}|_\LL}.
\end{equation}
\end{lem}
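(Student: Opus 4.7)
The overall approach is to leverage the invariance of Fredholm determinants under similarity transformations to reduce the estimate to one involving the conjugated operators $\tilde X := \lr{D}^s X \lr{D}^{-s}$ and $\tilde Y := \lr{D}^s Y \lr{D}^{-s}$, and then read off the bound on the Taylor coefficients $\omega_n(X, Y)$ of the holomorphic function $D(\mu)$ appearing in \eqref{eq:2a}. The key observation is the cyclicity identity $\Det(\Id + CD) = \Det(\Id + DC)$ applied with $C = \lr{D}^s$ and $D = (X + \mu Y)\lr{D}^{-s}$; this is legitimate because, by hypothesis, both $CD = \tilde X + \mu \tilde Y$ and $DC = X + \mu Y$ are trace class on $L^2$. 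It yields the identity
\begin{equation*}
D(\mu) = \Det(\Id + X + \mu Y) = \Det(\Id + \tilde X + \mu \tilde Y)
\end{equation*}
of holomorphic functions of $\mu \in \C$, and hence $\omega_n(X,Y) = \omega_n(\tilde X, \tilde Y)$ for every $n$ by matching Taylor coefficients in \eqref{eq:2a}.

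Next I would apply the standard trace-class determinant estimate $|\Det(\Id + A)| \leq \exp(|A|_\LL)$ to $A = \tilde X + \mu \tilde Y$, together with the triangle inequality for $|\cdot|_\LL$, to obtain $|D(\mu)| \leq \exp(|\tilde X|_\LL + |\mu|\, |\tilde Y|_\LL)$. Extract the $n$-th coefficient via Cauchy's integral formula on the circle $|\mu| = r$ and optimize over $r$; alternatively, and in a way that avoids any spurious factorial-type factor, one can use the Plemelj--Smithies formula \eqref{eq:2z} directly with $\tau_j = \trace(((\Id + \tilde X)^{-1} \tilde Y)^j)$, recognize the $n \times n$ determinant as $n!$ times the $n$-th elementary symmetric polynomial in the eigenvalues of $(\Id + \tilde X)^{-1} \tilde Y$, and combine the Weyl-type bound on elementary symmetric functions with $|\Det(\Id + \tilde X)| \leq e^{|\tilde X|_\LL}$ to arrive at the claimed inequality.

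The main anticipated obstacle is making the conjugation step fully rigorous, since $\lr{D}^s$ is unbounded on $L^2$. A clean route is to regularize, replacing $\lr{D}^s$ by the bounded operator $\lr{D}^s (1 + \delta \lr{D}^s)^{-1}$ for $\delta > 0$, apply cyclicity in this regularized setting where all factors are bounded, and pass to the limit $\delta \to 0$ using the continuity of the map $A \mapsto \Det(\Id + A)$ on $\LL$. The hypothesis $\tilde X, \tilde Y \in \LL$ guarantees convergence of the regularized operators in trace norm, which is what is needed to justify the limit.
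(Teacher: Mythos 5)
Your main argument --- pass to $\tilde X = \lr{D}^s X \lr{D}^{-s}$, $\tilde Y = \lr{D}^s Y \lr{D}^{-s}$ by cyclicity of the Fredholm determinant, bound $|D(\mu)| \leq e^{|\tilde X|_\LL + |\mu|\,|\tilde Y|_\LL}$, and read off the Taylor coefficients by a Cauchy estimate --- is precisely the paper's proof. Two remarks are worth making. The $\delta$-regularization is unnecessary if you split the product as $A = \lr{D}^{-s}$ (bounded on $L^2$) and $B = \lr{D}^s(X + \mu Y)$ (trace class on $L^2$ by hypothesis); then $AB = X + \mu Y$ and $BA = \tilde X + \mu\tilde Y$ are both automatically trace class and the standard cyclicity identity $\Det(\Id + AB) = \Det(\Id + BA)$ applies at once, with no limit to justify. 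Your choice $C = \lr{D}^s$, $D = (X + \mu Y)\lr{D}^{-s}$ isolates the unbounded factor, which is what forces the workaround (or, alternatively, requires comparing determinants on $L^2$ and on $H^s$). Also, the Plemelj--Smithies alternative as sketched does not quite land on \eqref{eq:11b}: writing $Z = (\Id + \tilde X)^{-1}\tilde Y$, the Weyl bound on elementary symmetric functions of eigenvalues yields $|\omega_n| \leq e^{|\tilde X|_\LL}\,|Z|_\LL^n$, which carries the extra factor $(\Id + \tilde X)^{-1}$ absent from \eqref{eq:11b}; in the paper's application this factor degenerates near resonances of $W_0$, so that route gives a genuinely weaker statement. (The Cauchy-estimate route, which is what the paper uses, produces \eqref{eq:11b} only up to a factor $n!\,e^n/n^n \sim \sqrt{2\pi n}$ after optimizing the contour radius; this slight imprecision is the paper's as well and is harmless, since only the geometric rate in $n$ is used downstream.)
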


\begin{proof} First note that since $\lr{D}^{-s} \in \BB$ and $\lr{D}^s (X + \mu Y) \in \LL$ we can use the cyclicity of the determinant to get
\begin{equation*}
\Det(\Id + X + \mu Y) = \Det(\Id + \lr{D}^s (X+\mu Y) \lr{D}^{-s}).
\end{equation*}
Therefore 
\begin{equation*}
|\Det(\Id + X + \mu Y)| \leq e^{|\lr{D}^s X \lr{D}^{-s}|_\LL+|\mu||\lr{D}^s Y \lr{D}^{-s}|_\LL}.
\end{equation*}
This proves that $\Det(\Id + X + \mu Y)$ is an entire function of order $1$. Therefore by Cauchy estimates the coefficients $\omega_n(X,Y)$ must satisfy \eqref{eq:11b}. This completes the proof. \end{proof}

\subsection{Reduction to a trace expansion}

We now  start the proof of Theorem \ref{thm:5}. Fix without loss of generality $N \geq d+1$ and $p=4(d+N)N$. Let $W$ in $C_0^\infty(\Bb^d(0,L)\times \Tt^d, \C)$, $V, V_\sharp \in C_0^\infty(\R^d,\C)$ be given by
\begin{equation*}
W(x,y) = \sum_{k \in \Z^d} W_k(x) e^{iky}, \ \ \ 
V(x) = W_0(x) + V_\sharp(x), \ \ \ V_\sharp(x) = \sum_{k \neq 0} W_k(x) e^{ikx/\epsi}.
\end{equation*}
We define $|W|_{Z^s} = \sum_{k \in \Z^d} \| W_k \|_s$. This quantity is finite for every $s \geq 0$.

Let $X$ and $Y$ be the trace class operators given by
\begin{gather}\label{eq:2f}\begin{split}
X = \Psi(K_{W_0}), \ \  \ \ Y = \Psi(K_V) - \Psi(K_{W_0}), \ \ \ \ \ \ \ \\ 
\Psi(z) = (1+z) \exp\left(-z+ \dfrac{z^2}{2}- ...  +\dfrac{(-z)^{p-1}}{p-1} \right) - 1.\end{split}
\end{gather}
The expansion \eqref{eq:2a} yields
\begin{align*}
D_V(\lambda) = \Det(\Id + X+Y) = \sum_{n=0}^\infty \dfrac{1}{n!} \omega_n(X,Y).
\end{align*}
We now reduce this exact infinite expansion to a finite expansion modulo a term of order $O(\epsi^{2N})$. We recall that $X_d = \C$ if $d \geq 3$ and $X_1 = \C \setminus \{0\}$.

\begin{lem}\label{lem:1j} Locally uniformly on $X_d$, 
\begin{equation*}
D_V(\lambda) = \sum_{n=0}^N \dfrac{1}{n!} \omega_n(X,Y) + O(\epsi^{2N}).
\end{equation*}
\end{lem}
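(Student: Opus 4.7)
The starting point is the exact series expansion \eqref{eq:2a}, so the lemma is equivalent to showing that
$$
R_N(\lambda) \;:=\; \sum_{n > N} \frac{1}{n!}\,\omega_n(X,Y) \;=\; O(\epsi^{2N})
$$
locally uniformly on $X_d$. My plan is to use the estimate \eqref{eq:11b} for a fixed $s$ (e.g.\ $s=1$): this reduces everything to two separate bounds, namely that $|\lr{D}^s X \lr{D}^{-s}|_\LL$ is bounded locally uniformly on $X_d$, and that $|\lr{D}^s Y \lr{D}^{-s}|_\LL$ is $O(\epsi^2)$ locally uniformly on $X_d$. The former is immediate because $X = \Psi(K_{W_0})$ is $\epsi$-independent, and since $\Psi$ vanishes to order $p$ at the origin every monomial in its power series has at least $p \geq d+1$ factors of $K_{W_0}$, which is enough for trace-class control via Lemma \ref{lem:1b}. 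With the second bound, the Taylor tail of the exponential gives
$$
|R_N(\lambda)| \;\leq\; e^{|\lr{D}^s X \lr{D}^{-s}|_\LL}\sum_{n>N}\frac{(C\epsi^2)^n}{n!} \;=\; O(\epsi^{2(N+1)}) \;=\; O(\epsi^{2N}).
$$

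The core of the argument is therefore the bound $|\lr{D}^s Y \lr{D}^{-s}|_\LL = O(\epsi^2)$. Since $\log(1 + \Psi(z)) = \log(1+z) + P(z) = O(z^p)$ with $P$ the polynomial appearing in \eqref{eq:2f}, the entire function $\Psi$ admits a convergent expansion $\Psi(z) = \sum_{k\geq p} a_k z^k$, whence
$$
Y \;=\; \Psi(K_V) - \Psi(K_{W_0}) \;=\; \sum_{k \geq p} a_k\,\bigl(K_V^k - K_{W_0}^k\bigr).
$$
I would telescope each $K_V^k - K_{W_0}^k$ through $K_V = K_{W_0} + K_{V_\sharp}$, producing a finite sum of $k$-fold products in which at least one factor is $K_{V_\sharp} = \rho R_0(\lambda)V_\sharp$. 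Although the operator norm of $K_{V_\sharp}$ is $O(1)$, the oscillatory structure $V_\sharp = \sum_{k \neq 0} W_k e^{ikx/\epsi}$ lets me gain powers of $\epsi$ by the same integration-by-parts trick as in \S\ref{subsec:7}: using $\ek = (\epsi/|k|)[P_k,\ek]$ twice on a single $K_{V_\sharp}$ factor inside each product, I transfer two derivatives onto adjacent operators and collect an $\epsi^2/|k|^2$ prefactor, summable over $k \neq 0$ because $W \in C^\infty_0$.

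The one subtlety is that each of these estimates must be carried out in the \emph{trace-ideal} topology rather than the operator norm; this is precisely why the statement involves the conjugated operator $\lr{D}^s Y \lr{D}^{-s}$. After integration by parts, the surviving product still contains $k - 1 \geq p - 1 \geq d$ factors of $K_{W_0}$ (or $K_{V_\sharp}$), so Lemma \ref{lem:1b} together with Lemma \ref{lem:1h} yield trace-class estimates with the right polynomial growth in $\lr{\lambda}$; the weight $\lr{D}^s$ is there only to absorb the one derivative coming from each commutator. Assembling, each term in the expansion of $Y$ is of size $C^k\,\epsi^2\,|k|^{-2}$ in trace norm, and summing in $k$ (and in the index $k$ of the power series, whose Taylor coefficients $a_k$ decay super-exponentially since $\Psi$ is entire) gives $|\lr{D}^s Y \lr{D}^{-s}|_\LL \leq C\epsi^2$ locally uniformly on $X_d$.

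The main obstacle is this trace-norm bookkeeping: at the operator-norm level the estimate is essentially trivial, but converting operator-norm smallness into trace-class smallness forces one to keep track of where derivatives end up after integration by parts, and to verify that each elementary factor left behind is trace class with the right bound. The choice $p = 4(d+N)N$, much larger than what is strictly needed here, is precisely what buys the flexibility to discard several $K$-factors into $\BB$ while still keeping a product in $\LL$.
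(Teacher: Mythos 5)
Your overall strategy matches the paper's: both reduce the lemma, via \eqref{eq:2a} and \eqref{eq:11b}, to showing that $\lr{D}^s Y \lr{D}^{-s}$ has trace norm $O(\epsi^2)$ locally uniformly on $X_d$, and both extract the $\epsi^2$ from a double integration by parts on the oscillatory factor $e^{ik\bullet/\epsi}$ inside exactly one $K_{V_\sharp}$. Your telescoping of $K_V^m - K_{W_0}^m$ is equivalent to the paper's mean-value representation $Y=\int_0^1 \frac{d}{dt}\Psi(K_{W_0+tV_\sharp})\,dt$, which likewise produces a single $K_{V_\sharp}$ factor per term; this is a cosmetic difference.

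The gap is in the choice of weight and the bookkeeping of where the derivatives go. You suggest $s=1$ and then invoke the double commutator. But if you write $\ek=\tfrac{\epsi^2}{|k|^2}[P_k,[P_k,\ek]]$ and try to bound $\lr{D}^{-1}W_k\ek\lr{D}^{-1}$, the terms $P_k^2\ek$ and $\ek P_k^2$ place two derivatives on the same side, and $P_k^2\lr{D}^{-1}$ is an unbounded operator of order $+1$. With $s=1$ a single integration by parts gives $O(\epsi)$, not $O(\epsi^2)$. Your fallback --- ``transfer the derivatives onto adjacent operators'' --- runs into the same issue when $K_{V_\sharp}$ sits at an end of the product and both derivatives try to escape in the same direction: they land on $K_\rho=\rho R_0(\lambda)\rho$, which only gains two derivatives, and is already carrying one of the external $\lr{D}$'s, so the count fails. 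It is also internally inconsistent to claim simultaneously that the derivatives go to neighbors and that $\lr{D}^s$ is what absorbs them.

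What makes the estimate close in the paper is the choice $s=2$ \emph{together with a specific factorization}: inside the product, the $\lr{D}^2$ that would have landed on the left of $K_{V_\sharp}$ is instead grouped with the preceding factor, giving the bounded operator $\lr{D}^2K_{W_0+tV_\sharp}$, while the key term is just $K_{V_\sharp}\lr{D}^{-2}$. Then the bound
\[
|K_{V_\sharp}\lr{D}^{-2}|_\BB \le |K_\rho\lr{D}^2|_\BB\, |\lr{D}^{-2}V_\sharp\lr{D}^{-2}|_\BB
\le \sum_{k\neq0}\frac{\epsi^2}{|k|^2}\,|K_\rho\lr{D}^2|_\BB\,\bigl|\lr{D}^{-2}W_k[P_k,[P_k,\ek]]\lr{D}^{-2}\bigr|_\BB
\le C\epsi^2|W|_{Z^2}
\]
works precisely because $\lr{D}^{-2}$ on each side swallows the two $P_k$'s in $[P_k,[P_k,\ek]]$ regardless of which side they land on. You should replace your $s=1$ by $s=2$, carry out the double commutator estimate against $\lr{D}^{-2}$ on both sides as above, and use the grouping $\lr{D}^2K_{W_0+tV_\sharp}\cdot K_{V_\sharp}\lr{D}^{-2}$ (or its telescoping analogue) so that you never need a two-sided conjugation of $K_{V_\sharp}$ itself. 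With those corrections, your singular-value bookkeeping and the summation over the Taylor coefficients $\az_m$ (note also that you use $k$ for both the power-series index and the Fourier mode, which should be disambiguated) complete the argument exactly as in the paper.
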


\begin{proof} It is enough to show that the coefficients $\omega_n(X,Y)$ satisfy the inequality
\begin{equation}\label{eq:4d}
|\omega_n(X,Y)| \leq (C \epsi^2)^n
\end{equation}
for all $n \geq 0$. Because of \eqref{eq:11b} it suffices then to estimate $|Y|_\LL$. Recall that the first $p-1$ derivatives of $\Psi$ vanish at $0$ and write a power series expansion of $\Psi$ as
\begin{equation*}
\Psi(z) = \sum_{m=p}^\infty \az_m z^m, \ \ \ \az_m = \dfrac{1}{m!} \dfrac{d^m\Psi}{dz^m}(0).
\end{equation*}
Since the function $\Psi$ is entire of order $p-1$ and type $(p-1)^{-1}$ the coefficients $\az_m$ satisfy the estimate
\begin{equation}\label{eq:2e}
 |\az_m| \leq C \left(m^{-m} e^{m}\right)^{1/(p-1)} \leq C (m^{1/2}/m!)^{1/(p-1)}.
\end{equation}
-- see for instance \cite{T}. Next write
\begin{align*}
Y = \Psi(K_V) - \Psi(K_{W_0}) & = \int_0^1 \dfrac{d}{dt} \Psi(K_{W_0+tV_\sharp}) dt  = \int_0^1 \sum_{m=p}^\infty \az_m \sum_{\ell=0}^{m-1} K_{W_0+tV_\sharp}^\ell K_{V_\sharp} K_{W_0+tV_\sharp}^{m-\ell-1} dt.
\end{align*}
This yields $\lr{D}^2 \left(\Psi(K_V) - \Psi(K_{W_0})\right)\lr{D}^{-2}=$
\begin{equation*}
\int_0^1 \sum_{m=p}^\infty \az_m \sum_{\ell=0}^{m-1} \left(\lr{D}^2 K_{W_0+tV_\sharp}\lr{D}^{-2}\right)^{\ell-1} \lr{D}^2 K_{W_0+tV_\sharp}  K_{V_\sharp} \left(\lr{D}^2K_{W_0+tV_\sharp}\lr{D}^{-2}\right)^{m-\ell-1} dt.
\end{equation*} 
The singular values of $\lr{D}^2 K_{W_0+tV_\sharp}\lr{D}^{-2}$ are bounded as follows:
\begin{equation*}\begin{gathered}
s_j\left(\lr{D}^2 K_{W_0+tV_\sharp}\lr{D}^{-2}\right)  \leq \left|\lr{D}^2 K_{W_0+tV_\sharp}\right|_\BB s_j\left(\rho\lr{D}^{-2}\right)
 \leq C |W|_\infty s_j\left(\rho\lr{D}^{-2}\right).\end{gathered}
\end{equation*}
To estimate $s_j\left(\rho\lr{D}^{-2}\right)$ we note that as the singular values of an operator $X$ are the squareroot of the eigenvalues of $XX^*$,
\begin{equation}\label{eq:2i}
s_j\left(\rho \lr{D}^{-2}\right) =  \lambda_j\left(\rho \lr{D}^{-4} \rho\right)^{1/2}  \leq s_j\left(\rho \lr{D}^{-4} \rho\right)^{1/2} \leq C j^{-2/d}.
\end{equation}
In the last line we used \cite[(B.3.9)]{DyaZwo}. It follows that $s_j\left(\lr{D}^2 K_{W_0+tV_\sharp}\lr{D}^{-2}\right)$  $\leq C |W|_\infty j^{-2/d}$. In addition using the commutation relation
\begin{equation*}
\ek = \dfrac{\epsi}{|k|}[P_k,\ek], \ \ \ P_k = \dfrac{k_1 D_{x_1} + ... + k_d D_{x_d}}{|k|}
\end{equation*}
we obtain
\begin{equations}\label{eq:2h}
|K_{V_\sharp}\lr{D}^{-2}|_\BB   \leq |K_\rho \lr{D}^2|_\BB |\lr{D}^{-2} V_\sharp \lr{D}^{-2}|_\BB \\  
 \leq \sum_{k \neq 0} |K_\rho \lr{D}^2|_\BB |\lr{D}^{-2} W_k \ek \lr{D}^{-2}|_\BB \\ 
     \leq \sum_{k \neq 0} \dfrac{\epsi^2}{|k|^2} | K_\rho \lr{D}^2|_\BB |\lr{D}^{-2} W_k [P_k,[P_k,\ek]] \lr{D}^{-2}|_\BB  \leq C \epsi^2 |W|_{Z^2}.
\end{equations}
Consequently, 
\begin{equation*}\begin{gathered}
 s_{(m-2)j}\left( \left(\lr{D}^2 K_{W_0+tV_\sharp}\lr{D}^{-2}\right)^{\ell-1} \lr{D}^2 K_{W_0+tV_\sharp}  K_{V_\sharp} \lr{D}^{-2} \left(\lr{D}^2K_{W_0+tV_\sharp}\lr{D}^{-2}\right)^{m-\ell-1} \right) \\ 
 \leq s_j\left(\lr{D}^2 K_{W_0+tV_\sharp}\lr{D}^{-2}\right)^{m-2} |\lr{D}^2 K_{W_0+tV_\sharp}|_\BB |K_{V_\sharp} \lr{D}^{-2}|_\BB \\
 \leq   C^m \epsi^2|W|_\infty^{m-1} |W|_{Z^2}  j^{-2(m-2)/d}.\end{gathered}
\end{equation*}
Sum over $\ell \in [0,m-1], j \geq 0$ and note that $m \geq p \geq d+2$ to obtain the bound
\begin{equation*}
\left|\sum_{\ell=0}^{m-1} \lr{D}^2 K_{W_0+tV_\sharp}^\ell K_{V_\sharp} K_{W_0+tV_\sharp}^{m-1-\ell} \lr{D}^{-2}\right|_\LL \leq m^2 C^m\epsi^2 |W|_\infty^{m-1} |W|_{Z^2}.
\end{equation*}
This yields
\begin{equation*}
\left|\lr{D}^2\left(\Psi(K_V) - \Psi(K_{W_0})\right)\lr{D}^{-2}\right|_\LL \leq \sum_{m=p}^\infty m^2 |\az_m|  C^m\epsi^2 |W|_\infty^{m-1} |W|_{Z^2}  \leq C \epsi^2,
\end{equation*}
where the series indeed converges because of the decay of the coefficients $\az_m$ proved in \eqref{eq:2e}. This ends the proof of the lemma. \end{proof}

We now show that Theorem \ref{thm:5} can be reduced to the following key result:

\begin{lem}\label{lem:2} Let $X, Y$ be given by \eqref{eq:2f} and $\TT_X$ be the holomorphic continuation of the operator $\Det(\Id + X) (\Id + X)^{-1}$ given in Appendix \ref{app:1}. There exist $N$ functions $c_0, c_1, ... c_{N-1}$ holomorphic on $X_d$ such that for all $1 \leq a \leq N$,
\begin{equation*}
\trace\left( (\TT_X Y)^a \right) = c_0(\lambda) + \epsi c_1(\lambda) + ... + \epsi^{N-1} c_{N-1}(\lambda) + O(\epsi^N).
\end{equation*} 
This holds uniformly locally on $X_d$.
\end{lem}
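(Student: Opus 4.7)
The plan is to reduce $\trace((\TT_X Y)^a)$ to an absolutely convergent sum of traces of products of multiplication operators $e^{ik\bullet/\epsi}$ and cut-off resolvents, and then to apply the constructive/destructive dichotomy sketched at the opening of \S\ref{sec:5}. Since $\TT_X$ depends holomorphically on $\lambda \in X_d$ but not on $\epsi$, all the $\epsi$-dependence comes from the $Y$ factors. Using the power series $\Psi(z) = \sum_{m\geq p} \az_m z^m$ with the decay \eqref{eq:2e}, together with
\begin{equation*}
K_V^m - K_{W_0}^m \,=\, \sum_{\substack{(k_1,\ldots,k_m) \in (\Z^d)^m \\ \text{not all zero}}} \prod_{j=1}^m K_{W_{k_j}}\, e^{ik_j \bullet/\epsi},
\end{equation*}
I would express $\trace((\TT_X Y)^a)$ as an infinite linear combination of traces of the form $\trace(\TT_X A_1 \cdots \TT_X A_a)$, where each $A_i$ is a word built from operators $K_{W_k} e^{ik\bullet/\epsi}$ containing at least one nonzero frequency.

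I would then split the sum into a \emph{constructive} part (multi-indices whose total frequency $\sum_j k_j$ vanishes) and a \emph{destructive} part (all others). For the constructive part I would apply the commutation $e^{-ik\bullet/\epsi} D\, e^{ik\bullet/\epsi} = D + k/\epsi$ iteratively to remove all oscillatory factors, as in \eqref{eq:65a}, turning each $R_0(\lambda)$ into a frequency-shifted resolvent with symbol $R(\xi + \sigma_j/\epsi)$ for appropriate partial sums $\sigma_j$. Taylor-expanding each such symbol in $\epsi$ up to order $\epsi^{N-1}$, the shift $\sigma_1$ vanishes (yielding the leading term involving $R(\xi)$, whence the exclusion of $\lambda = 0$ in dimension one), while the nonzero shifts produce $R(\xi + \sigma_j/\epsi) = \epsi^2/|\sigma_j|^2 + O(\epsi^3)$. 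The resulting operator-valued coefficients are Fourier multipliers with $\lambda$-holomorphic symbols on $X_d$, and give the claimed expansion $\sum_{j=0}^{N-1} \epsi^j c_j(\lambda) + O(\epsi^N)$ for the constructive contribution.

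For the destructive part I would integrate by parts using $e^{ik\bullet/\epsi} = (\epsi/|k|^2)[k\cdot D,\, e^{ik\bullet/\epsi}]$ applied iteratively with the partial sums $\sigma_j \neq 0$. Each such integration produces a factor $\epsi/|\sigma_j|$ at the cost of differentiating one of the smooth ingredients ($W_k$, the cutoff $\rho$, or the holomorphic family $\TT_X$, all of which absorb arbitrarily many derivatives thanks to Appendix \ref{app:1} and Lemma \ref{lem:1h}). After $N$ such integrations, summing over multi-indices using the rapid decay of the Fourier coefficients of $W \in C_0^\infty$ yields the estimate $O(\epsi^N)$ for the total destructive contribution, uniformly on compacts of $X_d$. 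The main obstacle is precisely this combined combinatorial-analytic estimate: the number of multi-indices grows with $M$, the expansion of $\Psi$ is infinite in $m$, and each integration by parts pays in derivatives hitting Fourier coefficients. The choice $p = 4(d+N)N$ in Theorem \ref{thm:5} is tuned so that the factorial decay of $\az_m$ from \eqref{eq:2e} dominates these growths and guarantees an absolutely convergent $O(\epsi^N)$ bound on compacts of $X_d$.
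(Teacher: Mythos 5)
Your high-level plan coincides with the paper's: reduce $\trace((\TT_X Y)^a)$ to a sum of traces of words in $K_{W_k}\,e^{ik\bullet/\epsi}$, split into constructive and destructive parts, Taylor-expand shifted resolvents for the constructive part (as in Lemmas \ref{lem:9}, \ref{lem:41}, \ref{lem:6}), and show the destructive part is $O(\epsi^N)$ by exploiting oscillation. You also correctly flag that the central difficulty is getting absolute convergence of the sum over word lengths $m_1,\dots,m_a$. The gap is that you dismiss this difficulty too quickly, and the way you propose to dismiss it does not work.

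Specifically, your destructive estimate is ``integrate by parts $N$ times across the word and sum.'' If you do this naively on a word of length $\nu$, each commutator $[k\cdot D, e^{ik\bullet/\epsi}]$ must be telescoped through all remaining factors, so the bound picks up a constant of the order $C^{\nu^2}$ and needs the Sobolev norms $\|W_k\|_{C\nu}$, exactly as in the paper's Lemma \ref{lem:1}. But $|\az_m| \lesssim (m!)^{-1/(p-1)}$ decays only like $e^{-c\,m\log m}$, and this does \emph{not} dominate $C^{m^2}$: $\log(|\az_m|\,C^{m^2}) \sim m^2\log C - \tfrac{m\log m}{p-1}\to +\infty$. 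So the infinite sum over $m$ you propose is not absolutely convergent with this bound, no matter how large $p$ is taken. The paper repairs this with a genuinely combinatorial ingredient you have not reproduced: it first restricts (Lemma \ref{lem:4}) to words where at most $2N-1$ of the $m\geq p$ factors carry a nonzero frequency, so the words are overwhelmingly made of $K_{W_0}$'s; then it extracts a ``good subsequence'' of bounded length (at most $N(\gamma+1)+N+1$ with $\gamma \leq 2N-1$) flanked by $N+d$ zeros, and performs the integration by parts \emph{only} on that bounded subword (Lemma \ref{lem:3}). This brings the constant down to $C^{\nu}$ and the Sobolev norm to a fixed $\|W_k\|_s$, which \emph{is} summable against $\az_m$ for $p = 4N(d+N)$.

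A related slip: you claim $\TT_X$ ``absorbs arbitrarily many derivatives thanks to Appendix \ref{app:1} and Lemma \ref{lem:1h}.'' Appendix \ref{app:1} only gives a $\BB$-bound, and $(\Id + X)^{-1} = \Id - X(\Id+X)^{-1}$, so $\TT_X$ has a bounded-but-not-smoothing identity part; moreover Lemma \ref{lem:1h} only lets a single $K_\VV$ absorb two derivatives, not arbitrarily many. To shed derivatives onto $\TT_X$ you need the decomposition $\TT_X = C_0 + C_1$ from \eqref{eq:1s}, where $C_0$ is a polynomial in $K_{W_0}$ (so it just lengthens the word with zero-frequency letters) and $C_1 = K_{W_0}^{N+d} B_{W_0} K_{W_0}^{N+d}$ provides the needed $O(2(N+d))$ smoothing. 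Without these two devices — the bounded ``good subsequence'' to localize the integration by parts, and the splitting of $\TT_X$ into word-compatible pieces — the destructive bound does not close, and the proof as outlined does not go through.
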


Assuming that this lemma holds Theorem \ref{thm:5} is only a consequence of a complex analysis argument resumed in

\begin{lem}\label{lem:1i} Let $E = \C$ or $\C \setminus \{0\}$, $S_0$ be a discrete subset of $E$. Let $(\lambda,\epsi) \rightarrow f(\lambda,\epsi), g(\lambda,\epsi)$ two functions such that $f(\cdot,\epsi) ,g(\cdot, \epsi)$ are meromorphic with poles in $S_0$ and such that $h(\cdot,\epsi)=f(\cdot,\epsi)g(\cdot,\epsi)$ is holomorphic on $E$. Assume moreover that locally uniformly on $E \setminus S_0$ we have
\begin{equation}\label{eq:4b}\begin{gathered}
f(\lambda,\epsi) = f_0(\lambda) + \epsi f_1(\lambda) + ... + \epsi^{N-1}f_{N-1}(\lambda) + O(\epsi^N) \\
g(\lambda,\epsi) = g_0(\lambda) + \epsi g_1(\lambda) + ... + \epsi^{N-1}g_{N-1}(\lambda) + O(\epsi^N)
\end{gathered}
\end{equation} 
where $f_0, g_0, ..., f_{N-1}, g_{N-1}$ are meromorphic functions of $\lambda \in \C$. Then there exist holomorphic functions  $h_0, ..., h_{N-1}$ on $E$ such that uniformly locally on $E$,
\begin{equation}\label{eq:4c}
h(\lambda,\epsi) = h_0(\lambda) + \epsi h_1(\lambda) + ... + \epsi^{N-1}h_{N-1}(\lambda) + O(\epsi^N).
\end{equation}
\end{lem}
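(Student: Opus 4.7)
The plan is to multiply the two expansions formally and then upgrade the local uniform convergence on $E \setminus S_0$ to a local uniform expansion on all of $E$ by exploiting the assumption that $h = fg$ has no poles. First I would define $h_k(\lambda) = \sum_{i+j=k} f_i(\lambda) g_j(\lambda)$ for $k = 0, \ldots, N-1$; each $h_k$ is meromorphic on $E$ with possible poles in $S_0$. Writing $f = \sum_{k<N} \epsi^k f_k + r_f$ and $g = \sum_{k<N} \epsi^k g_k + r_g$ with $r_f, r_g = O(\epsi^N)$ locally uniformly on $E \setminus S_0$, direct multiplication gives
\begin{equation*}
h(\lambda, \epsi) = \sum_{k=0}^{N-1} \epsi^k h_k(\lambda) + O(\epsi^N)
\end{equation*}
locally uniformly on $E \setminus S_0$, since the cross terms and the powers $\epsi^k$ for $N \leq k \leq 2N-2$ produced by the product are all locally uniformly $O(\epsi^N)$ there.

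The main obstacle is to show that each $h_k$ is in fact holomorphic on $E$, i.e. has no poles at points of $S_0$. Fix $\lambda_0 \in S_0$ and choose $r>0$ so that $\overline{\Dd(\lambda_0, r)} \cap S_0 = \{\lambda_0\}$. Then $\gamma = \p \Dd(\lambda_0, r)$ is a compact subset of $E \setminus S_0$, so the expansion above holds uniformly on $\gamma$. For each integer $n \geq 1$ the $-n$-th Laurent coefficient of $h(\cdot, \epsi)$ at $\lambda_0$ can be computed as
\begin{equation*}
c_{-n}\bigl(h(\cdot, \epsi)\bigr) = \frac{1}{2\pi i} \oint_\gamma (\mu - \lambda_0)^{n-1} h(\mu, \epsi) \, d\mu = \sum_{k=0}^{N-1} \epsi^k c_{-n}(h_k) + O(\epsi^N).
\end{equation*}
Since $h(\cdot, \epsi)$ is holomorphic on $E$ the left-hand side vanishes identically in $\epsi$, while the right-hand side is a polynomial in $\epsi$ of degree at most $N-1$ whose coefficients do not depend on $\epsi$. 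A polynomial of degree at most $N-1$ that is $O(\epsi^N)$ as $\epsi \to 0$ is identically zero, so $c_{-n}(h_k) = 0$ for every $n \geq 1$ and every $k \in \{0, \ldots, N-1\}$. Hence each $h_k$ extends holomorphically across $\lambda_0$, and repeating the argument at every point of $S_0$ shows that $h_0, \ldots, h_{N-1}$ are holomorphic on all of $E$.

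Finally I would extend the expansion across $S_0$ by the maximum modulus principle. With the $h_k$ now known to be holomorphic on $E$, the function
\begin{equation*}
\phi(\mu, \epsi) = h(\mu, \epsi) - \sum_{k=0}^{N-1} \epsi^k h_k(\mu)
\end{equation*}
is holomorphic in $\mu$ on $E$ and satisfies $|\phi(\mu, \epsi)| \leq C \epsi^N$ uniformly on $\gamma$. The maximum principle applied on $\Dd(\lambda_0, r)$ therefore yields $|\phi(\mu, \epsi)| \leq C \epsi^N$ on the full closed disk, which combined with the already-known local uniform estimate on $E \setminus S_0$ gives \eqref{eq:4c} locally uniformly on all of $E$. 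The whole proof rests on the Laurent-coefficient step, which is the only place where the hypothesis that $h$ is globally holomorphic is used in an essential way.
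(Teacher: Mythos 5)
Your proof is correct, and the key step is genuinely different from the paper's. To show that the $h_k$ have no poles at $\lambda_0 \in S_0$, the paper argues inductively: it takes $n$ minimal with $h_n$ singular, observes that $H_n(\cdot,\epsi) = \epsi^{-n}\bigl(h(\cdot,\epsi) - h_0 - \dots - \epsi^{n-1}h_{n-1}\bigr)$ is holomorphic for each fixed $\epsi$, applies the maximum principle to $H_n$ on $\p\Dd(\lambda_0,r)$ and lets $\epsi \to 0$ to bound $h_n$ near $\lambda_0$, showing the singularity is removable. You instead integrate the uniform expansion of $h$ against $(\mu-\lambda_0)^{n-1}$ over $\gamma$, use that every negative Laurent coefficient of the holomorphic $h(\cdot,\epsi)$ vanishes, and conclude that the polynomial $\sum_k \epsi^k c_{-n}(h_k)$ of degree $\le N-1$ is $O(\epsi^N)$, hence identically zero. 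This handles all $h_k$ simultaneously rather than by induction and replaces the maximum-principle bound by a simple ``a polynomial of degree $<N$ that is $O(\epsi^N)$ vanishes'' identity; it is a bit more direct. One small thing you leave implicit (which the paper makes explicit via the iterated-limit characterization of the $f_j$, $g_j$) is the justification that the $h_k$ are meromorphic on $E$ with poles confined to $S_0$: this is needed both so that the $c_{-n}(h_k)$ are well-defined Laurent coefficients at $\lambda_0$ and so that removing the singularities at $S_0$ actually yields holomorphy on all of $E$. It follows immediately since the $f_j$, $g_j$ are locally uniform limits of holomorphic functions on $E\setminus S_0$, so you should record that observation. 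Your final maximum-modulus step is essentially equivalent to the paper's closing Cauchy-integral argument.
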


\begin{proof} First note that \eqref{eq:4b} and the fact that $h=fg$ imply that the expansion \eqref{eq:4c} holds for $\lambda$ away from $S_0$. It remains to show that the functions $h_j$ are holomorphic on $E$ and that the expansion holds locally uniformly on $E$. We first note that locally uniformly on $E \setminus S_0$,
\begin{equation*}
f_j(\lambda) = \lim_{\epsi \rightarrow 0} \dfrac{f(\lambda) - f_0(\lambda) - ... - \epsi^{j-1} f_{j-1}(\lambda)}{\epsi^j}
\end{equation*}
where by convention $f_{-1} = 0$. A uniform limit of holomorphic functions is holomorphic; thus by an immediate recursion $f_0$, ..., $f_{N-1}$ must be holomorphic on $E$. The poles of the $f_n$ are then a subset of the poles of $f$ and thus they all belong to $S_0$. The same holds for the poles of $g_n$. Consequently the poles of the $h_n$ belong to $S_0$. Let $n$ minimal so that $h_n$ has a singularity at a point $\lambda_0 \in S_0$. For $r$ small enough $\lambda_0$ is the unique singularity of $h_n$ on $\Dd(\lambda_0,2r)$. For every $\epsi > 0$, the function
\begin{equation*}
H_n(\cdot,\epsi) = \dfrac{h(\cdot,\epsi) - \epsi h_1 - ... - \epsi^{n-1}h_{n-1}}{\epsi^n}
\end{equation*}
is holomorphic on $\Dd(\lambda_0,2r)$. As $\epsi \rightarrow 0$, $H_n(\lambda,\epsi) = O(1)$ and $H_n(\lambda,\epsi) \rightarrow h_n(\lambda)$, both holding uniformly locally in $\Dd(\lambda_0,2r) \setminus \{\lambda_0\}$. By the maximum principle there exists $M > 0$ such that for every $\lambda \in \Dd(\lambda_0,r) \setminus \{\lambda_0\}$,
\begin{equation*}
|h_n(\lambda)| = \lim_{\epsi \rightarrow 0} |H_n(\lambda,\epsi)| \leq \limsup_{\epsi \rightarrow 0} \sup_{\mu \in \p \Dd(\lambda_0,r)} |H_n(\mu,\epsi)| \leq M.
\end{equation*}
Therefore $h_n$ is uniformly bounded in a neighborhood of $\lambda_0$ and its singularity is removable. It follows that all the $h_j$ are holomorphic on $E$. Now to prove that \eqref{eq:4c} holds uniformly locally on $E$ we recall that it already holds uniformly locally on $E \setminus S_0$. Now if $\lambda_0 \in S_0$ and $r > 0$ is such that $\overline{\Dd(\lambda_0,r)} \subset E$ and $\p \Dd(\lambda_0,r) \subset E \setminus S_0$ then Cauchy's formula shows
\begin{equations*}
h(\lambda) = \dfrac{1}{2\pi i} \oint_{\p \Dd(\lambda,r)} \dfrac{h(\mu)}{\mu-\lambda} d\mu = \dfrac{1}{2\pi i} \oint_{\p \Dd(\lambda,r)} \dfrac{h_0(\mu) + ... \epsi^{N-1}h_{N-1}(\mu) + O(\epsi^N)}{\mu-\lambda} d\mu \\
 = h_0(\lambda) + ... \epsi^{N-1}h_{N-1}(\lambda) + O(\epsi^N)
\end{equations*} 
with convergence realized uniformly in $\Dd(\lambda,r)$. This ends the proof.\end{proof}

\begin{proof}[Proof of Theorem \ref{thm:5} assuming Lemma \ref{lem:2}.] By Lemma \ref{lem:1j} it suffices to prove that for every $n \in [0,N]$, $\omega_n(X,Y)$ admits an expansion in powers of $\epsi$ at order $N$. By \eqref{eq:2z}, $\omega_n(X,\Det(\Id +X)Y)$ is a finite combination of terms of the form $\trace((\TT_X Y)^\alpha)$, $1 \leq a \leq N$. Thus by Lemma \ref{lem:2}, $\omega_n(X, \Det(\Id +X)Y)$ has an expansion of the form
\begin{equation}\label{eq:11j}
\omega_n(X,\Det(\Id +X)Y) = f_0(\lambda) + \epsi f_1(\lambda) + ... + \epsi^{N-1} f_{N-1}(\lambda) + O(\epsi^N).
\end{equation}
Here the convergence holds locally uniformly on $X_d$. In addition,
\begin{equation*}
\omega_n(X,Y) = \dfrac{1}{\det(\Id + X)^n}\omega_n(X,\Det(\Id +X) Y).
\end{equation*}
Now apply Lemma \ref{lem:1i} to $E=X_d$, $S_0 = \Res(W_0)$, $f=\det(\Id +X)^{-n}$ and $g=\omega_n(X,\Det(\Id+X) Y)$. The meromorphic function $f$ does not depend on $\epsi$ and its poles in $E$ are  exactly the resonances of $W_0$. The function $g$ is holomorphic on $E$, depends on $\epsi$ and admits an expansion given by \eqref{eq:11j}. The product $h=fg$ is then meromorphic; by \eqref{eq:4d} it is locally uniformly bounded on $E$ and consequently it is holomorphic on $E$. Thus $\omega_n(X,Y)$ admits an expansion in powers of $\epsi$ at order $N$ and Theorem \ref{thm:5} follows. We will compute the first few terms in \S \ref{subsec:3} below.\end{proof}

The next sections are devoted to the proof of Lemma \ref{lem:2}. We first simplify the expression $\trace\left(\TT_X Y)^a\right)$. 

\begin{lem}\label{lem:4} For $a \in [1,N]$, $\trace((\TT_X Y)^a)$ can be written modulo $O(\epsi^N)$ as a finite sum of expressions of the form $\trace( \TT_X F_{n_1} ... \TT_X F_{n_a} )$ where $1 \leq n_j \leq 2N-1$ and
\begin{equation}\label{eq:0e}
F_n = \sum_{m=p}^\infty \az_m \sum_{\ell_0+...+\ell_n +n =m} K_{W_0}^{\ell_1} K_{V_\sharp} ... K_{W_0}^{\ell_{n-1}} K_{V_\sharp} K_{W_0}^{\ell_n}, \ \ \ \ \az_m = \dfrac{1}{m!}\dfrac{d^m \Psi}{z^m}(0).
\end{equation}
This holds uniformly locally on $X_d$.
\end{lem}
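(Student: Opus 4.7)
The plan is to expand $Y = \Psi(K_V) - \Psi(K_{W_0})$ algebraically as a series in the number of $K_{V_\sharp}$ factors. Writing $K_V = K_{W_0} + K_{V_\sharp}$ and developing each monomial $K_V^m = (K_{W_0} + K_{V_\sharp})^m$ via the non-commutative binomial formula, I would group terms according to the number $n$ of occurrences of $K_{V_\sharp}$. The $n=0$ contribution reassembles into $\Psi(K_{W_0})$ and cancels, leaving
\[
Y = \sum_{n \geq 1} F_n
\]
with $F_n$ as in \eqref{eq:0e}. Multilinearly expanding $(\TT_X Y)^a$ and taking traces then gives
\[
\trace((\TT_X Y)^a) = \sum_{n_1, \ldots, n_a \geq 1} \trace(\TT_X F_{n_1} \cdots \TT_X F_{n_a}),
\]
so the lemma reduces to showing that multi-indices with some $n_j \geq 2N$ contribute $O(\epsi^N)$ in total.

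Second, I would establish the key quantitative estimate
\[
\bigl|\lr{D}^2 F_n \lr{D}^{-2}\bigr|_\LL \leq (C\epsi^2)^n,
\]
locally uniformly in $\lambda \in X_d$. This is a direct iteration of the mechanism already employed in Lemma \ref{lem:1j}: each $K_{V_\sharp}$ is rewritten via the commutator identity $\ek = (\epsi/|k|)[P_k,\ek]$ to absorb two factors $\lr{D}^{-2}$ on either side, yielding $|\lr{D}^{-2} V_\sharp \lr{D}^{-2}|_\BB \leq C\epsi^2 |W|_{Z^2}$ as in \eqref{eq:2h}. Among the $K_{W_0}$ factors (conjugated by $\lr{D}^{\pm 2}$), a fixed number are placed in the trace class via the singular-value bound $s_j(\rho \lr{D}^{-2}) \leq C j^{-2/d}$ from \eqref{eq:2i}, and the rest are absorbed in operator norm. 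The inner sum over $m$ converges thanks to $|\az_m| \leq C(m^{1/2}/m!)^{1/(p-1)}$, exactly as in Lemma \ref{lem:1j}.

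From $|F_n|_\LL \leq (C\epsi^2)^n$ together with $|\TT_X|_\BB \leq C$ locally on $X_d$ (since $\TT_X$ is holomorphic there), Hölder for Schatten classes gives
\[
|\trace(\TT_X F_{n_1} \cdots \TT_X F_{n_a})| \leq C^a (C\epsi^2)^{n_1+\cdots+n_a}.
\]
If some $n_j \geq 2N$ then the total $\epsi$-exponent is at least $4N \geq N$; geometric summation over the tail $\{(n_1,\dots,n_a) : \max n_j \geq 2N\}$ then yields an $O(\epsi^N)$ contribution. Truncating each $n_j$ to $[1, 2N-1]$ leaves a finite sum of terms of the advertised form.

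The main obstacle is justifying the interchange of the infinite sum $\sum_n F_n$ with the trace and with the multilinear expansion. Both are handled simultaneously by the quantitative trace-norm bound above, which ensures absolute convergence in the trace class. The only remaining subtlety is the inner series over $m$ inside each $F_n$; here the $\lr{D}^{\pm 2}$-conjugation together with the decay of the coefficients $\az_m$, already exploited in Lemma \ref{lem:1j}, takes care of the convergence and produces the locally uniform estimate on $X_d$.
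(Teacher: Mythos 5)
Your top-level plan — expand $Y$ according to the number of $K_{V_\sharp}$ factors, write $Y=\sum_{n\geq 1}F_n$, and show the tail $\sum_{n\geq 2N}F_n$ is $O(\epsi^N)$ in trace norm — is exactly the structure of the paper's argument, and your reduction to a quantitative bound on $|F_n|_\LL$ is the right move. The gap is in the bound itself. You assert $|\lr{D}^2 F_n\lr{D}^{-2}|_\LL\leq (C\epsi^2)^n$ and call it a direct iteration of the mechanism in Lemma \ref{lem:1j}. It is not, and the claimed estimate is not attainable by that mechanism. In Lemma \ref{lem:1j} there is exactly one $K_{V_\sharp}$ per monomial, so the single factor $\lr{D}^{-2}$ coming from the right of the conjugation can land on the right side of $V_\sharp$, and the $\lr{D}^2$ from the left of the conjugation is absorbed by $\rho R_0$, producing the sandwich $\lr{D}^{-2}V_\sharp\lr{D}^{-2}$ once. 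With $n\geq 2$ factors of $K_{V_\sharp}$, each internal $K_{V_\sharp}$ is trapped between $\lr{D}^{2}\cdots\lr{D}^{-2}$ pairs that cancel, and to manufacture a fresh $\lr{D}^{-2}(\cdot)\lr{D}^{-2}$ around every $V_\sharp$ you would need $\lr{D}^2\rho R_0\lr{D}^2$ to be bounded, which it is not: $\rho R_0$ gains only two derivatives, so $\lr{D}^2\rho R_0\lr{D}^2$ maps $L^2$ to $H^{-2}$ and not to $L^2$. Consequently you cannot extract $\epsi^2$ per $K_{V_\sharp}$ factor; at best the $\lr{D}^{\pm 1}$ conjugation gives $\epsi$ per factor in operator norm (since $\lr{D}\rho R_0\lr{D}$ is bounded), and that is precisely what the paper uses.

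The paper's actual device is finer: it conjugates by $\lr{D}^{\pm 1}$, sets $\KK_{V_\sharp}=\lr{D}K_{V_\sharp}\lr{D}^{-1}$, and interpolates two estimates on singular values, $s_j(\KK_{V_\sharp})\leq C\epsi|W|_{Z^1}$ (decay in $\epsi$, none in $j$) and $s_j(\KK_{V_\sharp})\leq C|W|_\infty j^{-1/d}$ (decay in $j$, none in $\epsi$), to get $s_j(\KK_{V_\sharp})\leq C\epsi^{1/2}|W|_{Z^1}j^{-1/(2d)}$. A monomial with $n$ factors then has $s_{nj}\lesssim C^m\epsi^{n/2}j^{-n/(2d)}$, and for $n\geq 2N\geq 2d+2$ this is both summable in $j$ and of size $\epsi^{N}$. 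This $\epsi^{n/2}$ (not $\epsi^{2n}$) scaling is in fact the reason the cutoff in the lemma is $2N-1$: if each factor really gave $\epsi^2$, one would only need to discard $n\geq N/2$. So the claimed bound does not match the statement you are proving, and the step where you invoke it would not go through.
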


\begin{proof} Fix $1 \leq a \leq N$ and define $\KK_\VV = \lr{D} K_\VV \lr{D}^{-1}$. Using the cyclicity of the trace, 
\begin{gather*}
\trace\left( (\TT_X Y)^a \right) = \trace\left( (\TT_{X'} Y')^a \right) \\
X' = \Det(\Id + \Psi(\KK_{W_0})) (\Id + \Psi(\KK_{W_0}))^{-1}, \ \ \ Y' = \Psi(\KK_V) - \Psi(\KK_{W_0}).
\end{gather*}
Define
\begin{equation}\label{eq:2k}
\EE_{m,n} = \sum_{\ell_0+...+\ell_n +n = m} \KK_{W_0}^{\ell_0} \KK_{V_\sharp} ... \KK_{W_0}^{\ell_{n-1}} \KK_{V_\sharp} \KK_{W_0}^{\ell_{n}}, \ \ \ \ \FF_n = \sum_{m=p}^\infty \az_m \EE_{m,n}.
\end{equation}
The index $n$ has the following significance: $\EE_{m,n}$ is the sum of monomials in $\KK_{W_0}$, $\KK_{V_\sharp}$ with exactly $n$ factors equal to $\KK_{V_\sharp}$. Using the power series expansion of $\Psi$ and $Y'=\Psi(\KK_V)-\Psi(\KK_{W_0})$ we obtain
\begin{align*}
Y'  = \sum_{m = p}^\infty \az_m(\KK_{W_0} + \KK_{V_\sharp})^m - \sum_{m = p}^\infty \az_m \KK_{W_0}^m = \sum_{m = p}^\infty \az_m \left( \EE_{m,1} + ... + \EE_{m,m} \right)  = \sum_{n=1}^\infty \FF_n.
\end{align*}
We claim that
\begin{equation}\label{eq:2l}
\left|\sum_{n = 2N}^\infty \FF_n \right|_\LL = O(\epsi^N).
\end{equation}
In order to prove this start by fixing $\ell_0, ..., \ell_n$ with $\ell_0+...+\ell_n +n = m \geq p$. Since $\KK_{V_\sharp}$ appears exactly $n$ times in the product $\KK_{W_0}^{\ell_0} \KK_{V_\sharp} ... \KK_{W_0}^{\ell_{n-1}} \KK_{V_\sharp} \KK_{W_0}^{\ell_n}$ we have
\begin{equation}\label{eq:2j}
s_{nj}\left(\KK_{W_0}^{\ell_0} \KK_{V_\sharp} ... \KK_{W_0}^{\ell_{n-1}} \KK_{V_\sharp} \KK_{W_0}^{\ell_n}\right) \leq s_j(\KK_{V_\sharp})^n |\KK_{W_0}|^{m-n}_\BB.
\end{equation}
We now prove some estimates on $s_j(\KK_{V_\sharp})$. On one hand by the same argument as in \eqref{eq:2h} we have 
\begin{align*}
s_j(\KK_{V_\sharp}) \leq |\KK_{V_\sharp}|_\BB & \leq |\lr{D} K_\rho \lr{D}|_\BB \cdot |\lr{D}^{-1} V_\sharp \lr{D}^{-1}|_\BB  \leq C \epsi |W|_{Z^1}.  
\end{align*}
On the other hand by arguments similar to \eqref{eq:2i} we have
\begin{align*}
s_j(\KK_{V_\sharp}) & \leq |\lr{D} K_{V_\sharp}|_\BB \cdot s_j(\rho \lr{D}^{-1}) \leq C|W|_\infty j^{-1/d}.
\end{align*}
Interpolating both inequalities yields $s_j(\KK_{V_\sharp}) \leq C \epsi^{1/2} |W|_{Z^1} j^{-1/(2d)}$.
Coming back to \eqref{eq:2j} we obtain
\begin{equation}\label{eq:0c}
s_{mj}\left(\KK_{W_0}^{\ell_0} \KK_{V_\sharp} ... \KK_{W_0}^{\ell_{n-1}} \KK_{V_\sharp} \KK_{W_0}^{\ell_n}\right) \leq C^m |W|_{Z^1}^m \epsi^{n/2} j^{-n/(2d)}.
\end{equation}
Since $n \geq 2N \geq 2d+2$ the RHS of \eqref{eq:0c} is summable. Summation over $j$ leads
\begin{equation*}
\left|\KK_{W_0}^{\ell_0} \KK_{V_\sharp} ... \KK_{W_0}^{\ell_{n-1}} \KK_{V_\sharp} \KK_{W_0}^{\ell_n}\right|_\LL \leq m \epsi^{n/2} (C|W|_{Z^1})^m
\end{equation*}
Consequently if $\EE_{m,n}$ is given by \eqref{eq:2k} then for $n \geq 2N$ 
\begin{equation}\label{eq:0d}
|\EE_{m,n}|_\LL \leq  m \matrice{ m \\ n } \epsi^N (C|W|_{Z^1})^m.
\end{equation}
The claim \eqref{eq:2l} follows then from \eqref{eq:0d} and the estimate \eqref{eq:2e} on $\az_m$:
\begin{equations*}
\left|\sum_{n = 2N}^\infty \FF_n \right|_\LL =  \left|\sum_{m = p}^\infty \az_m \left( \EE_{m,2N} + ... + \EE_{m,m} \right)\right|_{\LL} \leq  \epsi^{N} \sum_{m=p}^\infty m |\az_m| (C|W|_{Z^1})^m \left(\matrice{ m \\ 2N } + ... + \matrice{ m \\ m } \right)  \\
\leq  \epsi^{N} \sum_{m=p}^\infty m |\az_m| (2C|W|_{Z^1})^m = O(\epsi^{N}).
\end{equations*}
It follows that we can write $Y'$ as a the sum of a finite combination of the operators $\FF_n$ with $1 \leq n \leq m$ and a small error in $\LL$:
\begin{equation*}
Y' = \sum_{n=1}^\infty \FF_n = \sum_{n=1}^{2N-1} \FF_n + O_\LL(\epsi^N).
\end{equation*}
Therefore $\trace((\TT_{X'} Y')^a)$ is modulo $O(\epsi^N)$ a finite sum of expressions of the form
\begin{equation*}
\trace( \TT_{X'} \FF_{n_1} ... \TT_{X'} \FF_{n_a} ),
\end{equation*} 
where $1 \leq n_j \leq 2N-1$. Now as $X = \lr{D}^{-1} X' \lr{D}$, $F_n = \lr{D}^{-1} \FF_n \lr{D}$ and $\trace((\TT_{X'} Y')^a) = \trace((\TT_X Y)^a$ this completes the proof of the lemma. \end{proof}

To sum up we have proved that Theorem \ref{thm:5} holds if Lemma \ref{lem:2} holds, that is if for $a \in [1,N]$, $\trace((\TT_X Y)^a)$ admits an expansion in powers of $\epsi$. In addition Lemma \ref{lem:2} holds if for all $n_j \in [1,2N-1]$, $\trace(\TT_X F_{n_1} ... \TT_X F_{n_a})$ admits an expansion in powers of $\epsi$.

We write the operator $F_n$ given in \eqref{eq:0e} in the following form:
\begin{equations}\label{eq:1p}
F_n  =  \sum_{m=p}^\infty \az_m \sum_{\{k_\ell\} \in \SSSS_m^n} \left( \prod_{\ell=1}^m K_{W_{k_\ell}} e^{ik_\ell\bullet/\epsi} \right)
\end{equations}
where $\SSSS_m^n$ is the collection of sequences $d$-tuples $(k_1, ..., k_m)$, with exactly $n$ non-vanishing terms. Because of the conclusion of Lemma \ref{lem:4} we can restrict our attention to operators $F_n$ with $n \leq 2N-1$. For $n \leq 2N-1$ and $m \geq p$ the sequences of $\SSSS_m^n$ have much more vanishing terms than non vanishing terms. This will allow us to use some arguments of combinatorial nature. The expansion of $F_n$ given by \eqref{eq:1p} leads to
\begin{align*}
\prod_{j=1}^a \TT_X F_{n_j} & = \sum_{m_1, ..., m_a=p}^\infty \ \ \  \sum_{\{k_\ell^1\} \in \SSSS_{m_1}^{n_1}, \ ..., \ \{k_\ell^a\} \in \SSSS_{m_a}^{n_a}} \ \ \  \left(\prod_{j=1}^a \az_{m_j} \TT_X \prod_{\ell=1}^{m _j}K_{W_{k_\ell^j}} e^{ik_\ell^j\bullet/\epsi}\right) \\
   & = \DD_{n_1, ..., n_a} + \CC_{n_1,..., n_a}
\end{align*}
where 
\begin{equation}\label{eq:2w}\begin{gathered}
\DD_{n_1, ..., n_a} = \sum_{m_1, ..., m_a=p}^\infty \ \ \  \sum_{\substack{\{k_\ell^1\} \in \SSSS_{m_1}^{n_1}, \ ..., \ \{k_\ell^a\} \in \SSSS_{m_a}^{n_a}, \\ k_1^1+...+k_{m_a}^a \neq 0 }} \ \ \  \left(\prod_{j=1}^a \az_{m_j} \TT_X \prod_{\ell=1}^{m _j}K_{W_{k_\ell^j}} e^{ik_\ell^j\bullet/\epsi}\right), \\
\CC_{n_1, ..., n_a} = \sum_{m_1, ..., m_a=p}^\infty \ \ \  \sum_{\substack{\{k_\ell^1\} \in \SSSS_{m_1}^{n_1}, \ ..., \ \{k_\ell^a\} \in \SSSS_{m_a}^{n_a}, \\ k_1^1+...+k_{m_a}^a = 0}} \ \ \  \left(\prod_{j=1}^a \az_{m_j} \TT_X \prod_{\ell=1}^{m _j}K_{W_{k_\ell^j}} e^{ik_\ell^j\bullet/\epsi}\right).\end{gathered}
\end{equation}
In the next subsection we study the trace of the operator $\DD_{n_1, ..., n_a}$.

\subsection{Destructive interaction} The main result of this part is the following:

\begin{lem}\label{lem:5} For $1 \leq a \leq N$ and $n_1, ..., n_a \in [0,2N-1]$ let $\DD_{n_1, ..., n_a}$ be the trace class operator given by \eqref{eq:2w}. Then locally uniformly on $X_d$,
\begin{equation*}
\trace\left(\DD_{n_1, ..., n_a}\right) = O(\epsi^N).
\end{equation*}
\end{lem}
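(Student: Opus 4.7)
The plan is to exploit the fact that for destructive sequences $\sigma := \sum_{j,\ell} k_\ell^j \neq 0$, the oscillatory factors combine into a single overall phase $e^{i\sigma x/\epsi}$ whose integration by parts yields $\epsi^N$-decay.

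First, I would fix a summand of $\DD_{n_1,\dots,n_a}$ indexed by multi-degrees $(m_j)$ and a destructive tuple $(k_\ell^j)$. Using the commutation relation
\begin{equation*}
R_0(\lambda)\, e^{ik\bullet/\epsi} = e^{ik\bullet/\epsi}\, R_0\bigl(\lambda;\, D+k/\epsi\bigr), \qquad R_0(\lambda;\xi) := (\xi^2-\lambda^2)^{-1},
\end{equation*}
as in the overview formula \eqref{eq:65a}, one can push every exponential past every resolvent; the relation extends to $\TT_X$ since $\TT_X$ is built from $R_0(\lambda)$ and multiplication by $W_0$. Collecting all exponentials at the right under the trace yields a scalar expression $\int K_\sigma(x,x)\,e^{i\sigma x/\epsi}\,dx$, where $K_\sigma$ is a composition of shifted $\TT_X^{(\tau/\epsi)}$ and shifted resolvents $R_0(\lambda; D+\tau/\epsi)$ (for various partial sums $\tau$ of the $k_\ell^j$), cutoffs $\rho$, and smooth potentials $W_{k_\ell^j}$.

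Second, since the destructive condition forces $\sigma\in\Z^d\setminus\{0\}$ (hence $|\sigma|\geq 1$), I would apply
\begin{equation*}
e^{i\sigma x/\epsi} = \left(\frac{\epsi}{i|\sigma|^2}\,\sigma\cdot\nabla_x\right)^{\!N} e^{i\sigma x/\epsi}
\end{equation*}
and integrate by parts $N$ times inside the trace integral, gaining the prefactor $(\epsi/|\sigma|^2)^N$ at the cost of distributing $N$ derivatives (by Leibniz) among $\rho$, the $W_{k_\ell^j}$, and the $\TT_X$-blocks. Each shifted resolvent $R_0(\lambda;D+\tau/\epsi)$ has kernel $e^{-i\tau(x-y)/\epsi}R_0(\lambda)(x,y)$, whose modulus is independent of $\tau$; therefore the Schur-type bounds of Lemma \ref{lem:1h} apply uniformly in $\tau$, and so do the analogous bounds on the shifted $\TT_X^{(\tau/\epsi)}$ from Appendix \ref{app:1}.

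Finally, I would sum over $(m_j)_{j=1}^a$ and over admissible destructive tuples. Since the number of nonzero indices is fixed at $n_1+\cdots+n_a\leq a(2N-1)$, only finitely many sums over $\Z^d\setminus\{0\}$ occur; smoothness of $W$ makes $\sum_{k\neq 0}\|W_k\|_{C^{N+2}}$ finite, and the super-exponential decay $|\az_m|\leq C(m^{1/2}/m!)^{1/(p-1)}$ from \eqref{eq:2e} absorbs the sums over $m_j$. The main obstacle is the combinatorial bookkeeping: within each block there are $\binom{m_j}{n_j}$ ways to place the nonzero $k_\ell^j$'s, and Leibniz produces $O(m_j^N)$ distributions of derivatives per block. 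Both grow only polynomially in $m_j$ and are therefore dominated by the factorial decay of $\az_{m_j}$, so the resulting estimate $O(\epsi^N)$ holds locally uniformly on $X_d$ as claimed.
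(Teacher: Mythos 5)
Your proposal takes a genuinely different route from the paper. The paper does not integrate by parts; instead it (i) reduces the destructive sequences to \emph{admissible} ones (starting and ending with $N$ zeros) via the pigeonhole / ``good subsequence'' combinatorics, (ii) writes the product of shifted resolvents in the factorized form $T_1(T_2^{-1}W_{k_1}T_2)\cdots$ and extracts $\epsi^N$ from a Fourier-multiplier bound on $T_1 = \prod_j R(D+\sigma_j/\epsi,\lambda)$ using Peetre's inequality (Lemma \ref{lem:1}), supplemented by a reflection $K_\VV(\lambda)=K_\VV(-\lambda)+I_{\VV,1}(\lambda)$ and a three-lines argument to reach $|\Im\lambda|\le 1$, and (iii) controls the $\TT_X$ factors by the decomposition $\TT_X = C_0 + C_1$ of \eqref{eq:1s}, where $C_0$ is a polynomial in $K_{W_0}$ (so that it merges into the constructive/destructive chains) and $C_1 = K_{W_0}^{N+d}B_{W_0}K_{W_0}^{N+d}$ has explicit smoothing sandwiches. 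Your integration-by-parts approach aims at the same $\epsi^N$ gain by a different mechanism, and avoids the Fourier multiplier / Peetre step; it is closer in spirit to the commutator trick used to prove Theorem \ref{thm:1}, extended to traces. That is a legitimate alternative route.

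However, there is a real gap at the single hardest point of the argument, namely the $\TT_X$ blocks, which you dispose of in one sentence (``the relation extends to $\TT_X$ since $\TT_X$ is built from $R_0(\lambda)$ and multiplication by $W_0$''). Pushing the phases through $\TT_X$ only produces the unitary conjugate $\TT_X^{(\tau)} = e^{-i\tau\bullet/\epsi}\TT_X e^{i\tau\bullet/\epsi}$, which has the same operator norm as $\TT_X$ — so this step yields no smallness — and the $\epsi^N$ must then come entirely from your $N$ integrations by parts. Each integration by parts hits the $\TT_X^{(\tau)}$-blocks with $\mathrm{ad}_{\sigma\cdot D}$, and the naive formula
\begin{equation*}
[\sigma\cdot D,\TT_X] \;=\; -\,\TT_X\,[\sigma\cdot D,\Psi(K_{W_0})]\,(\Id+\Psi(K_{W_0}))^{-1}
\end{equation*}
exhibits an apparent pole at resonances of $W_0$; one must argue that this pole is removable (e.g. by writing $\TT_X = \Det(\Id+\Psi(K_{W_0}))\,\Id - \TT_X\Psi(K_{W_0})$, so that $[\sigma\cdot D,\TT_X]=-[\sigma\cdot D,\TT_X\Psi(K_{W_0})]$ with the smoothing factor absorbing the derivative), and then that iterated commutators $(\mathrm{ad}_{\sigma\cdot D})^N$ of all the $\TT_X^{(\tau)}$-blocks remain bounded locally uniformly on $X_d$, including at resonances, with constants compatible with the $C^{m_1+\cdots+m_a}$ growth. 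You do not address any of this, yet it is precisely the difficulty that the $C_0+C_1$ decomposition of \eqref{eq:1s} and the case-by-case analysis in the paper's proof (steps 2--4) are designed to handle. In addition, rewriting $\trace(\tilde\DD\,e^{i\sigma\bullet/\epsi})$ as $\int K_\sigma(x,x)e^{i\sigma x/\epsi}\,dx$ and differentiating $N$ times presupposes $C^N$ kernel regularity of the diagonal, which is not free when $\TT_X = \Det(\Id+\Psi(K_{W_0}))\Id + (\text{smoothing})$ contributes a non-smoothing identity piece; this also needs an argument (e.g. by expanding out the identity parts of each $\TT_X$). Until these points are addressed, the proposal is an outline of a plausible alternative rather than a proof.
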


We start with a few definitions. Let $\{k_\ell\}_{1 \leq \ell \leq \nu}$ a sequence of $d$-tuples in $\Z^d$ of length $\nu$ . We say that $\{k_\ell\}_{1 \leq \ell \leq \nu}$ is constructive if it satisfies $k_1+...+k_\nu = 0$ and destructive otherwise. A sequence of $d$-tuples $\{k_\ell\}_{1 \leq \ell \leq \nu}$ is said to be admissible if 
\begin{enumerate}
\item[$(i)$] It is destructive.
\item[$(ii)$] It starts and ends with $N$ vanishing terms.
\end{enumerate}
A sequence $\{k_\ell\}_{1 \leq \ell \leq \nu'}$ with exactly $\gamma$ non-vanishing terms is said to be good if
\begin{enumerate}
\item[$(i)$] It is admissible.
\item[$(ii)$] $\nu' \leq N+N \gamma+1$.
\end{enumerate}
A subsequence of an admissible sequence $\{k_\ell\}_{1 \leq \ell \leq \nu}$ is a good subsequence if it takes the form $\{k_\ell\}_{q+1 \leq \ell \leq q+\nu'}$ for some $q, \nu'$ and if the sequence $\{k_{\ell+q}\}_{1 \leq \ell \leq \nu'}$ is good.

A cyclic permutation of $\{k_\ell\}_{1 \leq \ell \leq \nu}$ is a sequence equal to
\begin{equation*}
(k_{L+1}, ..., k_\nu, k_1, ..., k_L)
\end{equation*}
for some $L \geq 0$. We will use below the following version of the pigeonhole principle. Let $\{k_\ell\}_{1 \leq \ell \leq \nu}$ a sequence with exactly $\gamma$ non-vanishing terms. If $\nu \geq N(\gamma+1)$, there exists a subsequence of $\{k_\ell\}_{1 \leq \ell \leq \nu}$ made of $N$ consecutive vanishing $d$-tuples. The next lemma is a combinatorial result allowing us to extract good subsequences out of admissible subsequences.

\begin{lem} Every admissible sequence $\{k_\ell\}_{1 \leq \ell \leq \nu}$ admits a good subsequence.
\end{lem}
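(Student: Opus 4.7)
The plan is to induct on $\gamma$, the number of non-vanishing terms in the admissible sequence $\{k_\ell\}_{1\leq\ell\leq\nu}$. Destructivity forces $\gamma\geq 1$ (otherwise the sum is $0$). Denoting $p_1<\cdots<p_\gamma$ the positions of the non-vanishing entries, the admissibility condition gives $p_1\geq N+1$ and $p_\gamma\leq \nu-N$, so boundary buffers of $N$ zeros are available at both ends.

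For the base case $\gamma=1$, I would take the centered window $\{k_\ell\}_{p_1-N\leq\ell\leq p_1+N}$: it has length exactly $2N+1=N\gamma+N+1$, inherits $N$ vanishing entries at each end from the boundary blocks of the original, and its sum equals $k_{p_1}\neq 0$, so it is destructive. Hence it is good. For the inductive step ($\gamma\geq 2$), I would split into two cases according to the largest interior gap between consecutive non-vanishing terms.

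Case~A (every interior gap has $\leq N-1$ zeros). I would trim the sequence to $\{k_\ell\}_{p_1-N\leq\ell\leq p_\gamma+N}$, keeping exactly $N$ boundary zeros on each side. It still starts and ends with $N$ zeros, contains the same non-vanishing terms (hence the same non-zero sum), and its length is
\[
2N + \gamma + \sum_{i=1}^{\gamma-1}(p_{i+1}-p_i-1) \;\leq\; 2N+\gamma+(\gamma-1)(N-1) \;=\; N\gamma+N+1,
\]
so it is good in its own right. Case~B (some interior gap satisfies $p_{i+1}-p_i-1\geq N$). I would split the sequence at that gap into $L=\{k_\ell\}_{1\leq\ell\leq p_i+N}$ and $R=\{k_\ell\}_{p_{i+1}-N\leq\ell\leq\nu}$. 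Both pieces start and end with $N$ zeros by construction; their non-vanishing sums $S_L=\sum_{j\leq i}k_{p_j}$ and $S_R=\sum_{j>i}k_{p_j}$ add up to the total sum $\neq 0$, so at least one of $L,R$ is destructive and hence admissible. Since each piece contains at most $\gamma-1$ non-vanishing terms, I would apply the inductive hypothesis to that admissible piece; the good subsequence it provides is contiguous in the original and is therefore a good subsequence of $\{k_\ell\}_{1\leq\ell\leq\nu}$.

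The serious content is really Case~A, where the bound $\nu'\leq N\gamma'+N+1$ must work out on the nose; this is an arithmetic identity rather than a deep obstacle. The rest of the argument is bookkeeping with positions, and the pigeonhole observation is not needed as a formal tool—it only motivates the dichotomy between Cases~A and~B (absence or presence of a block of $N$ consecutive interior zeros), which in Case~B is what enables splitting without breaking the boundary condition.
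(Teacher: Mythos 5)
Your proof is correct, and it takes a genuinely different route from the paper's. The paper inducts on the length $\nu$: it extracts the shortest prefix that contains a non-zero term and ends in $N$ zeros, shows this prefix has the right length bound via a pigeonhole argument, and if the prefix is constructive, recurses on the tail (which inherits admissibility because the prefix's sum cancels out). You instead induct on $\gamma$, the number of non-vanishing terms, and set up a clean dichotomy: when every interior gap of zeros is shorter than $N$, trimming to the outermost $N$-zero buffers already yields a good sequence by a direct count; when some interior gap contains $N$ or more zeros, you cut across that gap, observe that at least one half must be destructive (since the two partial sums add to the non-zero total), and recurse on that half, which has strictly fewer non-vanishing terms. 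Your approach has the advantage of making the structural dichotomy explicit and self-contained — the pigeonhole is absorbed into Case~A's arithmetic rather than invoked as a separate black box, and the base case $\gamma=1$ is fully explicit. It also makes the termination of the recursion immediate, since $\gamma$ strictly drops, whereas the paper's recursion on $\nu$ requires one to observe that the tail is strictly shorter. One small thing worth stating explicitly in your write-up: in Case~B you use that the interior gap having $\geq N$ zeros guarantees that both $L$ and $R$, as defined, genuinely end (resp.\ begin) with $N$ consecutive zeros — you do note this ("by construction") but it is the place where the $\geq N$ hypothesis is actually consumed, so it deserves a sentence.
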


\begin{proof} We prove this lemma by recursion on $\nu$. We start by noting that any admissible sequence with length $\nu \leq 2N+1$ has at least one non-vanishing term and therefore it is a good sequence. We now fix $\nu \geq 2N+2$ and we assume that all admissible sequences of length strictly less than $\nu$ admit a good subsequence. Let $\{k_\ell\}_{1 \leq \ell \leq \nu}$ be an admissible sequence with $\gamma$ non-vanishing terms. 

If $\nu \leq N+\gamma N+1$ then $\{k_\ell\}_{1 \leq \ell \leq \nu}$ is good. Therefore we assume that $\nu \geq N+\gamma N+2$. Consider the subsequence of minimal length of consecutive $d$-tuples starting at $k_1$, containing at least one non-zero term and ending with $N$ zeros: $(k_1,...,k_{\nu'})$. Let $\gamma'$ be the number of non-zero terms in this subsequence. Since this sequence is of minimal length the pigeonhole principle implies $\nu' \leq N+\gamma'N+1$. Hence if $k_1+...+k_{\nu'} \neq 0$ then this subsequence is good and therefore we are done.

Otherwise the sequence $\{k_\ell\}_{\nu'-N+1 \leq \ell \leq \nu}$ is admissible. Indeed it starts and ends with $N$ zeros and it is destructive since $k_1+...+k_{\nu'}=0$ and $k_1+...+k_\nu \neq 0$. Therefore we can apply the induction hypothesis: it admits a good subsequence. This completes the recursion and the proof.\end{proof}

\begin{lem}\label{lem:1} Let $\{k_\ell\}_{1 \leq \ell \leq \nu}$ be an admissible sequence. Then locally uniformly on $X_d$,
\begin{equation}\label{eq:11e}
\left| \prod_{\ell=1}^\nu K_{W_{k_\ell}} e^{ik_\ell\bullet/\epsi}\right|_\BB \leq C^{\nu^2} \left(\prod_{i=1}^\nu \|W_{k_\ell}\|_{2\nu}\right) \epsi^N.
\end{equation}
\end{lem}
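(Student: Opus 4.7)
The plan is to exploit the non-vanishing total momentum $\sigma_\nu := k_1 + \cdots + k_\nu \in \Z^d \setminus \{0\}$ via integration by parts, using the smoothing provided by the long runs of $K_{W_0}$ at the ends of the product. Since $k_\ell = 0$ for $\ell \leq N$ and $\ell > \nu - N$, the product factors as
\begin{equation*}
T := \prod_{\ell=1}^\nu K_{W_{k_\ell}} e^{ik_\ell\bullet/\epsi} = K_{W_0}^N \cdot A \cdot K_{W_0}^N, \qquad A := \prod_{\ell=N+1}^{\nu-N} K_{W_{k_\ell}} e^{ik_\ell\bullet/\epsi}.
\end{equation*}
Iterating the commutation $R_0(\lambda) e^{ik\cdot x/\epsi} = e^{ik\cdot x/\epsi}((D+k/\epsi)^2-\lambda^2)^{-1}$, I push all exponentials to the right and rewrite $A = B \cdot e^{i\sigma_\nu\bullet/\epsi}$, where $B$ is a product of shifted resolvents $\rho((D+\tau/\epsi)^2-\lambda^2)^{-1}W_{k_\ell}$. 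Each such factor is unitarily similar (via multiplication by $e^{\mp i\tau\cdot x /\epsi}$) to $K_{W_{k_\ell}}$, so by Lemma~\ref{lem:1h} the operator $B$ is bounded on $L^2$ by $C^\nu \prod_\ell \|W_{k_\ell}\|_0$ uniformly on compact subsets of $X_d$.

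I then write the Schwartz kernel of $T$ at $(x,y)$ as an integral over $\nu-1$ intermediate variables, with a single oscillation $e^{i\sigma_\nu \cdot z / \epsi}$ attached to the variable $z$ joining $B$ to the trailing $K_{W_0}^N$. Since $|\sigma_\nu| \geq 1$, the identity $e^{i\sigma_\nu \cdot z /\epsi} = -i(\epsi/|\sigma_\nu|^2)(\sigma_\nu\cdot\nabla_z) e^{i\sigma_\nu \cdot z/\epsi}$ allows $N$ integrations by parts in $z$, contributing a total gain of at most $\epsi^N$; the boundary terms vanish thanks to the compact supports of $\rho$ and the $W_{k_\ell}$.

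The $N$ derivatives produced either fall on the smooth potentials $W_{k_\ell}$ (contributing at most $\|W_{k_\ell}\|_{2\nu}$ each), on $\rho$, or on Green's functions $G_0$ in the kernel. The singular Green's function derivatives are handled using the translation invariance $\nabla_z G_0(w-z) = -\nabla_w G_0(w-z)$ to transfer them to the adjacent integration variable, followed by another integration by parts onto the kernel of the leftmost $K_{W_0}^N$; that kernel lies in $H^{2N}$ in each variable by iteration of the $L^2 \to H^2$ smoothing of $K_{W_0}$, so it absorbs up to $2N$ derivatives. Schur's test applied to the resulting compactly supported kernel finally yields $|T|_\BB \leq C^{\nu^2} \prod_\ell \|W_{k_\ell}\|_{2\nu} \epsi^N$, where the factor $C^{\nu^2}$ collects the Leibniz combinatorics from the $\nu - 2N$ middle factors and $N$ differentiations, together with the $\lr{\lambda}$-powers supplied by Lemma~\ref{lem:1h}. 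The main obstacle is the clean bookkeeping of singular derivative transfers while preserving uniform constants across a long product.
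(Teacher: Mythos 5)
Your key step --- writing the Schwartz kernel of $T$ with ``a single oscillation $e^{i\sigma_\nu\cdot z/\epsi}$ attached to the variable $z$'' --- is not correct, and the error invalidates the integration-by-parts gain. After you push exponentials to the right, the operator $B$ is a product of shifted resolvents $\rho\,((D+\tau/\epsi)^2-\lambda^2)^{-1}W_{k_\ell}$, but the \emph{Schwartz kernel} of each such factor is $\rho(u)\,e^{-i\tau(u-v)/\epsi}\,G_0(\lambda,u-v)\,W_{k_\ell}(v)$, which still carries a rapidly oscillating phase. If you expand the kernel of $T$ as an iterated integral, the phases do not concentrate at $z$: they telescope back to the original form, leaving $e^{ik_j z_j/\epsi}$ attached to \emph{every} intermediate variable. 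Consequently, when you integrate by parts $N$ times in $z$ against $e^{i\sigma_\nu\cdot z/\epsi}$, one Leibniz term falls on the phase $e^{-i\tau z/\epsi}$ of the last shifted resolvent, producing a factor of size $|\tau|/\epsi$ that cancels the $\epsi/|\sigma_\nu|^2$ you gained. The same obstruction reappears each time you try to transfer a singular Green's function derivative to an adjacent variable and integrate by parts again: that variable also carries an $e^{ik_{j'}z_{j'}/\epsi}$. As soon as the middle block has more than one non-vanishing $k_\ell$ (the generic case), the proposed physical-space integration by parts does not produce $\epsi^N$. A second, independent issue is that your argument does not explain how the estimates remain uniform on compact subsets of $X_d$ --- in particular for $\Im\lambda\le 0$, where the Green's function grows exponentially and the shifted Fourier multipliers are no longer bounded. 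The paper handles this with a separate argument: split $K_\VV(\lambda)=K_\VV(-\lambda)+I_{\VV,1}(\lambda)$ for $\Im\lambda\in[-M,-1]$ using the smoothing operator $I_{\VV,1}$ of Lemma~\ref{lem:1a}, and then interpolate across $|\Im\lambda|\le 1$ by the three lines theorem.

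The paper's proof avoids the oscillation problem entirely by staying on the Fourier side. Pushing the exponentials to the left and setting $T_j=A(\sigma_j,\lambda)\cdots A(\sigma_\nu,\lambda)$, it rewrites the product as $e^{i\sigma_1\bullet/\epsi}\rho\,T_1\,(T_2^{-1}W_{k_1}T_2)\cdots(T_\nu^{-1}W_{k_{\nu-1}}T_\nu)W_{k_\nu}$, and the entire $\epsi$-gain is read off from the sup-norm of the pure Fourier multiplier $T_1$. Since $\sigma_1=\cdots=\sigma_N\ne 0$ and $\sigma_{\nu-N+1}=\cdots=\sigma_\nu=0$, Peetre's inequality \eqref{eq:7f} gives $\sup_\xi\prod_j\lr{\xi+\sigma_j/\epsi}^{-2}\le C\,\lr{\sigma_1/\epsi}^{-2N}\lesssim\epsi^{2N}$; morally, the first $N$ factors are frequency-localized near $-\sigma_1/\epsi$ and the last $N$ near $0$, and those two localizations barely overlap. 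The conjugated potentials $T_j^{-1}W_{k_j}T_j$ are shown to be $\BB$-bounded by $C^{\nu-j}\|W_{k_j}\|_{2(\nu-j)}$ by a descending commutator recursion, which is where the $\|W_{k_\ell}\|_{2\nu}$ norms arise. You would need to replace your integration-by-parts scheme by a frequency-localization argument of this kind.
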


This lemma is the key to prove Lemma \ref{lem:5}. We start by a preliminary result:

\begin{lem}\label{lem:1a} The operator $I_{\VV,1} (\lambda) = K_\VV(\lambda) - K_\VV(-\lambda)$ is a smoothing operator. In addition there exists a constant $C$ such that uniformly in $\lambda \in \C \setminus \Dd(0,1)$,
\begin{equation}\label{eq:3d}
\left|(D^2-\lambda^2)^N I_{\VV,1}(\lambda) \right|_\BB \leq C \lr{\lambda}^{2N+d} e^{2L|\lambda|}|\VV|_\infty.
\end{equation}
\end{lem}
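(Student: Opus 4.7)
The plan is to exploit the cancellation $(D^2 - \lambda^2)(R_0(\lambda) - R_0(-\lambda)) = 0$ coming from the fact that each $R_0(\pm\lambda)$ is a right inverse of $D^2 - \lambda^2$ on $L^2_\comp$, and then estimate an explicit Schwartz kernel via Schur's test. Write $M(\lambda) = R_0(\lambda) - R_0(-\lambda)$, so that $I_{\VV,1}(\lambda) = \rho M(\lambda) \VV$ and $(D^2 - \lambda^2) M(\lambda) f = 0$ for every $f \in L^2_\comp$. First I would apply the elementary identity
\[
(D^2 - \lambda^2)^N \rho = \sum_{k=0}^N \binom{N}{k} (\operatorname{ad}_{D^2})^k(\rho) \cdot (D^2 - \lambda^2)^{N-k},
\]
provable by induction on $N$ using $[\lambda^2, \rho] = 0$. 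Applied to $M(\lambda) \VV f$, only the $k = N$ term survives, yielding
\[
(D^2 - \lambda^2)^N I_{\VV,1}(\lambda) = (\operatorname{ad}_{D^2})^N(\rho)\, M(\lambda) \VV.
\]
The operator $(\operatorname{ad}_{D^2})^N(\rho) = \sum_{|\alpha| \leq N} g_\alpha(x) D^\alpha$ is a differential operator of order at most $N$ with smooth coefficients supported in $\supp(\grad \rho) \subset \Bb^d(0,L+1)$; this already exhibits $(D^2 - \lambda^2)^N I_{\VV,1}(\lambda)$ as a smoothing operator with compactly supported kernel.

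For the kernel estimate, I would use the spectral representation
\[
M(\lambda)(x,y) \;=\; c_d\, \lambda^{d-2} \int_{S^{d-1}} e^{i\lambda\omega\cdot(x-y)}\, d\omega,
\]
obtained for real $\lambda > 0$ by applying Sokhotski--Plemelj to the Fourier symbol $(|\xi|^2 - \lambda^2)^{-1}$ of $R_0$, and extended to all of $\C$ by analytic continuation (the pole at $\lambda = 0$ in $d=1$ is harmless since we only consider $|\lambda| \geq 1$). Differentiation under the integral sign gives $D_x^\alpha\, e^{i\lambda\omega\cdot(x-y)} = (\lambda\omega)^\alpha e^{i\lambda\omega\cdot(x-y)}$, so
\[
|D_x^\alpha M(\lambda)(x,y)| \leq C \lr{\lambda}^{d - 2 + |\alpha|} e^{|\lambda||x-y|}
\]
uniformly in $|\lambda| \geq 1$ and $x,y$ in any fixed compact set. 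In dimensions $d=1$ and $d=3$ the closed forms $M(\lambda)(x,y) = i\cos(\lambda|x-y|)/\lambda$ and $i\sin(\lambda|x-y|)/(2\pi|x-y|)$ give the bound directly; in higher odd dimensions one can either use the plane-wave integral above or the Hankel-function formula for $R_0$ combined with the entirety of $H^{(1)}_\nu - e^{i\nu\pi} H^{(2)}_\nu$.

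Combining the two steps, the Schwartz kernel of $(D^2 - \lambda^2)^N I_{\VV,1}(\lambda)$ is supported in the bounded set $\supp(\grad\rho) \times \supp(\VV)$ and is pointwise bounded by $C \lr{\lambda}^{N + d - 2} e^{2L|\lambda|} |\VV|_\infty$ (any residual factor $e^{c|\lambda|}$ coming from the width of $\supp\rho$ can be absorbed by adjusting $\rho$ or widening the constant). Schur's test then produces the claimed $\BB$-norm bound, which is in fact sharper than the stated exponent $\lr{\lambda}^{2N+d}$. The main technical obstacle is establishing the spectral representation of $M(\lambda)$ together with its derivative bounds in general odd dimension; once this is available, the commutator reduction and Schur bookkeeping are routine.
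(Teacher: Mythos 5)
Your proof is correct, and it takes a genuinely cleaner route than the paper's. The paper does not exploit the cancellation $(D^2-\lambda^2)M(\lambda) = 0$: it writes $(D^2-\lambda^2)^N I_{\VV,1}$ as a finite sum via the binomial expansion of $(D^2-\lambda^2)^N$ and the Leibniz rule applied to $\rho \cdot M(\lambda)\VV$, then estimates \emph{every} resulting term (including those with $\chi = \rho$ and no derivatives hitting $\rho$) using the plane-wave kernel formula and Schur's test. Your commutator identity
$(D^2-\lambda^2)^N\rho = \sum_{k=0}^N\binom{N}{k}(\operatorname{ad}_{D^2})^k(\rho)(D^2-\lambda^2)^{N-k}$
combined with $(D^2-\lambda^2)^{N-k}M(\lambda)\VV f = 0$ for $k<N$ collapses the whole sum to the single term $(\operatorname{ad}_{D^2})^N(\rho)\,M(\lambda)\VV$, which is conceptually tidier and, as you note, gives the sharper exponent $\lr{\lambda}^{N+d-2}$ in place of $\lr{\lambda}^{2N+d}$ --- the $\lambda^{2t}$ factors from the binomial coefficients of $(D^2-\lambda^2)^N$ never appear once the cancellation is used. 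What the two arguments share is the final reduction: the plane-wave integral representation of $M(\lambda)(x,y)$ (which the paper simply quotes from the standard reference, so you need not re-derive it in each odd dimension), the observation that $x$-derivatives bring down powers of $\lambda$, the exponential bound $e^{|\lambda||x-y|}$ on a compact support, and Schur's lemma. Your remark about the width of $\supp\rho$ producing $e^{(2L+2)|\lambda|}$ rather than $e^{2L|\lambda|}$ is also a real (if benign) imprecision in the paper's statement, and you handle it the same way the paper does implicitly, by absorbing the discrepancy into the constant or enlarging $L$.
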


\begin{proof} The operator $I_{\VV,1}(\lambda)$ is smoothing as the kernel of the operator $R_0(\lambda)-R_0(-\lambda)$ is given by the smooth function
\begin{equation*}
I_{\VV,1}(\lambda,x,y) = \dfrac{i}{2} \dfrac{\lambda^{d-2}}{(2\pi)^{d-1}} \int_{\Ss^{d-1}} e^{i\lambda \lr{w,x-y}} d\w,
\end{equation*}
see \cite[Theorem 3.4]{DyaZwo}. In order to prove the estimate \eqref{eq:3d} we note that by the product rule for derivatives the operator $(D^2-\lambda^2)^N I_{\VV,1}(\lambda)$ is a finite sum of operators of the form
\begin{equation}\label{eq:3e}
\dfrac{i}{2} \dfrac{\lambda^{d-2+2t}}{(2\pi)^{d-1}}  \chi D^{2\tau} (R_0(\lambda) - R_0(-\lambda)) \VV
\end{equation}
where $t \in [0,N]$, $t+\tau \leq N$ and $\chi \in \{ \rho^{(\az)}, \ |\az| \leq N\}$. The operators of the form \eqref{eq:3e} have kernel given by
\begin{equation*}
(x,y) \mapsto \dfrac{i}{2}\dfrac{\lambda^{d-2+2N-2t}}{(2\pi)^{d-1}} \chi(x) \left(D_x^{2\tau} \int_{\Ss^{d-1}} e^{i\lambda \lr{w,\cdot-y}} d\w\right)(x) \VV(y).
\end{equation*}
Since $D_x^{2\tau} e^{i\lambda \lr{w,\cdot-y}}(x) = \lambda^{2\tau} e^{i\lambda \lr{\w, x-y}}$ we have the estimate
\begin{equation}\label{eq:3f}
\left|\left(D_x^{2\tau} \int_{\Ss^{d-1}} e^{i\lambda \lr{w,\cdot-y}} d\w\right)(x)\right| \leq C \lr{\lambda}^{2\tau} e^{|\lambda| |x-y|}
\end{equation}
uniformly on $\C \setminus \Dd(0,1)$. Since $\chi$ and $\VV$ are compactly supported the $\BB$-norm of operators of the form \eqref{eq:3e} can be estimated by Schur's lemma and the bound \eqref{eq:3f}. Recalling that $t+\tau \leq N$ it leads to
\begin{equation*}
\left|\dfrac{i}{2}\dfrac{\lambda^{d-2+2t}}{(2\pi)^{d-1}} \chi D^{2t} (R_0(\lambda) - R_0(-\lambda)) \VV\right|_\BB \leq C |\chi|_\infty \lr{\lambda}^{2N+d} e^{2L|\lambda|}|\VV|_\infty. 
\end{equation*}
To conclude it suffices to recall that the operator $(D^2-\lambda^2)^N I_{\VV,1}(\lambda)$ is a finite sum of operators of the form \eqref{eq:3e}. This completes the proof of \eqref{eq:3d}. \end{proof}

\begin{proof}[Proof of Lemma \ref{lem:1}.] We divide the proof in three main steps.

1. Fix $M > 1$. We first show that
\begin{equation}\label{eq:11v}
\left| \prod_{\ell=1}^\nu K_{W_{k_\ell}} e^{ik_\ell\bullet/\epsi}\right|_\BB \leq C^{\nu^2} \lr{\lambda}^\nu \left(\prod_{\ell=1}^\nu \|W_{k_\ell}\|_{2\nu}\right) \epsi^N, \ \ \ \Im \lambda \in [1,M].
\end{equation}
uniformly on the set $\{ \lambda : \ \Im \lambda \in [1,M]\}$. Let $R(\xi,\lambda) = (\xi^2 - \lambda^2)^{-1}$ and
\begin{equation*}
A(k,\lambda) = R(D+k/\epsi,\lambda) = e^{-ik\bullet/\epsi} R_0(\lambda) e^{ik\bullet/\epsi}.
\end{equation*}
Define $\sigma_\ell = k_\ell+...+k_\nu$. The commutation relation $e^{-ik\bullet/\epsi} D e^{ik\bullet/\epsi} = D+k/\epsi$ shows
\begin{equations*}
 K_{W_{k_1}} e^{ik_1\bullet/\epsi}...K_{W_{k_\nu}} e^{ik_\nu\bullet/\epsi} 
=  \rho A(0,\lambda) W_{k_1} e^{ik_1\bullet/\epsi} A(0,\lambda) W_{k_2} e^{ik_2\bullet/\epsi} ... A(0,\lambda) W_{k_\nu} e^{ik_\nu \bullet/\epsi} \\
=  e^{i\sigma_1 \bullet/\epsi}\rho A(\sigma_1,\lambda) W_{k_1} A(\sigma_2,\lambda) W_{k_2}  ...  A(\sigma_\nu,\lambda) W_{k_\nu} .
\end{equations*}
Now define $T_j(\lambda) = A(\sigma_j,\lambda) ... A(\sigma_\nu,\lambda)$. Since we are working in the half plane $\{\Im \lambda \geq 1\}$ the operator $T_j(\lambda)$ is well defined and bounded from $H^{-2(\nu-j)}$ to $L^2$. It admits a bounded inverse $T_j(\lambda)^{-1}$ from $L^2$ to $H^{-2(\nu-j)}$. Thus $A(\sigma_{j-1},\lambda) = T_{j-1}(\lambda) T_j(\lambda)^{-1}$ as an operator on $L^2$. This yields
\begin{equation}\label{eq:2x}\begin{gathered} 
  K_{W_{k_1}} e^{ik_1\bullet/\epsi}...K_{W_{k_\nu}} e^{ik_\nu\bullet/\epsi} \\
=   e^{i\sigma_1 \bullet/\epsi} \rho T_1(\lambda) \left(T_2(\lambda)^{-1} W_{k_1} T_2(\lambda)\right) ... \left( T_\nu(\lambda)^{-1} W_{k_{\nu-1}} T_\nu(\lambda)\right) W_{k_\nu}.\end{gathered}
\end{equation}
The estimate \eqref{eq:11e} for $\Im \lambda \in [1,M]$ follows then from a bound on $|T_j(\lambda)^{-1} W_{k_j} T_j(\lambda)|_\BB$ and a bound on $|T_1(\lambda)|_\BB$. We start with the bound on $|T_1(\lambda)|_\BB$. Since this operator is a Fourier multiplier we have
\begin{align*}
|T_1(\lambda)|_\BB = \sup_{\xi \in \R^d} \left|\prod_{j=1}^{\nu} R(\xi+\sigma_j/\epsi,\lambda)\right|.
\end{align*}
We reduce this estimate for $\Im \lambda \in [1,M]$ to an estimate for $\lambda = i$. For  $\xi \in \R^d$ and $\Im \lambda \in [1,M]$ we have $\left|(\xi^2+1)/(\xi^2-\lambda^2)\right| \leq C\lr{\lambda}$. It implies
\begin{equations*}
\sup_{\xi \in \R^d} \left|\prod_{j=1}^{\nu} R(\xi+\sigma_j/\epsi,\lambda)\right| = \sup_{\xi \in \R^d} \prod_{j=1}^{\nu} \left|R(\xi+\sigma_j/\epsi,i)\right| \cdot \left|\dfrac{(\xi+\sigma_j/\epsi)^2+1}{(\xi+\sigma_j/\epsi)^2-\lambda^2}\right| \\
  \leq  (C\lr{\lambda})^\nu \sup_{\xi \in \R^d} \left|\prod_{j=1}^{\nu} \lr{\xi+\sigma_j/\epsi}^{-2}\right|.
\end{equations*}
Since the sequence $\{k_\ell\}_{1 \leq \ell \leq \nu}$ is admissible we have $\sigma_1=...=\sigma_N \neq 0$ and $\sigma_{\nu-N+1} = ... = \sigma_{\nu} = 0$. Thus the sequence $\{\sigma_\ell\}_{1 \leq \ell \leq \nu-1}$ starts with $N$ equal non-vanishing terms and ends with $N$ vanishing terms. Peetre's inequality (see equation \eqref{eq:7f}) implies
\begin{align*} 
\sup_{\xi \in \R^d} \left|\prod_{j=1}^{\nu} \lr{\xi+\sigma_j/\epsi}^{-2}\right| & \leq |\lr{\xi+\sigma_1/\epsi}^{-2N}  \lr{\xi}^{-2N}|_\BB  \leq C^\nu \epsi^{2N}.
\end{align*}
It follows that for $\lambda \in [1,M]$, $|T_1|_\BB \leq C^\nu\lr{\lambda}^\nu \epsi^N$.

We next estimate on $|T_j^{-1} W_{k} T_j|_\BB$. We show that for every $j \in [2,\nu]$,
\begin{equation}\label{eq:11d}
|T_j^{-1} W_{k} T_j|_\BB \leq C^{\nu-j} \|W_{k_j}\|_{2(\nu-j)}
\end{equation}
using a descendent recursion on $j$. If $j=\nu$ then $T_j = A(\sigma)$ for some $\sigma \in \Z^d$. Thus 
\begin{align*}
A(\sigma)^{-1} W_k A(\sigma) & = W_k + [W_k, (D+\sigma/\epsi)^2-\lambda^2] A(\sigma) \\
   & = W_k + (D^2W_k) A(\sigma) + 2(D W_k)\cdot (D+\sigma/\epsi) A(\sigma).
\end{align*}
The operator $A(\sigma) = e^{-i\sigma\bullet/\epsi} R_0(\lambda) e^{i\sigma\bullet/\epsi}$ is bounded on $L^2$ with uniform bound when $\Im \lambda \geq 1$. The operator $(D+\sigma/\epsi)A(\sigma) = e^{-i\sigma\bullet/\epsi} DR_0(\lambda) e^{i\sigma/\epsi}$ is also bounded on $L^2$ with uniform bound when $\Im \lambda \geq 1$ as $DR_0(\lambda)$ is bounded on $L^2$ with uniform bound. Therefore:
\begin{equation*}
|A(\sigma)^{-1} W_k A(\sigma)|_\BB \leq C\|W_k\|_2.
\end{equation*}
This proves the case $j=\nu$ of \eqref{eq:11d}. Now assume that \eqref{eq:11d} holds for some $j \in [3,\nu]$ and let us prove that it also holds for $j-1$. Write $T_{j-1} = A(\sigma)T_j$ for some $\sigma$ so that
\begin{align*}
T_{j-1}^{-1} W_k T_{j-1} & = T_j^{-1}A(\sigma)^{-1} W_k A(\sigma)T_j \\
      & = T_j^{-1} \left( W_k + (D^2W_k) A(\sigma) + 2(D W_k) \cdot (D+\sigma/\epsi) A(\sigma) \right) T_j \\
     & = \left(T_j^{-1}W_k T_j\right) + \left(T_j^{-1} (D^2W_k) T_j\right) A(\sigma) + 2\left(T_j^{-1} (D W_k) T_j\right) \cdot (D+\sigma/\epsi)A(\sigma).
\end{align*}
Therefore the bounds follows from the recursion hypothesis applied to the operators $T_j^{-1}W_k T_j$, $T_j^{-1} (D^2W_k) T_j$ and $T_j^{-1} (D W_k) T_j$:
\begin{align*}
\left|T_{j-1}^{-1} W_k T_{j-1}\right|_\BB & \leq C^{\nu-j+1} \|W_k\|_{2(\nu-j)} + C^{\nu-j+1} \|DW_k\|_{2(\nu-j)} + C^{\nu-j+1} \|D^2W_k\|_{2(\nu-j)} \\
    & \leq C^{\nu-j+1} \|W_k\|_{2(\nu-j+1)}.
\end{align*}
This ends the recursion and thus the proof of \eqref{eq:11d}. The estimate \eqref{eq:11v} follows then from the identity \eqref{eq:2x}, and the bounds on $|T_j^{-1} W_{k} T_j|_\BB$, $|T_1|_\BB$. 

2. We show that an estimate similar to \eqref{eq:11v} holds for $\Im \lambda \in [-M,-1]$. Write $K_\VV(\lambda) = I_{\VV,0}(\lambda)+I_{\VV,1}(\lambda)$ where $I_{\VV,0}(\lambda) = K_\VV(-\lambda)$ and $I_{\VV,1}(\lambda)$ was defined in Lemma \ref{lem:1a}. This yields
\begin{equation*}
\prod_{\ell=1}^\nu K_{W_{k_\ell}} e^{ik_\ell\bullet/\epsi} = \sum_{\epsilon_1, ..., \epsilon_\nu \in \{0,1\}^\nu} \prod_{\ell=1}^\nu I_{W_{k_\ell},\epsilon_\ell}(\lambda)  e^{ik_\ell\bullet/\epsi}.
\end{equation*}
Fix a sequence $\epsilon_1, ..., \epsilon_\nu \in \{0,1\}^\nu$. If all the terms vanish, then 
\begin{equation*}
\prod_{\ell=1}^\nu I_{W_{k_\ell},\epsilon_\ell}(\lambda)  e^{ik_\ell\bullet/\epsi} = \prod_{\ell=1}^\nu K_{W_{k_\ell}}(-\lambda) e^{ik_\ell\bullet/\epsi}.
\end{equation*}
As $\Im(-\lambda) \in [1,M]$ we can bound the norm of this operator by directly applying \eqref{eq:11v}. Now assume that at least one of the $\epsilon_\ell$ is equal to $1$. The indexes $\ell_1, ..., \ell_s$ with $\epsilon_{\ell_1} = ... = \epsilon_{\ell_s} = 1$ split the sequence $k_1, ..., k_\nu$ in $s+1$ subsequences, of the form
\begin{equation}\label{eq:11y}
(k_1, ..., k_{\ell_1-1}), \ (k_{\ell_1}, ..., k_{\ell_2-1}), \ ..., (k_{\ell_s}, ..., k_\nu).
\end{equation}
At least one of these subsequences is destructive. Let us assume that it is the first one. Then $(k_1, ..., k_{\ell_1-1})$ is destructive and starts with $N$ zeros. It does not necessarily end with $N$ zeros. Write\begin{equation}\label{eq:3c}\begin{gathered}
  \left|\prod_{\ell=1}^{\nu} I_{W_{k_\ell},\epsilon_\ell}(\lambda)  e^{ik_\ell\bullet/\epsi}\right|_\BB \\ = \left|\left(\prod_{\ell=1}^{\ell_1-1} K_{W_{k_\ell}}(-\lambda)  e^{ik_\ell\bullet/\epsi}\right) I_{W_{k_{\ell_1}},1}(\lambda)  e^{ik_{\ell_1}\bullet/\epsi}\right|_\BB \left|\prod_{\ell=\ell_1+1}^{\nu} I_{W_{k_\ell},\epsilon_\ell}(\lambda)  e^{ik_\ell\bullet/\epsi}\right|_\BB.\end{gathered}
\end{equation}
The second factor of the RHS of \eqref{eq:3c} can be controlled by the estimates of Lemma \ref{lem:1h}:
\begin{equation*}
\left|\prod_{\ell=\ell_1+1}^{\nu} I_{W_{k_\ell},\epsilon_\ell}(\lambda)  e^{ik_\ell\bullet/\epsi}\right|_\BB \leq  \prod_{\ell=\ell_1+1}^{\nu} C_M |W_{k_\ell}|_\infty
\end{equation*}
for a constant $C_M$ depending on $M$. We deal next with the first factor in the RHS of \eqref{eq:3c}. Let $\chi \in \C_0^\infty(\Bb^d(0,L))$ be equal to $1$ on $\supp(\rho)$ and define $\tK_\rho(\lambda) = \chi R_0(\lambda) \chi$. Since $\Im \lambda \leq -1$, $\tK_\rho(-\lambda)^N (D^2-\lambda^2)^N \rho = \Id$. It follows that
\begin{equation}\label{eq:3g}\begin{gathered} 
\left|\left(\prod_{\ell=1}^{\ell_1-1} K_{W_{k_\ell}}(-\lambda)  e^{ik_\ell\bullet/\epsi}\right) I_{W_{k_{\ell_1}},1}(\lambda)  e^{ik_{\ell_1}\bullet/\epsi}\right|_\BB \\
=  \left|\left(\prod_{\ell=1}^{\ell_1-1} K_{W_{k_\ell}}(-\lambda) e^{ik_\ell\bullet/\epsi}\right) \tK_\rho(-\lambda)^N (D^2-\lambda^2)^N I_{W_{k_{\ell_1}},1}(\lambda) \right|_\BB  \\
   \leq \left|\left(\prod_{\ell=1}^{\ell_1-1} K_{W_{k_\ell}}(-\lambda)  e^{ik_\ell\bullet/\epsi}\right) \tK_\rho(-\lambda)^N \right|_\BB \left|(D^2-\lambda^2)^N I_{W_{k_{\ell_1}},1}(\lambda) \right|_\BB .\end{gathered}
\end{equation}
The same arguments used to show \eqref{eq:11v} yield that for $\lambda \in [1,M]$
\begin{equation*}
\left|\left(\prod_{\ell=1}^{\ell_1-1} K_{W_{k_\ell}}(-\lambda)  e^{ik_\ell\bullet/\epsi}\right) \tK_\rho(-\lambda)^N \right|_\BB \leq  C^{\ell_1^2} \lr{\lambda}^{\ell_1} \left(\prod_{\ell=1}^{\ell_1-1} \|W_{k_\ell}\|_{2\nu}\right) \epsi^N.
\end{equation*}
By Lemma \ref{lem:1a}, $\left|(D^2-\lambda^2)^N I_{W_{k_{\ell_1}},1}(\lambda) \right|_\BB \leq \lr{\lambda}^{2N+d} e^{2L|\lambda|} |W_{k_{\ell_1}}|_\infty$ for $\Im \lambda \in [-1,-M]$. Coming back to \eqref{eq:3g} and putting these bounds together we obtain
\begin{equation*}
\left|\left(\prod_{\ell=1}^{\ell_1-1} K_{W_{k_\ell}}(-\lambda)  e^{ik_\ell\bullet/\epsi}\right) I_{W_{k_{\ell_1}},1}(\lambda)  e^{ik_{\ell_1}\bullet/\epsi}\right|_\BB \leq C^{\ell_1^2} \lr{\lambda}^{\ell_1+2N+d} e^{2L|\lambda|} \left(\prod_{\ell=1}^{\ell_1} \|W_{k_\ell}\|_{2\nu}\right) \epsi^N.
\end{equation*}
By \eqref{eq:3c} we conclude that if the first sequence among \eqref{eq:11y} is destructive we have
\begin{equation*}
\left|\prod_{\ell=1}^{\nu} I_{W_{k_\ell},\epsilon_\ell}(\lambda)  e^{ik_\ell\bullet/\epsi}\right|_\BB \leq C_M^{\nu^2} \lr{\lambda}^{\nu+2N+d} e^{2L|\lambda|} \left(\prod_{\ell=1}^{\ell_1} \|W_{k_\ell}\|_{2\nu}\right) \epsi^N
\end{equation*}
uniformly for $\lambda$ with $\Im \lambda \leq M$. In the case where the first subsequence among \eqref{eq:11y} is not destructive we know that at least one of the subsequence in \eqref{eq:11y} is destructive. This subsequence might not start nor end with $N$ vanishing term. Here again using that the operator $I_{\VV,1}(\lambda)$ is smoothing we can overcome this difficulty. We skip the additional details. It leads to the bound 
\begin{equation}\label{eq:11x}
\left|\prod_{\ell=1}^{\nu} I_{W_{k_\ell},\epsilon_\ell}(\lambda)  e^{ik_\ell\bullet/\epsi}\right|_\BB \leq C_M^{\nu^2} \lr{\lambda}^{2\nu+4N+2d} e^{4L|\lambda|} \left(\prod_{\ell=1}^{\ell_1} \|W_{k_\ell}\|_{2\nu}\right) \epsi^N.
\end{equation}
Sum the bound \eqref{eq:11x} over $\epsilon_1, ..., \epsilon_\nu \in \{0,1\}^\nu$ to get that when $\Im \lambda \in [-1,-M]$,
\begin{equation}\label{eq:2v}
\left|\prod_{\ell=1}^\nu K_{W_{k_\ell}} e^{ik_\ell\bullet/\epsi}\right|_\BB \leq C_M^{\nu^2} \lr{\lambda}^{2\nu+4N+2d} e^{4L|\lambda|} \left(\prod_{\ell=1}^{\nu} \|W_{k_\ell}\|_{2\nu}\right) \epsi^N.
\end{equation}

3. We conclude the proof by a complex analysis argument. The estimates \eqref{eq:11v} and \eqref{eq:2v} show that \eqref{eq:11e} holds locally for $|\Im \lambda| \geq 1$. Thus it remains to show that it holds locally for $|\Im \lambda| \leq 1$. Fix $u,v \in L^2$ and consider
\begin{equation*}
f(\lambda) = \dfrac{\lambda^\nu e^{-\lambda^2}}{(\lambda+2i)^{2\nu +4N+2d}} \lr{\prod_{\ell=1}^\nu K_{W_{k_\ell}} e^{ik_\ell\bullet/\epsi}u,v}.
\end{equation*}
This function is holomorphic in the strip $|\Im \lambda| \leq 1$. For $\lambda$ with $|\Im \lambda| \leq 1$, $-\Re(\lambda^2) = -|\Re \lambda|^2 + |\Im \lambda|^2 \leq 1$ which shows that $e^{-\lambda^2}$ is uniformly bounded in this strip. Hence
\begin{equation*}
|\Im \lambda| \leq 1 \ \Rightarrow \ |f(\lambda)| \leq C^{\nu} \left(\prod_{\ell=1}^\nu |W_{k_\ell}|_\infty\right) |u|_2 |v|_2.
\end{equation*}
In addition 
\begin{equation*}
|\Im \lambda| = 1 \ \Rightarrow \ -\Re(\lambda^2) +4L|\lambda| 
 \leq 1+4L+4L^2.
\end{equation*}
Therefore $e^{-\lambda^2+4L|\lambda|}$ is uniformly bounded on $|\Im \lambda| = 1$. Hence on the boundary of the strip $|\Im \lambda| \leq 1$ the estimates \eqref{eq:11v} and \eqref{eq:2v} imply the bound
\begin{equation}\label{eq:11w}
|f(\lambda)| \leq C^{\nu^2} \left(\prod_{\ell=1}^\nu \|W_{k_\ell} \|_{2\nu}\right)\epsi^N |u|_2 |v|_2.
\end{equation}
Therefore by the three lines theorem the function $f$ satisfies \eqref{eq:11w} for all $\lambda$ with $|\Im \lambda| \leq 1$. Taking the supremum over $u,v \in L^2$ shows that \eqref{eq:11e} holds for $|\Im \lambda| \leq 1$. This ends the proof of the lemma. \end{proof}

Lemma \ref{lem:1} is somehow unsatisfying. The bound \eqref{eq:11e} involves a constant $C^{\nu^2}$ and the norm $\| W_k\|_{2\nu}$. Both $C^{\nu^2}$ and $\|W_k\|_{2\nu}$ grow too fast as $\nu \rightarrow \infty$. The next result uses combinatorial arguments to refine Lemma \ref{lem:1}.

\begin{lem}\label{lem:3} Let $\{k_\ell\}_{1 \leq \ell \leq \nu}$ be a destructive sequence with exactly $\gamma$ non-vanishing terms. Let $s = 2(N+\gamma N+1)$. If $\nu \geq (2N+2d)(\gamma+1)$ then locally uniformly on $X_d$
\begin{equation*}
\left|\trace\left(\prod_{\ell=1}^\nu K_{W_{k_\ell}} e^{ik_\ell\bullet/\epsi}\right) \right| \leq C^{\nu+s^2} \epsi^N. \prod_{\ell=1}^{\nu} \|W_{k_\ell}\|_s.
\end{equation*}
If moreover the sequence $\{k_\ell\}_{1 \leq \ell \leq \nu}$ starts and ends with $N+d$ zeros then locally uniformly on $X_d$
\begin{equation}\label{eq:11h}
\left|\prod_{\ell=1}^\nu K_{W_{k_\ell}} e^{ik_\ell\bullet/\epsi} \right|_\LL \leq C^{\nu+s^2} \epsi^N \prod_{\ell=1}^{\nu} \|W_{k_\ell}\|_s.
\end{equation}
\end{lem}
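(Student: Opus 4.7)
The plan is to refine Lemma \ref{lem:1} by localizing the destructive behavior: instead of applying Lemma \ref{lem:1} to the whole sequence (which produces the wasteful factor $C^{\nu^2}$ together with the Sobolev order $2\nu$), I will apply it only to a short \emph{good} subsequence of bounded length $\nu'\leq N+N\gamma+1=s/2$, and handle the remaining factors with the elementary operator-norm estimate $|K_{W_k} e^{ik\bullet/\epsi}|_\BB\leq C\|W_k\|_0$ coming from Lemma \ref{lem:1h}. For Part 1 I would first reduce to Part 2: with $\gamma$ non-vanishing terms among $\nu\geq (2N+2d)(\gamma+1)$ positions, the pigeonhole principle (applied cyclically) produces a run of at least $2N+2d$ consecutive zeros. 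A cyclic shift then puts $N+d$ of them at the beginning and $N+d$ at the end of the sequence, while preserving both destructiveness (the total sum is cyclic-invariant) and the product $\prod_\ell \|W_{k_\ell}\|_s$. Since the full product is trace class (the argument of Lemma \ref{lem:1b} shows $s_{\nu j}\leq C^\nu j^{-2\nu/d}$, summable for $\nu\geq d$) and the trace is cyclic, the shift preserves $\trace(\prod)$; Part 1 then follows from Part 2 together with $|\trace(X)|\leq|X|_\LL$.

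\textbf{Proof of Part 2.} The $N+d$ zeros at each end let me peel off trace-class factors. Using $k_1=\cdots=k_d=k_{\nu-d+1}=\cdots=k_\nu=0$, I factor
\[
\prod_{\ell=1}^\nu K_{W_{k_\ell}} e^{ik_\ell\bullet/\epsi}
= K_{W_0}^d\cdot M\cdot K_{W_0}^d,
\]
where $M$ is the product for the middle subsequence $\{k_{d+1},\ldots,k_{\nu-d}\}$. This middle sequence has length $\nu-2d$, still contains $\gamma$ non-vanishing terms, is destructive (removing zeros leaves the sum unchanged), and starts and ends with $N$ zeros; in particular it is admissible. The combinatorial extraction lemma of \S\ref{sec:5} yields a good subsequence $\{k_{q+1},\ldots,k_{q+\nu'}\}$ inside it with $\nu'\leq N+N\gamma+1=s/2$. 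Splitting $M=A\cdot B\cdot C$ where $B$ is the product over this good subsequence, I apply Lemma \ref{lem:1} to $B$ (which is itself admissible):
\[
|B|_\BB\leq C^{\nu'^2}\epsi^N\prod_{\ell=q+1}^{q+\nu'}\|W_{k_\ell}\|_{2\nu'}\leq C^{s^2/4}\epsi^N\prod_{\ell=q+1}^{q+\nu'}\|W_{k_\ell}\|_s,
\]
using $2\nu'\leq s$. For $A$ and $C$, the elementary bound $|K_{W_k}e^{ik\bullet/\epsi}|_\BB\leq C\|W_k\|_s$ from Lemma \ref{lem:1h} gives $|A|_\BB\,|C|_\BB\leq C^{\nu-2d-\nu'}\prod\|W_{k_\ell}\|_s$ over the remaining indices.

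\textbf{Combining the bounds.} Lemma \ref{lem:1b} supplies $|K_{W_0}^d|_\LL\leq C\|W_0\|_0^d\leq C\|W_0\|_s^d$ locally uniformly on $X_d$, while $|K_{W_0}^d|_\BB\leq C\|W_0\|_s^d$ is immediate. H\"older's inequality for Schatten norms ($|XYZ|_\LL\leq|X|_\LL|Y|_\BB|Z|_\BB$) then yields
\[
\left|\prod_{\ell=1}^\nu K_{W_{k_\ell}}e^{ik_\ell\bullet/\epsi}\right|_\LL
\leq C^{\nu+s^2}\,\epsi^N\prod_{\ell=1}^\nu\|W_{k_\ell}\|_s,
\]
after absorbing the $2d$ factors $\|W_0\|_s$ coming from the two $K_{W_0}^d$ into the positions $\ell\in\{1,\ldots,d\}\cup\{\nu-d+1,\ldots,\nu\}$ where $W_{k_\ell}=W_0$. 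The main obstacle is book-keeping: one has to verify that the combinatorial lemma does produce a good subsequence with length controlled solely by $N$ and $\gamma$, that the resulting Sobolev order $2\nu'$ is majorized by $s$, and that all locally uniform bounds on $X_d$ (rather than merely on $\{\Im\lambda\geq 1\}$) are inherited via the three-lines argument already used in the proof of Lemma \ref{lem:1}. Once these are confirmed, the exponential $C^{s^2/4+2d+(\nu-2d-\nu')}$ is clearly dominated by $C^{\nu+s^2}$, and the lemma follows.
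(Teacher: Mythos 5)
Your proof is correct and follows essentially the same route as the paper's: reduce Part 1 to Part 2 by a cyclic pigeonhole/trace-cyclicity argument, then split the product around a good subsequence, control that short block with Lemma \ref{lem:1} (getting the $C^{\nu'^2}\epsi^N$ factor with $\nu'\leq s/2$ so $2\nu'\leq s$), and pick up the trace-class norm from $d$ leading zero factors via Lemma \ref{lem:1b} and the remaining factors in operator norm via Lemma \ref{lem:1h}. Your variant of peeling $K_{W_0}^d$ off both ends before extracting the good subsequence is a small organizational difference from the paper's four-factor split (trace class $\cdot$ $\BB$ $\cdot$ Lemma \ref{lem:1} $\cdot$ $\BB$); it is if anything a bit cleaner, since applying the extraction lemma to the middle sequence $M$ makes the WLOG $q\ge d$ step automatic, but the underlying estimate and constant bookkeeping are the same.
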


\begin{proof} First note that since $\nu \geq (2N+2d)(\gamma+1)$ by the pigeonhole principle there exists a cyclic permutation (in the sense precised above) of $\{k_\ell\}$ that starts and ends with $N+d$ zeros. Using the cyclicity of the trace we can assume that the sequence $\{k_\ell\}$ starts and ends with $N+d$ zeros. In particular we are reduced to prove \eqref{eq:11h}. Since the sequence $\{k_\ell\}$ is now admissible it admits a good subsequence $\{k_\ell\}_{q+1 \leq \ell \leq \nu'+q}$. Without loss of generality $q \geq d$. Write 
\begin{equation*}\begin{gathered}
 \left| \prod_{\ell=1}^\nu K_{W_{k_\ell}} e^{ik_\ell\bullet/\epsi} \right|_\LL \\
\leq  \left| \prod_{\ell=1}^d K_{W_{k_\ell}} e^{ik_\ell\bullet/\epsi} \right|_\LL \left| \prod_{\ell=d+1}^q K_{W_{k_\ell}} e^{ik_\ell\bullet/\epsi} \right|_\BB \left| \prod_{\ell=q+1}^{\nu'+q} K_{W_{k_\ell}} e^{ik_\ell\bullet/\epsi} \right|_\BB \left| \prod_{\ell=\nu'+q+1}^\nu K_{W_{k_\ell}} e^{ik_\ell\bullet/\epsi} \right|_\BB.\end{gathered}
\end{equation*}
For $\lambda$ in compact subsets of $X_d$ the first, second and fourth factor are estimated by Lemma \ref{lem:1h}. The third factor is controlled by \eqref{eq:11e}. It leads to
\begin{equation*}
\left| \prod_{\ell=1}^\nu K_{W_{k_\ell}} e^{ik_\ell\bullet/\epsi} \right|_\LL  \leq  C^{\nu+\nu'^2} \epsi^N \left(\prod_{\ell \leq q, \ \ell \geq \nu'+q+1} |W_{k_\ell}|_\infty \right) \left(\prod_{\ell=q+1}^{\nu'+q} \|W_{k_\ell}\|_{2\nu}\right)  \leq C^{\nu+s^2} \epsi^N \prod_{\ell=1}^\nu \|W_{k_\ell}\|_s.
\end{equation*}
This completes the proof of the lemma. \end{proof}

With this refinement we are now ready for the proof of Lemma \ref{lem:5}.

\begin{proof}[Proof of Lemma \ref{lem:5}] We divide the proof in 5 main steps. 

1. Let $a \in [1,N]$ and $n_1, ..., n_a \in [1, 2N-1]$. The function $z \mapsto (1 + \Psi(z))^{-1}$ is meromorphic with a simple pole at $z=-1$. Write a Taylor expansion of $z \mapsto (1+\Psi(z))^{-1}$ at $z=0$:
\begin{equation*}
(\Id + \Psi(z))^{-1} = P_N(z) + z^{2N+2d} \rho_N(z).
\end{equation*}
Here $P_N$ is a polynomial of degree $2N+2d-1$ and $\rho_N$ is a holomorphic function on $\C \setminus \{-1\}$. The pole at $-1$ is of multiplicity one. Away from resonances of $W_0$,
\begin{equation*}
(\Id + \Psi(K_{W_0}))^{-1} = P_N(K_{W_0}) + K_{W_0}^{N+d} \rho_N(K_{W_0}) K_{W_0}^{N+d}.
\end{equation*}
The operator $B_{W_0} = \Det(\Id+\Psi(K_{W_0})) \rho_N(K_{W_0})$, well defined on $\C \setminus \Res(K_{W_0})$, extends to an entire family of operators by Appendix \ref{app:1}. Let us write $\TT_X = C_0+C_1$ where
\begin{equation}\label{eq:1s}\begin{gathered}
C_0 = \Det(\Id+\Psi(K_{W_0})) \cdot P_N(K_{W_0}), \ \ \ \ 
  C_1 =K_{W_0}^{N+d} \cdot B_{W_0} \cdot K_{W_0}^{N+d}.\end{gathered}
\end{equation}
Fix  $m_1, ..., m_a \geq p$ and for each $1 \leq j \leq a$ a sequence $\{k_\ell^j\} \in \SSSS_{m_j}^{n_j}$, with $k_1^1 + ... + k_{m_a}^a \neq 0$. We define $\gamma=n_1+...+n_a$ and $\nu = m_1+...+m_a$. Using $\TT_X = C_0+C_1$ we get
\begin{equation*}
 \trace\left(  \prod_{j=1}^a \TT_X \prod_{\ell=1}^{m_j} K_{W_{k_\ell^j}} e^{ik_\ell^j \bullet/\epsi} \right)  = \sum_{\epsilon_1, ... \epsilon_a \in \{0,1\}^a}\trace\left( \prod_{j=1}^a C_{\epsilon_j} \prod_{\ell=1}^{m_j} K_{W_{k_\ell^j}} e^{ik_\ell^j \bullet/\epsi} \right).
\end{equation*}
In the following steps we study separately the terms of the RHS sum, depeding on the value of $\epsilon_1, ..., \epsilon_a \in \{0,1\}^a$.

2. Assume that $\epsilon_1= ... = \epsilon_a=0$. Then
\begin{equation*}
\trace\left( \prod_{j=1}^a C_{\epsilon_j} \prod_{\ell=1}^{m_j} K_{W_{k_\ell^j}} e^{ik_\ell^j \bullet/\epsi} \right) = \trace\left( \prod_{j=1}^a C_0 \prod_{\ell=1}^{m_j} K_{W_{k_\ell^j}} e^{ik_\ell^j \bullet/\epsi} \right).
\end{equation*}
The sequence $\{k_\ell^j\}$ is destructive, $\nu \geq  p a \geq 2(N+d) \cdot 2Na$ and $2N a \geq \gamma+1$. This implies $\nu \geq 2(N+d)(\gamma+1)$. Hence for $s=2(N+2N^3 +1)$ we have $s \geq 2(N+\gamma N +1)$. The assumptions of Lemma  \ref{lem:3} are satisfied thus
\begin{equation*}
\left|\trace\left( \prod_{j=1}^a C_0 \prod_{\ell=1}^{m_j} K_{W_{k_\ell^j}} e^{ik_\ell^j \bullet/\epsi} \right)\right| \leq C^{\nu} \epsi^N \prod_{\ell=1}^{\nu} \|W_{k_\ell}\|_s
\end{equation*}
for a constant $C$ depending only on $N,d$ and $|W_0|_\infty$.

3. Assume that exactly one of the $\epsilon_1, ..., \epsilon_a \in \{0,1\}^a$ is equal to $1$. Using the cyclicity of the trace we can assume without loss of generality that $\epsilon_1=1$. Hence
\begin{equation*}\begin{gathered}
 \trace\left( \prod_{j=1}^a C_{\epsilon_j} \prod_{\ell=1}^{m_j} K_{W_{k_\ell^j}} e^{ik_\ell^j \bullet/\epsi} \right) \\
  =  \trace\left( B_{W_0} K_{W_0}^{N+d} \left( \prod_{\ell=1}^{m_1} K_{W_{k_\ell^1}} e^{ik_\ell^1 \bullet/\epsi}\right) \left(\prod_{j=2}^a C_0 \prod_{\ell=1}^{m_j} K_{W_{k_\ell^j}} e^{ik_\ell^j \bullet/\epsi} \right)  K_{W_0}^{N+d} \right).\end{gathered}
\end{equation*}
Using \eqref{eq:11h} we obtain again
\begin{equation*}\begin{gathered}
  \left|\trace\left( \prod_{j=1}^a C_{\epsilon_j} \prod_{\ell=1}^{m_j} K_{W_{k_\ell^j}} e^{ik_\ell^j \bullet/\epsi} \right)\right| \\ \leq  |B_{W_0}|_\BB \cdot \left|K_{W_0}^{N+d}\left( \prod_{\ell=1}^{m_1} K_{W_{k_\ell^1}} e^{ik_\ell^1 \bullet/\epsi} \right) \left(\prod_{j=2}^a C_0 \prod_{\ell=1}^{m_j} K_{W_{k_\ell^j}} e^{ik_\ell^j \bullet/\epsi}\right) K_{W_0}^{N+d}\right|_\LL.\end{gathered}
\end{equation*}
The second factor in the second line is a finite sum of terms studied in Lemma \ref{lem:3}. Consequently we obtain the bound
\begin{equation*}
\left|\trace\left( \prod_{j=1}^a C_{\epsilon_j} \prod_{\ell=1}^{m_j} K_{W_{k_\ell^j}} e^{ik_\ell^j \bullet/\epsi} \right)\right| \leq C^\nu \epsi^N |B_{W_0}|_\BB \prod_{\ell=1}^{\nu} \|W_{k_\ell}\|_s.
\end{equation*}

4. Assume that $2$ or more terms among $\epsilon_1, ..., \epsilon_a \in \{0,1\}^a$ are equal to $1$. Using a circular permutation we can assume without loss of generality that $\epsilon_1 = 1$. Let us prove the following statement: there exists two indexes $j_1, j_2 \in [1,a]$ such that
\begin{enumerate}
\item[$(i)$] the sequence $\{k_\ell^j\}^{j_1 \leq j < j_2}_{1 \leq \ell \leq m_j}$ is destructive;
\item[$(ii)$] $\epsilon_j = 0$ for all $j$ in the interval $(j_1,j_2)$.
\end{enumerate}
We process by recursion on $a$. If $a=2$ this is obvious: either the sequence $\{k_\ell^1\}_{1 \leq \ell \leq m_1}$ or the sequence $\{k_\ell^2\}_{1 \leq \ell \leq m_2}$ is destructive. Now assume that the statement holds true for all $a' \leq a-1$. Let us prove it for $a$. Let $j_0$ be the smallest index with $\epsilon_{j_0}=1$ and $j_0 > 1$. Then either the sequence $\{k_\ell^j\}^{1 \leq j < j_0}_{1 \leq \ell \leq m_j}$ is destructive and we are done, or it is constructive. But then the sequence   $\{k_\ell^j\}^{j_0 \leq j \leq a}_{1 \leq \ell \leq m_j}$ is destructive and so we can apply the recursion hypothesis to it. This proves the above claim.
 
Again using a circular permutation we can assume that $j_1=1$. Hence
\begin{equation*}\begin{gathered}
\left|\trace\left( \prod_{j=1}^a C_{\epsilon_j} \prod_{\ell=1}^{m_j} K_{W_{k_\ell^j}} e^{ik_\ell^j \bullet/\epsi} \right)\right| \\
\leq |B_{W_0}|_\BB \left| K_{W_0}^{2(N+d)} \left(\prod_{\ell=1}^{m_1} K_{W_{k_\ell^j}} e^{ik_\ell^j \bullet/\epsi} \right) \left( \prod_{j=2}^{j_2-1} C_0 \prod_{\ell=1}^{m_j} K_{W_{k_\ell^j}} e^{ik_\ell^j \bullet/\epsi} \right) K_{W_0}^{2(N+d)} \right|_\LL |B_{W_0}|_\BB \\ 
\cdot \left| \left(\prod_{\ell=1}^{m_{j_2}} K_{W_{k_\ell^j}} e^{ik_\ell^j \bullet/\epsi} \right) \left(\prod_{j=j_2+2}^{a} C_{\epsilon_j} \prod_{\ell=1}^{m_j} K_{W_{k_\ell^j}} e^{ik_\ell^j \bullet/\epsi}\right) \right|_\BB.\end{gathered}
\end{equation*} 
The first line is a finite sum of terms estimated by Lemma \ref{lem:3}. The second line is controlled by the standards bounds of Lemma \ref{lem:1h}. It leads to
\begin{equation}\label{eq:1q}
\left|\trace\left( \prod_{j=1}^a C_{\epsilon_j} \prod_{\ell=1}^{m_j} K_{W_{k_\ell^j}} e^{ik_\ell^j \bullet/\epsi} \right)\right| \leq C^{\nu} \epsi^N \prod_{\ell=1}^{\nu} \|W_{k_\ell}\|_s.
\end{equation}

5. Points $2,3,4$ show that \eqref{eq:1q} holds for all sequences $\epsilon_1, ..., \epsilon_a \in \{0,1\}^a$. Summing this estimate over all possible $\epsilon_1, ..., \epsilon_a$ to get
\begin{equation}\label{eq:2u}
\left|\trace\left(  \prod_{j=1}^a \TT_X \prod_{\ell=1}^{m_j} K_{W_{k_\ell^j}} e^{ik_\ell^j \bullet/\epsi} \right)\right| \leq C^{\nu} \epsi^N \prod_{\ell=1}^{\nu} \|W_{k_\ell}\|_s.
\end{equation}
The last step of the proof is to sum the bound \eqref{eq:2u}. Recall that
\begin{equation*}
\DD_{n_1, ..., n_a} = \sum_{m_1, ..., m_a=p}^\infty \ \ \  \sum_{\substack{\{k_\ell^1\} \in \SSSS_{m_1}^{n_1}, \ ..., \ \{k_\ell^a\} \in \SSSS_{m_a}^{n_a}, \\ k_1^1+...+k_{m_a}^a \neq 0 }} \ \ \  \left(\prod_{j=1}^a \az_{m_j} \TT_X \prod_{\ell=1}^{m _j}K_{W_{k_\ell^j}} e^{ik_\ell^j\bullet/\epsi}\right)
\end{equation*}
where $\SSSS_m^n$ is the set of sequences of length $m$ with $n$ non-vanishing terms. Hence
\begin{align*}
\left|\trace\left( \DD_{n_1, ..., n_a} \right)\right| & \leq  \sum_{m_1, ..., m_a=p}^\infty \ \ \  \sum_{\substack{\{k_\ell^1\} \in \SSSS_{m_1}^{n_1}, \ ..., \ \{k_\ell^a\} \in \SSSS_{m_a}^{n_a}, \\ k_1^1+...+k_{m_a}^a \neq 0 }} \ \ \ |\az_{m_1} ... \az_{m_a}| \left|\trace\left( \prod_{j=1}^a \TT_X \prod_{\ell=1}^{m _j}K_{W_{k_\ell^j}} e^{ik_\ell^j\bullet/\epsi}\right)\right| \\
   & \leq \epsi^N \sum_{m_1, ..., m_a=p}^\infty  |\az_{m_1} ... \az_{m_a}| C^{m_1+...+m_a} \sum_{\substack{\{k_\ell^1\} \in \SSSS_{m_1}^{n_1}, \ ..., \ \{k_\ell^a\} \in \SSSS_{m_a}^{n_a}, \\ k_1^1+...+k_{m_a}^a \neq 0 }} \  \prod_{\ell=1}^{m_1+...+m_a} \|W_{k_\ell}\|_s \\
   & \leq \epsi^N \sum_{m_1, ..., m_a=p}^\infty  |\az_{m_1} ... \az_{m_a}| C^{m_1+...+m_a} |W|_{Z^s}^{m_1+...+m_a}  = \epsi^N \left(\Phi(C|W|_{Z^s})\right)^a,
\end{align*}
where $\Phi(z) = \sum_{m=p}^\infty |\az_m|z^m $. Since $\Phi$ is entire, $\Phi(C|W|_{Z^s}) < \infty$. Hence $\trace(\DD_{n_1, ..., n_a}) = O(\epsi^N)$ which completes the proof. \end{proof}

\subsection{Constructive interaction} In this paragraph we prove the following lemma:

\begin{lem}\label{lem:61} For $1 \leq a \leq N$ and $n_1, ..., n_a \in [1,2N-1]$ let $\CC_{n_1, ..., n_a}$ be the trace class operator given by \eqref{eq:2w}. There exist $\varphi_0, ..., \varphi_{N-1}$ holomorphic functions on $X_d$ such that
\begin{equation*}
\trace\left(\CC_{n_1, ..., n_a}\right) = \varphi_0(\lambda) + \epsi \varphi_1(\lambda)  + ... + \epsi^{N-1} \varphi_{N-1}(\lambda)  + O(\epsi^N)
\end{equation*} 
locally uniformly on $X_d$.
\end{lem}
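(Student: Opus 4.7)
The plan is to exploit the constructive condition $k_1^1+\ldots+k_{m_a}^a = 0$ together with the commutation identity $e^{-ik\bullet/\epsi} D e^{ik\bullet/\epsi} = D + k/\epsi$ to eliminate all oscillatory phases from the operator products. After substituting the decomposition $\TT_X = C_0 + C_1$ from \eqref{eq:1s} and expanding $C_0$ as a polynomial in $K_{W_0}$, each summand in $\CC_{n_1,\ldots,n_a}$ can be rewritten---via the same manipulation used at the start of the proof of Lemma~\ref{lem:1}---as a trace of the form
\[
\trace\bigl(\rho\, A(\sigma_1,\lambda)\, Z_1\, A(\sigma_2,\lambda)\, Z_2 \cdots A(\sigma_\nu,\lambda)\, Z_\nu\bigr),
\]
where each $Z_i$ is either $W_0$ or a marked $W_{k_\ell^j}$, $\sigma_i$ is the tail-sum of the corresponding $k$-sequence, and $\sigma_1=0$ by the constructive hypothesis, so that no global phase survives.

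For each $i$ with $\sigma_i\neq 0$, the next step is to Taylor-expand $A(\sigma_i,\lambda)=R(D+\sigma_i/\epsi,\lambda)$ in powers of $\epsi$ via the Neumann series
\[
\frac{1}{(\xi+\sigma/\epsi)^2-\lambda^2} = \frac{\epsi^2}{|\sigma|^2}\sum_{n\geq 0} (-1)^n\left(\frac{2\epsi\,\sigma\cdot\xi+\epsi^2(\xi^2-\lambda^2)}{|\sigma|^2}\right)^n,
\]
yielding $A(\sigma_i,\lambda) = \sum_{j\geq 2}\epsi^j B_j(\sigma_i,\lambda,D)$ where $B_j$ is polynomial in $D,\lambda$ and decays in $|\sigma_i|$. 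For $\sigma_i=0$ one retains $A(0,\lambda)=R_0(\lambda)$, already $\epsi$-independent. Truncating at order $\epsi^N$ and collecting by power of $\epsi$ produces a finite expansion of each trace. The coefficient of $\epsi^j$ receives contributions only from configurations with at most $\lfloor j/2 \rfloor$ non-zero partial sums, so only finitely many shift configurations contribute at each order, which will be the source of holomorphicity in $\lambda$ on $X_d$---the exclusion of $\lambda=0$ in dimension one reflecting exactly the pole of $R_0(\lambda)$ there.

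The resulting coefficients $\varphi_j$ are sums of traces of operators independent of $\epsi$, each holomorphic on $X_d$. Summability of these series, and the holomorphy of $\varphi_j$, follow by combining the Schatten-norm estimates of Lemmas~\ref{lem:1h} and~\ref{lem:1b}, the super-exponential decay of the Taylor coefficients $\az_m$ from~\eqref{eq:2e}, and the finiteness of $|W|_{Z^s}$ for all $s$ that comes from $W\in C_0^\infty$.

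The main obstacle will be controlling the $O(\epsi^N)$ remainder uniformly across the triple sum over $m_1,\ldots,m_a$, over constructive sequences, and over Neumann truncations. Unlike the destructive case of Lemma~\ref{lem:5}, here there is no free $\epsi^N$ gained from phase cancellation, so the entire $\epsi^N$ must come from the Neumann expansions themselves: every unexpanded $A(\sigma_i,\lambda)$ with $\sigma_i\neq 0$ is already of size $\epsi^2$, and one must balance the number of expanded versus unexpanded factors against operator-norm bounds in the spirit of Lemma~\ref{lem:1}, then reassemble everything with the $\az_m$ decay to absorb the combinatorial explosion.
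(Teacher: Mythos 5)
Your high-level strategy matches the paper's: remove the global phase using the constructive condition, Taylor-expand the shifted Fourier multipliers $A(\sigma_i,\lambda)=R(D+\sigma_i/\epsi,\lambda)$ in powers of $\epsi$, and control the triple sum with Schatten estimates and the super-exponential decay of the $\az_m$. However, there is a genuine gap in the reduction step. You write that after substituting $\TT_X=C_0+C_1$ and expanding $C_0$, \emph{every} summand becomes a trace of the form $\trace\bigl(\rho\,A(\sigma_1,\lambda)Z_1\cdots A(\sigma_\nu,\lambda)Z_\nu\bigr)$. That is only correct for the terms involving $C_0$ alone, because $C_0$ is a polynomial in $K_{W_0}$. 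The operator $C_1 = K_{W_0}^{N+d}\,B_{W_0}\,K_{W_0}^{N+d}$, with $B_{W_0}$ the cofactor-type continuation of $\Det(\Id+\Psi(K_{W_0}))\,\rho_N(K_{W_0})$, is \emph{not} a product of Fourier multipliers and multiplication operators, so $e^{ik\bullet/\epsi}$ does not commute through it via $e^{-ik\bullet/\epsi} D e^{ik\bullet/\epsi}=D+k/\epsi$. When one or more of the $C_1$ factors is present, the phases cannot be gathered at the front, and your proposed trace form does not arise.

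The paper resolves this by splitting the product at the $C_1$ factors: the $C_1$'s partition the product into blocks of the form (Fourier-multiplier)$\cdot$(multiplication operator), and one distinguishes two cases. If every block between consecutive $C_1$'s is itself constructive, each block admits its own $\sigma_1=0$ and the operator-valued Neumann expansion (organized as in Lemmas \ref{lem:9} and \ref{lem:41}) can be applied blockwise, with the sandwiching factors $K_{W_0}^{N+d}$ ensuring trace class. If some block is destructive, the term is $O(\epsi^N)$ by Lemma \ref{lem:3} (exactly as in Step 4 of the paper's proof); these contributions are not expanded at all, they are simply discarded into the remainder. Your proposal mentions the $C_0+C_1$ split but does not distinguish these cases, and the destructive-subblock scenario---where there is in fact no free $\epsi^N$ from the Neumann expansion in that block---is precisely where your "balance expanded vs.~unexpanded factors" heuristic breaks down: a destructive block contributes no $\epsi^2$ per factor because $\sigma_i$ can be zero for every term in the block even though the overall subsequence is destructive. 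A secondary issue: after expanding the $A(\sigma_i,\lambda)$ into differential operators $B_j(\sigma_i,\lambda,D)$, the products are no longer manifestly trace class (the $B_j$ are unbounded). This must be repaired using a pigeonhole/cyclic-permutation argument to insert a factor $K_{W_0}^N$ of sufficiently negative order, as in Lemma \ref{lem:6}; your proposal references "operator-norm bounds in the spirit of Lemma~\ref{lem:1}" but that lemma controls only $\BB$-norms, not $\LL$-norms, and does not close this gap on its own.
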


The first step is an operator valued expansion for $e^{-ik\bullet/\epsi} K_{W_k} e^{ik\bullet/\epsi}$

\begin{lem}\label{lem:9} For every $n \geq 0$ there exists some operators $A_0, ..., A_{n-1}, \mathfrak{R}_n$ with
\begin{equation}\label{eq:11p}
e^{-ik\bullet/\epsi} K_{W_k} e^{ik\bullet/\epsi} = A_0 + ... +\epsi^{n-1} A_{n-1} + \epsi^n \mathfrak{R}_n,
\end{equation}
such that:
 \begin{enumerate}
\item[$(i)$] $A_j$ is a pseudodifferential operator of order $j-2$ that maps locally supported functions to compactly supported functions. It does not depend on $\epsi$ and
\begin{equation*}
s+j \leq N \ \Rightarrow \ |A_j|_{\BB(H^{s+j}, H^s)} \leq C \|W_k\|_N.
\end{equation*}
\item[$(ii)$] $\mathfrak{R}_n$ is a pseudodifferential operator of order $n-1$ and maps locally supported functions to compactly supported functions. It depends on $\epsi$ and uniformly in $\epsi$ near $0$,
\begin{equation*}
s+n+1 \leq N \ \Rightarrow \ |\mathfrak{R}_n|_{\BB(H^{s+n+1},  H^s)} \leq C \|W_k\|_N.  
\end{equation*}
\end{enumerate}
\end{lem}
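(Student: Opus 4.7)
The plan is to use the commutation relation $e^{-ik\bullet/\epsi} D e^{ik\bullet/\epsi} = D + k/\epsi$ to rewrite the operator as a Fourier multiplier sandwiched by fixed cutoffs:
\begin{equation*}
e^{-ik\bullet/\epsi} K_{W_k} e^{ik\bullet/\epsi} = \rho \cdot \left((D+k/\epsi)^2 - \lambda^2\right)^{-1} \cdot W_k,
\end{equation*}
and then to Taylor expand the symbol $\tau(\xi,\epsi) = ((\xi+k/\epsi)^2 - \lambda^2)^{-1}$ in powers of $\epsi$. The case $k=0$ is trivial: the operator equals $K_{W_0}$, so one sets $A_0 = K_{W_0}$, $A_j = 0$ for $j \geq 1$, and $\mathfrak{R}_n = 0$; the required bounds follow from Lemma \ref{lem:1h}.

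For $k \neq 0$, rewrite the symbol as $\tau(\xi,\epsi) = \epsi^2 / (|k+\epsi\xi|^2 - \epsi^2\lambda^2)$, which, since $|k| \geq 1$, is analytic in $\epsi$ near $\epsi = 0$ for every fixed $\xi$. Expanding the denominator via formal geometric series and collecting powers of $\epsi$ yields
\begin{equation*}
\tau(\xi,\epsi) = \sum_{m=2}^{n-1} \epsi^m c_{m-2}(\xi,\lambda) + \epsi^n r_n(\xi,\epsi,\lambda),
\end{equation*}
where $c_j(\xi,\lambda)$ is a polynomial of degree $j$ in $\xi$, obtainable explicitly from the multinomial theorem, and $r_n$ is the Taylor remainder in integral form. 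I then set $A_0 = A_1 = 0$ and $A_m = \rho \cdot c_{m-2}(D,\lambda) \cdot W_k$ for $2 \leq m \leq n-1$. Since $c_{m-2}(D)$ is a differential operator of order $m-2$ flanked by smooth compactly supported cutoffs, the bound $|A_m|_{\BB(H^{s+m}, H^s)} \leq C \|W_k\|_N$ for $s+m \leq N$ follows from standard Sobolev product rules and from the fact that $c_{m-2}(D)$ maps $H^{s+m}$ into $H^{s+2} \hookrightarrow H^s$.

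The main obstacle is the estimate $|\mathfrak{R}_n|_{\BB(H^{s+n+1}, H^s)} \leq C\|W_k\|_N$ for the remainder $\mathfrak{R}_n = \rho \cdot r_n(D,\epsi,\lambda) \cdot W_k$. Writing the integral remainder explicitly as $r_n(\xi,\epsi,\lambda) = \int_0^1 \frac{(1-t)^{n-3}}{(n-3)!} \partial_\epsi^{n-2}\tilde\tau(\xi,t\epsi,\lambda) \, dt$ with $\tilde\tau(\xi,\epsi) = (|k+\epsi\xi|^2-\epsi^2\lambda^2)^{-1}$, an inductive computation shows that $\partial_\epsi^{n-2}\tilde\tau = P_{n-2}(\xi,\lambda,t\epsi)/(|k+t\epsi\xi|^2 - t^2\epsi^2\lambda^2)^{n-1}$ for an explicit polynomial $P_{n-2}$. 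A case analysis on whether $t\epsi|\xi|$ is small or large relative to $|k|$ produces uniform symbol estimates on $r_n$; combined with the compact support of $\rho, W_k$ and standard Fourier-multiplier bounds this yields the claimed operator norm bound. The delicate point is that $\tau$ has genuine singularities on the sphere $|\xi + k/\epsi| = |\lambda|$ for real $\lambda$, so the argument is first carried out for $\Im \lambda \gg 1$ where $\tau$ is a bona fide bounded Fourier multiplier, and then extended to locally uniform bounds on $X_d$ by an analytic continuation / three-lines argument analogous to the one used at the end of the proof of Lemma \ref{lem:1}.
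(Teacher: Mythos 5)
The setup is exactly the paper's: conjugation by $e^{\pm ik\bullet/\epsi}$ turns $K_{W_k}$ into the Fourier multiplier $\rho\,((D+k/\epsi)^2-\lambda^2)^{-1}W_k$, and the $A_j$ are differential operators $\rho\, p_{j-2}(D,\lambda)\, W_k$ flanked by fixed cutoffs. Your verification of (i) is fine. The gap is in (ii): you write the remainder as $\mathfrak{R}_n=\rho\, r_n(D,\epsi,\lambda)\, W_k$ with $r_n$ the \emph{integral} Taylor remainder of the symbol. Differentiating $\tilde\tau=(|k+\epsi\xi|^2-\epsi^2\lambda^2)^{-1}$ in $\epsi$ produces high inverse powers of $|k+\epsi\xi|^2-\epsi^2\lambda^2$; these define honest bounded Fourier multipliers only when $\Im\lambda$ is large. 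For $\lambda$ with $\Im\lambda\le 0$ (which is most of $X_d$) the symbol $r_n(\xi,\epsi,\lambda)$ simply is not bounded, so \emph{the operator $\rho\, r_n(D,\epsi,\lambda)\, W_k$ has no direct meaning there}. Your proposed repair — a three-lines argument \textquotedblleft as in Lemma~\ref{lem:1}\textquotedblright\ — does not close this: the three-lines theorem requires an a priori operator bound on the edge $\Im\lambda=-M$ as well, and in Lemma~\ref{lem:1} that bound is obtained only after the additional and nontrivial decomposition $K_\VV(\lambda)=K_\VV(-\lambda)+I_{\VV,1}(\lambda)$ of Lemma~\ref{lem:1a}, which your proposal never invokes. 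As written the remainder estimate is unproved away from $\{\Im\lambda\gg 1\}$.

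The paper avoids all of this by a different, algebraic form of the remainder. In Appendix~\ref{app:2}, the Maclaurin remainder of $F(\epsi)=(1+a\epsi+b\epsi^2)^{-1}$ is written not as an integral but as a degree-one polynomial in $\epsi$ divided by the \emph{original} denominator $1+a\epsi+b\epsi^2$. After rescaling, the remainder of $R(\xi+k/\epsi,\lambda)$ is a polynomial in $\xi$ of degree $\le n+1$ divided by the \emph{original} $(\xi+k/\epsi)^2-\lambda^2$. Translating back to operators, $\mathfrak{R}_n$ is a finite combination of differential operators and $e^{-ik\bullet/\epsi}R_0(\lambda)e^{ik\bullet/\epsi}$ composed with differential operators, all sandwiched between $\rho$ and $W_k$. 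Since $\rho\,R_0(\lambda)\,\chi$ is a holomorphic (in $d=1$, meromorphic with a pole only at $0$) family of bounded operators on $H^s$, locally uniformly on $X_d$, the required $\epsi$-uniform bound on $\mathfrak{R}_n$ is immediate — no symbol estimates in the lower half-plane, no smoothing decomposition, no interpolation. This is the essential idea your proposal is missing; without it, (ii) is not established beyond $\Im\lambda\gg 1$.
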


\begin{proof} For $k = 0$ there is nothing to prove. Thus we assume $k \neq 0$. In Appendix \ref{app:2}  we prove that if $R(\xi,\lambda) = (\xi^2-\lambda^2)^{-1}$ then
\begin{equation}\label{eq:11u}
R(\xi+k/\epsi,\lambda) = \left(\sum_{j=2}^{n-1} \epsi^j p_{j-2}(\xi,\lambda)\right) + \epsi^n p_{n-2}(\xi,\lambda) + \epsi^{n+1} p_{n-1}(\xi,\lambda) + \epsi^n \dfrac{r_n(\xi,\lambda,\epsi)}{(\xi^2-k/\epsi)^2-\lambda^2}.
\end{equation}
Here the $p_j(\xi,\lambda)$ are polynomials in $\xi$ and $\lambda$ of degree at most $j$ in $\xi$; and $r_n(\xi,\lambda,\epsi)$ is a polynomial in $\xi$ and $\lambda$ of degree at most $n+1$ in $\xi$ and whose coefficients depend smoothly of $\epsi$. In particular,
\begin{equation}\label{eq:11f}
\sup_{\xi \in \R^d}\left|\dfrac{r_n(\xi,\lambda,\epsi)}{\lr{\xi}^{n+1}}\right| = O(1) \text{ uniformly as } \epsi \rightarrow 0.
\end{equation}
Since $e^{-ik\bullet/\epsi}De^{ik\bullet/\epsi} = D+k/\epsi$ we have for $\Im \lambda > 0$,
\begin{equation*}
e^{-ik\bullet/\epsi} R_0(\lambda) e^{ik\bullet/\epsi} = \left((D+k/\epsi)^2-\lambda^2\right)^{-1}.
\end{equation*}
Therefore the expansion \eqref{eq:11u} implies that for $\Im \lambda > 0$,
\begin{equation*}
e^{-ik\bullet/\epsi} R_0(\lambda) e^{ik\bullet/\epsi} = \left(\sum_{j=2}^{n-1} \dfrac{\epsi^j}{|k|^2} p_{j-2}(D)\right) + \epsi^n\left( p_{n-2}(D) + \epsi p_{n-1}(D) + R_0(\lambda)r_n(D,\epsi)\right).
\end{equation*}
This identity extends analytically to  $X_d$ and yields
\begin{equation*}\begin{gathered}
e^{-ik\bullet/\epsi} K_{W_k} e^{ik\bullet/\epsi} = A_0 + ... +\epsi^{n-1} A_{n-1} + \epsi^n \mathfrak{R}_n,\\
A_0=A_1=0, \ \ \ 
A_j = \dfrac{1}{|k|^2} \rho p_{j-2}(D) W_k \text{ for } j \in [2,n-1], \\
\mathfrak{R}_n = \rho \left( p_{n-2}(D) + \epsi p_{n-1}(D) + R_0(\lambda)r_n(D,\epsi) \right)W_k.
\end{gathered}
\end{equation*}
The operators $A_j$ are pseudodifferential of order $j-2$ and map locally supported functions to compactly supported functions. For $\Im \lambda > 0$ the operator $K_{W_k}(\lambda)$ is pseudodifferential; the operator $K_{W_k}(\lambda) - K_{W_k}(-\lambda)$ is smoothing. Hence $K_{W_k}(\lambda)$ is pseudodifferential for all $\lambda \in X_d$. As
\begin{equation*}
\mathfrak{R}_n = \dfrac{e^{-ik\bullet/\epsi} K_{W_k} e^{ik\bullet/\epsi}- A_0 - ... -\epsi^{n-1} A_{n-1}}{\epsi^n}
\end{equation*}
and the RHS is pseudodifferential $\mathfrak{R}_n$ must also be pseudodifferential. To evaluate its order we note that $p_{n-2}(D)$ (resp. $p_{n-1}(D)$) is a differential operator of order $n-4$ (resp. $n-3$) and that $r_n(D)$ is a differential operator of order $n+1$. Thus $R_0(\lambda) r_n(D)$ maps $H^{n+1}$ to $H^2$ and $\mathfrak{R}_n$ must be of order $n-1$. To prove the required bounds, we note that for $s \leq N$, the multiplication operator $u \mapsto W_k u$ from $H^s$ to itself has norm bounded by $\|W_k\|_N$. Therefore for $s+j \leq N$,
\begin{equation*}
|A_j|_{\BB(H^{s+j}, H^s)} \leq C |p_{j-2}(D)|_{\BB(H^{s+j}, H^s)} |W_k|_{\BB(H^{s+j}, H^{s+j})} \leq C \|W_k\|_N.
\end{equation*}
This proves $(i)$. Now we prove $(ii)$. For $s+n+1 \leq N$ the bound \eqref{eq:11f} implies that the operator $r_n(D,\epsi)$ (which is a differential operator) satisfies the bound
\begin{equation*}
|r_n(D,\epsi)|_{\BB(H^{s+n+1},H^s)} = O(1) \text{ uniformly as } \epsi \rightarrow 0.
\end{equation*}
Let $\chi \in C_0^\infty(\Bb^d(0,L))$ with $\chi=1$ on $\supp(\chi)$. The operator $\rho R_0(\lambda) \chi$ maps $H^s$ to itself. Consequently uniformly as $\epsi \rightarrow 0$
\begin{equation*}
|\rho R_0(\lambda) r_n(D,\epsi) W_k|_{\BB(H^{s+n+1},H^s)} \leq |\rho R_0(\lambda) \chi|_{\BB(H^s,H^s)} |r_n(D,\epsi) W_k|_{\BB(H^{s+n+1},H^s)} = O(\| W_k\|_N)
\end{equation*}
The operators $\rho p_{n-2}(D)W_k$ and $\rho p_{n-1}(D)W_k$ do not depend on $\epsi$ and are bounded from $H^{s+n+1}$ to $H^s$. This shows $(ii)$ and completes the proof of the lemma. \end{proof}

Now we prove the same kind of expansion for product of operators of the form \eqref{eq:11p}.

\begin{lem}\label{lem:41} Let $\{k_\ell\}_{1 \leq \ell \leq \nu}$ be a sequence of $d$-tuples in $\Z^d$. There exists some operators $\AAA_0, ..., \AAA_{N-1}, \RR_N$ with
\begin{equation*}
e^{-i\sigma_1\bullet/\epsi} \left(\prod_{\ell=1}^{\nu} K_{W_{k_\ell}} e^{ik_\ell \bullet/\epsi}\right) = \AAA_0 +  ... + \epsi^{N-1} \AAA_{N-1}+\epsi^N \RR_N
\end{equation*}
where $\sigma_1 = k_1+...+k_\nu$ and
\begin{enumerate}
\item[$(i)$] $\AAA_j$ is a pseudodifferential operator of  order $j-2$ and maps locally supported functions to compactly supported functions. It does not depend on $\epsi$ and
\begin{equation*}
s+j \leq N \ \Rightarrow \ |\AAA_j|_{\BB(H^{s+j}, H^s)} \leq C^\nu \prod_{\ell=1}^\nu \|W_{k_\ell}\|_N
\end{equation*}
\item[$(ii)$] $\RR_N$ is a pseudodifferential operator of order $N-1$ mapping locally supported functions to compactly supported functions. It depends on $\epsi$ and uniformly in $\epsi$ near $0$,
\begin{equation*}
s \leq -1 \ \Rightarrow \ |\RR_N|_{\BB(H^{s+N+1}, H^s)} \leq C^\nu \prod_{\ell=1}^\nu \|W_{k_\ell}\|_N. 
\end{equation*}
\end{enumerate}
\end{lem}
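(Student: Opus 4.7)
The plan is to commute all oscillatory factors to a single exponential on the left, reducing the operator to a non-oscillatory product of shifted resolvents, and then to expand each resolvent factor using (an adapted form of) Lemma \ref{lem:9} before multiplying out.

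The first step is an algebraic identity. Writing $E_k = e^{ik\bullet/\epsi}$ and using $E_k^{-1} D E_k = D+k/\epsi$, so that $E_k^{-1} R_0(\lambda) E_k = R_\sigma(\lambda) := ((D+\sigma/\epsi)^2 - \lambda^2)^{-1}$ with $\sigma = k$, together with the fact that $E_k$ commutes with every multiplier and every cutoff, the telescoping already implicit in \eqref{eq:2x} yields
\begin{equation*}
e^{-i\sigma_1\bullet/\epsi} \prod_{\ell=1}^\nu K_{W_{k_\ell}} e^{ik_\ell\bullet/\epsi} = \prod_{\ell=1}^\nu \rho\, R_{\sigma_\ell}(\lambda) W_{k_\ell},
\end{equation*}
where the $\rho$'s between consecutive factors are inserted using $W_{k_\ell}\rho = W_{k_\ell}$ (since $\rho \equiv 1$ on $\Bb^d(0,L) \supset \supp(W_{k_\ell})$), and the shifted resolvents are meromorphically continued from $\Im\lambda \gg 1$ to $X_d$. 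The proof of Lemma \ref{lem:9} uses the shift only through the Taylor expansion of the symbol $R(\xi+k/\epsi,\lambda)$ and the potential only through its bounded multiplication at the right end; it therefore adapts verbatim to each factor, giving
\begin{equation*}
\rho\, R_{\sigma_\ell}(\lambda) W_{k_\ell} = \sum_{j=0}^{N-1} \epsi^{j} A_{j}^{(\ell)} + \epsi^{N}\mathfrak{R}_{N}^{(\ell)},
\end{equation*}
with the bounds of Lemma \ref{lem:9}(i), (ii) in terms of $\|W_{k_\ell}\|_N$; when $\sigma_\ell = 0$ one simply sets $A_0^{(\ell)} = K_{W_{k_\ell}}$ and all higher terms to zero. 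Multiplying out and collecting terms of total $\epsi$-weight at most $N-1$ gives
\begin{equation*}
\prod_{\ell=1}^\nu \rho\, R_{\sigma_\ell}(\lambda) W_{k_\ell} = \sum_{j=0}^{N-1} \epsi^{j} \AAA_j + \epsi^{N} \RR_N,\qquad \AAA_j := \sum_{J_1+\cdots+J_\nu = j} \prod_{\ell=1}^\nu A_{J_\ell}^{(\ell)}.
\end{equation*}

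To verify the claimed properties, a composition of $\nu$ pseudodifferential operators of respective orders $J_\ell - 2$ has order $\sum_\ell(J_\ell-2) \leq j-2$, and each factor preserves the mapping of locally supported to compactly supported functions. Iterating the Sobolev bound of Lemma \ref{lem:9}(i) along the chain $H^{s+|J|} \to H^{s+J_1+\cdots+J_{\nu-1}} \to \cdots \to H^s$ gives $|\AAA_j|_{\BB(H^{s+j},H^s)} \leq C^\nu \prod_\ell \|W_{k_\ell}\|_N$ whenever $s+j \leq N$, which is condition (i). The remainder $\RR_N$ decomposes into finitely many products containing at least one $\mathfrak{R}_N^{(\ell)}$ factor (for which Lemma \ref{lem:9}(ii) gives a Sobolev bound only when $s \leq -1$) or pure $A$-products with $|J| \geq N$ carrying an extra $\epsi^{|J|-N}$; the same iterative estimate yields (ii). The main obstacle lies in this Sobolev bookkeeping for $\RR_N$: the single-factor remainder loses $N+1$ derivatives and requires $s \leq -1$, so one must verify that the intermediate Sobolev indices along each composition chain remain within the range where the single-factor bounds of Lemma \ref{lem:9} apply, exploiting that the expansion for $\sigma_\ell \neq 0$ actually starts at $\epsi^2$ (since $A_0^{(\ell)} = A_1^{(\ell)} = 0$ in that case) to prevent the order loss from growing with $\nu$.
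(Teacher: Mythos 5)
Your reduction to a non-oscillatory product $\prod_\ell \rho\, R_{\sigma_\ell}(\lambda) W_{k_\ell}$ is exactly the identity \eqref{eq:2x} used in the paper, and the extension of Lemma \ref{lem:9} to arbitrary shift $\sigma_\ell$ (not just $\sigma=k$) is correct and also implicit in the paper's own proof. The substantive difference is what comes next: you expand \emph{every} factor to the same order $N$ and then multiply out, whereas the paper proceeds by induction on $\nu$ and, when combining the first factor with the term $\epsi^j\AAA_j$ of the inductive hypothesis, expands that first factor only to order $N-j$. This is not a cosmetic difference — it is precisely what keeps the cumulative Sobolev loss at $N+1$.

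Here is where your version breaks. Consider the term $\epsi^{N+\sum_{i\geq 2}J_i}\,\mathfrak{R}_N^{(1)} A_{J_2}^{(2)}\cdots A_{J_\nu}^{(\nu)}$ in the multiplied-out product. Reading right to left with the bounds of Lemma \ref{lem:9}, each $A_{J_i}^{(i)}$ costs $J_i$ derivatives and $\mathfrak{R}_N^{(1)}$ costs $N+1$, so the composition is bounded $H^{s+N+1+\sum_{i\geq 2}J_i}\to H^s$ — which is a \emph{weaker} mapping property than the required $H^{s+N+1}\to H^s$ (the domain $H^{s+N+1+\sum J_i}$ is a proper subspace of $H^{s+N+1}$). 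The extra powers $\epsi^{\sum J_i}$ you are carrying do not repair this: they multiply the operator but do not improve its Sobolev mapping range. Your closing remark that "the expansion for $\sigma_\ell\neq 0$ starts at $\epsi^2$, preventing the order loss from growing with $\nu$" does not address this: for $J_i\geq 3$ the factor $A_{J_i}$ genuinely loses derivatives in the stated bound, so the loss does compound. The concrete fix is exactly the paper's adapted-order expansion, realized cleanly by the recursion: pairing $\mathfrak{R}_{N-j}$ (loss $N-j+1$) with $\AAA_j$ (loss $j$) yields total loss $N+1$, uniformly in $j$ and $\nu$. As written, your proposal identifies the obstacle but does not overcome it.
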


\begin{proof} We prove this lemma by recursion. For $\nu=1$ it is the result of Lemma \ref{lem:9}. Now assume that Lemma \ref{lem:41} holds true for all sequences $\{k_j\}$ of length less or equal to $\nu-1$. Let $\{k_j\}$ be a sequence of length $\nu$. Define $\sigma_2=k_2+...+k_\nu$ so that
\begin{equation*}
e^{-i\sigma_1\bullet/\epsi} \prod_{\ell=1}^{\nu} K_{W_{k_\ell}} e^{ik_\ell \bullet/\epsi}  = \left(e^{-i\sigma_1\bullet/\epsi} K_{W_{k_1}} e^{i\sigma_1 \bullet/\epsi}\right) \cdot \left( e^{-i\sigma_2\bullet/\epsi}\prod_{\ell=2}^{\nu} K_{W_{k_\ell}}e^{ik_\ell \bullet/\epsi}\right).
\end{equation*}
Using the recursion hypothesis we have
\begin{equations*}
 \left(e^{-i\sigma_1\bullet/\epsi} K_{W_{k_1}} e^{i\sigma_1 \bullet/\epsi}\right) \cdot \left( e^{-i\sigma_2\bullet/\epsi}\prod_{\ell=2}^{\nu} K_{W_{k_\ell}}e^{ik_\ell \bullet/\epsi}\right) \\ 
=  \left(e^{-i\sigma_1\bullet/\epsi} K_{W_{k_1}} e^{i\sigma_1 \bullet/\epsi}\right) \AAA_0 + ... + \epsi^{N-1} \left(e^{-i\sigma_1\bullet/\epsi} K_{W_{k_1}} e^{i\sigma_1 \bullet/\epsi}\right) \AAA_{N-1} + \epsi^N \left(e^{-i\sigma_1\bullet/\epsi} K_{W_{k_1}} e^{i\sigma_1 \bullet/\epsi}\right) \RR_N.
\end{equations*}
We expand below $e^{-i\sigma_1\bullet/\epsi} K_{W_{k_1}} e^{i\sigma_1 \bullet/\epsi}$ at order $N-j$ as given by Lemma \ref{lem:9}:
\begin{equation}
e^{-i\sigma_1\bullet/\epsi} K_{W_{k_1}} e^{i\sigma_1 \bullet/\epsi} = A_0 + \epsi A_1 + ... + \epsi^{N-j-1} A_{N-j-1} + \epsi^{N-j} \mathfrak{R}_{N-j}.
\end{equation}
It leads to
\begin{equation*}
\epsi^j \left(e^{-i\sigma_1\bullet/\epsi} K_{W_{k_1}} e^{i\sigma_1 \bullet/\epsi}\right) \AAA_j = \epsi^j A_0 \AAA_j + ... + \epsi^{N-1} A_{N-1-j} \AAA_j + \epsi^N \mathfrak{R}_{N-j} \AAA_j.
\end{equation*}
The operator $A_{j'} \AAA_j$ has order $j'-2+j-2 = j'+j-4 \leq j'+j-2$ and in the above expression it is weighted with a term $\epsi^{j'+j}$. Moreover if $s+j'+j \leq N$ then
\begin{equation*}
|A_{j'} \AAA_j|_{\BB(H^{s+j'+j}, H^s)} \leq |A_{j'}|_{\BB(H^{s+j'},H^s)}|\AAA_j|_{\BB(H^{s+j'+j}, H^{s+j'})} \leq C^\nu \prod_{\ell=1}^\nu \|W_{k_\ell}\|_N. 
\end{equation*}
The remainder $\mathfrak{R}_{N-j} \AAA_j$ has order $N-j-1+j-2=N-3 \leq N-1$ and satisfies
\begin{equation*}
|\mathfrak{R}_{N-j} \AAA_j|_{\BB(H^{N+1+s}, H^s)} \leq |\mathfrak{R}_{N-j}|_{\BB(H^{s+N-j+1}, H^s)}|\AAA_j|_{\BB(H^{N+1+s}, H^{N+1+s-j})} \leq C^\nu \prod_{\ell=1}^\nu \|W_{k_\ell}\|_N. 
\end{equation*}
The term $e^{-i\sigma_1\bullet/\epsi} K_{W_{k_1}} e^{i\sigma_1 \bullet/\epsi} \RR_N$ is of order $N-3 \leq N-1$ and satisfies
\begin{align*}
\left|  e^{-i\sigma_1\bullet/\epsi} K_{W_{k_1}} e^{i\sigma_1 \bullet/\epsi}  \RR_N \right|_{\BB(H^{s+N+1}, H^s)}  \leq & \left| e^{-i\sigma_1\bullet/\epsi} K_{W_{k_1}} e^{i\sigma_1 \bullet/\epsi} \right|_{\BB(H^s, H^s)} |\RR_N|_{\BB(H^{s+N+1}, H^s)}\\ 
\leq & C^\nu \prod_{\ell=1}^\nu \|W_{k_\ell}\|_N.
\end{align*}
This prove that the lemma holds for all sequences of length $\nu$. This completes the recursion and ends the proof.\end{proof}

The expansion of Lemma \ref{lem:41} implies a trace expansion as follows:

\begin{lem}\label{lem:6} Let $\{ k_\ell \}_{1 \leq \ell \leq \nu}$ be a constructive sequence with $\gamma$ non-vanishing terms. Assume that $\nu \geq N(\gamma+1).$ Then there exists $a_0, a_1, ..., a_{N-1}$ holomorphic functions on $X_d$ such that locally uniformly on $X_d$, $ |a_j(\lambda)| \leq C^\nu \prod_{\ell=1}^\nu \|W_{k_\ell}\|_N$ and
\begin{equation*}
\left|\trace\left(\prod_{\ell=1}^{\nu} K_{W_{k_\ell}} e^{ik_\ell \bullet/\epsi}\right) - a_0(\lambda) - \epsi a_1(\lambda) + ... - \epsi^{N-1} a_{N-1}(\lambda)\right| \leq \epsi^N C^\nu \prod_{\ell=1}^\nu \|W_{k_\ell}\|_N.
\end{equation*}
\end{lem}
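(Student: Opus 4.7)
The plan is to isolate a block of $N$ consecutive vanishing indices, strip them off the trace, and then expand the remaining product via Lemma \ref{lem:41}. Because $\nu-\gamma\geq N(\gamma+1)-\gamma=(N-1)\gamma+N>(N-1)\gamma$, the average length of a cyclic gap of zeros between non-vanishing $k_\ell$'s strictly exceeds $N-1$, so at least one such gap has length $\geq N$ (the case $\gamma=0$ is trivial). Using cyclicity of the trace I rotate so that $k_{\nu-N+1}=\cdots=k_\nu=0$, giving
$$
T\;\equiv\;\prod_{\ell=1}^\nu K_{W_{k_\ell}} e^{ik_\ell\bullet/\epsi}\;=\;T'\cdot K_{W_0}^N,\qquad T'\;\equiv\;\prod_{\ell=1}^{\nu-N} K_{W_{k_\ell}} e^{ik_\ell\bullet/\epsi}.
$$
Removing zero indices does not change the sum, so $(k_1,\ldots,k_{\nu-N})$ is still constructive and $\sigma_1(T')=0$. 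Lemma \ref{lem:41} applied with the same truncation order $N$ then yields
$$
T'\;=\;\AAA_0+\epsi\AAA_1+\cdots+\epsi^{N-1}\AAA_{N-1}+\epsi^N\RR_N,
$$
with the mapping and quantitative properties stated there, and with holomorphic dependence on $\lambda\in X_d$ inherited from $R_0(\lambda)$ and from the polynomials in Lemma \ref{lem:9} (Appendix \ref{app:1} handles the one-dimensional pole at $\lambda=0$).

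Setting $a_j(\lambda):=\trace(\AAA_jK_{W_0}^N)$ gives, formally,
$$
\trace(T)=\sum_{j=0}^{N-1}\epsi^j a_j(\lambda)+\epsi^N\trace(\RR_N K_{W_0}^N),
$$
and the core technical step is to verify trace class membership and the bounds $|\AAA_jK_{W_0}^N|_\LL,|\RR_NK_{W_0}^N|_\LL\leq C^\nu\prod_\ell\|W_{k_\ell}\|_N$, locally uniformly on $X_d$. The standard tool is a split $K_{W_0}^N=K_{W_0}^aK_{W_0}^{N-a}$: for $N-a\geq d$ Lemma \ref{lem:1b} gives $K_{W_0}^{N-a}\in\LL$ with $|K_{W_0}^{N-a}|_\LL\leq C|W_0|_\infty^{N-a}$, while iteration of Lemma \ref{lem:1h} produces $K_{W_0}^a:L^2\to H^{2a}$ boundedly, so that the mapping $\AAA_j:H^{s+j}\to H^s$ (for $s+j\leq N$) of Lemma \ref{lem:41} makes $\AAA_jK_{W_0}^a$ bounded on $L^2$ as soon as $2a\geq j$. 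When $j$ is so close to $N-1$ that no such split simultaneously achieves $2a\geq j$ and $N-a\geq d$, I invoke the finer structure of $\AAA_j$: it is a sum of products of $\nu-N$ factors, $\gamma$ of which are operators $A_i^{(k_\ell)}$ of Lemma \ref{lem:9} (of order $i-2$ with $i\geq 2$ when $k_\ell\neq 0$) and $\nu-N-\gamma\geq (N-1)\gamma$ of which are copies of $K_{W_0}$ (of order $-2$); the effective total order is $j-2(\nu-N)$, which for $\gamma\geq 1$ is at most $-N-1$, and composition with the further $N$-fold smoothing $K_{W_0}^N$ amply yields trace class. The case $\gamma=0$ reduces to $\AAA_0 K_{W_0}^N=K_{W_0}^\nu$, trace class by Lemma \ref{lem:1b} since $\nu\geq N\geq d+1$. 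The norm bound on $a_j$ then follows from Lemma \ref{lem:41}(i), and the identical argument using Lemma \ref{lem:41}(ii) bounds the $O(\epsi^N)$ remainder.

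The main obstacle is exactly this trace class bookkeeping: the coarse order $j-2$ for $\AAA_j$ provided by Lemma \ref{lem:41} does not by itself guarantee trace class after composition with $K_{W_0}^N$ when $N-j\leq d$; the argument is closed by the extra smoothing hidden in the $K_{W_0}$ factors of $\AAA_j$ (equivalently, by the zeros still present in the truncated sequence, since $\nu-N\geq N\gamma$). Holomorphy of $a_j$ on $X_d$ and local uniformity of the remainder are then routine consequences of the corresponding properties of $K_{W_0}$, $\AAA_j$ and $\RR_N$.
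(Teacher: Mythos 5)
Your overall strategy is the same as the paper's: pigeonhole plus cyclicity of the trace to arrange that $k_{\nu-N+1}=\cdots=k_\nu=0$, then Lemma~\ref{lem:41} applied to the first $\nu-N$ factors, then define $a_j(\lambda)=\trace(\AAA_j K_{W_0}^N)$. Where you part ways with the paper is in the trace-class bookkeeping, and that step has a genuine gap.

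Your primary mechanism is the split $K_{W_0}^N=K_{W_0}^aK_{W_0}^{N-a}$ with $2a\geq j$ and $N-a\geq d$. This requires $j\leq 2(N-d)$, which covers $j\leq N-1$ only when $N\geq 2d-1$. The ambient hypothesis of \S\ref{sec:5} is merely $N\geq d+1$ (and $p=4(d+N)N$ is chosen accordingly), so for $d\geq 3$ at the minimal $N$ the split simply does not reach the top coefficients. You notice this and fall back on a ``finer structure'' claim: that $\AAA_j$ has effective pseudodifferential order $j-2(\nu-N)$, not $j-2$. The order count per se is correct for each monomial $A_{i_1}\cdots A_{i_{\nu-N}}$ appearing in $\AAA_j$, but Lemma~\ref{lem:41}$(i)$ only records the mapping bound $|\AAA_j|_{\BB(H^{s+j},H^s)}\leq C^\nu\prod_\ell\|W_{k_\ell}\|_N$ for $s+j\leq N$, which treats $\AAA_j$ as an operator of order $j$ and says nothing about gains beyond $s=N-j$. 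To use the lower order quantitatively with the required $C^\nu\prod\|W_{k_\ell}\|_N$ bound, you would need to strengthen Lemma~\ref{lem:41} (essentially re-do its recursion keeping track of the cumulative negative order), which is not something you can just ``invoke.''

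The paper closes this step by a mechanism that works uniformly for all $j\leq N-1$ and requires only $N\geq d+1$: compose $K_{W_0}^N:H^{-N}\to H^N$ (Lemma~\ref{lem:1h}) with $\AAA_j:H^N\to L^2$ (Lemma~\ref{lem:41}$(i)$ with $s=N-j\geq 1$) to get $\AAA_jK_{W_0}^N\in\BB(H^{-N},L^2)$, then use the localization of this operator together with \cite[(B.3.9)]{DyaZwo} to conclude $|\AAA_jK_{W_0}^N|_\LL\leq C|\AAA_jK_{W_0}^N|_{\BB(H^{-N},L^2)}$ because a compactly cut-off operator bounded from $H^{-N}$ to $L^2$ is trace class once $N>d$. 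The same mechanism handles $\RR_NK_{W_0}^N$. If you replace your split-plus-fallback with this single estimate, your proof becomes complete and is then essentially identical to the paper's.
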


\begin{proof} Since $\nu \geq N(\gamma+1)$, there exists a subsequence of $\{ k_\ell \}_{1 \leq \ell \leq \nu}$ made of $N$ vanishing $d$-tuples. Using the cyclicity of the trace we can assume that $k_{\nu-N+1} = ... = k_\nu = 0$. The sequence $k_1, ..., k_{\nu-N}$ is constructive. Therefore we can apply Lemma \ref{lem:41} to obtain the expansion
\begin{equation*}
\prod_{\ell=1}^{\nu-N} K_{W_{k_\ell}} e^{ik_\ell \bullet/\epsi} = \AAA_0 +  ... + \epsi^{N-1} \AAA_{N-1}+\epsi^N \RR_N.
\end{equation*}
Here $\AAA_j$ is pseudodifferential of order $j-2$ and does not depend on $\epsi$ and $\RR_N$ is pseudodifferential of order $N-1$ and satisfies the bound
\begin{equation*}
|\RR_N|_{\BB(H^{N+1},L^2)} \leq C^\nu\prod_{\ell=1}^\nu \|W_{k_\ell}\|_N.
\end{equation*}
Both these operators map locally supported functions to compactly supported functions. As $k_{\nu-N+1} = ... = k_\nu = 0$ we obtain
\begin{equation}\label{eq:11o}
\prod_{\ell=1}^{\nu} K_{W_{k_\ell}} e^{ik_\ell \bullet/\epsi} = \AAA_0 K_{W_0}^N +  ... + \epsi^{N-1} \AAA_{N-1} K_{W_0}^N+\epsi^N \RR_N K_{W_0}^{N}.
\end{equation}
We recall that $N \geq d$. The operators $\AAA_j K_{W_0}^N$ have order $j-2-2N \leq -2-N \leq -2-d$ therefore they are trace class. The operator $\RR_N K_{W_0}^N$ has order $-N-1 \leq -d$ hence it is also trace class. It satisfies the bound
\begin{equation*}
|\RR_N K_{W_0}^N|_{\BB(H^{1-N}, L^2)} \leq |\RR_N|_{\BB(H^{N+1}, L^2)} |K_{W_0}^N|_{\BB(H^{1-N}, H^{N+1})} \leq C^\nu \prod_{\ell=1}^\nu \|W_{k_\ell}\|_N.
\end{equation*}
By \cite[Equation (B.3.9)]{DyaZwo} this implies $|\RR_N K_{W_0}^N|_\LL \leq |\RR_N K_{W_0}^N|_{\BB(H^{1-N}, L^2)}  \leq C^\nu \prod_{\ell=1}^\nu \|W_{k_\ell}\|_N$. Taking the trace of both sides of \eqref{eq:11o} yields
\begin{equation*}
\left|\trace\left(\prod_{\ell=1}^{\nu} K_{W_{k_\ell}} e^{ik_\ell \bullet/\epsi}\right) - \trace\left(\AAA_0 K_{W_0}^N\right) -  ... - \epsi^{N-1} \trace\left(\AAA_{N-1} K_{W_0}^N\right)\right| \leq \epsi^N C^\nu \prod_{\ell=1}^\nu \|W_{k_\ell}\|_N.
\end{equation*}
This gives the required expansion. We now need to prove the estimate on the coefficients $a_0, ..., a_{N-1}$ appearing in the expansion. By \cite[Equation (B.3.9)]{DyaZwo} and the estimate $(i)$ of Lemma \ref{lem:41},
\begin{equations*}
|\trace(\AAA_j K_{W_0}^N)|   \leq |\AAA_j K_{W_0}^N|_\LL    \leq C |\AAA_j K_{W_0}^N|_{\BB(H^{-N},L^2)} \\
   \leq C |\AAA_j|_{\BB(H^N,L^2)} |K_{W_0}^N|_{\BB(L^2,H^N)}  \leq C^\nu \prod_{\ell=1}^\nu \|W_{k_\ell}\|_N.
\end{equations*}
This completes the proof.\end{proof}

Fix $a \in [1,N]$ and $n_1, ..., n_a \in [1,2N-1]$. The operator $\CC_{n_1, ..., n_a}$ defined by \eqref{eq:2w} is a linear combination of operators of the form
\begin{equation}\label{eq:1r}
L[k_\ell^j] = \prod_{j=1}^a \TT_X \prod_{\ell=1}^{m_j} K_{W_{k_\ell^j}} e^{ik_\ell^j \bullet/\epsi},
\end{equation}
where
\begin{enumerate}
\item[$(i)$] For every $j \in [1,a]$, $m_j \geq p$.
\item[$(ii)$] The sequence $\{k_\ell^j\}^{1 \leq j \leq a}_{1 \leq \ell \leq m_j}$ is constructive.
\item[$(iii)$] For every $j \in [1,a]$, the sequence $\{k_\ell^j\}_{1 \leq \ell \leq m_j}$ has $n_j$ non-vanishing terms.
\end{enumerate}
In order to prove Lemma \ref{lem:61} we prove an expansion for operators of the form \eqref{eq:1r} where $\{k_\ell^j\}$ satisfies $(i), (ii)$ and $(iii)$. We fix $s=2(N+2N^2+1)$.

\begin{lem}\label{lem:2w} Let $L[k_\ell^j]$ be an operator of the form \eqref{eq:1r} where $\{k_\ell^j\}$ satisfies $(i), (ii)$ and $(iii)$ above. 
Then there exist $b_0[k_\ell^j], ..., b_{N-1}[k_\ell^j]$ holomorphic functions on $X_d$ such that locally uniformly on $X_d$ we have $|b_i[k_\ell^j]| \leq C^\nu \prod_{\ell=1}^\nu \|W_{k_\ell}\|_s$ and
\begin{equation}\label{eq:11r}
\left|\trace\left( L[k_\ell^j] \right) - b_0[k_\ell^j]+ ... - b_{N-1}[k_\ell^j] \epsi^{N-1}\right| \leq \epsi^N C^\nu \prod_{j=1}^a\prod_{\ell=1}^{m_j} \|W_{k_\ell^j}\|_s.
\end{equation} 
\end{lem}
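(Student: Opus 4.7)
The plan is to follow the same case-structure as in the proof of Lemma \ref{lem:5}, with Lemma \ref{lem:41} now playing the role that Lemma \ref{lem:3} played in the destructive case. As before, I first expand $\TT_X = C_0 + C_1$ using \eqref{eq:1s} and write
\begin{equation*}
\trace(L[k_\ell^j]) = \sum_{\epsilon \in \{0,1\}^a} \trace\Big(\prod_{j=1}^a C_{\epsilon_j} \prod_{\ell=1}^{m_j} K_{W_{k_\ell^j}} e^{ik_\ell^j\bullet/\epsi}\Big).
\end{equation*}
For each $\epsilon$, I expand $C_0 = \Det(\Id+\Psi(K_{W_0})) P_N(K_{W_0})$ as a polynomial in $K_{W_0}$ and absorb the resulting $K_{W_0}$ factors into the neighboring oscillatory products, viewing them as $K_{W_k} e^{ik\bullet/\epsi}$ with $k=0$. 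After using the cyclicity of the trace each term takes the form
\begin{equation*}
\Det(\Id+\Psi(K_{W_0}))^{a-|\epsilon|} \cdot \trace\bigl(B_{W_0} M_1 B_{W_0} M_2 \cdots B_{W_0} M_r\bigr),
\end{equation*}
where $r = |\epsilon|$ (possibly zero) and each $M_i = \prod_{\ell \in S_i} K_{W_{k'_\ell}} e^{ik'_\ell\bullet/\epsi}$ runs over a contiguous segment. By construction, every $M_i$ adjacent to a $B_{W_0}$ begins and ends with at least $N+d$ vanishing $k'_\ell$'s, inherited from the $K_{W_0}^{N+d}$ factors of $C_1$. The concatenation of the $M_i$-sequences is, up to the inserted zeros, the original sequence $\{k_\ell^j\}$, hence remains constructive.

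Now two scenarios are possible. If at least one $M_i$ is destructive, then since that $M_i$ starts and ends with $N+d$ zeros, Lemma \ref{lem:3} yields $|M_i|_\LL \leq C^{\nu+s^2}\epsi^N \prod_{\ell \in S_i}\|W_{k'_\ell}\|_s$; the remaining factors are bounded in $\BB$ by Lemma \ref{lem:1h} and the $\BB$-boundedness of $B_{W_0}$ (from Appendix \ref{app:1}), and the whole term becomes $O(\epsi^N \prod \|W_{k_\ell^j}\|_s)$, contributing only to the remainder. If every $M_i$ is constructive, I apply Lemma \ref{lem:41} to each to get the operator-valued expansion $M_i = \AAA_{i,0} + \epsi \AAA_{i,1} + \cdots + \epsi^{N-1}\AAA_{i,N-1} + \epsi^N \RR_{i,N}$ with $\epsi$-independent $\AAA_{i,j}$. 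Substituting these expansions and collecting powers of $\epsi$ produces finitely many $\epsi$-independent contributions which I take as candidates for $b_0, \ldots, b_{N-1}$.

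To make sense of these coefficients as traces, the trace-class property at each order must be established: each $\AAA_{i,j}$ is pseudodifferential of order $j-2$, but the flanking $K_{W_0}^{N+d}$ factors inside the $M_i$'s lower the total order well below $-d$, so the final product is trace class and its trace can be bounded by $C^\nu\prod\|W_{k_\ell^j}\|_s$ by \cite[Equation (B.3.9)]{DyaZwo}, exactly as in the proof of Lemma \ref{lem:6}. Holomorphy of the $b_j$ on $X_d$ is automatic, since $B_{W_0}$ depends holomorphically on $\lambda \in X_d$ (Appendix \ref{app:1}), the $\AAA_{i,j}$'s are entire in $\lambda$, and $\Det(\Id+\Psi(K_{W_0}))$ is entire on $X_d$. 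The $\epsi^N \RR_{i,N}$ remainder produces an $O(\epsi^N \prod\|W_{k_\ell^j}\|_s)$ contribution, using the bound $|\RR_{i,N}|_{\BB(H^{N+1},L^2)} \leq C^{\nu}\prod \|W_{k'_\ell}\|_N$ from Lemma \ref{lem:41}(ii) combined with the smoothing provided by the $K_{W_0}^{N+d}$ factors to drop to the trace class.

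The main obstacle I anticipate is the bookkeeping in the constructive subcase: one must track how many vanishing $k'_\ell$'s have been accumulated at each endpoint of every segment $M_i$ (coming from the polynomial expansion of $C_0$, from the $K_{W_0}^{N+d}$ factors of $C_1$, and from the original zeros in $\{k_\ell^j\}$) in order to certify that the products of $\AAA_{i,j}$'s and $B_{W_0}$'s are trace class and that their trace norm is controlled by $\prod\|W_{k_\ell^j}\|_s$ rather than by higher Sobolev norms. Once this order-counting is in place, summing the finitely many contributions over $\epsilon$, over the polynomial terms of $P_N$, and over the placements of $B_{W_0}$'s gives the desired expansion \eqref{eq:11r} with the required holomorphy and norm bounds.
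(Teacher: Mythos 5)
Your proposal follows essentially the same route as the paper's proof: both decompose $\TT_X = C_0 + C_1$ with $C_1 = K_{W_0}^{N+d} B_{W_0} K_{W_0}^{N+d}$, sum over $\epsilon \in \{0,1\}^a$, absorb the $K_{W_0}$'s coming from $P_N(K_{W_0})$ into the oscillatory products, split into the case where some resulting segment is destructive (dispatched by Lemma \ref{lem:3}, giving an $O(\epsi^N)$ remainder) versus the case where all segments are constructive (dispatched by the operator-valued expansion of Lemma \ref{lem:41} followed by the trace estimate as in Lemma \ref{lem:6}), and then establish holomorphy and the norm bound on the coefficients from the holomorphy of $B_{W_0}$ and $\Det(\Id + \Psi(K_{W_0}))$. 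The one point you flag as an obstacle — tracking enough $K_{W_0}$ smoothing at the endpoints of each segment $M_i$ to guarantee the $\AAA_{i,j}$-products are trace class — is precisely the role of the $K_{W_0}^{N+d}$ factors hardwired into $C_1$, exactly as the paper handles it; you have the right mechanism, and the only thing missing is to carry out the order count $(j-2) - 2(N+d) \leq -d-2$ explicitly, which is routine given $N \geq d$.
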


\begin{proof}[Proof of Lemma \ref{lem:61}] Fix $a \in [1,N]$, $n_1, ..., n_a \in [1,2N-1]$ and $k_\ell^j$ satisfying $(i), (ii)$ and $(iii)$ above. Let $\gamma = n_1+...+n_a$ be the number of non-vanishing terms of $\{k_\ell^j\}$. We divide the proof below in 5 main steps.

1. Write $\TT_X = C_0+C_1$ where $C_0, C_1$ were given in \eqref{eq:1s}. Then
\begin{align*}
\trace\left(L[k_\ell^j]\right)=  \trace\left(  \prod_{j=1}^a \TT_X \prod_{\ell=1}^{m_j} K_{W_{k_\ell^j}} e^{ik_\ell^j \bullet/\epsi} \right)  = \sum_{\epsilon_1, ... \epsilon_a \in \{0,1\}^a}\trace\left( \prod_{j=1}^a C_{\epsilon_j} \prod_{\ell=1}^{m_j} K_{W_{k_\ell^j}} e^{ik_\ell^j \bullet/\epsi} \right).
\end{align*}
We recall that since $\{k_\ell^j\}$ has $\gamma \leq (2N-1) a$ non-vanishing terms and length $\nu \geq pa$ we have $\nu \geq N(\gamma+1)$. Fix a sequence $\epsilon_j \in \{0,1\}^a$. In order to prove the lemma it suffices to prove that the term 
\begin{equation}\label{eq:99}
\trace\left( \prod_{j=1}^a C_{\epsilon_j} \prod_{\ell=1}^{m_j} K_{W_{k_\ell^j}} e^{ik_\ell^j \bullet/\epsi} \right)
\end{equation}
admits an expansion in powers of $\epsi$ at order $N$.

2. Assume that $\epsilon_1= ... = \epsilon_a = 0$. Recall that $C_0$ is a polynomial in $K_{W_0}$. In this case \eqref{eq:99} is a finite sum of terms studied in Lemma \ref{lem:6}. These all admit an expansion in powers of $\epsi$ and thus so does \eqref{eq:99}. 

3. Assume that $\epsilon_j$ has at least one non-zero term. Without loss of generality $\epsilon_1 = 1$. The indexes $j_1, ..., j_r$ such that $\epsilon_j = 1$ split the sequence $(k_1^1, ..., k_{m_1}^1, k_1^2, ..., k_{m_a}^a)$ into $r+1$ subsequences
\begin{equation}\label{eq:1t}
(k_1^1, ..., k_{m_{j_1}}^{j^1}), \ ..., \ (k_1^{j_r}, ..., k_{m_a}^{a}).
\end{equation}
Assume that each of the subsequences in \eqref{eq:1t} is constructive. Then since $C_1= K_{W_0}^{N+d} B_{W_0} K_{W_0}^{N+d}$ we can write \eqref{eq:99} as the trace of a product of operators of the form
\begin{equation}\label{eq:3h}
B_{W_0} K_{W_0}^{N+d} \prod_{j=j_t}^{j_{t+1}-1} C_{\epsilon_j} \prod_{\ell=1}^{m_j} K_{W_{k_\ell^j}} e^{ik_\ell^j \bullet/\epsi} K_{W_0}^{N+d}.
\end{equation}
By Lemma \ref{lem:41} the operator 
\begin{equation*}
K_{W_0}^{N+d} \prod_{j=j_t}^{j_{t+1}-1} C_{\epsilon_j} \prod_{\ell=1}^{m_j} K_{W_{k_\ell^j}} e^{ik_\ell^j \bullet/\epsi} K_{W_0}^{N+d}
\end{equation*}
admits an operator-valued expansion in powers of $\epsi$. Thus so does the operator \eqref{eq:3h}. Multiplying these expansions over $t=1, ..., r$ leads to an operator-valued expansion for the operator
\begin{equation*}
\prod_{j=1}^a C_{\epsilon_j} \prod_{\ell=1}^{m_j} K_{W_{k_\ell^j}} e^{ik_\ell^j \bullet/\epsi}
\end{equation*}
in the spirit of Lemma \ref{lem:41}. Taking the trace and adapting  the proof of Lemma \ref{lem:6} shows that \eqref{eq:99} admits an expansion in powers of $\epsi$. 

4. Assume that at least one of the sequences in \eqref{eq:1t} is destructive. Without loss of generality $(k_1^1, ..., k_{j_1}^{m_{j_1}})$ is destructive. Since $C_1 = K_{W_0}^{N+d} B_{W_0} K_{W_0}^{N+d}$ the operator
\begin{equation}\label{eq:11q}
K_{W_0}^{N+d} \left(\prod_{j=1}^{j_1} C_0 \prod_{\ell=1}^{m_j} K_{W_{k_\ell^j}} e^{ik_\ell^j \bullet/\epsi}\right) K_{W_0}^{N+d}
\end{equation}
appears as one of the factors in the product 
\begin{equation*}
\prod_{j=1}^a C_{\epsilon_j} \prod_{\ell=1}^{m_j} K_{W_{k_\ell^j}} e^{ik_\ell^j \bullet/\epsi}.
\end{equation*}
In addition since $C_0$ is a polynomial in $K_{W_0}$ it is associated with a destructive sequence, that starts and ends with $N+d$ zeros. Consequently Lemma \ref{lem:3} applies and yields
\begin{equation*}
\left|K_{W_0}^{N+d} \left(\prod_{j=1}^{j_1} C_0 \prod_{\ell=1}^{m_j} K_{W_{k_\ell^j}} e^{ik_\ell^j \bullet/\epsi}\right) K_{W_0}^{N+d}\right|_\LL \leq C^{\nu+s^2} \epsi^N \prod_{j=1}^{j_1}\prod_{\ell=1}^{m_j}  \| W_{k^j_\ell} \|_s.
\end{equation*}
This yields the estimate:
\begin{align*}
 \trace\left(\prod_{j=1}^a C_{\epsilon_j}\right. & \left. \prod_{\ell=1}^{m_j} K_{W_{k_\ell^j}} e^{ik_\ell^j \bullet/\epsi}\right) \leq  \left|K_{W_0}^{N+d} B_{W_0}\right|_\BB \left|K_{W_0}^{N+d} \left(\prod_{j=1}^{j_1} C_0 \prod_{\ell=1}^{m_j} K_{W_{k_\ell^j}} e^{ik_\ell^j \bullet/\epsi}\right) K_{W_0}^{N+d}\right|_\LL \\ & \cdot \left|B_{W_0} K_{W_0}^{N+d}\right|_\BB \left|\prod_{\ell=1}^{m_{j_1}+1} K_{W_{k_\ell^{j_1+1}}} e^{ik_\ell^{j_1+1} \bullet/\epsi}\right|_\BB \left|\prod_{j=j_1+2}^a C_{\epsilon_j} \prod_{\ell=1}^{m_j} K_{W_{k_\ell^j}} e^{ik_\ell^j \bullet/\epsi}\right|_\BB \\
& \leq C^{\nu+s^2} \epsi^N \prod_{j=1}^a \prod_{\ell=1}^{m_j} \|W_{k_\ell^j}\|_s.
\end{align*}
This shows that such sequences $\epsilon_1, ..., \epsilon_a$ induce negligible contributions.

5. Points $2,3,4$ include all the possible values of $\epsilon_1, ..., \epsilon_a$. The expansion \eqref{eq:11r} follows now from a summation over $\epsilon_1, ..., \epsilon_a \in \{0,1\}^a$ of the expansions obtained in Points 2,3. This ends the proof.\end{proof}

We are now ready to prove Lemma \ref{lem:61}. 

\begin{proof}[Proof of Lemma \ref{lem:61}] Let us recall that for $a \in [1,N]$ and $n_1, ... n_a \in [1,2N-1]$ the operator $\CC_{n_1, ..., n_a}$ is defined by
\begin{equation*}
\CC_{n_1, ..., n_a} = \sum_{m_1, ..., m_a=p}^\infty \az_{m_1}... \az_{m_a}  \sum_{\substack{\{k_\ell^1\} \in \SSSS_{m_1}^{n_1}, \ ..., \ \{k_\ell^a\} \in \SSSS_{m_a}^{n_a}, \\ k_1^1+...+k_{m_a}^a = 0}} \ \ \ L[k_\ell^j].
\end{equation*}
Here $L[k_\ell^j]$ is given by \eqref{eq:1r}, $\SSSS_m^n$ is the set of sequences of length $m$ with $n$ non-vanishing terms and $\az_m = \Psi^{(m)}(0)/m!$. The proof consist in showing that the sum of the expansions of $\trace(L[k_\ell^j])$ provided by Lemma \ref{lem:2w} is convergent. By Lemma \ref{lem:2w},
\begin{align*}
 & \sum_{m_1, ..., m_a=p}^\infty  |\az_{m_1}... \az_{m_a}|  \sum_{\substack{\{k_\ell^1\} \in \SSSS_{m_1}^{n_1}, \ ..., \ \{k_\ell^a\} \in \SSSS_{m_a}^{n_a}, \\ k_1^1+...+k_{m_a}^a = 0}} \ \ \ \left|\trace\left( L[k_\ell^j] \right) - b_0[k_\ell^j]+ ... - b_{N-1}[k_\ell^j] \epsi^{N-1}\right| \\ \leq  & \epsi^N \sum_{m_1, ..., m_a=p}^\infty  |\az_{m_1}... \az_{m_a}|  \sum_{\substack{\{k_\ell^1\} \in \SSSS_{m_1}^{n_1}, \ ..., \ \{k_\ell^a\} \in \SSSS_{m_a}^{n_a}, \\ k_1^1+...+k_{m_a}^a = 0}}  C^{m_1+...+m_a} \prod_{j=1}^a\prod_{\ell=1}^{m_j} \|W_{k_\ell}\|_s \\
   & \leq \epsi^N \sum_{m_1, ..., m_a=p}^\infty  |\az_{m_1} ... \az_{m_a}| C^{m_1+...+m_a} |W|_{X^s}^{m_1+...+m_a}   = \epsi^N (\Phi(C|W|_{Z^s}))^a,
\end{align*}
where $\Phi(z) = \sum_{m=p}^\infty |\az_m| z^m$. It follows that $\trace(\CC_{n_1, ..., n_a})$ has an expansion given by
\begin{align*}
\trace(\CC_{n_1, ..., n_a}) & = O(\epsi^N)+\sum_{m_1, ..., m_a=p}^\infty \az_{m_1}... \az_{m_a}  \sum_{\substack{\{k_\ell^1\} \in \SSSS_{m_1}^{n_1}, \ ..., \ \{k_\ell^a\} \in \SSSS_{m_a}^{n_a}, \\ k_1^1+...+k_{m_a}^a = 0}} \ \ \  b_0[k_\ell^j]+ ... + b_{N-1}[k_\ell^j] \epsi^{N-1} \\
    & = \varphi_0 + ... + \epsi^{N-1} \varphi_{N-1} + O(\epsi^N),
\end{align*}
where
\begin{equation*}
\varphi_i = \sum_{m_1, ..., m_a=p}^\infty \az_{m_1}... \az_{m_a}  \sum_{\substack{\{k_\ell^1\} \in \SSSS_{m_1}^{n_1}, \ ..., \ \{k_\ell^a\} \in \SSSS_{m_a}^{n_a}, \\ k_1^1+...+k_{m_a}^a = 0}} b_i[k_\ell^j].
\end{equation*}
This ends the proof. \end{proof}

Since 
\begin{equation*}
\prod_{j=1}^a \TT_X F_{n_j} = \CC_{n_1, ..., n_a} + \DD_{n_1, ..., n_a},
\end{equation*}
the combination of Lemma \ref{lem:4}, Lemma \ref{lem:5} and Lemma \ref{lem:61} proves Lemma \ref{lem:2}. This in turn shows that $D_V(\lambda)$ admits an expansion in powers of $\epsi$. In the next section we conclude the proof of Theorem \ref{thm:5} by computing explicitly the first few coefficients in the expansion. 

\subsection{Computation of coefficients in the expansion}\label{subsec:3}

Here we use the first steps in the proof of Theorem \ref{thm:5} to compute a few coefficients in the expansion of $D_V$. These coefficients are holomorphic functions of $\lambda$. Hence it suffices to compute them for $\Im \lambda \gg 1$ and extends the obtained expression to $\C$ by the unique continuation principle. Fix $N \geq 4$ and $p=4N(d+N)$. If $\Im \lambda$ is large enough then $|K_V^p|_\LL < 1$. In this case the series
\begin{equation*}
\ln(1+\Psi(K_V)) = -\sum_{m=p}^\infty \dfrac{(-K_V)^m}{m}
\end{equation*}
converges in $\LL$. This implies that for $\Im \lambda \gg 1$
\begin{equation}\label{eq:2s}
D_V(\lambda)) = \exp \left( -\sum_{m=p}^\infty (-1)^m\dfrac{\trace\left(K_V^m\right)}{m} \right).
\end{equation}
We now explain how to obtain an expansion of $\trace(K_V^m)$ for $\Im \lambda \gg 1$. Split $\trace(K_V^m)$ in constructive and destructive parts:
\begin{equations*}
\trace(K_V^m)  = \sum_{\substack{k_1, ..., k_m, \\ k_1+...+k_m=0}} \trace\left( \prod_{\ell=1}^m K_{W_{k_\ell}} e^{ik_\ell \bullet/\epsi} \right) + \sum_{\substack{k_1, ..., k_m, \\ k_1+...+k_m \neq 0}} \trace\left( \prod_{\ell=1}^m K_{W_{k_\ell}} e^{ik_\ell \bullet/\epsi} \right).
\end{equations*}
By Lemma \ref{lem:3} destructive sequences induce negligible contributions: 
\begin{equation}\label{eq:1n}
\trace(K_V^m) = \sum_{\substack{k_1, ..., k_m, \\ k_1+...+k_m=0}} \trace \left( \prod_{\ell=1}^m K_{W_{k_\ell}} e^{ik_\ell \bullet/\epsi} \right)+ O(\epsi^4).
\end{equation}
We next reproduce the proofs of Lemma \ref{lem:41} and \ref{lem:6}. Fix $k_1, ..., k_m$ with $k_1+...+k_m=0$ and define $\sigma_j=k_j+...+k_\nu$. 
\begin{equation*}
\prod_{j=1}^m K_{W_{k_j}} e^{ik_j \bullet/\epsi} = \prod_{j=1}^m \rho R(D+\sigma_j/\epsi) W_{k_j},
\end{equation*}
where $R(\xi,\lambda) = (\xi^2-\lambda^2)^{-1}$. The expansion of $R(\xi+\sigma/\epsi,\lambda)$ given in Appendix \ref{app:2} induces
\begin{equation*}
R(D+\sigma/\epsi) = \epsi^2\dfrac{\Id}{|\sigma|^2} - \epsi^3\dfrac{2 \sigma \cdot D}{|\sigma|^4}+  O_{\BB(H^{s+5}, H^s)}(\epsi^4).
\end{equation*}
Thus if two or more of the $\sigma_j$ are non-zero then 
\begin{equation*}
\trace\left( \prod_{j=1}^m K_{W_{k_j}} e^{ik_j \bullet/\epsi}  \right) = O(\epsi^4).
\end{equation*}
On the other hand the only constructive sequences with at most one non-vanishing $\sigma_j$ are the cyclic permutations of $(0, ..., 0, -k,k)$. For every $k \neq 0$ there are $m$ such sequences. It yields
\begin{align*}
 & \trace(K_V^m) = \trace(K_{W_0}^m) + m \sum_{k \neq 0} \trace\left(K_{W_0}^{m-2} K_{W_{-k}} R(D+k/\epsi) W_{k}\right) + O(\epsi^4) \\
   = & \trace(K_{W_0}^m) + m \sum_{k \neq 0} \dfrac{\epsi^2}{|k|^2} \trace\left(K_{W_0}^{m-2} K_{W_{-k}} W_k\right) - 2m \sum_{k \neq 0} \dfrac{\epsi^3}{|k|^4}\trace\left( K_{W_0}^{m-2} K_{W_{-k}}  (k \cdot D) W_k  \right) + O(\epsi^4).
\end{align*}
Note moreover that as operators,
\begin{align*}
2 \sum_{k \neq 0} \dfrac{W_k  (k \cdot D)  W_{-k}}{|k|^4} = 2\sum_{k \neq 0} \dfrac{ W_{-k} ((k \cdot D)W_k)}{|k|^4}.
\end{align*}
Thus the LHS is actually a multiplication operator and thus can be seen as a potential. This leads to
\begin{gather*}
\trace(K_V^m) = \trace(K_{W_0}^m) + m \epsi^2 \trace\left(K_{W_0}^{m-2} K_{\Lambda_0}\right) + m \epsi^3\trace\left( K_{W_0}^{m-2} K_{\Lambda_1} \right) + O(\epsi^4), \\
\Lambda_0 = \sum_{k \neq 0} \dfrac{W_{-k}W_k}{|k|^2}, \ \ \ \ \ \Lambda_1 = -2\sum_{k \neq 0} \dfrac{ W_{-k}((k\cdot D)W_k)}{|k|^4}.
\end{gather*}
Coming back to \eqref{eq:2s},
\begin{align*}
D_V(\lambda) & = \exp\left(-\sum_{m=p}^\infty (-1)^m\dfrac{\trace\left(K_V\right)}{m}\right) \\
   & = \exp\left(-\sum_{m=p}^\infty (-1)^m\dfrac{\trace\left(K_{W_0}^m\right)}{m} - \sum_{m=p-2}^\infty (-1)^m\trace\left(K_{W_0}^m K_{\epsi^2\Lambda_0 + \epsi^3 \Lambda_1}\right) + O(\epsi^4)\right) \\
   & = D_{W_0}(\lambda) \left(1 - \trace\left((\Id + K_{W_0})^{-1} (-K_{W_0})^{p-2} K_{\epsi^2\Lambda_0 + \epsi^3\Lambda_1} \right) \right) + O(\epsi^4).
\end{align*}
This gives the value of $a_0, a_1, a_2, a_3$. It is in practice possible to use this method to compute all the other coefficients $a_4, ..., a_{N-1}$ given by Theorem \ref{thm:5}. 

\subsection{The case $\lambda_0 = 0$ in dimension one}\label{subsec:4} In this part we prove Lemma \ref{lem:2s}. Thus we assume $d=1$. For $\lambda \neq 0$ the operator $K_\VV$ is trace class. This allows us to define $d_\VV(\lambda) = \Det(\Id + K_\VV)$. By \cite[Theorem 2.6]{DyaZwo} the function $\lambda \mapsto \lambda d_\VV(\lambda)$ is entire. It is related to the modified Fredholm determinant $D_\VV$  by the identity
\begin{equation}\label{eq:3l}
\lambda \exp\left(-\sum_{m=1}^{p-1} (-1)^m \dfrac{\trace(K_\VV^m)}{m} \right)D_\VV(\lambda) = \lambda d_\VV(\lambda).
\end{equation}
If $\varphi$ is a meromorphic function with a pole at $0$ we write $\varphi = \sum_{m \in \Z} \beta_m z^m$ and we define $\sing(\varphi)$ the meromorphic function $\sing(\varphi)(z) = \sum_{m < 0} \beta_m z^m$. We recall that $\Lambda$ is the potential given by
\begin{equation}\label{eq:2q}
\Lambda = \epsi^2 \Lambda_0 + \epsi^3 \Lambda_1 =  \epsi^2 \sum_{k \neq 0} \dfrac{W_k W_{-k}}{k^2} - 2 \epsi^3 \sum_{k \neq 0} \dfrac{W_k (DW_{-k})}{k^3}.
\end{equation}

\begin{lem}\label{lem:1g} Let $d=1$ and $N \geq 4$. For every $m \geq 2$ there exists a holomorphic function $t_m : \C \setminus \{0\} \rightarrow \C$ with the following:
\begin{enumerate}
\item[$(i)$] $\sing(t_m)=\sing(\trace(K_V^m))$.
\item[$(ii)$] Locally  uniformly on $\C \setminus \{0\}$,
\begin{equation*}
t_m(\lambda) = \trace(K_{W_0}^m) + m \trace(K_{W_0}^{m-2} K_\Lambda) + ... + O(\epsi^N).
\end{equation*}
\end{enumerate}
\end{lem}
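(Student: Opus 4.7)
The plan is to reuse the strategy of \S\ref{subsec:3} to derive an expansion of $\trace(K_V^m)$ that holds locally uniformly on $X_1=\C\setminus\{0\}$, and then to \emph{correct} its singular part at $\lambda=0$ by a small amount so that condition $(i)$ holds on the nose.

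First, I Fourier-expand $V$ and split
\[
\trace(K_V^m)=\sum_{k_1+\dots+k_m=0}T[k_1,\dots,k_m]+\sum_{k_1+\dots+k_m\neq 0}T[k_1,\dots,k_m],
\]
where $T[k_1,\dots,k_m]=\trace\bigl(\prod_\ell K_{W_{k_\ell}}e^{ik_\ell\bullet/\epsi}\bigr)$. For $s=2(N+Nm+1)$, Lemma \ref{lem:3} together with the summation argument at the end of the proof of Lemma \ref{lem:5} (using $|W|_{Z^s}<\infty$) shows that the destructive part is $O(\epsi^N)$ locally uniformly on $X_1$. For each constructive sequence, Lemma \ref{lem:41} provides an operator-valued expansion at order $N$; taking traces and summing (convergence again via $|W|_{Z^s}<\infty$) yields a finite expansion
\[
\trace(K_V^m)=\tilde t_m(\lambda)+O(\epsi^N), \qquad \tilde t_m(\lambda)=\sum_{j=0}^{N-1}\epsi^j u_j(\lambda)
\]
locally uniformly on $X_1$, where each $u_j$ is meromorphic on $\C$ with a pole only at $\lambda=0$. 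Following the calculation at the end of \S\ref{subsec:3}, the $\epsi^0$ contribution to $\tilde t_m$ is $\trace(K_{W_0}^m)$, and the $\epsi^2$ and $\epsi^3$ contributions combine into $m\trace(K_{W_0}^{m-2}K_\Lambda)$, with $\Lambda$ given by \eqref{eq:2q}.

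To enforce condition $(i)$, I set
\[
t_m(\lambda):=\tilde t_m(\lambda)+\sing\bigl(\trace(K_V^m)-\tilde t_m\bigr)(\lambda).
\]
Condition $(i)$ holds by construction, and $(ii)$ will follow if the correction term is $O(\epsi^N)$ locally uniformly on $X_1$.

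The main obstacle is obtaining a bound on the pole order of $\trace(K_V^m)-\tilde t_m$ at $\lambda=0$ that is \emph{uniform in $\epsi$}. The key observation is that in dimension one $R_0(\lambda)=\frac{i}{2\lambda}M+R_{\mathrm{hol}}(\lambda)$, where $M$ is the operator with constant kernel equal to $1$, which is rank one on compactly supported functions. Consequently
\[
K_V(\lambda)=\frac{i}{2\lambda}(\rho\otimes V)+\rho R_{\mathrm{hol}}(\lambda)V,
\]
and expanding $K_V^m$ as a sum over binary strings in $\{0,1\}^m$ and using the collapse identity $(\rho\otimes V)X(\rho\otimes V)=\langle V,X\rho\rangle(\rho\otimes V)$ shows that $\trace(K_V^m)$ has a pole at $0$ of order at most $m$, with coefficients bounded uniformly in $\epsi$. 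The same bound holds for $\tilde t_m$ since each of its summands is a trace of a product with at most $m$ factors of $R_0(\lambda)$. Thus for any compact $K\subset X_1$, picking $r\in(0,d(K,0)/2)$ small enough, Cauchy's formula gives
\[
|c_{-k}|=\Big|\frac{1}{2\pi i}\oint_{|\mu|=r}\mu^{k-1}\bigl(\trace(K_V^m)-\tilde t_m\bigr)(\mu)\,d\mu\Big|\le C_r\,r^{k}\epsi^N,\qquad 1\le k\le m,
\]
so $\bigl|\sum_{k=1}^m c_{-k}/\lambda^k\bigr|\le C_K\epsi^N$ uniformly for $\lambda\in K$, yielding $(ii)$. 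The rank-one structure of $\rho\otimes V$, specific to $d=1$, is exactly what caps the pole order uniformly in $\epsi$ and makes the construction work.
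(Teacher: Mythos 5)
There is a genuine gap in the step claiming $\trace(K_V^m)=\tilde t_m(\lambda)+O(\epsi^N)$ locally uniformly on $X_1$, where you justify the constructive part by invoking Lemma \ref{lem:41} and ``taking traces and summing''. In Lemma \ref{lem:41} the operators $\AAA_j$ are pseudodifferential of order $j-2$ and $\RR_N$ of order $N-1$; in dimension one an operator of order $-1$ with compactly supported kernel is in general \emph{not} trace class (its kernel has a jump discontinuity on the diagonal — Volterra type), so for $j\ge 3$ the individual $\AAA_j$ need not be trace class for short constructive sequences. The problem already appears at $m=2$: the constructive term $K_{W_k}e^{ik\bullet/\epsi}K_{W_{-k}}e^{-ik\bullet/\epsi}$ produces at the $\epsi^3$ level an operator of the form $K_{W_kW_{-k}}D$ (order $-1$), which is not trace class on its own. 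The paper never uses Lemma \ref{lem:41} or Lemma \ref{lem:6} for such short sequences — Lemma \ref{lem:6} requires $\nu\ge N(\gamma+1)$, which fails for $m=2$, $\gamma=2$ — and this is precisely why \S\ref{subsec:4} exists: the paper splits the one-dimensional kernel explicitly as $R_0(\lambda,x,y)=f_0(\lambda,x-y)/\lambda+f_1(\lambda,x-y)|x-y|$, producing the operators $E_{\VV,0}$, $E_{\VV,1}$, and then uses Lemma \ref{lem:1c} to deal with the boundary terms generated by the kink of $|x-y|$ on the diagonal in the oscillatory integrals. That is the content your Step~1 is missing: the expansion of the individual oscillatory traces for small $m$ cannot be read off Lemma \ref{lem:41}.

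The remaining ingredients of your plan are sound and give a tidier route to condition $(i)$ than the paper's explicit construction: the uniform-in-$\epsi$ bound on the order of the pole of $\trace(K_V^m)$ at $\lambda=0$, obtained from $R_0(\lambda)=\frac{i}{2\lambda}M+R_{\mathrm{hol}}(\lambda)$ with $M$ rank one on compactly supported functions, is correct; the definition $t_m=\tilde t_m+\sing(\trace(K_V^m)-\tilde t_m)$ makes $(i)$ hold by fiat; and the Cauchy-estimate argument on $|\mu|=r$ showing that the singular-part correction is $O(\epsi^N)$ locally uniformly works once Step~1 is in place. But all of this presupposes the very expansion whose proof is the hard part of the lemma, so as written the proposal does not constitute a proof — it needs to be supplemented by a Lemma~\ref{lem:1c}-type integration-by-parts argument (or an equivalent device) for the short constructive sequences.
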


\begin{proof}[Proof of Lemma \ref{lem:2s} assuming Lemma \ref{lem:1g}] Let $p=4N(N+1)$ and set
\begin{equation*}
h_V(\lambda) = \lambda \exp\left(-\sum_{m=1}^{p-1} (-1)^m \dfrac{t_m(\lambda)}{m} \right)D_V(\lambda)
\end{equation*}
where $D_\VV(\lambda)$ is the determinant defined in \eqref{eq:1d}. Equation \eqref{eq:3l} implies that 
\begin{equation*}
\lambda d_V(\lambda) = h_V(\lambda) \exp\left(\sum_{m=1}^{p-1} (-1)^m \dfrac{t_m(\lambda)-\trace(K_V^m)}{m} \right).
\end{equation*}
The function 
\begin{equation*}
\sum_{m=1}^{p-1} (-1)^m \dfrac{t_m(\lambda)-\trace(K_V^m)}{m}
\end{equation*}
is entire thanks to point $(i)$ of Lemma \ref{lem:1g}. Consequently resonances of $V$ (counted with multiplicity) are exactly zeros of $h_V$ (counted with multiplicity).

We next show that the function $h_V$ has an expansion in powers of $\epsi$ on all of $\C$. For that we use Lemma \ref{lem:1i} with $S_0 = \{0\}$, $E=\C$,
\begin{equation*}
f(\lambda,\epsi) = \lambda \exp\left(-\sum_{m=1}^{p-1} (-1)^m \dfrac{t_m(\lambda)}{m} \right), \ \  \ \ g(\lambda,\epsi) = D_V(\lambda).
\end{equation*}
Both $f,g$ are meromorphic on $\C$ and their only pole is at $0$. They both admit an expansion away from $\{0\}$ by Lemma \ref{lem:1g} for $f$ and by Theorem \ref{thm:5} for $g$. Their product $h_V=fg$ is entire. Consequently $h_V$ admits an expansion of the form
\begin{equation*}
h_V(\lambda) = h_0(\lambda) + \epsi h_1(\lambda) + ... + \epsi^{N-1} h_{N-1}(\lambda) + O(\epsi^N)
\end{equation*}
that holds locally uniformly for $\lambda$  in $\C$. We next compute the first few terms in this expansion. Because of $(ii)$ in Lemma \ref{lem:1g} and of Theorem \ref{thm:5} we have
\begin{align*}
h_V(\lambda) & = \lambda \exp\left(-\sum_{m=1}^{p-1} (-1)^m \dfrac{t_m(\lambda)}{m} \right)D_V(\lambda) \\
    & = \lambda d_{W_0}(\lambda) \exp \left( - \sum_{m=0}^{p-3} (-1)^m\trace(K_{W_0}^{m} K_\Lambda)  \right) \left( 1 - \trace((\Id + K_{W_0})^{-1} K_{W_0}^{p-2} K_\Lambda)\right) + O(\epsi^4) \\
    & =  \lambda d_{W_0}(\lambda) \left( 1 - \trace((\Id + K_{W_0})^{-1} K_\Lambda)\right) + O(\epsi^4).
\end{align*}
This ends the proof of Lemma \ref{lem:2s}.\end{proof}

We next prove Lemma \ref{lem:1g}. We start with a preliminary lemma:

\begin{lem}\label{lem:1c} Let $k \in \Z \setminus \{0\}$ and $\varphi : \R \rightarrow \C$ be a smooth compactly supported function. Let $p_N$ be the polynomial defined by
\begin{equation*}
p_N(X) = -2\left(1+2X+3X^2+...+ (N+1) X^N\right).
\end{equation*}
Then for every $N \geq 2$,
\begin{equation}\label{eq:2p}
\left|\int_\R \varphi(x) e^{ikx/\epsi} |x| dx  - \epsi^2 \left(p_{N-3}(\epsi D/k) \varphi\right)(0)\right| \leq  C N \left(\dfrac{\epsi}{k}\right)^N \|\varphi\|_{N+1}.
\end{equation}
where the constant $C$ depends only on the support of $\varphi$.
\end{lem}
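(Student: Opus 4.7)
The proof is a repeated nonstationary-phase integration by parts exploiting the fact that $|x|$ is smooth on each of $\R_+$ and $\R_-$, with the only boundary contributions coming from $x=0$. Split
\[
\int_\R \varphi(x) e^{ikx/\epsi}|x|\,dx = I_+ + I_-, \quad I_+ = \int_0^\infty x\,\varphi(x)\, e^{ikx/\epsi}\,dx, \quad I_- = -\int_{-\infty}^0 x\,\varphi(x)\, e^{ikx/\epsi}\,dx.
\]

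For $I_+$ set $\psi(x)=x\varphi(x)$; then $\psi(0)=0$ and by Leibniz $\psi^{(j)}(0)=j\,\varphi^{(j-1)}(0)$ for $j\geq 1$. Writing $e^{ikx/\epsi}=\gamma\,\partial_x e^{ikx/\epsi}$ with $\gamma=\epsi/(ik)$ and iterating integration by parts $N$ times, with all $+\infty$-boundary terms killed by the compact support of $\varphi$, gives
\[
I_+ = \sum_{j=0}^{N-1}(-\gamma)^{j+1}\psi^{(j)}(0) + (-\gamma)^N\!\int_0^\infty \psi^{(N)}(x)\, e^{ikx/\epsi}\,dx.
\]
The $j=0$ term vanishes; reindexing $m=j-1$ converts the sum into $\sum_{m=0}^{N-2}(m+1)(-\gamma)^{m+2}\varphi^{(m)}(0)$, which is exactly the $(m+1)$-weighted power series defining $p_N$. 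Using $D=-i\partial_x$ to rewrite $\varphi^{(m)}$ in terms of $D^m\varphi$ then produces the expansion $\epsi^2(p_{N-3}(\epsi D/k)\varphi)(0)$ up to a single $m=N-2$ boundary term which is already of size $(\epsi/k)^N$ and can be absorbed into the remainder --- this is what shifts the truncation index from $N-2$ to $N-3$.

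The negative half line is reduced to the same form by the substitution $y=-x$, which turns $I_-$ into $\int_0^\infty y\,\varphi(-y)\,e^{-iky/\epsi}\,dy$; a parallel IBP with $k$ replaced by $-k$ and the identity $[\varphi(-\cdot)]^{(m)}(0)=(-1)^m\varphi^{(m)}(0)$ shows that the main asymptotic sums of $I_+$ and $I_-$ coincide, which accounts for the overall factor $-2$ in $p_N$. For the remainder, $\psi^{(N)}=x\,\varphi^{(N)}+N\,\varphi^{(N-1)}$ so $\|\psi^{(N)}\|_{L^1}\leq C\,N\,\|\varphi\|_{N+1}$ with $C$ depending only on $\supp\varphi$; combined with $|\gamma|^N=(\epsi/|k|)^N$ this yields the claimed bound $CN(\epsi/|k|)^N\|\varphi\|_{N+1}$, and similarly for the $I_-$ remainder. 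The factor of $N$ in \eqref{eq:2p} is precisely the Leibniz contribution $N\,\varphi^{(N-1)}$.

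The main obstacle is the bookkeeping: one must check that the Leibniz factor $j$ from $\psi^{(j)}(0)$ lines up after reindexing with the coefficient $(m+1)$ in $p_N$, and that the two half-line contributions combine in phase to a single real sum rather than cancelling. Tracking the signs through the substitution $y=-x$ (which flips both $k$ and each derivative of $\varphi$) is the only subtle point.
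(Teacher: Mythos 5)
Your proof is correct and proceeds by essentially the same technique as the paper's, namely iterated nonstationary-phase integration by parts; the paper organizes it via auxiliary functionals $I[\varphi]=\int e^{ix/\epsi}\varphi|x|\,dx$ and $J[\varphi]=\tfrac{1}{i}\int e^{ix/\epsi}\varphi\,\sgn(x)\,dx$ on the whole line with $\partial_x(\sgn)$ producing the $\delta_0$ boundary contribution, whereas you split into two half-line integrals and harvest the boundary terms of $\psi(x)=x\varphi(x)$ at $0$; these are two bookkeeping schemes for the same calculation. One caution on the sign-tracking you correctly flag as the subtle point: if you actually carry it through with $\gamma=\epsi/(ik)$ (so $-\gamma=i\epsi/k$) and $\varphi^{(m)}(0)=i^m(D^m\varphi)(0)$, the main sum comes out as $\tfrac{\epsi^2}{k^2}\bigl(p_{N-2}(-\epsi D/k)\varphi\bigr)(0)$ rather than $\epsi^2\bigl(p_{N-3}(\epsi D/k)\varphi\bigr)(0)$; the $1/k^2$ prefactor and the minus sign in the argument of $p$ match the paper's own recursion $\epsi^2(p_n(-\epsi D)\varphi)(0)$ and the way the lemma is applied with factors $(\epsi/k)^2$, $(\epsi/k)^3$, so the statement as printed has typos which your argument (done carefully) exposes and corrects.
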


\begin{proof} By rescaling $\epsi$ to $\epsi/k$ we see that it suffices to prove the lemma in the case $k=1$. Define 
\begin{equation*}
I[\varphi] = \int_\R e^{ix/\epsi} \varphi(x) |x| dx, \ \ \ \  J[\varphi] = \dfrac{1}{i}\int_\R e^{ix/\epsi} \varphi(x) \sgn(x) dx.
\end{equation*}
By an integration by parts
\begin{equation*}\begin{gathered}
I[\varphi] = -\epsi \left( J[\varphi] + I[D\varphi]\right), \\
J[\varphi] = \epsi \left( 2\varphi(0) - J[D\varphi]\right).\end{gathered}
\end{equation*}
Consequently:
\begin{equation}\label{eq:2n}
I[\varphi] = \epsi^2 \left( -2 \varphi(0) + 2J[D\varphi] + I[D^2\varphi]\right).
\end{equation}
We prove by recursion: for every $n \geq 0$
\begin{equation}\label{eq:2o}
I[\varphi] = \epsi^2 (p_n(-\epsi D) \varphi)(0) + \epsi^{n+2} I[(-D)^{n+2} \varphi] - \epsi^{n+2} (n+2)J[(-D)^{n+1} \varphi]
\end{equation}
where $p_n=-2(1+2X+3X^2+...+(n+1) X^n)$. For $n=2$ this holds by equation \eqref{eq:2n}. Now assume \eqref{eq:2o} holds for some $n$. Then
\begin{align*}
I[\varphi] = & \epsi^2 [p_n(-\epsi D) \varphi](0) + \epsi^{n+3} \left( -J[(-D)^{n+2}\varphi] + I[(-D)^{n+3}\varphi]\right)  \\  &  - \epsi^{n+3} (n+2) \left( 2[(-D)^{n+1}\varphi](0) + J[(-D)^{n+2}\varphi]\right) \\
            = & \epsi^2 [p_{n+1}(-\epsi D) \varphi](0) + \epsi^{n+3} I[(-D)^{n+3}\varphi] - \epsi^{n+3} (n+3) J[(-D)^{n+2}\varphi]
\end{align*}
where $p_{n+1} = p_n - 2 (n+2) x^{n+1}$. This ends the recursion. Equation \eqref{eq:2p} follows from \eqref{eq:2o} and the estimate $|I[D^N \varphi]| \leq C \epsi \|\varphi \|_{N+1}$, $|J[D^N\varphi]| \leq C \epsi \|\varphi\|_{N+1}$.\end{proof}

\begin{proof}[Proof of Lemma \ref{lem:1g}] In dimension one the kernel of the free resolvent $R_0(\lambda)$ is given by $R_0(\lambda,x,y) = i e^{i\lambda|x-y|}/(2\lambda)$. We decompose it as follows:
\begin{equation*}
\begin{gathered}
R_0(\lambda,x,y)  = \dfrac{f_0(\lambda,x-y)}{\lambda} + f_1(\lambda,x-y) |x-y|, \\
f_0(\lambda,x-y) = \dfrac{i}{2} \cos(\lambda|x-y|), \ \ \ f_1(\lambda,x-y) = -\dfrac{\sin(\lambda|x-y|)}{2\lambda|x-y|}. \end{gathered}
\end{equation*}
The functions $f_0$ and $f_1$ are both smooth on $\C \times \R$. This induces a decomposition of $K_\VV(\lambda)$ given by
\begin{equation*}\begin{gathered}
K_\VV(\lambda) = E_{\VV,0}(\lambda) + E_{\VV,1}(\lambda), \\ 
E_{\VV,\epsilon}(\lambda,x,y) = \rho(x)\dfrac{f_0(\lambda,x-y)}{\lambda} \VV(y), \ \ \ E_1(\lambda,x,y) = \rho(x) f_1(\lambda,x-y) |x-y|\VV(y).\end{gathered}
\end{equation*}
Thus $K_\VV(\lambda)$ is the sum of a smoothing operator $E_{\VV,0}(\lambda)$ with a pole at $\lambda = 0$ and of an operator $E_{\VV,1}(\lambda)$ which is not smoothing but has no pole. We now define
\begin{equation}\label{eq:0q}
t_m(\lambda) =  \systeme{\trace(K_V^m) - \trace(E_{V,1}^m) + \trace(E_{W_0,1}^m)  + m\trace(E_{W_0,1}^{m-2} E_{\Lambda,1}) \ \  \text{ if } m \geq 3 \\ \trace(K_V^2) + \trace(E_{W_0,1}^2) - \trace(E_{V,1}^2) \ \ \ \ \ \ \ \ \ \ \ \ \ \ \ \ \ \ \ \ \ \ \ \ \ \ \   \text{ if } m=2}
\end{equation}
where $\Lambda$ is the potential given by \eqref{eq:2q}. Since $\trace(E_{W_0,1}^m)- \trace(E_{V,1}^m) + m\trace(E_{W_0,1}^{m-2} E_{\Lambda,1})$ and $\trace(E_{W_0,1}^2) - \trace(E_{V,1}^2)$ are both entire function of $\lambda$ we have $\sing(t_m) = \sing(\trace(K_V^m))$. It remains to show that the function $t_m$ satisfies the expansion given by $(ii)$. Write
\begin{equations*}
\trace(K_V^m)  = \sum_{\epsilon_1, ..., \epsilon_m \in \{0,1\}^m} \trace\left( \prod_{j=1}^m E_{V,\epsilon_j} \right) \\
    = \trace\left(E_{V,1}^m\right) +  \sum_{k_1, ..., k_m} \ \ \  \sum_{\substack{\epsilon_1 ..., \epsilon_m \in \{0,1\}^m \\ \epsilon_1 \cdot ... \cdot \epsilon_m = 0 }} \trace\left( \prod_{j=1}^{m} E_{W_{k_j},\epsilon_j} e^{ik_j\bullet/\epsi} \right).
\end{equations*}
We first claim that for every $N$ and locally uniformly on $\C \setminus \{0\}$
\begin{equation}\label{eq:4a}
\sum_{k_1 + ...+ k_m \neq 0} \ \ \  \sum_{\substack{\epsilon_1 ..., \epsilon_m \in \{0,1\}^m \\ \epsilon_1 \cdot ... \cdot \epsilon_m = 0 }} \trace\left( \prod_{j=1}^{m} E_{W_{k_j},\epsilon_j} e^{ik_j\bullet/\epsi} \right) \leq C \epsi^N \|W\|_{Z^N}^m.
\end{equation}
Fix a sequence $\epsilon_1, ..., \epsilon_m \in \{0,1\}^m$ with $\epsilon_1 \cdot ... \cdot \epsilon_m =0$ and $k_1, ..., k_m \in \Z$ with $k_1+...+k_m \neq 0$. There exists $j_0$ with $\epsilon_{j_0} = 0$. Using the cyclicity of the trace we can assume without loss of generality that $j_0=1$. Let $n=m-\epsilon_1-...-\epsilon_m$. Using the explicit expression of the kernel of the operators $E_{\VV,\epsilon}$ we have
\begin{equation*}
\trace\left( \prod_{j=1}^{m} E_{{W_{k_j}},\epsilon_j} e^{ik_j\bullet/\epsi} \right) 
= \lambda^{-n} \int_{\R^m} \left(\prod_{j=1}^m f_{\epsilon_j}(x_j-x_{j-1}) |x_j-x_{j-1}|^{\epsilon_j} W_{k_j}(x_j)e^{ik_j x_j/\epsi} dx_j\right) dx_1
\end{equation*}
where by convention $x_0=x_m$. The substitution $x_j = y_1+...+y_j$, $j \in [1,m]$ and the explicit expression of the kernels of $E_{\VV,0}$ and $E_{\VV,1}$ yield
\begin{equation*}\begin{gathered}
\trace\left( \left(\prod_{j=1}^{m} E_{W_{k_j},\epsilon_j} e^{ik_j\bullet/\epsi}\right) \right) 
= \lambda^{-n} \int_{\R} e^{i\sigma_1 y_1/\epsi} I(y_1) dy_1
\end{gathered}
\end{equation*}
where $\sigma_j = k_j+...+k_m$, $z=y_2+...+y_{m-1}$ and
\begin{equation*}
I(y_1) = W_{k_1}(y_1) \int_{\R^{m-1}}f_0(z+y_m) \prod_{j=2}^m f_{\epsilon_j}(y_j) |y_j|^{\epsilon_j} W_{k_j}(y_1+...+y_j)e^{i\sigma_j y_j/\epsi} dy_j.
\end{equation*} 
The function $y_1 \mapsto I(y_1)$ is smooth and compactly supported. Since $\sigma_1 \neq 0$ $N$ integrations by parts give the estimate
\begin{equation*}
\left|\int_{\R} e^{i\sigma_1 y_1/\epsi} I(y_1) dx_1\right| \leq C\epsi^N\|I\|_N \leq C \epsi^N \prod_{j=1}^N \|W_{k_j}\|_N.
\end{equation*}
Therefore
\begin{equation*}\begin{gathered}
\left|\sum_{k_1 + ...+ k_m \neq 0} \ \ \  \sum_{\substack{\epsilon_1 ..., \epsilon_m \in \{0,1\}^m \\ \epsilon_1 \cdot ... \cdot \epsilon_m = 0 }} \trace\left( \prod_{j=1}^{m} E_{W_{k_j},\epsilon_j} e^{ik_j\bullet/\epsi} \right)\right| \leq C \epsi^N \sum_{k_1, ..., k_m}\prod_{j=1}^N \|W_{k_j}\|_N  \leq C \epsi^N \|W\|_{Z^N}^m.\end{gathered}
\end{equation*}
This proves \eqref{eq:4a}.

We next show that the function
\begin{equation*}
\sum_{k_1 + ...+ k_m = 0} \ \ \  \sum_{\substack{\epsilon_1 ..., \epsilon_m \in \{0,1\}^m \\ \epsilon_1 \cdot ... \cdot \epsilon_m = 0 }} \trace\left( \prod_{j=1}^{m} E_{W_{k_j},\epsilon_j} e^{ik_j\bullet/\epsi} \right).
\end{equation*}
admits an expansion in powers of $\epsi$. It suffices to prove that for any fixed sequence $\{\epsilon_j\}$ with $\epsilon_1=0$ the function
\begin{equation}\label{eq:0x}
\sum_{k_1 + ...+ k_m = 0} \trace\left( \prod_{j=1}^{m} E_{W_{k_j},\epsilon_j} e^{ik_j\bullet/\epsi} \right)
\end{equation}
admits an expansion in powers of $\epsi$. Fix $k_1, ..., k_m$ with $k_1+...+k_m=0$. We define $F_{m-1}$ and $F_s, s \in [1,m-1]$ recursively as follows:
\begin{equation*}
\systeme{F_{m-1}(y_1,...,y_{m-1}) =  \int_{\R} f_0(z+y_m)  f_{\epsilon_m}(y_m) W_{k_m}(y_1+...+y_m) e^{i\sigma_my_m/\epsi} |y_m|^{\epsilon_m} dy_m \\ 
F_{s-1}(y_1,...,y_{s-1}) = \int_\R f_{\epsilon_s}(y_s) W_{k_s}(y_1+...+y_s) F_s(y_1, ..., y_s) e^{i\sigma_s y_s/\epsi} |y_s|^{\epsilon_s} dy_s}
\end{equation*}
where $z=y_2+...+y_{m-1}$. Let $F_0(\lambda)$ be given by
\begin{equation*}
F_0(\lambda) = \trace\left( \prod_{j=1}^{m} E_{W_{k_j},\epsilon_j} e^{ik_j\bullet/\epsi}\right) 
= \lambda^{-n} \int_{\R} W_{k_1}(y_1) F_1(y_1) dy_1.
\end{equation*}
We prove recursively that $F_{m-1}, F_{m-2}, ..., F_1, F_0$ admit an expansion in powers of $\epsi$. The fact that $F_{m-1}$ admits an expansion in powers of $\epsi$ is a consequence of Lemma \ref{lem:1c}. The coefficients are smooth functions of $y_1, ..., y_{m-1}$. The recursive formula defining $F_{m-2}$ shows that $F_{m-2}$ also admits an expansion in powers of $\epsi$ whose coefficients are smooth functions of $y_1, ..., y_{m-2}$. The same recursive scheme shows that $F_{m-3}, ..., F_0$ admit an expansion in powers of $\epsi$. The sum over $k_1, ..., k_m$ with $k_1+...+k_m=0$ of the coefficients converge (we skip the details) and we conclude that \eqref{eq:0x} admits an expansion in powers of $\epsi$. Finally we sum over all sequences $\{\epsilon_j\}$ with at least one vanishing term and we use \eqref{eq:4a} to deduce that
\begin{align*}
t_m(\lambda) = & \trace(K_V^m) + \trace(E_{W_0,1}^m) - \trace(E_{V,1}^m) + \delta_{m \neq 2} m\trace(E_{W_0,1}^{m-2} E_{\Lambda,1}) \\
   = & \sum_{\substack{\epsilon_1, ..., \epsilon_m \in \{0,1\}^m, \\ \epsilon_1\cdot ... \cdot \epsilon_m=0}} \trace\left( \prod_{j=1}^m E_{V,\epsilon_j} \right) + \delta_{m \neq 2} m  \trace(E_{W_0,1}^{m-2} E_{\Lambda,1}) + \trace(E_{W_0,1}^m) 
\end{align*}
admits an expansion in powers of $\epsi$.

To end the proof we must compute the first terms terms in the expansion of the function $t_m$. We fix $N=4$ and work modulo $O(\epsi^4)$. The only sequence $\{k_j\}$ that can generate non-negligible terms is $(0, ..., 0, -k, k)$ up to cyclic permutation -- see \S \ref{subsec:3}. We fix $\{\epsilon_j\}$ and we estimate
\begin{equation*}
\sum_{k \neq 0} \trace\left(\left(\prod_{j=1}^{m-2} E_{W_0,\epsilon_j}\right) E_{W_{-k},\epsilon_{m-1}} e^{-ik\bullet/\epsi} E_{W_k,\epsilon_m} e^{ik\bullet/\epsi}  \right).
\end{equation*}
Assume that $m \geq 3$ and define $G$ by
\begin{align*}
G(\lambda,y_1,...,y_{m-1}) =  \sum_{k \neq 0} W_{-k}(y_1+z) \int_{\R} f_0(z+y_m) f_{\epsilon_m}(y_m) W_k(y_1+z+y_m) e^{ik y_m/\epsi} |y_m|^{\epsilon_m} dy_m
\end{align*}
where we recall that $z=y_2+...+y_{m-1}$. We first deal with the case $\epsilon_m=1$. This implies $f_{\epsilon_m}(0) = f_1(0)= -1/2$. Apply Lemma \ref{lem:1c} to obtain the asymptotic
\begin{equations*}
G(\lambda,y_1,...,y_{m-1}) =  \sum_{k \neq 0} \left(\dfrac{\epsi}{k}\right)^2 f_0(z) W_{-k}(y_1+z)W_k(y_1+z) \\  - 2 \sum_{k \neq 0} \left(\dfrac{\epsi}{k}\right)^3 f_0(z) W_{-k}(y_1+z)(DW_k)(y_1+z)  - 2\sum_{k \neq 0} \left(\dfrac{\epsi}{k}\right)^3 (Df_0)(z) W_{-k} (y_1+z)W_k(y_1+z)  + O(\epsi^4).
\end{equations*}
Since $\sum_{k \neq 0} {W_{-k} W_k}/{k^3} = 0$ we can remove the last term that appears in the expansion of $G$ and write $G(\lambda,y_1,...,y_{m-1}) = f_0(z) \Lambda(y_1+z) + O(\epsi^4)$.  This expansion combined with the inverse substitution $y \mapsto x$ variables yields
\begin{equations*}
 \sum_{k \neq 0} \trace\left(\left(\prod_{j=1}^{m-2} E_{W_0,\epsilon_j}\right) E_{W_{-k},\epsilon_{m-1}} e^{-ik\bullet/\epsi} E_{W_k,1} e^{ik\bullet/\epsi}\right) + O(\epsi^4) \\ 
=   \sum_{k \neq 0} \lambda^{-n}\int_{\R^{m-1}} f_0(z) W_0(y_1) \left(\prod_{j=2}^{m-2} f_{\epsilon_j}(y_j) |y_j|^{\epsilon_j} W_0(y_1+...+y_j) dy_j\right)  f_{\epsilon_{m-1}}(y_{m-1})\Lambda(z)dy_1 dy_{m-1} \\
    =  \sum_{k \neq 0} \lambda^{-n} \int_{\R^{m-1}}  \left(\prod_{j=1}^{m-2} f_{\epsilon_j}(y_j-y_{j-1}) |x_j-x_{j-1}|^{\epsilon_j} W_0(x_j) dx_j\right) f_{\epsilon_{m-1}}(x_{m-1}-x_{m-2})\Lambda(x_{m-1}) dx_1 dx_{m-1}  \\
    =  \trace\left(\left(\prod_{j=1}^{m-2} E_{W_0,\epsilon_j}\right) E_{\Lambda,\epsilon_{m-1}}\right).
\end{equations*}
This gives an estimate of $G$ in the case $\epsilon_m=1$. In the case $\epsilon_m=0$ the kernel of $E_{W_k,0}$ is smooth and we can integrate by parts by parts to obtain $G(\lambda,y_1, ..., y_{m-1}) = O(\epsi^4)$. Summing these estimates of $G$ over all possible values of $\epsilon_1, ..., \epsilon_{m-1}, \epsilon_m$ and using the cyclicity of the trace yield
\begin{equations*}
    \sum_{\substack{\epsilon_1, ..., \epsilon_m \in \{0,1\}^m \\ \epsilon_1 \cdot ... \cdot \epsilon_m = 0}} \trace\left( \prod_{j=1}^{m} E_{V,\epsilon_j} \right) - \sum_{\substack{\epsilon_1, ..., \epsilon_m \in \{0,1\}^m \\ \epsilon_1 \cdot ... \cdot \epsilon_m = 0}} \trace\left( \prod_{j=1}^{m} E_{W_0,\epsilon_j}\right) \\  
   =   m  \sum_{\substack{\epsilon_1, ..., \epsilon_m \in \{0,1\}^m \\ \epsilon_1 \cdot ... \cdot \epsilon_m = 0}} \sum_{k \neq 0} \trace\left( \left(\prod_{j=1}^{m-2} E_{W_0,\epsilon_j}\right) E_{W_{-k},\epsilon_{m-1}} e^{-ik\bullet/\epsi} E_{W_k,\epsilon_{m}} e^{ik\bullet/\epsi}\right)+ O(\epsi^4) \\
   =  m \ \sum_{\substack{\epsilon_1, ..., \epsilon_m \in \{0,1\}^m \\ \epsilon_1 \cdot ... \cdot \epsilon_m = 0, \ \epsilon_m=1}} \trace\left( \left(\prod_{j=1}^{m-2} E_{W_0,\epsilon_j}\right) E_{\Lambda,\epsilon_{m-1}} \right) + O(\epsi^4).
\end{equations*}
Recall that $t_m(\lambda)$ is given by \eqref{eq:0q} to conclude that
\begin{align*}
 t_m(\lambda) = & \sum_{\substack{\epsilon_1, ..., \epsilon_m \in \{0,1\}^m \\ \epsilon_1 \cdot ... \cdot \epsilon_m = 0}} \trace\left( \prod_{j=1}^{m} E_{W_0,\epsilon_j}\right) + \trace(E_{W_0,1}^m) + \sum_{\substack{\epsilon_1, ..., \epsilon_m \in \{0,1\}^m \\ \epsilon_1 \cdot ... \cdot \epsilon_m = 0}} \trace\left( \prod_{j=1}^{m} E_{V,\epsilon_j} \right) + m \trace(E_{W_0,1}^{m-2} E_{\Lambda,1}) \\
= & \trace(K_{W_0}^m)+ m \ \sum_{\substack{\epsilon_1, ..., \epsilon_m \in \{0,1\}^m \\ \epsilon_1 \cdot ... \cdot \epsilon_m = 0, \ \epsilon_m=1}} \trace\left( \left(\prod_{j=1}^{m-2} E_{W_0,\epsilon_j}\right) E_{\Lambda,\epsilon_{m-1}} \right) + O(\epsi^4) \\
= &  \trace(K_{W_0}^m)+ m \trace\left( K_{W_0}^{m-2}  K_{\Lambda} \right) + O(\epsi^4).
\end{align*}
We finally deal with the case $m=2$. If $\epsilon_1, \epsilon_2 \in \{0,1\}$ then
\begin{equation*}\begin{gathered}
\trace\left(E_{W_{-k},\epsilon_{m-1}} e^{-ik\bullet/\epsi} E_{W_k,\epsilon_m} e^{ik\bullet/\epsi}\right) \\  = \lambda^{\epsilon_1+\epsilon_2-2} \int_{\R} f_{\epsilon_1}(y_2) f_{\epsilon_2}(y_2) W_{-k}(y_1) W_k(y_1+y_2) |y_2|^{\epsilon_1+\epsilon_2} e^{iky_2/\epsi} dy_1 dy_2.\end{gathered}
\end{equation*}
If $\epsilon_1+\epsilon_2$ is even then one can integrate by parts many times in $y_2$ and obtain $O(\epsi^4)$. Otherwise $\epsilon_1+\epsilon_2 = 1$ and $f_{\epsilon_1} f_{\epsilon_2} = f_0f_1$. In particular $f_0f_1(0) = 1/(4i)$ and $(f_0f_1)'(0) = 0$. This yields
\begin{equations*}
   \sum_{\epsilon_1, \epsilon_2, \ \epsilon_1\epsilon_2=0} \sum_{k \neq 0} \trace(E_{W_{-k},\epsilon_{m-1}} e^{-ik\bullet/\epsi} E_{W_k,\epsilon_m} e^{ik\bullet/\epsi})  \\ =  2\lambda^{-1}  \sum_{k \neq 0} \int_{\R} W_{-k}(y_1) \int_\R f_0(y_2) f_1(y_2) W_k(y_1+y_2) |y_2| e^{iky_2/\epsi} dy_2 dy_1 \\ 
   =  2 \sum_{k \neq 0} \dfrac{i}{2\lambda}\int_\R W_{-k}(y_1) \left( \left( \dfrac{\epsi}{k} \right)^2  W_k(y_1) - 2\left( \dfrac{\epsi}{k} \right)^3  DW_k(y_1) \right) dy_1 + O(\epsi^4) =  2\trace(K_\Lambda) + O(\epsi^4).
\end{equations*}
Together with \eqref{eq:4a} this gives $t_2(\lambda) = \trace(K_{W_0}^2) + 2 \trace(K_\Lambda) + O(\epsi^4)$. This completes the proof of the lemma. \end{proof}

\section{Appendices}
\subsection{Analytic continuation of some Fredholm operators}\label{app:1} Let $T(\lambda)$ be a holomorphic family of trace-class operators on a Hilbert space. In finite dimension, the operator $\det(\Id + T(\lambda)) (\Id + T(\lambda))^{-1}$, defined away from the poles of $(\Id+T(\lambda))^{-1}$, extends to an entire family of operators known as the comatrix of $\Id + T(\lambda)$. In infinite dimension a similar statement holds:

\begin{lem} Consider $\HH$ a Hilbert space, $\UU$ an open connected subset of $\C$ and $T(\lambda)$ a holomorphic family of trace class operators for $\lambda \in \UU$. Assume that $\Id + T(\lambda_0)$ is invertible for some $\lambda_0 \in \UU$. Then the family of operators
\begin{equation*}
\TT(\lambda) = \Det(\Id + T(\lambda))(\Id + T(\lambda))^{-1}
\end{equation*}
initially defined for $\lambda$ away from the poles of $(\Id + T(\lambda))^{-1}$ extends to a holomorphic family of operators on $\UU$. Moreover, 
\begin{equation}\label{eq:4i}
|\TT(\lambda)|_{\BB(\HH)} \leq \Det\left( \Id + \left(T(\lambda)^* T(\lambda)\right)^{1/2} \right) \leq e^{2|T(\lambda)|_{\LL}}.
\end{equation}
\end{lem}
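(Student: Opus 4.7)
The plan is to construct $\TT(\lambda)$ directly as a norm-convergent series of operators that makes sense whether or not $I+T(\lambda)$ is invertible, and then identify it with $\Det(I+T(\lambda))(I+T(\lambda))^{-1}$ on the dense open set where the latter is defined. First I would use the Plemelj--Smithies expansion from \eqref{eq:2z} (with $X=0$, $Y=T$, variable $\mu$) and the Neumann expansion of $(I+\mu T)^{-1}$ to obtain, for $|\mu|$ small enough, a power series identity
\begin{equation*}
\Det(I+\mu T)(I+\mu T)^{-1} = \sum_{n=0}^\infty \mu^n B_n(T), \qquad B_n(T) = \sum_{k=0}^n (-1)^k \alpha_{n-k}(T)\,T^k,
\end{equation*}
where $\alpha_j(T) = \trace(\Lambda^j T)$. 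The coefficients $B_n(T)$ are explicit polynomials in $T$ and in the numerical invariants $\alpha_j(T)$; in particular they make sense for any trace class $T$, regardless of invertibility of $I+T$.

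Second, I would prove the operator-norm estimate $|B_n(T)|_{\BB(\HH)} \leq \alpha_n(|T|)$, where $|T| = (T^*T)^{1/2}$, by using the polar decomposition $T=U|T|$ together with the Horn/Ky Fan multiplicative inequalities on singular values (recalling that $s_j(\Lambda^n T)$ are the increasing products $s_{i_1}(T)\cdots s_{i_n}(T)$). Summing in $n$ gives
\begin{equation*}
\sum_{n=0}^\infty |B_n(T)|_{\BB(\HH)} \leq \sum_{n=0}^\infty \alpha_n(|T|) = \prod_{j}(1+s_j(T)) = \Det(I+|T|),
\end{equation*}
and the elementary inequality $1+s \leq e^s$ then yields $\Det(I+|T|) \leq e^{|T|_\LL} \leq e^{2|T|_\LL}$, which is the right-hand side of \eqref{eq:4i}. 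This simultaneously proves that the series $\sum_n B_n(T)$ converges absolutely in $\BB(\HH)$ and gives the bound in \eqref{eq:4i}.

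Third, I would turn to holomorphy. Since $T(\lambda)$ is holomorphic in $\LL(\HH)$ on $\UU$ (holomorphy in the $\BB$-norm together with the trace class valuedness promotes to $\LL$-holomorphy via Cauchy integrals), each coefficient $\alpha_j(T(\lambda))$ and hence each operator-valued $B_n(T(\lambda))$ is holomorphic in $\lambda$. The estimate in the previous paragraph, applied locally together with local boundedness of $|T(\lambda)|_\LL$, provides uniform convergence on compact subsets of $\UU$. Therefore $\TT(\lambda) := \sum_n B_n(T(\lambda))$ is a holomorphic family of bounded operators on $\UU$ satisfying \eqref{eq:4i}. Multiplying the formal identity by $I+T(\lambda)$ and verifying the telescoping cancellation term by term shows $\TT(\lambda)(I+T(\lambda)) = \Det(I+T(\lambda))\cdot I$, so at every point where $I+T(\lambda)$ is invertible the constructed $\TT$ coincides with the one defined in the statement.

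The main obstacle is the coefficient-wise bound $|B_n(T)|_{\BB} \leq \alpha_n(|T|)$. A direct expansion using the formula $B_n = \sum_{k=0}^n (-1)^k \alpha_{n-k}(T) T^k$ gives a crude bound with alternating signs that does not close; the clean argument instead identifies $B_n(T)$ with the trace of $\Lambda^n T$ taken in the first $n$ variables of $\Lambda^{n+1}\HH$, which directly produces the Ky Fan majorization by $\alpha_n(|T|)$. Once this combinatorial identification is in place the remaining steps are routine.
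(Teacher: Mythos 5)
Your argument is correct and takes a genuinely different route from the paper's. The paper works \emph{locally}: near each pole $\mu$ of $(\Id + T(\lambda))^{-1}$ it invokes the Gohberg--Sigal factorization $\Id + T(\lambda) = U_1(\lambda)\bigl(P_0 + \sum_m (\lambda-\mu)^{\kappa_m}P_m\bigr)U_2(\lambda)$ to see that $\Det(\Id + T(\lambda))$ vanishes at $\mu$ to order exactly $\sum_m \kappa_m$, cancelling the pole and identifying $\TT(\mu)$ as a rank-$\le 1$ operator or $0$; the operator bound $|\TT(\lambda)|_{\BB} \le \Det(\Id + (T^*T)^{1/2})$ is then taken directly from \cite[(B.4.7)]{DyaZwo} without proof. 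You instead construct $\TT$ \emph{globally} as the Plemelj--Smithies power series $\sum_n B_n(T(\lambda))$ for the Fredholm first minor and prove the termwise estimate $|B_n(T)|_{\BB} \le \trace(\Lambda^n(T^*T)^{1/2})$, which after summation yields the determinant bound and simultaneously gives holomorphy by locally uniform convergence; this is more self-contained than the paper (you establish the estimate the paper only cites), and as a small bonus your version $\Det(\Id + (T^*T)^{1/2}) \le e^{|T|_{\LL}}$ is sharper than the paper's $e^{2|T|_{\LL}}$, whose factor of $2$ is superfluous since $(T^*T)^{1/2}$ and $T$ have identical singular values. The one step you leave at the level of a sketch is precisely that coefficient estimate: you rightly note that the triangle inequality on $B_n = \sum_{k}(-1)^k\alpha_{n-k}(T)T^k$ does not close, and that one must pass through the exterior-algebra realization of $B_n$ as a partial trace on $\Lambda^{n+1}\HH$ with the Ky Fan/Horn singular-value inequalities. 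That identification carries the whole weight of \eqref{eq:4i} -- it is the classical Fredholm-minor bound (cf.\ Simon's \emph{Trace Ideals} or Gohberg--Goldberg--Krupnik) -- so a complete write-up should spell it out or cite it explicitly, in the same way the paper outsources (B.4.7).
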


\begin{proof} The proof uses the Gohberg-Sigal theory of residues -- see \cite[Appendix C.4]{DyaZwo}. By analytic Fredholm theory, $(\Id + T(\lambda))^{-1}$ defines a meromorphic family of operators with poles of finite rank. Fix $\mu \in \UU$ a pole of $(\Id + T(\lambda))^{-1}$ and $\lambda$ in a punctured neighborhood of $\mu$. We can write
\begin{equation*}
\Id + T(\lambda) = U_1(\lambda) \left( P_0 + \sum_{m=1}^N (\lambda - \mu)^{\kappa_m} P_m \right) U_2(\lambda)
\end{equation*}
where $U_1(\lambda), U_2(\lambda)$ are holomorphic families of invertible operators, $\kappa_m \geq 1$, $P_m$ has rank $1$ for $m > 0$, $P_m P_{m'} = \delta_{mm'}P_m$, $\text{rank} (\Id - P_0)  < \infty$. Therefore
\begin{equation*}
(\Id + T(\lambda))^{-1} = U_2(\lambda)^{-1} \left( P_0 + \sum_{m=1}^N (\lambda - \mu)^{-\kappa_m} P_m \right) U_1(\lambda)^{-1}.
\end{equation*}
The holomorphic function $\lambda \mapsto \Det(\Id + T(\lambda))$ has a zero at $\mu$, of multiplicity $\sum_{m=1}^N \kappa_m$
-- see \cite[equation $(C.4.7)$]{DyaZwo}. It follows that the operator $\TT(\lambda)$ can indeed be analytically continued at $\lambda = \mu$ with
\begin{equation*}
\TT(\mu) = \systeme{ 0 & \text{ if } N > 1 \\ \left.\dfrac{\Det(\Id + T(\lambda))}{(\lambda-\mu)^{\kappa_1}}\right|_{\lambda=\mu} U_2(\mu)^{-1} P_1 U_1(\mu)^{-1} & \text{ if } N=1. }
\end{equation*}
The first bound in \eqref{eq:4i} follows from \cite{DyaZwo}, (B.4.7). For the second one, note first that
\begin{equation*}
\Det\left( \Id + \left(T(\lambda)^* T(\lambda)\right)^{1/2} \right) \leq \exp\left(\left|\left(T(\lambda)^* T(\lambda)\right)^{1/2}\right|_{\LL}\right).
\end{equation*}
Finally we note that
\begin{equations*}
s_{2j}\left( \left(T(\lambda)^* T(\lambda)\right)^{1/2} \right) \leq s_j\left(T(\lambda)^*\right)^{1/2}s_j\left(T(\lambda)\right)^{1/2} \leq s_j\left(T(\lambda)\right),  \\
\left|\left(T(\lambda)^* T(\lambda)\right)^{1/2} \right|_{\LL} \leq 2\sum_{j=0}^\infty s_{2j}\left( \left(T(\lambda)^* T(\lambda)\right)^{1/2} \right) \leq 2\sum_{j=0}^\infty s_j\left(T(\lambda)\right) \leq 2 |T(\lambda)|_{\LL}.
\end{equations*}
This concludes the proof. \end{proof}

\subsection{Expansion of $R(\lambda,\xi+k/\epsi)$}\label{app:2} We study here the Taylor development of rational functions of the form $F(\epsi) = (1+a\epsi +b\epsi^2)^{-1}$. Such functions are analytic for small values of $\epsi$ and therefore there exists $u_k \in \C$ with $F(\epsi) = \sum_{j \geq 0} u_j \epsi^j$. Since $F(\epsi) (1+a\epsi +b\epsi^2) = 1$, the $u_k$ must satisfy the recursion relation
\begin{equation*}
\systeme{u_0= 1, \\ u_1=-a, \\ u_j=-au_{j-1} -bu_{j-2}.}
\end{equation*}
For $\epsi$ small enough the Taylor development of $F$ takes the form
\begin{equation*}
F(\epsi) = \sum_{j=0}^{J-1} u_j \epsi^j + r_J(\epsi), \ \ r_J(\epsi) = \sum_{j = J}^\infty u_j \epsi^j.
\end{equation*}
We have moreover
\begin{align*}
(1+a\epsi+b\epsi^2) r_J(\epsi) & = (1+a\epsi+b\epsi^2) \sum_{j = J}^\infty u_j \epsi^j \\
    & = u_J \epsi^J + u_{J+1} \epsi^{J+1} + au_J \epsi^{J+1} + \sum_{j = J+2}^\infty (u_j+au_{j-1} + b u_{j-2}) \epsi^j \\
    & = u_J \epsi^J + u_{J+1} \epsi^{J+1} + au_J \epsi^{J+1}.
\end{align*}
Consequently for small values of $\epsi$,
\begin{equation*}
F(\epsi) = \left(\sum_{j=0}^{J-1} u_j \epsi^j\right) + \dfrac{u_J  + u_{J+1} \epsi + au_J \epsi}{1+a\epsi +b\epsi^2}\epsi^J
\end{equation*}
and this identity extends meromorphically to all of $\C$. If $a$ and $b$ are polynomial of respective degree $1$ and $2$ in a parameter $\xi$ then by an immediate recursion $u_j$ is a polynomial of degree at most $j$ in $\xi$. In particular, \eqref{eq:11u} holds:
\begin{align*}
R(\xi+k/\epsi) & = \dfrac{\epsi^2}{|k|^2} \dfrac{1}{1+\epsi k\cdot \xi/|k|^2 + \epsi^2(\xi^2-\lambda^2)/|k|^2} \\
     & = \dfrac{\epsi^2}{|k|^2} \left( \left(\sum_{j=0}^{J-1} u_j \epsi^j\right) + \dfrac{u_{J}  + u_{J+1} \epsi + au_{J} \epsi}{1-2\epsi k\cdot \xi /|k|^2 + \epsi^2(\xi^2-\lambda^2)/|k|^2}\epsi^J\right) \\
     & =  \left(\sum_{j=2}^{J-1} \dfrac{u_{j-2}}{|k|^2} \epsi^j\right) + \dfrac{u_{J-1}}{|k|^2} \epsi^J+  \dfrac{u_{J-1}}{|k|^2} \epsi^{J+1} + \dfrac{u_J  + u_{J+1} \epsi + au_J \epsi}{(\xi-k/\epsi)^2 - \lambda^2}\epsi^J.
\end{align*}
The first terms in this expansion are given by
\begin{equation*} 
R(\xi+k/\epsi) = \dfrac{\epsi^2}{|k|^2} - 2\epsi^3\dfrac{k \cdot \xi}{|k|^4} - \epsi^4 \dfrac{ \xi^2 - \lambda^2 }{|k|^4} + 4 \epsi^4 \dfrac{(k \cdot \xi)^2}{|k|^6} + O(\epsi^5).
\end{equation*}

\end{document}